\documentclass[a4paper, oneside, 11pt]{amsart}
\usepackage{import}

\title[Categories of representations of fixed-points conformal nets]{Balanced tensor categories of representations of fixed-points conformal nets}
\author{Adrià Marín-Salvador}
\date{}

\newcommand{\Z}{\mathbb{Z}}
\newcommand{\R}{\mathbb{R}}
\newcommand{\C}{\mathbb{C}}

\newcommand{\Diff}{\text{Diff}}

\newcommand{\Mob}{\mathbf{M\ddot{o}b}}
\newcommand{\A}{\mathcal{A}}
\newcommand{\Rot}{\text{Rot}}

\newcommand{\id}{\text{id}}

\newcommand{\Hom}{\text{Hom}}
\newcommand{\Aut}{\text{Aut}}
\newcommand{\Rep}{\text{Rep}}

\newcommand{\Dcal}{\mathcal{D}}

\newcommand{\Ccal}{\mathcal{C}}
\newcommand{\End}{\text{End}}

\newcommand{\Jcal}{\mathcal{J}}
\newcommand{\ConfR}{\widetilde{\text{Conf}}_{2}(S^1)}

\newcommand{\B}{\mathbb{B}}

\newcommand{\Loc}{\text{Loc}}
\newcommand{\DHR}{\text{DHR}}
\newcommand{\1}{\mathbf{1}}
\newcommand{\M}{\text{Mod}(\Gamma)}

\usepackage{amssymb}
\usepackage{amsfonts}
\usepackage{amsmath}
\usepackage{quiver}
\usepackage{amsthm}
\usepackage{comment}
\usepackage[english]{babel}
\usepackage{quiver}
\usepackage{nicefrac}
\usepackage[a4paper]{geometry}
\usepackage{amssymb}
\usepackage{mathtools}
\usepackage{stackrel}
\usepackage[hidelinks]{hyperref}
\usepackage{mathrsfs} 
\usetikzlibrary{decorations.pathmorphing}
\usetikzlibrary{calc}
\usetikzlibrary{decorations.markings}
\usetikzlibrary{fadings,decorations.pathreplacing}
\usetikzlibrary{matrix,arrows}
\usetikzlibrary{arrows,calc,decorations.pathreplacing,decorations.markings,shapes.geometric,shadows}

\usepackage{xcolor}

\tikzset{-dot-/.style={decoration={
  markings,
  mark=at position 0.5 with {\fill circle (2pt);}},postaction={decorate}}}

\tikzset{
	Fdot/.style={circle, draw, fill, inner sep=0pt}, 
	Odot/.style={circle, draw, inner sep=0.1pt, minimum size=0.1cm}
	}

  \vfuzz \hfuzz
\usepackage{xcolor}
\usetikzlibrary{arrows}

\theoremstyle{definition}
\newtheorem{definition}{Definition}[section]
\newtheorem{proposition}[definition]{Proposition}
\newtheorem{lemma}[definition]{Lemma}
\newtheorem{theorem}[definition]{Theorem}
\newtheorem{corollary}[definition]{Corollary}

\newtheorem{remark}[definition]{Remark}

\newtheorem*{theorem*}{Theorem}
\newtheorem*{definition*}{Definition}
\newtheorem*{example*}{Example}
\newtheorem*{corollary*}{Corollary}
\newtheorem*{conjecture*}{Conjecture}

\definecolor{myviolet}{HTML}{7D3C98}
\definecolor{myorange}{HTML}{F39C12}
\definecolor{myblue}{HTML}{2E86C1}
\definecolor{mygreen}{HTML}{1E8449}
\definecolor{myred}{HTML}{C0392B}

\tikzstyle{ipe stylesheet} = [
  ipe import,
  even odd rule,
  line join=round,
  line cap=butt,
  ipe pen normal/.style={line width=0.4},
  ipe pen heavier/.style={line width=0.8},
  ipe pen fat/.style={line width=1.2},
  ipe pen ultrafat/.style={line width=2},
  ipe pen normal,
  ipe mark normal/.style={ipe mark scale=3},
  ipe mark large/.style={ipe mark scale=5},
  ipe mark small/.style={ipe mark scale=2},
  ipe mark tiny/.style={ipe mark scale=1.1},
  ipe mark normal,
  /pgf/arrow keys/.cd,
  ipe arrow normal/.style={scale=7},
  ipe arrow large/.style={scale=10},
  ipe arrow small/.style={scale=5},
  ipe arrow tiny/.style={scale=3},
  ipe arrow normal,
  /tikz/.cd,
  ipe arrows, 
  <->/.tip = ipe normal,
  ipe dash normal/.style={dash pattern=},
  ipe dash dotted/.style={dash pattern=on 1bp off 3bp},
  ipe dash dashed/.style={dash pattern=on 4bp off 4bp},
  ipe dash dash dotted/.style={dash pattern=on 4bp off 2bp on 1bp off 2bp},
  ipe dash dash dot dotted/.style={dash pattern=on 4bp off 2bp on 1bp off 2bp on 1bp off 2bp},
  ipe dash normal,
  ipe node/.append style={font=\normalsize},
  ipe stretch normal/.style={ipe node stretch=1},
  ipe stretch normal,
  ipe opacity 10/.style={opacity=0.1},
  ipe opacity 30/.style={opacity=0.3},
  ipe opacity 50/.style={opacity=0.5},
  ipe opacity 75/.style={opacity=0.75},
  ipe opacity opaque/.style={opacity=1},
  ipe opacity opaque,
]
\definecolor{red}{rgb}{1,0,0}
\definecolor{blue}{rgb}{0,0,1}
\definecolor{green}{rgb}{0,1,0}
\definecolor{yellow}{rgb}{1,1,0}
\definecolor{orange}{rgb}{1,0.647,0}
\definecolor{gold}{rgb}{1,0.843,0}
\definecolor{purple}{rgb}{0.627,0.125,0.941}
\definecolor{gray}{rgb}{0.745,0.745,0.745}
\definecolor{brown}{rgb}{0.647,0.165,0.165}
\definecolor{navy}{rgb}{0,0,0.502}
\definecolor{pink}{rgb}{1,0.753,0.796}
\definecolor{seagreen}{rgb}{0.18,0.545,0.341}
\definecolor{turquoise}{rgb}{0.251,0.878,0.816}
\definecolor{violet}{rgb}{0.933,0.51,0.933}
\definecolor{darkblue}{rgb}{0,0,0.545}
\definecolor{darkcyan}{rgb}{0,0.545,0.545}
\definecolor{darkgray}{rgb}{0.663,0.663,0.663}
\definecolor{darkgreen}{rgb}{0,0.392,0}
\definecolor{darkmagenta}{rgb}{0.545,0,0.545}
\definecolor{darkorange}{rgb}{1,0.549,0}
\definecolor{darkred}{rgb}{0.545,0,0}
\definecolor{lightblue}{rgb}{0.678,0.847,0.902}
\definecolor{lightcyan}{rgb}{0.878,1,1}
\definecolor{lightgray}{rgb}{0.827,0.827,0.827}
\definecolor{lightgreen}{rgb}{0.565,0.933,0.565}
\definecolor{lightyellow}{rgb}{1,1,0.878}
\definecolor{black}{rgb}{0,0,0}
\definecolor{white}{rgb}{1,1,1}

\usetikzlibrary{arrows.meta,patterns}
\usetikzlibrary{ipe}

\begin{document}

\begin{abstract}
    Let $\A$ be a (not necessarily rational) conformal net with a faithful action of a finite group $G$. Let $\Rep^G(\A)$ be the $G$-crossed balanced $\mathrm{W}^*$-tensor category of $G$-twisted representations of $\A$ as introduced in \cite{GcrossedbraidedRep}. We show that there is an equivalence of balanced $\mathrm{W}^*$-tensor categories $(\Rep^G(\A))^G\cong \Rep(\A^G)$ between the $G$-equivariantization of $\Rep^G(\A)$ and the category of representations of the fixed-points conformal net $\A^G$. This generalizes to the non-rational case the equivalence of braided tensor categories $(\Rep^G(\A))^G\cong \Rep(\A^G)$ for $\A$ rational appearing (in the language of localized endomorphisms) in \cite{muger05}, and it also includes the balances.
\end{abstract}

\maketitle

\tableofcontents

\section{Introduction}
\addtocontents{toc}{\protect\setcounter{tocdepth}{-1}}

Conformal nets are one of the most developed mathematical approaches to 2-dimensional chiral conformal field theory (CFT). A conformal net consists of a vacuum Hilbert space $H_0$ together with a net of von Neumann algebras of observables $I\subset S^1\mapsto \A(I)\subset B(H_0)$ parametrized by intervals of $S^1$. The vacuum $H_0$ is also equipped with a projective action of the group $\Diff^+(S^1)$ of orientation preserving diffeomorphisms of $S^1$ in a way that is compatible with the net $\A$. See Definition \ref{def: ConformalNet} for a precise definition and \cite{bmt88, bsm90, bgl93, gf93, w95, kl04, bdh15} for an introduction to the theory of conformal nets. 

Given a conformal net $\A$, one can consider actions of $\A$ on Hilbert spaces other than the vacuum $H_0$. More precisely, a representation of $\A$ consists of a Hilbert space $H$ together with $*$-actions of all the von Neumann algebras $\A(I)$ which are compatible with the inclusions $\A(J)\subset \A(I)$ for $J\subset I\subset S^1$. We write $\Rep(\A)$ for the category of representations of $\A$ and bounded linear maps intertwining the actions of all the algebras $\A(I)$. It is well known that $\Rep(\A)$ is a braided tensor category. We refer the reader to \cite[Sec. 2]{frs}, \cite[Sec. 7]{lon89} and \cite[Sec. IV.4]{gf93} for the definition of the tensor product of representations and the braiding, and \cite{bdh15, bicommutantfromnets, Gui21} for a more modern treatment. The algebras of endomorphisms of $\Rep(\A)$ are von Neumann algebras, and $\Rep(\A)$ is furthermore a braided $\mathrm{W}^*$-tensor category. See \cite{henriques2024completewcategories} for an introduction to $\mathrm{W}^*$-tensor categories. 

Most of the literature on conformal nets focuses on the subclass of \emph{rational} conformal nets (we note that \emph{rationality} and \emph{complete rationality} are equivalent by \cite{MR2100058}). One of the main reasons for this restriction is that, whenever $\A$ is a rational conformal net, the braided tensor category $\Rep(\A)$ is a \emph{modular tensor category} \cite{klm01}. In particular, $\Rep(\A)$ is a finite semisimple rigid tensor category, and it carries a \emph{balance} (or \emph{categorical twist}). Recall that a balance on a tensor category $\mathcal{C}$ with a braiding $\beta$ consists of a natural family of isomorphisms
\[
\theta_X: X\to X
\]
satisfying that $\theta_{X\otimes Y} = (\theta_X\otimes\theta_Y)\circ \beta_{Y,X}\circ\beta_{X,Y}$ for all $X,Y\in\mathcal{C}$. If $\mathcal{C}$ is furthermore~rigid and $\theta$ is compatible with the evaluation and coevaluation maps, the balance is called a \emph{ribbon~structure}.

When $\A$ is not rational, the category $\Rep(\A)$ is in general not finite, semisimple, or rigid: it may have infinitely (or continuously) many simple objects, and a generic representation of $\A$ is a \emph{direct integral} of irreducible representations as opposed to a direct sum. Whilst there is a vast literature on finite semisimple rigid tensor categories, much less is known about categories with direct integrals and continuously many simple objects. For example, the fact that $\Rep(\A)$ is a balanced tensor category even when $\A$ is not rational only appeared in the literature in \cite{marinsalvador2026balancedstructurecategoryrepresentations}. In this paper, we do not assume rationality at any point.

Even when non-rational conformal nets are considered, most of the literature approaches the representation theory of conformal nets through the language of localized endomorphisms \cite{MR258394}. Fix a point $\mathrm{p}\in S^1$ and let $\A_\infty$ be the $*$-subalgebra of $B(H_0)$ constructed as the union of the algebras $\A(I)$ for $I\subset S^1$ not containing $\mathrm{p}$. The endomorphisms of $\A_\infty$ are the objects of a category $\End\,\A_\infty$ whose morphisms are elements of $\A_\infty$ intertwining the actions of the endomorphisms. An endomorphism $\rho\in \End\,\A_\infty$ is said to be localized if there is an interval $\mathrm{p}\notin I\subset S^1$ such that $\rho$ acts like the identity outside of $I$. The subcategory $\DHR(\A)\subset \End\,\A_\infty$ (for Doplicher–Haag–Roberts) whose objects are localized endomorphisms and whose morphisms are morphisms in $\End\,\A_\infty$ contained in $\A(I)$ is a braided tensor category equivalent to $\Rep(\A)$, see \cite[Thm. 2.31]{muger05} and \cite[Sec.~6]{Gui21}.

\textbf{Fixed-points conformal nets and orbifolding.} Given a CFT and a compact subgroup $G$ of symmetries of the CFT, there exists a process, known as \emph{orbifolding}, which constructs a new theory by \emph{modding out} by the symmetries $G$. The study of orbifolding in CFTs began in \cite{MR1003430} in the context of \emph{rational} CFTs and finite subgroups of symmetries. Mathematically, if the CFT is encoded as a conformal net $\A$, one considers a compact group $G$ and a continuous group homomorphism from $G$ into the group of automorphisms of $\A$. An automorphism $\varphi$ of $\A$ is a collection $\{\varphi_I\}_{I\subset S^1}$ of automorphisms of the von Neumann algebras $\A(I)$ which is implemented in the vacuum Hilbert space in the sense that there exists a unitary $V_\varphi\in U(H_0)$ such that $\varphi(x) = V_\varphi x V_\varphi^*$ for all intervals $I\subset S^1$ and all elements $x\in \A(I)$. We write $\Aut(\A)$ for the group of automorphisms of $\A$, which inherits a topology from that of $U(H_0)$. Given such an action $G\to \Aut(\A)$, orbifolding amounts to taking the fixed-points conformal net $\A^G$, defined as follows.

Let us identify an element of $G$ with the automorphism of $\A$ it produces.  The fixed-points conformal net $\A^G$ is the net that assigns to an interval $I\subset S^1$ the von Neumann algebra $\A^G(I) := \A(I)^G:=\{x\in\A(I)\,|\,gx = x\,\text{for all $g\in G$}\}$ acting on the fixed-points Hilbert space $H_0^G:=\{\xi\in H_0\,|\,\text{$V_g\xi = \xi$ for all $g\in G$}\}$. It is natural to ask whether one can understand the category of representations of $\A^G$ in terms of representations of $\A$. It is known that $\Rep(\A)$ does not contain all the necessary information to compute $\Rep(\A^G)$, but one has to, in addition, consider \emph{twisted representations} of $\A$. 

\textbf{Twisted representations of conformal nets.} Given an automorphism $\varphi\in \Aut(\A)$, a $\varphi$-twisted representation of $\A$ is, roughly speaking, a Hilbert space $H$ together with a family of $*$-actions of the algebras $\A(I)$ compatible with the inclusions $\A(J)\subset \A(I)$ for $J\subset I$ everywhere away from a fixed point $\mathrm{p}\in S^1$, and only up to an application of $\varphi$ at the point $\mathrm{p}$, see Definition \ref{def: TwistedRep}. We write $\Rep^\varphi(\A)$ for the $\mathrm{W}^*$-category of $\varphi$-twisted representations of $\A$. Twisted representations were introduced, in the context of localized endomorphisms, in \cite{muger05} precisely to study representations of fixed-points conformal nets.

Given an automorphism $\varphi\in\Aut(\A)$, Müger constructs in \cite{muger05} a category $\varphi-\Loc_I(\A)\subset \End\,\A_\infty$ of endomorphisms which are $\varphi$-localized in some fixed interval $\mathrm{p}\notin I\subset S^1$, meaning that they act by the identity on the left of $I$ and by $\varphi$ on the right. Note that, if $\varphi = \id$ is the identity automorphism, then $\id-\Loc_I(\A) \cong \DHR(\A)$. Given a faithful action $G\to \Aut(\A)$ of a discrete group on $\A$ and identifying an element $g\in G$ with the automorphism it produces, Müger showed that the category $G-\Loc_I(\A) : = \bigoplus\limits_{g\in G}g-\Loc_I(\A)$ has the structure of a $G$-crossed braided tensor category. In particular, $G-\Loc_I(\A)$ admits a tensor product (given by composition of endomorphisms) covering the multiplication of $G$, as well as an action of $G$ by tensor automorphisms and a version of a braiding in the context of a group acting on a graded tensor category. In \cite{GcrossedbraidedRep}, we produced explicitly a $G$-crossed braided tensor structure on the $\mathrm{W}^*$-category $\Rep^G(\A):=\bigoplus\limits_{g\in G}\Rep^g(\A)$ and provided an equivalence of $G$-crossed braided $\mathrm{W}^*$-tensor categories $\Rep^G(\A)\cong G-\Loc_I(\A)$. Working in the language of representations, we also provided a $G$-crossed balance on $\Rep^G(\A)$, that is, a natural family of unitary isomorphisms $\theta_H: H\to T_g(H)$, for $g\in G$ and $H\in\Rep^g(\A)$ compatible with the $G$-crossed braiding, where $T_g(H)$ is the image of $H$ under the action of~$g$.
\begin{theorem*}(\cite{GcrossedbraidedRep})
    Let $\A$ be a conformal net being acted on faithfully by a discrete group $G$. Then, the $\mathrm{W}^*$-category $\Rep^G(\A)$ of $G$-twisted representations of $\A$ has the structure of a $G$-crossed balanced $\mathrm{W}^*$-tensor category. In addition, its underlying $G$-crossed braided $\mathrm{W}^*$-tensor category is equivalent to $G-\Loc_I(\A).$
\end{theorem*}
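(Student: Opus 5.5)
This theorem is quoted from \cite{GcrossedbraidedRep}. To reprove it, I would build the structure directly in the language of twisted representations, and then compare with Müger's localized endomorphisms.

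\textbf{Step 1: the $G$-action and the tensor product.} First I would define the $G$-action $T_g:\Rep^h(\A)\to\Rep^{ghg^{-1}}(\A)$. On $H$, the functor $T_g$ precomposes each local action $\A(I)\to B(H)$ with $g^{-1}$. Since every $g$ commutes with the inclusions and is implemented by $V_g$ on $H_0$, this sends $h$-twisted representations to $ghg^{-1}$-twisted ones. The coherence isomorphisms $T_gT_k\cong T_{gk}$ are then identities.

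Next comes the tensor product $\Rep^g(\A)\times\Rep^h(\A)\to\Rep^{gh}(\A)$. I would define it by Connes fusion over a chosen interval, or equivalently by transporting Müger's composition of $g$-localized endomorphisms. To do this, I would build a functor $\Rep^g(\A)\to g\text{-}\Loc_I(\A)$: given $H$, choose a unitary $H\cong H_0$ compatible with the actions on intervals to the left of $I$, and read off $\rho_H$ on $\A_\infty$. Checking that $\rho_H$ is $g$-localized uses Haag duality and the twisting at $\mathrm{p}$. Essential surjectivity and full faithfulness should follow as in the untwisted case \cite[Thm.~2.31]{muger05}, \cite[Sec.~6]{Gui21}. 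Transporting the tensor product, the $G$-action and the crossed braiding of $G\text{-}\Loc_I(\A)$ through this equivalence gives the $G$-crossed braided $\mathrm{W}^*$-tensor structure. By construction, the equivalence with $G\text{-}\Loc_I(\A)$ is then automatic.

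\textbf{Step 2: the $G$-crossed balance.} For $H\in\Rep^g(\A)$, I would define $\theta_H:H\to T_g(H)$ from the action of $\Diff^+(S^1)$, passed to its universal cover. Conformal covariance of twisted representations, with positive energy and covariance inherited from the vacuum, gives an action of the rotation group's universal cover. The $2\pi$-rotation $U_H(\widetilde{r}_{2\pi})$ then intertwines the action of $\A(I)$ with the action of $g^{-1}$ applied to it, because transporting an interval once around the circle crosses $\mathrm{p}$ and picks up the twist. That is exactly an isomorphism $H\cong T_g(H)$. Naturality is clear, since intertwiners commute with the unique covariance.

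\textbf{Main obstacle.} The hard step is the crossed balance axiom
\[
\theta_{H\otimes K}=(\theta_H\otimes\theta_K)\circ c_{T_g(K),H}\circ c_{H,K}
\]
(in the appropriate crossed form), together with compatibility with $T_k$. I would follow the untwisted argument of \cite{marinsalvador2026balancedstructurecategoryrepresentations}. Write the $2\pi$-rotation on the fused space as the tensor product of rotations on each factor, composed with a monodromy. Choosing a path in $\ConfR$ that rotates the two insertion intervals around one another, the induced transport is the crossed monodromy, which is the double crossed braiding.

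Two points need care. First, uniqueness of covariance for twisted representations needs to be proved, or assumed finite-index-free. Second, the twists $T_g$ appearing when an interval passes $\mathrm{p}$ must be tracked carefully.
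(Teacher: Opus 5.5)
\textbf{There is a genuine gap: the two halves of the plan use different models of the tensor product.}

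In Step~1 you commit to transporting Müger's composition, $G$-action and crossed braiding onto $\Rep^G(\A)$. That makes the equivalence with $G-\Loc_I(\A)$ a tautology. Your functor $\Rep^g(\A)\to g-\Loc_I(\A)$, built from a unitary intertwining the action of the complement together with Haag duality, is fine.

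But your proof of the crossed balance axiom uses path continuation in $\ConfR$, rotating the insertion intervals. That argument only makes sense when $H\otimes K$ is the Connes fusion $H_+(\widetilde{S^1_-})\boxtimes K_-(\widetilde{S^1_+})$ with its twisted action, and the crossed braiding is the path-continuation braiding $\B$. It says nothing about the transported product $(H_0,\pi_0\circ\rho_H\rho_K)$ or about Müger's $c$.

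Your ``or equivalently'' hides exactly what must be proved to connect the two models:
\begin{itemize}
\item the Connes fusion of a $g$-twisted and an $h$-twisted representation carries a canonical $gh$-twisted action (the actions obtained by continuing from the left and from the right must agree, Theorem~\ref{thm: RepresentationOnFusion});
\item $\boxtimes$ and $\B$ satisfy associativity and the crossed hexagons;
\item $\rho\mapsto(H_0,\pi_0\circ\rho)$ upgrades to a $G$-crossed braided tensor equivalence (Theorem~\ref{Thm: RepIsMugerForG}).
\end{itemize}
The cited work proceeds in that order: intrinsic Connes-fusion structure, then the balance $e^{-2\pi i L_0}$, then the comparison with $G-\Loc_I(\A)$. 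If you keep the transported structure instead, you must prove the crossed balance axiom in the endomorphism picture. That means using covariance cocycles of twisted endomorphisms and handling what happens when the localization region is carried once past $\mathrm{p}$. Your sketch does not address this.

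\textbf{Step~2 also rests on a false premise: uniqueness of covariance.} It cannot be proved in this generality, with or without a finite-index assumption. On $H_0\otimes\mathcal{K}$ with $\A$ acting on the first factor, $U\otimes W$ implements Möbius covariance for every unitary representation $W$ of $\widetilde{\Mob}$ on $\mathcal{K}$. The value of $W$ on the $2\pi$-rotation need not be scalar; take a sum of lowest-weight representations with lowest weights $0$ and $\tfrac12$. So a balance defined from ``the'' covariance would be neither well defined nor natural.

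What you need is the canonical action:
\begin{itemize}
\item A diffeomorphism supported in $I$ has an implementer $U(\gamma)\in\A(I)$, by condition (2) of Definition~\ref{def: ConformalNet} together with Haag duality.
\item Let it act on $H$ through $\pi^H_{\tilde I}$, and assemble these local actions into an action of $\widetilde{\Mob}$.
\item Naturality is then automatic, because intertwiners commute with every $\pi^H_{\tilde I}(\A(I))$.
\end{itemize}

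For the same reason, you cannot identify the canonical rotation on $H\boxtimes K$ with ``rotations on each factor composed with a monodromy'' by appealing to uniqueness. You have to derive it from how the canonical action conjugates the operators $L(\xi,\tilde I)$, rotating $\xi$ and $\tilde I$ together. A full turn then replaces $\tilde I$ by $\tilde I-1$. The relation between $L(\xi,\tilde I-1)$ and $L(\xi,\tilde I)$, a twisted analogue of Lemma~\ref{lemm: L(xi, I-1)}, is what produces the double crossed braiding.
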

The $G$-equivariantization of a $G$-crossed braided (balanced) $\mathrm{W}^*$-tensor category is canonically a braided (resp. balanced) $\mathrm{W}^*$-tensor category. Hence, the $G$-equivariantization $(G-\Loc_I(\A))^G$ of $G-\Loc_I(\A)$ is a braided $\mathrm{W}^*$-tensor category, and the $G$-equivariantization $(\Rep^G(\A))^G$ of $\Rep^G(\A)$ is a balanced $\mathrm{W}^*$-tensor category.

\textbf{The category of representations of the fixed-points conformal net.} Assume now that the group $G$ is finite. Then, one of the main results in \cite{muger05} is that, if $\A$ is a \emph{rational} conformal net, $\DHR(\A^G)$ is completely controlled by $G-\Loc_I(\A)$, as follows.
\begin{theorem*}(\cite{muger05})
    Let $\A$ be a \emph{rational} conformal net and $G$ a finite group acting faithfully on $\A$. Then, there is an equivalence of braided tensor categories $\DHR(\A^G)\cong (G-\Loc_I(\A))^G$. 
\end{theorem*}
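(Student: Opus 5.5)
The plan is to build an explicit braided tensor functor $F:(G-\Loc_I(\A))^G\to\DHR(\A^G)$ by restriction. I would show that $F$ is fully faithful without using rationality. Rationality then enters only to prove essential surjectivity, through a comparison of global dimensions.

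\textbf{Construction of $F$.} An object of $(G-\Loc_I(\A))^G$ is a pair $(\rho,\{u_h\}_{h\in G})$. Here $\rho=\bigoplus_g\rho_g$ is a finite direct sum of $g$-localized endomorphisms. The unitaries $u_h:\rho\to h\rho h^{-1}$ are equivariance data satisfying the cocycle condition. Since $\A(I)$ is a type III factor, we may realize the direct sum inside $\End\,\A_\infty$. The twisted unitaries $u_h$ then modify $\rho$ so that it commutes with the $G$-action: $\tilde\rho:=\rho$ with $h\circ\rho\circ h^{-1}=\mathrm{Ad}(u_h)\circ\rho$. This guarantees that $\tilde\rho$ maps $\A^G_\infty$ into itself.

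Because $\A^G(J)=\A(J)^G$ and the left/right twisted localization is by elements of $G$, the restriction $F(\rho,u):=\tilde\rho|_{\A^G_\infty}$ is an honest localized endomorphism of $\A^G_\infty$. Both the identity and $g$ act trivially on $\A^G$, so localization in $I$ is preserved. On morphisms, $F$ sends an equivariant intertwiner in $\A(I)$ to itself. $G$-invariance puts it in $\A^G(I)$, and it intertwines the restrictions.

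I would check monoidality directly: composition of endomorphisms commutes with restriction, and the equivariant structure on tensor products is the product $u_h\,\rho(v_h)$. For the braiding, the $G$-crossed braiding $\varepsilon(\rho,\sigma)$ in Müger's construction is built from charge transporters. After equivariantization, it becomes $\varepsilon(\rho,\sigma)$ composed with the equivariance unitary of $\sigma$ at the grade of $\rho$. I would verify that this agrees with the DHR braiding of $\A^G$ computed with transporters chosen $G$-invariantly. This is a bookkeeping argument, using that the DHR braiding is independent of the choice of transporters.

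\textbf{Full faithfulness.} The inclusion $\A^G(I)\subset\A(I)$ is irreducible, with conditional expectation $E=\frac{1}{|G|}\sum_g g$. Take a morphism $t\in\A^G(I)$ with $t\,F(\rho)(x)=F(\sigma)(x)\,t$ for all $x\in\A^G_\infty$. I would show, using the Haag-dual/relative-commutant argument (as in Longo–Rehren and Müger's $\alpha$-induction setup), that $t$ also intertwines $\tilde\rho,\tilde\sigma$ on all of $\A_\infty$. The key point is that $\A(J)$ is generated by $\A^G(J)$ together with the charged intertwiners of the regular representation of $G$, and these are transported compatibly by $\tilde\rho,\tilde\sigma$ thanks to the equivariance data. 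Such a $t$ is $G$-invariant and hence a morphism in the equivariantization. Faithfulness is clear.

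\textbf{Essential surjectivity (main obstacle).} I would use rationality here. For a completely rational $\A$, $\A^G$ is completely rational, and $\DHR(\A^G)$ is modular with global dimension $\mu_{\A^G}=[\A:\A^G]^2\mu_\A=|G|^2\dim\DHR(\A)$ by the Kawahigashi–Longo–Müger index formula. On the other side, I would show every grade $g-\Loc_I(\A)$ is nonzero and has dimension $\dim\DHR(\A)$. To do so, I would apply the modularity/$\alpha$-induction result of Müger: the $G$-crossed category is a braided $G$-crossed extension of the modular category $\DHR(\A)$, and since the action is faithful all components are nonzero, each having the same dimension. It follows that
\[
\dim(G-\Loc_I(\A))^G=|G|\dim G-\Loc_I(\A)=|G|^2\dim\DHR(\A).
\]
A fully faithful tensor functor between fusion categories of equal global dimension is an equivalence, since its image is a full fusion subcategory of full dimension. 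Making the nonvanishing of every graded component rigorous is the step I expect to be hardest. I would first try to deduce it from Kac–Longo–Xu/Müger's result that $\A^G\subset\A$ realizes $\A$ as a commutative Q-system $\Theta$ in $\DHR(\A^G)$ with $\Theta\supset\Rep(G)$. The local $\Theta$-modules form $\DHR(\A)$, and the dimension of all $\Theta$-modules, $\mu_{\A^G}/|G|=|G|\dim\DHR(\A)$, matches $\dim G-\Loc_I(\A)$ exactly. That would give essential surjectivity directly via the equivalence $\mathrm{Mod}(\Theta)^G\simeq\DHR(\A^G)$.
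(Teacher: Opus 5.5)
\textbf{The gap: essential surjectivity.} Suppose you have shown that $F$ is a fully faithful braided tensor functor. The dimension count does not by itself give essential surjectivity. The support of the grading of $G-\Loc_I(\A)$ is a subgroup $H\le G$, and every nonzero component has dimension $\dim\DHR(\A)$. Hence $\dim(G-\Loc_I(\A))^G=|G|\,|H|\dim\DHR(\A)$. By the Kawahigashi--Longo--M\"uger formula, $F$ is an equivalence if and only if $H=G$. So everything rests on full $G$-spectrum, and the reason you give for it (``since the action is faithful all components are nonzero'') is not valid.

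Faithfulness of $G$ on the net is invisible at the level of the neutral component. For holomorphic $\A$, the category $\DHR(\A)\simeq\mathrm{Hilb}$ carries no information about $G$ at all. Nothing in the theory of $G$-crossed braided extensions forces full support either: $\DHR(\A)$ concentrated in degree $e$ is itself such an extension. The correct criterion is this: when the neutral component is modular, the grading has full support if and only if the equivariantization is nondegenerate. The only nondegeneracy you have is that of $\DHR(\A^G)$, and it reaches $(G-\Loc_I(\A))^G$ only through the equivalence you are trying to prove. In the literature, full $G$-spectrum for rational $\A$ is derived from modularity of $\DHR(\A^G)$ via an $\alpha$-induction or crossed-product description of the twisted sectors. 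It is not established beforehand.

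\textbf{The Q-system fallback and the missing idea.} Your fallback has the same defect. The claim that $\dim\mathrm{Mod}(\Theta)=|G|\dim\DHR(\A)$ matches $\dim G-\Loc_I(\A)$ is exactly the statement at stake. Likewise, $\mathrm{Mod}(\Theta)^G\simeq\DHR(\A^G)$ tells you something about $F$ only once you have an equivalence between $\Theta$-modules and twisted sectors that is compatible with $F$.

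The missing idea is a way to produce twisted sectors from $\DHR(\A^G)$, and this is the heart of the paper's proof. With $\Gamma=\Theta$, a $g$-graded $\Gamma$-module $(H,\mu_H)$ becomes a $g$-twisted representation via $\pi_{\tilde I}(x)=\mu_H\circ L^G(\pi_0(x)\Omega,\tilde I)$. The twist is read off from the monodromy $\beta_{H,\Gamma}\beta_{\Gamma,H}$ through Lemma~\ref{lemm: L(xi, I-1)} (see Proposition~\ref{Prop: ExtensionFunctor}). In particular, the free module $\Gamma\boxtimes^G H$ with equivariance $V_g\boxtimes^G\id$ is an object of $(\Rep^G(\A))^G$ whose $G$-invariants are $H_0^G\boxtimes^G H\cong H$.

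For your route, this object-level statement is all you need. It gives essential surjectivity of restriction directly, with no dimension count and no rationality. Combined with your full-faithfulness step, and after translating between localized endomorphisms and representations, it completes the proof.

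\textbf{Two smaller repairs.}
\begin{enumerate}
\item As written, $F$ is undefined on objects. The relation $h\rho h^{-1}=\mathrm{Ad}(u_h)\circ\rho$ gives $h(\rho(x))=u_h\rho(x)u_h^*$ for $x\in\A^G_\infty$, so $\rho$ need not preserve $\A^G_\infty$. You must first trivialize the cocycle $h\mapsto u_h$ by a unitary of $\A(I)$, i.e.\ pass to $(G-\Loc_I(\A))^G_0$. This is M\"uger's Rk.~3.7.2, which the paper also uses.
\item Full faithfulness is cleanest with a Pimsner--Popa basis $\{m_i\}$ for $\A^G(J_0)\subset\A(J_0)$, where $J_0$ is clockwise of $I$. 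Then $\rho(m_i)=m_i=\sigma(m_i)$ and $tm_i=m_it$. Moreover $x=\sum_iE(xm_i^*)m_i$ holds for all $x\in\A(J)$ whenever $J\supset I\cup J_0$, since both inclusions share the Jones projection onto $H_0^G$. Hence $t$ intertwines $\rho$ and $\sigma$ on all of $\A_\infty$.
\end{enumerate}
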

In the language of representations, we obtain an equivalence of braided $(\mathrm{W}^*-)$tensor categories $\Rep(\A^G)\cong (\Rep^G(\A))^G$ for a faithful action of a finite group $G$ on a \emph{rational} conformal net $\A$. The purpose of the current paper is to generalize the result above in two directions. Firstly, we remove the hypothesis of $\A$ being rational (while keeping $G$ finite), and secondly we upgrade the equivalence to an equivalence of balanced $\mathrm{W}^*$-tensor categories. The main result of the paper is Theorem \ref{Thm: EquivariantizationCompatibleBalance}.
\begin{theorem*}
    Let $\A$ be a (not necessarily rational) conformal net being acted on faithfully by a finite group $G$. Then, there is an equivalence of balanced $\mathrm{W}^*$-tensor categories \begin{equation}\label{eq: intro}\Rep(\A^G)\cong (\Rep^G(\A))^G.\end{equation}
\end{theorem*}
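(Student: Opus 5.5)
We construct an explicit functor $F:(\Rep^G(\A))^G\to\Rep(\A^G)$ and check, in order, that it is well defined, monoidal, braided, balance-preserving, fully faithful, and essentially surjective.

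\textbf{Construction of $F$.} An object of $(\Rep^G(\A))^G$ is a twisted representation $H=\bigoplus_g H_g$, with $H_g\in\Rep^g(\A)$, together with unitary equivariance isomorphisms $u_h:T_h(H)\to H$ satisfying the cocycle condition. Each $T_h$ acts on the underlying Hilbert space $H$ by precomposing the algebra actions with the automorphism $h$. Hence the $u_h$ assemble into a unitary action of $G$ on $H$ under which $x\in\A(I)$ is covariant: $u_h x u_h^* = h(x)$, with the appropriate care at the point $\mathrm{p}$. On the fixed-point algebras $\A^G(I)$ the twist by $g$ at $\mathrm{p}$ acts trivially, because $g|_{\A(I)^G}=\id$. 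Therefore restriction gives an honest (untwisted) representation of $\A^G$ on each $H_g$, and in particular on $H$ and on its $G$-invariant subspace. We set $F(H,u):=H^G$, the invariant vectors of the $G$-action determined by $u$, and let $F$ act on morphisms by restriction. Restriction makes sense because $G$-equivariant morphisms preserve invariant vectors and intertwine the $\A^G(I)$-actions.

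\textbf{Monoidal structure and balance.} We compare the tensor product on $\Rep^G(\A)$, given by the explicit construction of \cite{GcrossedbraidedRep} via fusion of representations along intervals, with the Connes fusion tensor product on $\Rep(\A^G)$. This should produce a natural unitary $F(H)\boxtimes F(K)\to F(H\otimes K)$. For the vacuum, $F(H_0,V)=H_0^G$. One can first check everything on the image of $\Rep^G(\A)$ under the induction functor $K\mapsto \bigoplus_{g}T_g(K)$, and then extend to direct integrals and subobjects. Braiding and balance are then handled as follows.
\begin{itemize}
\item Braiding: in the equivariantization, the braiding is the $G$-crossed braiding composed with $u_g$. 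It restricts to the invariant subspaces, and one checks that it agrees with the braiding of $\Rep(\A^G)$, defined by moving localization intervals past each other.
\item Balance: the balance of the equivariantization is $u_g\circ\theta_H$ on $H_g$. By \cite{marinsalvador2026balancedstructurecategoryrepresentations}, the balance on $\Rep(\A^G)$ is the action of the $2\pi$ rotation in the universal cover. The $G$-crossed balance of \cite{GcrossedbraidedRep} is likewise a $2\pi$ rotation, which on a $g$-twisted representation lands in $T_g(H)$. After composing with $u_g$, it becomes the honest rotation operator on $H^G$.
\end{itemize}

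\textbf{Equivalence.} Full faithfulness follows from a Galois/duality argument. Since $G$ is finite, $\A(I)$ is generated by $\A^G(I)$ together with charged intertwiners, and $H_0$ decomposes as $\bigoplus_{\pi\in\widehat G}H_{0,\pi}\otimes\pi$. Consequently an $\A^G$-intertwiner between invariant subspaces extends uniquely to a $G$-equivariant $\A$-intertwiner of twisted representations. Essential surjectivity is the main obstacle, since without rationality we cannot use Müger's finite-index and modularity arguments. Given $K\in\Rep(\A^G)$, we use $\alpha$-induction along the finite-index inclusion $\A^G\subset\A$, which has index $|G|$ even in the non-rational case. This is the relative tensor product
\[
\mathrm{Ind}(K)=H_0\boxtimes_{\A^G(I)}K,
\]
computed with respect to a half-line or interval. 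It yields a representation of $\A$ that is twisted at $\mathrm{p}$, since moving through $\mathrm{p}$ picks up the monodromy of $K$ against the charged fields. It decomposes into $g$-twisted pieces and carries a natural $G$-equivariant structure from the action of $G$ on $H_0$. We then show $\mathrm{Ind}(K)^G\cong K$ using $H_0^G=$ the vacuum of $\A^G$ and the projection $\frac1{|G|}\sum_g V_g$. Because this identification is canonical and unitary, it is compatible with direct integrals, so no semisimplicity is required.
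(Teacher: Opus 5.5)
The proposal has genuine gaps. Your $F$ is the paper's restriction functor $\mathfrak{R}$, your $\mathrm{Ind}(K)=H_0\boxtimes_{\A^G(I)}K$ is the paper's $\mathfrak{D}(K)=\Gamma\boxtimes^G K$, and $\mathrm{Ind}(K)^G\cong K$ is the easy half $\mathfrak{R}\circ\mathfrak{M}^G\circ\mathfrak{D}\cong\mathrm{Id}$. Your balance argument is the one in Theorem \ref{Thm: EquivariantizationCompatibleBalance}.

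\textbf{The braided monoidal structure is never produced.} This is the substance of the theorem. You give no candidate tensorator $F(H)\boxtimes^G F(K)\to F(H\boxtimes K)$, and there is no evident one. $H\boxtimes K$ is a Connes fusion over $\A$ of twisted representations, built from $\A$-bounded vectors and path continuations, whereas $H^G\boxtimes^G K^G$ is a fusion over $\A^G$. Reducing to induced objects $\bigoplus_g T_g(X)$ does not help. On those objects the required isomorphism reads $r(X)\boxtimes^G r(Y)\cong\bigoplus_{k\in G} r(X\boxtimes T_k(Y))$, which is essentially as hard as the theorem. Direct integrals play no role here, since for finite $G$ every equivariant object is already a summand of an induced one. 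The braiding comparison (``one checks'') is equally unsupported.

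The paper gets all of this from one device, in three steps.
\begin{enumerate}
\item It identifies $\Rep^G(\A)$ with the category $\M$ of unitary $\Gamma$-modules. This needs the $G$-grading of $\M$, obtained from McRae's projections built from the monodromy $\beta_{H,\Gamma}\beta_{\Gamma,H}$ (Proposition \ref{Prop: ModGammaIsGGraded}). It also needs the extension of the $\A^G$-action to a $g$-twisted $\A$-action $\mu_H\circ L^G(\pi_0(x)\Omega,\tilde I)$ for every $\tilde I\in\Jcal_\R$ (Proposition \ref{Prop: ExtensionFunctor}).
\item It shows $(\boxtimes_\Gamma,\B^\Gamma,L^\Gamma,R^\Gamma)$ is a $G$-crossed categorical extension of $\A$. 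By the uniqueness Theorem \ref{thm: UniquenessCategoricalExtensions}, it is then unitarily equivalent to Connes fusion with $\B$.
\item It shows the free-module functor $\mathfrak{D}$ is braided monoidal.
\end{enumerate}
The braided tensor structure thus lives on the inverse $\mathfrak{M}^G\circ\mathfrak{D}$, and $\mathfrak{R}$ inherits it. Your assertions that $\mathrm{Ind}(K)$ carries an $\A$-action twisted at $\mathrm{p}$ and ``decomposes into $g$-twisted pieces'' are precisely those two propositions. They do not follow from the monodromy heuristic alone.

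\textbf{Full faithfulness has a hole.} Extending an $\A^G$-intertwiner $T\colon H^G\to K^G$ by $\pi(x)\xi\mapsto\pi(x)T\xi$ is well defined and bounded. This follows from $\langle\pi(x)\xi,\pi(x')\xi'\rangle=\langle\pi(E(x'^*x))\xi,\xi'\rangle$ with $E=\frac{1}{|G|}\sum_g g$, which is essentially the paper's isometry computation via Lemma \ref{lemm: ProjectOntoAG}. But it only defines a map on the subrepresentation generated by $H^G$. For that subrepresentation to be all of $H$, you need that no nonzero object of $(\Rep^G(\A))^G$ has zero invariant subspace. This is needed for faithfulness as well as fullness. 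The isotypic decomposition of the vacuum $H_0$ says nothing about an arbitrary twisted $H$.

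The paper proves this as Lemma \ref{eq: Rdoesnotkill}. It passes to Müger's $(G-\Loc_{I_0}(\A))^G_0$, where the invariants of $\bigoplus_{h\in C_g}(H_0,\pi_0\circ\rho_h)$ are identified with $H_0^{Z_G(g)}\ni\Omega$. It then uses the lemma to show that the comparison map $\Gamma\boxtimes^G H^G\to H$ has dense image, because its cokernel has no invariants.
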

 The underlying functor of this equivalence is the functor $\mathfrak{R}: (\Rep^G(\A))^G\to \Rep(\A^G)$ that restricts a $G$-equivariant twisted representation of $\A$ on a Hilbert space $H$ to an honest $\A^G$-representation on the subspace of $G$-invariant vectors of $H$. 
 
 Our proof strategy is as follows. The vacuum $H_0$, seen as an $\A^G$-representation by restriction along $\A^G\subset\A$, has a canonical commutative $\mathrm{C}^*$-Frobenius algebra (or $Q$-system) structure in $\Rep(\A^G)$. Denoting this algebra by $\Gamma$, we argue that the category $\M$ of unitary modules over $\Gamma$ is a $G$-crossed braided $\mathrm{W}^*$-tensor category equivalent to $\Rep^G(\A)$, using similar arguments to those in \cite{Gui_2025}. This requires us to introduce $G$-crossed categorical extensions of conformal nets in Section \ref{sec: CategoricalExtensions}, following \cite{Gui21}. The fact that $\M$ and $\Rep^G(\A)$ are equivalent as $G$-crossed braided $\mathrm{W}^*$-tensor categories implies that their equivariantizations $\M^G$ and $(\Rep^G(\A))^G$ are equivalent as braided $\mathrm{W}^*$-tensor categories. Then, we construct a braided $\mathrm{W}^*$-tensor functor $\Rep(\A^G)\to \M^G$ and show that the composition $\Rep(\A^G)\to \M^G\cong (\Rep^G(\A))^G$ is an equivalence of braided $\mathrm{W}^*$-tensor categories with inverse the functor $\mathfrak{R}: (\Rep^G(\A))^G\to \Rep(\A^G).$ We borrow some of the arguments from \cite{MR4244264}, which proves a similar equivalence to \eqref{eq: intro} in the context of vertex~operator~algebras. It is then easy to show that $\mathfrak{R}: (\Rep^G(\A))^G\to \Rep(\A^G)$ is compatible with the balances on both sides.

 In \cite[App. A]{GcrossedbraidedRep}, we constructed an involutive structure on $\Rep^G(\A)$ compatible with the $G$-crossed balance. Then, both the $G$-equivariantization $(\Rep^G(\A))^G$ and the category $\Rep(A^G)$ are involutive $\mathrm{W}^*$-tensor categories. In Appendix \ref{Appendix}, we upgrade the equivalence $\Rep(\A^G)\cong (\Rep^G(\A))^G$ to also be compatible with these involutive structures.

These results are used in \cite{RepHeis} to produce the first explicit computation of a category of representations of a conformal net with irreducible representations that tensor to a direct integral of irreducible representations, as opposed to a direct sum.

\section*{Acknowledgements}
I am grateful to André Henriques and Bin Gui for their comments, and to Josep Fontana McNally for proofreading this paper. This work has been funded by the EPSRC grant EP/W524311/1.

For the purpose of Open Access, the author has applied a CC BY public copyright license to any Author Accepted Manuscript version arising from this submission.

\addtocontents{toc}{\protect\setcounter{tocdepth}{5}}

\section{Preliminaries}

\subsection{Conformal nets}
\label{Sec: ConfNets}

We denote by $S^1:=\{z\in \C\ |\ |z| = 1\}$ the standard circle, and define an \emph{interval} of $S^1$ to be an open, connected, non-empty, non-dense subset of $S^1$. We write $\Jcal$ for the collection of intervals of $S^1$. The group $\Mob$ is the group of Möbius transformations of $S^1$, that is, transformations of the form
\[
z\mapsto \frac{az+b}{\bar{b}z+\bar{a}}
\]
with $a,b\in \C$, $|a|^2-|b|^2 = 1$. Then, $\Mob$ is a Lie group isomorphic to $PSL(2, \R).$ We denote by $\Rot$ the subgroup of rotations of $\Mob$, and an anticlockwise rotation of angle $t$ by $R_t$. In this paper, all Hilbert spaces and $\mathrm{C}^*$-algebras are assumed to be separable.

\begin{definition}
    A \textit{Möbius covariant net on $S^1$} is a tuple $(H_0, \A, U, \Omega)$ where $H_0$ is a Hilbert space equipped with a nonzero vector $\Omega\in H_0$, $U$ is a strongly continuous unitary representation of $\Mob$ on $H_0$ and $\A$ is an assignment of a von Neumann algebra $\A(I)$ acting on $H_0$ for every interval $I\in\Jcal$. This data is required to satisfy, for every $I,J\in\Jcal$ and $\varphi\in\Mob$,
    \begin{enumerate}
        \item isotony: if $J\subset I$, then $\A(J)\subset \A(I)$;
        \item locality: if $I\cap J = \emptyset$, then $\A(J)$ and $\A(I)$ commute in $B(H_0)$;
        \item Möbius covariance: $U(\varphi)\A(I)U(\varphi)^* = \A(\varphi I)$;
        \item positivity of the energy: the representation $U$ is of positive energy, meaning that the conformal Hamiltonian $L_0$, defined by $U(R_t) = e^{itL_0}$ is positive;
        \item uniqueness of the vacuum: the vector $\Omega\in H_0$ is the unique vector, up to a constant, which is invariant under $U$;
        \item cyclicity of the vacuum: $\Omega$ is cyclic for the von Neumann  algebra $\A(S^1):=\bigvee\limits_{I\in \Jcal}\A(I)\subset B(H_0)$.
    \end{enumerate}
\end{definition}

Let $\Diff^+(S^1)$ denote the group of orientation-preserving diffeomorphisms of $S^1$. A strongly continuous projective unitary representation $V$ of $\Diff^+(S^1)$ on a Hilbert space $H$ is a strongly continuous homomorphism $V: \Diff^+(S^1)\to \mathcal{P}U(H)$. Note that $\Mob\subset\Diff^+(S^1)$. We say that $V$ is an extension of a unitary representation $U$ of $\Mob$ on $H$ if, for every $\varphi\in \Mob$, it holds that $V(\varphi) = [U(\varphi)]$, where $[-]: U(H)\to \mathcal{P}U(H)$ denotes the projection map.

\begin{definition}\label{def: ConformalNet}
    A Möbius covariant net $(H_0, \A ,U, \Omega)$ is said to be a \emph{conformal net} if there is an extension of $U$ to a projective unitary representation of $\Diff^+(S^1)$, which we also denote by $U$, such that for all $I\in\Jcal$ and $\varphi\in \Diff^+(S^1)$, it holds that
    \begin{enumerate}
        \item $U(\varphi)\A(I) U(\varphi)^* = \A(\varphi I)$;
        \item if $\varphi|_{I} = \id_I$, then $\text{Ad}(U(\varphi))|_{\A(I)} = \id_{\A(I)}.$
    \end{enumerate}
\end{definition}

The following are well-known properties of any conformal net $(H_0, \A, U, \Omega)$. 
\begin{enumerate}
    \item Haag duality: the commutant of $\A(I)$ in $B(H_0)$ is $\A(I^c)$, where $I^c$ denotes the interior of $S^1\setminus I$ \cite[Thm. 2.19]{gf93};
    \item The Reeh-Schlieder Theorem: the vacuum vector $\Omega$ is cyclic and separating for every von Neumann algebra $\A(I)$ \cite[Cor. 2.8]{gf93};
    \item Each von Neumann algebra $\A(I)$ is a type $\mathrm{III}_1$ factor \cite[Prop. 1.2]{MR1410566};
    \item Additivity: for any interval $I\in \Jcal$ and any collection of intervals $\{I_\alpha\}_{\alpha \in A}$ with $I_\alpha\in \Jcal$ and $\bigcup\limits_{\alpha\in A}I_\alpha = I$, it holds that $\A(I) = \bigvee\limits_{\alpha\in A}\A(I_\alpha)$ \cite[p. 545]{MR1376431}.
\end{enumerate}

Fix a conformal net $(H_0,\A,U, \Omega)$, which we will simply denote by $\A$ from now on.

\begin{definition}\label{def: Rep}
    A \emph{representation} of a conformal net $\A$ consists of a Hilbert space $H$ and a collection of $*$-homomorphisms $\pi_I:\A(I)\to B(H)$ for every interval $I\in\Jcal$ such that
\[
\pi_I|_{\A(J)} = \pi_J
\]
whenever $J\in\Jcal$ is an interval such that $J\subset I$. We write $\Rep(\A)$ for the category whose objects are representations of $\A$ and whose morphisms are bounded linear maps intertwining the actions of all the algebras $\A(I)$.
\end{definition}

The conformal net $\A$ has a canonical representation on its vacuum Hilbert space $H_0$ by definition, which we call the \emph{vacuum representation of} $\A$. Given $I\in\Jcal$ and $x\in \A(I)$, we write $\pi_{0,I}(x)$ or $\pi_0(x)$ for the action of $x$ on the vacuum representation $H_0$.

The category $\Rep(\A)$ is a braided tensor category, the structure of which can be produced in different equivalent ways \cite{gf93, was98, bdh15,Gui21}.

We wish to consider more general representations of $\A$, those which are \emph{twisted} by an automorphism of the conformal net. An \emph{automorphism} $\varphi$ of $\A$ consists of a von Neumann algebra automorphism $\varphi_I: \A(I)\to\A(I)$ for every $I\in\Jcal$ such that for every inclusion $J\subset  I$ of intervals, the following diagram commutes
\[\begin{tikzcd}
	{\A(I)} & {\A(I)} \\
	{\A(J)} & {\A(J)}.
	\arrow["{\varphi_I}", from=1-1, to=1-2]
	\arrow[hook, from=2-1, to=1-1]
	\arrow["{\varphi_J}"', from=2-1, to=2-2]
	\arrow[hook, from=2-2, to=1-2]
\end{tikzcd}\]
Automorphisms are furthermore required to be unitarily implemented in the vacuum representation in the sense that they come equipped with a unitary $V_\varphi: H_0\to H_0$ such that, for every $I\in \Jcal$, it holds that
\[
\pi_{0,I}\circ\varphi_I(-) = V_\varphi\circ \pi_{0,I}(-)\circ V_\varphi^{-1},
\]
and $V_\varphi(\Omega) = \Omega$. We denote by $\Aut(\A)$ the group of automorphisms of $\A$. Given $\varphi\in \Aut(\A)$, an interval $I\in \Jcal$ and $x\in\A(I)$, we will write $\varphi x:=\varphi_I x$.

Automorphisms of $\A$ allow us to define twisted representations of the conformal net, as follows. Let us pick $\mathrm{p}:=1\in S^1$, and fix it for the rest of the paper.

\begin{definition}\label{def: TwistedRep}
Let $\varphi\in\Aut(\A)$ be an automorphism of $\A$ and recall that we have fixed $\mathrm{p} = 1\in S^1$. A $\varphi$\emph{-twisted representation} of $\A$ consists of a Hilbert space $H$ equipped with a collection of $*$-actions $\pi^H_I$ of $\A(I)$ on $H$, such that for every pair of intervals $I,J\in\Jcal$ with $J\subset I$, the diagram
\[\begin{tikzcd}
	{\A(I)} & {\A(I)} & {B(H)} \\
	& {\A(J)}
	\arrow["{\varphi_I^{-1}}", from=1-1, to=1-2]
	\arrow["{\pi^H_I}", from=1-2, to=1-3]
	\arrow[hook, from=2-2, to=1-1]
	\arrow["{\pi^H_J}"', from=2-2, to=1-3]
\end{tikzcd}\]
commutes if $\mathrm{p}\in \mathrm{cl}({I})$, $\mathrm{p}\notin{\mathrm{cl}({J})}$, and $J$ is counter-clockwise to $\mathrm{p}$ ($\mathrm{cl}(L)$ denotes the closure of an interval $L$); and 
\[\begin{tikzcd}
	{\A(I)} & {B(H)} \\
	{\A(J)}
	\arrow["{\pi^H_I}", from=1-1, to=1-2]
	\arrow[hook, from=2-1, to=1-1]
	\arrow["{\pi_J^H}"', from=2-1, to=1-2]
\end{tikzcd}\]
commutes otherwise. We write $\Rep^\varphi(\A)$ for the category of $\varphi$-twisted representations of $\A$, where morphisms are bounded linear maps commuting with the actions of all the von Neumann algebras $\A(I)$. We define the $\mathrm{W}^*$-category $\Rep^{\Aut(\A)}(\A) :=\bigoplus\limits_{\varphi\in \Aut(\A)}\Rep^\varphi(\A)$, whose objects are countable orthogonal direct sums of twisted $\A$-representations.
\end{definition}
\begin{remark}
    Note that, although $\Aut(\A)$ may be naturally a topological group, we are treating it as a discrete group, and an object of $\Rep^{\Aut(\A)}(\A)$ decomposes as a direct sum of objects living in countably many of the subcategories $\Rep^\varphi(\A)$. If $\Aut(\A)$ is a finite group, or we are only interested in a finite subgroup of automorphisms, then $\Rep^{\Aut(\A)}(\A)$ provides a complete picture of the problem at hand.
\end{remark}

\begin{remark}
    Given distinct automorphisms $\varphi,\mu\in \Aut(\A)$, and twisted representations $H^\varphi\in\Rep^\varphi(\A)$ and $K^\mu\in\Rep^\mu(\A)$, the only bounded linear map $H^\varphi\to K^\mu$ commuting with the actions of all the algebras $\A(I)$ is the zero map.
\end{remark}

The condition in Definition \ref{def: TwistedRep} can also be formulated as follows. Let $q: \R\to S^1$ be the map $t\mapsto e^{2\pi i t}$. Given an interval $I\in \Jcal$, we shall write $\hat{I}\subset \R_{>0}$ for the lift of $I$ to $\R_{> 0}$ as close to zero as possible so that $\mathrm{cl}(\hat{I})\subset\R_{>0}$. We call an inclusion $J\subset I$ in $S^1$ \emph{standard} if $\hat{J}\subset \hat{I}$, and \emph{special} if it is not standard. For an inclusion $J\subset I$, we shall write $\delta^{\varphi^{-1}}_{J\subset I}: \A(J)\to \A(I)$ for the inclusion $\A(J)\hookrightarrow \A(I)$ if $J\subset I$ is standard, and for $\A(J)\hookrightarrow\A(I)\xrightarrow{\varphi^{-1}_I}\A(I)$ if $J\subset I$ is special. Then, the condition in Definition \ref{def: TwistedRep} is equivalent to the diagram
\[\begin{tikzcd}
	{\A(I)} & {B(H)} \\
	{\A(J)}
	\arrow["{\pi^H_I}", from=1-1, to=1-2]
	\arrow["{\delta_{J\subset I}^{\varphi^{-1}}}", hook, from=2-1, to=1-1]
	\arrow["{\pi^H_J}"', from=2-1, to=1-2]
\end{tikzcd}\]
commuting for all inclusions $J\subset I$ of intervals in $\Jcal$. 

In what follows, it will be useful to have the following characterization of special inclusions. Let $\Jcal_\R := \{\tilde{I}\subset \R\ |\ \tilde{I} \text{ is an open interval such that $q(\tilde{I}) \in \Jcal$}\}$. For an interval $\tilde{I}\in \Jcal_\R$, we denote $I : =q(\tilde{I})\in\Jcal$. Also, let 
\[
\epsilon(\tilde{I}):=\begin{cases}
    |(0, \partial_-\tilde{I})\cap \Z|,
    & \text{if $\partial_{-} \tilde{I}$} > 0\\
    -|[\partial_-\tilde{I}, 0]\cap \Z|, & \text{if $\partial_-\tilde{I}\leq0$}.
\end{cases}
\] 
Given an inclusion $\tilde{J}\subset \tilde{I}$ of intervals in $\Jcal_\R$, the inclusion $J\subset I$ is standard if and only if $\epsilon(\tilde{J}) = \epsilon(\tilde{I})$. In addition, if the inclusion $J\subset  I $ is special, then $\epsilon(\tilde{I}) = \epsilon(\tilde{J}) - 1$, see \cite[Lem. 2.7]{GcrossedbraidedRep}.

Given a conformal net $\A$, we can produce a partially defined net $\A_\R$ on the real line by assigning to every interval $\tilde{I}\in \Jcal_\R$ the von Neumann algebra $\A_\R(\tilde{I}) := \A(I)$. The net $\A_\R$ satisfies the weaker version of locality that $\A_\R(\tilde{I})$ and $\A_\R(\tilde{J})$ commute if $I\cap J = \emptyset$. Given a $\varphi$-twisted representation $(H^\varphi, \pi^H)$ of $\A$, we obtain an action $\pi^H_{\tilde{I}}$ of every von Neumann algebra $\A_\R(\tilde{I})$ on $H^\varphi$ by setting
\[
    \pi^H_{\tilde{I}}(x) := \pi^H_I(\varphi^{\epsilon(\tilde{I})}x)
\]
for every $\tilde{I}\in \Jcal_\R$ and $x\in A_\R(\tilde{I}) = \A(I)$. Note that it holds that, for every inclusion $\tilde{J}\subset \tilde{I}$ in $\Jcal_\R$ and $x\in \A_\R(\tilde{J}) = \A(J)$,
\[
\pi^H_{\tilde{J}}(x) = \pi^H_J(\varphi^{\epsilon(\tilde{J})}x) = \pi^H_I(\delta^{\varphi^{-1}}_{J\subset I}\varphi^{\epsilon(\tilde{J})}x) = \pi^H_I(\varphi^{\epsilon(\tilde{I})}x) = \pi_{\tilde{I}}^H(x).
\]
In addition, for all $x\in \A_\R(\tilde{I}) = \A(I) = \A_\R(\tilde{I} + 1)$, we have
\begin{equation}\label{eq: TwistedRepOnR}
\pi_{\tilde{I}}^H(\varphi x ) = \pi_{\tilde{I} + 1}^H( x)
\end{equation}
coming from the fact that $\epsilon(\tilde{I} + 1) = \epsilon(\tilde{I}) + 1$. Actually, given a Hilbert space $K$ with a $*$-action $\pi_{\tilde{I}}$ of the von Neumann algebra $\A_\R(\tilde{I})$ for every $\tilde{I}\in\Jcal_\R$, compatible with the inclusion of intervals in $\Jcal_\R$ and satisfying \eqref{eq: TwistedRepOnR}, we obtain a $\varphi$-twisted representation $\pi$ of $\A$ on $K$ by setting
\[
\pi_I(x) := \pi_{\hat{I}}(x)
\]
for every $I\in\Jcal$ and $x\in \A(I)$. We will freely move between the notion of a $\varphi$-twisted representation of $\A$ on a Hilbert space $H$ in the sense of Definition \ref{def: TwistedRep} and in the sense of a compatible family of $*$-actions of the von Neumann algebras $\A_\R(\tilde{I})$ on $H$ satisfying \eqref{eq: TwistedRepOnR}.

\subsection{Equivariantization of $G$-crossed balanced categories}\label{Sec: G-X-balanced}

In this section, we discuss equivariantizations of braided and balanced $\mathrm{W}^*$-tensor categories. Let us briefly recall the notions of crossed-braided and crossed-balanced tensor categories. For a more extensive introduction the reader may read \cite[Sec. 2.2]{GcrossedbraidedRep}. Let $\mathcal{C}$ be a $\mathrm{W}^*$-tensor category (which we assume admits all countable orthogonal direct sums). We write $\underline{\text{Aut}}_\otimes(\mathcal{C})$ for the monoidal category of $\mathrm{W}^*$-tensor automorphisms and unitary natural transformations~of~$\mathcal{C}$.

Let $G$ be a discrete group and write $\underline{G}$ for the tensor category whose objects are elements of $G$ and whose morphisms are only identities. A \emph{$G$-crossed braided $\mathrm{W}^*$-tensor category} structure on $\mathcal{C}$ consists of a grading $\mathcal{C} = \bigoplus\limits_{g\in G}\mathcal{C}_g$ such that the tensor product covers the group multiplication, an action $T: \underline{G}\to \underline{\text{Aut}}_\otimes(\mathcal{C})$ of $G$ on $\mathcal{C}$ covering the conjugation action of $G$ on itself, and a family of unitary isomorphisms $\beta_{X, Y}: X\otimes Y\to T_g(Y)\otimes X$ for $g\in G$, $X\in \mathcal{C}_g$ and $Y\in \mathcal{C}$. The family $\beta$ is called the $G$-crossed braiding and is required to satisfy the obvious versions of the hexagon diagrams. Note that the action $T$ comes, by definition, with unitary equivalences $i_g: \1\to T_g(\1)$, for $\1\in\mathcal{C}$ the unit object, as well as unitary natural isomorphisms $s_g:T_g(-)\otimes T_g(-)\to T_g(-\otimes -)$ and $n_{g,h}: T_g\circ T_h\to T_{gh}$, for $g,h\in G$.

 Given $(\mathcal{C}, \otimes^\Ccal, T^\Ccal, \beta^\Ccal)$ and $(\mathcal{D}, \otimes^\Dcal, T^\Dcal, \beta^\Dcal )$ two $G$-crossed braided $\mathrm{W}^*$-tensor categories, a functor between them consists of a $\mathrm{W}^*$-tensor functor $(F,\Psi): (\Ccal,\otimes^\Ccal)\to (\Dcal, \otimes^\Dcal)$ preserving the $G$-grading, together with unitary natural isomorphisms 
\[
\Phi_g(X): T_g^\Dcal(F(X)) \xrightarrow{\cong }   F(T^\Ccal_g(X))
\]
for all $g\in G$ and $X\in \Ccal$. These are required to be compatible with the $G$-action and the $G$-crossed braiding in the obvious way, see \cite[Sec. 2.2]{GcrossedbraidedRep}. We shall call such a triple $(F,\Psi,\Phi)$ a $G$\emph{-crossed braided} $\mathrm{W}^*$\emph{-tensor functor}. A $G$-crossed braided $\mathrm{W}^*$-tensor functor is an equivalence of $G$-crossed braided $\mathrm{W}^*$-tensor categories if its underlying $\mathrm{W}^*$-functor is an equivalence of $\mathrm{W}^*$-categories.

Given a $G$-crossed braided $\mathrm{W}^*$-tensor category $\mathcal{C}$, a $G$\emph{-crossed balance} on $\mathcal{C}$ is a family of natural unitary isomorphisms $\theta_X: X\to T_g(X)$ for every $g\in G$ and $X\in \mathcal{C}_g$ such that $T_h(\theta_X) = \theta_{T_h(X)}$ for every $g,h\in G$ and $X\in \mathcal{C}_g$ and the following diagram commutes
\begin{equation}\label{eq: Xtwist}
\begin{tikzcd}
	{X\otimes Y} && {T_{gh}(X\otimes Y)} \\
	&& {T_{gh}(X)\otimes T_{gh}(Y)} \\
	&& {T_{ghgh^{-1}g^{-1}}T_{ghg^{-1}}(X)\otimes T_{ghg^{-1}}T_g(Y)} \\
	{T_g(Y)\otimes X} && {T_{ghg^{-1}}(X)\otimes T_g(Y)}
	\arrow["{\theta_{X\otimes Y}}", from=1-1, to=1-3]
	\arrow["{\beta_{X,Y}}"', from=1-1, to=4-1]
	\arrow["\cong"', from=2-3, to=1-3]
	\arrow["\cong"', from=3-3, to=2-3]
	\arrow["{\beta_{T_g(Y), X}}"', from=4-1, to=4-3]
	\arrow["{\theta_{T_{ghg^{-1}}(X)}\otimes\theta_{T_g(Y)}}"', from=4-3, to=3-3]
\end{tikzcd}\end{equation}
for all $g,h\in G$, $X\in \mathcal{C}_g$, $Y\in\mathcal{C}_h$.
    A $G$\emph{-crossed balanced $\mathrm{W}^*$-tensor category} is a $G$-crossed braided $\mathrm{W}^*$-tensor category with a $G$-crossed balance.

Given $\Ccal$ and $\Dcal$ two $G$-crossed braided $\mathrm{W}^*$-tensor categories with $G$-crossed balances $\theta^\Ccal$ and $\theta^\Dcal$ respectively, a $G$\emph{-crossed balanced $\mathrm{W}^*$-tensor functor} $(F,\Psi,\Phi):\Ccal\to \Dcal$ is a functor of $G$-crossed braided $\mathrm{W}^*$-tensor categories such that $F(\theta^\mathcal{C}_X) = \Phi_g(X)\circ \theta^\mathcal{D}_{F(X)}$ for all $g\in G$ and $X\in\Ccal_g$.

It is well-known that, given a $G$-crossed braided tensor category $\mathcal{C}$ one can canonically obtain a braided category $\mathcal{C}^G$ by means of an equivariantization procedure \cite{braidedFC}. The equivariantization construction extends to an assignment from $G$-crossed balanced $\mathrm{W}^*$-tensor categories to balanced $\mathrm{W}^*$-tensor categories. We spell it out here for completeness.

The equivariantized category $\mathcal{C}^G$ has objects pairs $(X,u)$ where $X\in \mathcal{C}$ and $u = \{u_g\}_{g\in G}$ is a family of unitary isomorphisms $u_g: T_g(X)\cong X$ such that the diagram
\[\begin{tikzcd}
	{T_gT_h(X)} && {T_g(X)} \\
	{T_{gh}(X)} && X
	\arrow["{T_g(u_h)}", from=1-1, to=1-3]
	\arrow["\cong"', from=1-1, to=2-1]
	\arrow["{u_g}", from=1-3, to=2-3]
	\arrow["{u_{gh}}"', from=2-1, to=2-3]
\end{tikzcd}\]
commutes for every $g,h\in G$. We also require that $u_e$ is induced by the trivialization $\text{Id}_\mathcal{C}\cong T_e$. A morphism between two objects $(X,u)$ and $(Y,v)$ in $\mathcal{C}^G$ is a morphism from $X$ to $Y$ in $\mathcal{C}$ compatible with $u_g$ and $v_g$ for all $g\in G$. Given $(X, u)\in \mathcal{C}^G$, its algebra of endomorphisms is the subalgebra of $\End_\mathcal{C}(X)$ of morphisms $f:X\to X$ such that $u_g\circ T_g(f)\circ u_g^* = f$ for all $g\in G$. Since $T_g$ is a $\mathrm{W}^*$-functor and $u_g$ is unitary for each $g\in G$, the map $f\mapsto u_g\circ T_g(f)\circ u_g^*$ is a normal isometry from $\End_{\mathcal{C}}(X)$ to itself. Hence, $\End_{\mathcal{C}^G}((X, u)) = \bigcap_{g\in G}\ker(u_gT_g(-)u_g^*-\id)$ is a von Neumann algebra. Therefore, $\mathcal{C}^G$ is a $\mathrm{W}^*$-category. The $\mathrm{W}^*$-category $\mathcal{C}^G$ inherits a tensor structure
\[
(X,u)\otimes (Y,v):=(X\otimes Y,u\otimes v)
\]
from that of $\mathcal{C}$, where $u\otimes v = \{(u_g\otimes v_g)\circ s_g^{-1}(X,Y)\}_{g\in G}$. The unit is the object $(\1, \{i^{-1}_g\}_{g\in G})\in\mathcal{C}^G$. The fact that the action of $G$ on $\mathcal{C}$ is by tensor automorphisms implies that the associators and unitors of $\mathcal{C}$ descend to data on $\mathcal{C}^G$, providing its tensor structure. The $G$-crossed braiding $\beta$ on $\mathcal{C}$ induces an honest braiding on $\mathcal{C}^G$ as follows. Let $(X,u), (Y,v)\in \mathcal{C}^G$. Then, we can write $X = \bigoplus\limits_{g\in G} X_g$ with $X_g\in \Ccal_g$, and similarly for $Y = \bigoplus\limits_{h\in G}Y_h$, so that $X\otimes Y = \bigoplus\limits_{(g,h)\in G^2}X_g\otimes Y_h$. Note that for every $g\in G$, the isomorphism $v_g: T_g(Y)\cong Y$ breaks as a direct sum $\bigoplus_{h\in G}v_g^h$ with $v_g^h: T_g(Y_h)\xrightarrow{\cong} Y_{ghg^{-1}}$. On the $(g,h)$-component of $\bigoplus\limits_{(g,h)\in G^2}X_g\otimes Y_h$, we define $\beta^G_{(X,u), (Y,v)}$~as
\[
X_g\otimes Y_h\xrightarrow{\beta_{X_g, Y_h}} T_g(Y_h)\otimes X_g\xrightarrow{v^h_g\otimes \id} Y_{ghg^{-1}}\otimes X_g\hookrightarrow Y\otimes X.
\]
This map is extended linearly to produce $\beta^G_{(X,u), (Y,v)}$. The hexagon axioms follow from the definition of a $G$-crossed braiding. Finally, the $G$-crossed balance $\theta$ on $\mathcal{C}$ provides an honest balance on $\mathcal{C}^G$. Let us write $u_g = \bigoplus\limits_{h\in G} u_g^h$ with $u_g^h: T_g(X_h)\xrightarrow{\cong } X_{ghg^{-1}}$ for all $g,h\in G$. Then, we define the balance $\theta^G$ on $(X,u)$ to be the direct sum of the maps
\[
X_g\xrightarrow{\theta_{X_g}} T_g(X_g)\xrightarrow{u_{g}^g} X_g\hookrightarrow X
\]
for all $g\in G$. Diagram \eqref{eq: Xtwist} implies that ${\theta}^G$ is a balance on the braided $\mathrm{W}^*$-tensor category $(\mathcal{C}^G,{\beta}^G)$ as follows. The balance condition on $(X,u)$ and $(Y, v)$ reads
\begin{equation}\label{eq: BalanceEquivariantization}
\theta^G_{(X,u)\otimes(Y,v)} = (\theta_{(X,u)}^G\otimes \theta^G_{(Y,v)})\circ \beta^G_{(Y,v), (X,u)}\circ \beta^G_{(X,u), (Y, v)}.
\end{equation}
Given $g,h\in G$, the equality \eqref{eq: BalanceEquivariantization} on the component $X_g\otimes Y_h$ of $X\otimes Y$ is equivalent to the commutativity of the outer diagram of

\[\hspace{-1.6cm}\begin{tikzcd}
	{X_g\otimes Y_h} & {T_{gh}(X_g\otimes Y_h)} & {T_{gh}(X_g)\otimes T_{gh}(Y_h)} \\
	{T_g(Y_h)\otimes X_g} & {T_{ghg^{-1}}(X_g)\otimes T_g(Y_h)} \\
	{Y_{ghg^{-1}}\otimes X_g} && {X_{ghgh^{-1}g^{-1}}\otimes Y_{ghg^{-1}}} \\
	{T_{ghg^{-1}}(X_g)\otimes Y_{ghg^{-1}}} \\
	& {T_{ghgh^{-1}g^{-1}}T_{ghg^{-1}}(X_g)\otimes T_{ghg^{-1}}T_g(Y_h)} \\
	{X_{ghgh^{-1}g^{-1}}\otimes Y_{ghg^{-1}}} && {T_{ghgh^{-1}g^{-1}}(X_{ghgh^{-1}g^{-1}})\otimes T_{ghg^{-1}}(Y_{ghg^{-1}})}.
	\arrow["{\theta_{X_g\otimes Y_h}}", from=1-1, to=1-2]
	\arrow["{\beta_{X_g, Y_h}}"', from=1-1, to=2-1]
	\arrow["\cong", from=1-2, to=1-3]
	\arrow["{u_{gh}\otimes v_{gh}}", from=1-3, to=3-3]
	\arrow["{\beta_{T_g(Y_h), X_g}}", from=2-1, to=2-2]
	\arrow["{v_g^h\otimes\id}"', from=2-1, to=3-1]
	\arrow["{\id\otimes v_{g}^h}"', from=2-2, to=4-1]
	\arrow["{\theta_{T_{ghg^{-1}}(X_g)}\otimes\theta_{T_g(Y_h)}}"{pos=0.2}, from=2-2, to=5-2]
	\arrow["{u^{g}_{ghg^{-1}}\otimes v^h_g}"{pos=0.4}, from=2-2, to=6-1]
	\arrow["{\beta_{Y_{ghg^{-1}}, X_g}}"', from=3-1, to=4-1]
	\arrow["{u^g_{ghg^{-1}}\otimes\id}"', from=4-1, to=6-1]
	\arrow["\cong"', from=5-2, to=1-3]
	\arrow["\begin{array}{c} T_{ghgh^{-1}g^{-1}}(u^g_{ghg^{-1}})\otimes\\ T_{ghg^{-1}}(v^h_g) \end{array}", shift right, from=5-2, to=6-3]
	\arrow["{\theta_{X_{ghgh^{-1}g^{-1}}}\otimes \theta_{Y_{ghg^{-1}}}}"', from=6-1, to=6-3]
	\arrow["{u^{ghgh^{-1}g^{-1}}_{ghgh^{-1}g^{-1}}\otimes v_{ghg^{-1}}^{ghg^{-1}}}"', from=6-3, to=3-3]
\end{tikzcd}\]
The top diagram commutes by definition of a $G$-crossed balance. The uppermost left triangle commutes by naturality of the $G$-crossed braiding, and the one immediately below commutes trivially. The bottom diagram commutes by naturality of the $G$-crossed balance. The right-most inner diagram commutes by hypothesis on the structure morphisms $u$ and $v$. Hence, the outer diagram commutes and, since $g,h\in G$ were arbitrary, $\theta^G$ is indeed a balance on the braided tensor category $\mathcal{C}^G$.

Given two $G$-crossed braided $\mathrm{W}^*$-tensor categories $\Ccal$ and $\Dcal$, it is straightforward to check that a $G$-crossed braided $\mathrm{W}^*$-functor from $\Ccal$ to $\Dcal$ defines a braided $\mathrm{W}^*$-tensor functor between the equivariantizations $\Ccal^G$ and $\Dcal^G$. If $\Ccal$ and $\Dcal$ are furthermore $G$-crossed balanced, and the functor is compatible with the crossed balance, then the induced functor $\Ccal^G\to \Dcal^G$ is compatible with the balances.

\subsection{The $G$-crossed balanced category of representations of a conformal net}

Let $G$ be a discrete group acting faithfully on a conformal net $\A$ by an injective group homomorphism $G\to \Aut(\A)$. Throughout, we identify an element $g\in G$ with the automorphism of $\A$ it induces. In this section, we recall from \cite{GcrossedbraidedRep} the construction of the $G$-crossed balanced $\mathrm{W}^*$-tensor structure of
\[
\Rep^G(\A):=\bigoplus\limits_{g\in G}\Rep^g(\A).
\]
Actually, we shall produce an $\Aut(\A)$-crossed balanced $\mathrm{W}^*$-tensor structure on $\Rep^{\Aut(\A)}(\A)$. The $G$-crossed braided $\mathrm{W}^*$-tensor structure on $\Rep^G(\A)$ can be obtained by pulling the latter back along $\Phi: G\to \Aut(\A)$.

\label{Sec: CatOfTwistedReps}
\subsubsection{Connes fusion of twisted representations}

Recall that we write $q:\R\to S^1$, $t\mapsto e^{2\pi i t}$ for the exponential map, and we denote by $\Jcal_\R$ the collection of connected open intervals in $\R$ whose image under $q$ is an interval of $S^1$. Given $\tilde{I}\in \Jcal_\R$, we write $\tilde{I}^{c+} := (\partial_+\tilde{I}, \partial_-\tilde{I} + 1)$ and $\tilde{I}^{c-}:= (\partial_+\tilde{I} - 1, \partial_-\tilde{I})$ for the \emph{positive} and the \emph{negative complements} of $\tilde{I}$ respectively, which are intervals in $\Jcal_\R$. Note that $q(\tilde{I}^{c\pm}) = I^{c}$. Fix an automorphism $\varphi\in \Aut(\A)$ and let $H^\varphi = (H^\varphi, \pi^H)\in\Rep^\varphi(\A)$ be a $\varphi$-twisted representation. We denote by $\Hom_{\A_\R(\tilde{I})}(H_0, H^\varphi)$ the space of bounded linear operators $T:H_0\to H^\varphi$ intertwining the actions of $\A_\R(\tilde{I})$. Since $\A_\R(\tilde{I})$ is a type $\mathrm{III}$ factor, there are unitary operators in $\Hom_{\A_\R(\tilde{I})}(H_0, H^\varphi)$.

We say that a vector $\xi\in H^\varphi$ is $(\tilde{I}, +)$\emph{-bounded} if there exists $T\in \Hom_{\A_\R(\tilde{I}^{c+})}(H_0, H^\varphi)$ such that $\xi = T\Omega$. We write $H_+^\varphi(\tilde{I})$ for the subspace of $H^\varphi$ of $(\tilde{I},+)$-bounded vectors. Given $\xi\in H_+^\varphi(\tilde{I})$, there exists a unique operator $T\in \Hom_{\A_\R(\tilde{I}^{c+})}(H_0, H^\varphi)$ such that $\xi = T\Omega$, as $\A(I^{c})\Omega$ is dense in $H_0$ by the Reeh-Schlieder Theorem. We denote this operator by $T = Z^+(\xi, \tilde{I})$. We define similarly the space of $(\tilde{I},-)$-bounded vectors $H^\varphi_-(\tilde{I})$, using the negative complement $\tilde{I}^{c-}$ of $\tilde{I}$, and $Z^-(\xi, \tilde{I}): H_0\to H^\varphi$ for every $\xi\in H^\varphi_-(\tilde{I})$. 

For an untwisted representation $K\in \Rep(\A)$, there is no distinction between $(\tilde{I},+)$- and $(\tilde{I}, -)$-bounded vectors of $K$, and these furthermore only depend on $I$. Also, given $\xi\in K_{\pm}(\tilde{I})$, we have that $Z^+(\xi, \tilde{I}) = Z^-(\xi,\tilde{I})$, and these also only depend on $I$. We may therefore write $K_{\pm}(\tilde{I})$ as $K(I)$. In particular, for the vacuum representation we have that $H_0(I) = \A(I)\Omega$, by Haag duality. This implies that $H_0(I)\subset H_0$ is dense for all $I\in\Jcal$. Since there exist unitary operators in $\Hom_{\A_\R(\tilde{I}^{c\pm})}(H_0, H^\varphi)$, we obtain that $H_\pm^\varphi(\tilde{I})\subset H^\varphi$ are dense inclusions for all $\tilde{I}\in\Jcal_\R$. In addition, if $\tilde{I_1}\subset \tilde{I}$ is an inclusion of intervals in $\Jcal_\R$, we have inclusions $H_\pm^\varphi(\tilde{I_1})\subset H_\pm^\varphi(\tilde{I})$, which are also dense. 

Let $\mu\in \Aut(\A)$ be another automorphism and $K^\mu = (K^\mu, \pi^K)\in \Rep^\mu(\A)$ be a $\mu$-twisted representation. Fix intervals $\tilde{I}, \tilde{J}\in\Jcal_\R $ satisfying $\tilde{J}\subset\tilde{I}^{c+}$.

\begin{definition}(\cite[Def. 3.2]{GcrossedbraidedRep})
    The \emph{Connes fusion of $H^\varphi$ and $K^\mu$ over $\tilde{I}$ and $\tilde{J}$}, denoted $H^\varphi_+(\tilde{I})\boxtimes K^\mu_-(\tilde{J})$, is the completion of $H^\varphi_+(\tilde{I})\otimes K^\mu_-(\tilde{J})$ with respect to the positive sesquilinear~form
    \begin{equation}\label{eq: SesquilinearConnesForm}
    \langle \xi\otimes\eta, \xi'\otimes\eta'\rangle :=\langle Z^-(\eta', \tilde{J})^*Z^-(\eta, \tilde{J})Z^+(\xi', \tilde{I})^*Z^+(\xi, \tilde{I})\Omega, \Omega\rangle
    \end{equation}
    for $\xi, \xi'\in H^\varphi_+(\tilde{I})$ and $\eta, \eta'\in K^\mu_-(\tilde{J})$.
\end{definition}

Note that $Z^+(\xi', \tilde{I})^*Z^+(\xi, \tilde{I})\in \Hom_{\A(I^c)}(H_0, H_0) \cong \A(I) = \A_\R(\tilde{I})$ and $Z^-(\eta', \tilde{J})^*Z^-(\eta, \tilde{J})\in \Hom_{\A(J^c)}(H_0, H_0) \cong \A(J) = \A_\R(\tilde{J})$, hence the inner product \eqref{eq: SesquilinearConnesForm} can be expressed in the following equivalent ways
\begin{align}\label{eq: DiffConnesFroms}
\langle \xi\otimes\eta|\xi'\otimes\eta'\rangle&=\langle Z^-(\eta', \tilde{J})^*Z^-(\eta, \tilde{J})Z^+(\xi', \tilde{I})^*Z^+(\xi, \tilde{I})\Omega, \Omega\rangle\\\nonumber&=\langle Z^+(\xi', \tilde{I})^*Z^+(\xi, \tilde{I})Z^-(\eta', \tilde{J})^*Z^-(\eta, \tilde{J})\Omega|\Omega\rangle\\\nonumber
 & = \langle \pi^K_{\tilde{I}}(Z^+(\xi', \tilde{I})^*Z^+(\xi, \tilde{I}))\eta|\eta'\rangle\\\nonumber &  =\langle \pi^H_{\tilde{J}}(Z^-(\eta', \tilde{J})^*Z^-(\eta, \tilde{J}))\xi|\xi'\rangle.
\end{align}

Given twisted representations $\hat{H}^\varphi\in \Rep^\varphi(\A)$ and $\hat{K}^\mu\in \Rep^\mu(\A)$, and morphisms $F: H^\varphi\to \hat{H}^\varphi$ and $G: K^\mu\to \hat{K}^\mu$, the map $F\otimes G: H^\varphi_+(\tilde{I})\otimes K^\mu_-(\tilde{J})\to \hat{H}^\varphi_+(\tilde{I})\otimes \hat{K}^\mu_-(\tilde{J})$ induces a bounded linear map $F\boxtimes G: H^\varphi_+(\tilde{I})\boxtimes K^\mu_-(\tilde{J})\to \hat{H}^\varphi_+(\tilde{I})\boxtimes \hat{K}^\mu_-(\tilde{J})$, which we call the \emph{Connes fusion of $F$ and $G$}.

Given $\tilde{I_1},\tilde{J_1}\in\Jcal_\R$ such that $\tilde{J_1}\subset\tilde{I_1}^{c+}$ and $\tilde{I_1}\subset\tilde{I}$, $\tilde{J_1}\subset \tilde{J}$, there is a canonical equivalence
\begin{equation}\label{eq: CanonicalEquivalenceInclusion}
H^\varphi_+(\tilde{I_1})\boxtimes K_-^\mu(\tilde{J_1})\xrightarrow{\cong} H_+^\varphi(\tilde{I})\boxtimes K_-^\mu(\tilde{J})
\end{equation}
induced by the inclusion $H_+^\varphi(\tilde{I_1})\otimes K_-^\mu(\tilde{J_1})\hookrightarrow H_+^\varphi(\tilde{I})\otimes K_-^\mu(\tilde{J})$.

Now, given $z, \zeta\in \R$ such that $\zeta\in (z,z+1)$, we define
\[
H_+^\varphi(z)\boxtimes K_-^\mu(\zeta):=\varinjlim\limits_{(z,\zeta)\in\tilde{I}\times \tilde{J}}H_+^\varphi(\tilde{I})\boxtimes K_-^\mu(\tilde{J}) = \Bigg(\bigsqcup\limits_{(z,\zeta)\in\tilde{I}\times \tilde{J}}H_+^\varphi(\tilde{I})\boxtimes K_-^\mu(\tilde{J})\Bigg)\Big/\cong,
\]
where the limit runs over pairs $\tilde{I}, \tilde{J}\in \Jcal_\R$ containing $z$ and $\zeta$ respectively and satisfying $\tilde{J}\subset\tilde{I}^{c+}$, and $\cong$ denotes the equivalence obtained by identifying $H^\varphi_+(\tilde{I_1})\boxtimes K^\mu_-(\tilde{J_1})$ with $H^\varphi_+(\tilde{I})\boxtimes K^\mu_-(\tilde{J})$ by the unitary \eqref{eq: CanonicalEquivalenceInclusion} whenever $\tilde{I_1}\subset \tilde{I}$ and $\tilde{J_1}\subset \tilde{J}$. Given intervals $\tilde{I}, \tilde{J}\in \Jcal_\R$ containing $z$ and $\zeta$ respectively and such that $\tilde{J}\subset\tilde{I}^{c+}$, the canonical map 
\[
H_+^\varphi(\tilde{I})\boxtimes K_-^\mu(\tilde{J})\to H_+^\varphi(z)\boxtimes K_-^\mu(\zeta)
\]
is a unitary equivalence. 

We next relate the Connes fusion over different pairs of intervals. Consider the topological space $(\R\times\R)\setminus (\R\times_{S^1}\R) = \{(z,\zeta)\in \R\times\R\ |\  qz\neq q\zeta\}$, which is a disconnected space with contractible connected components. We write $\ConfR$ for the connected component of $(\R\times\R)\setminus (\R\times_{S^1}\R)$ given by the points $(z,\zeta)$ with $\zeta\in (z,z+1).$ Then, $\ConfR$ is the universal cover of $\text{Conf}_2(S^1)$, the space of configurations of two points in $S^1$. Let $\gamma = (\alpha, \beta):[0,1]\to \ConfR$ be a path in $\ConfR$ from $\gamma(0) =: (z_0, \zeta_0)$ to $\gamma(1) =: (z_1, \zeta_1)$. Assume that $\gamma$ is small enough so that $\gamma([0,1])\subset \tilde{I}\times \tilde{J}$, for some $\tilde{I}, \tilde{J}\in \Jcal_\R$ satisfying $\tilde{J}\subset \tilde{I}^{c+}$. We define the unitary equivalence $\gamma^\bullet: H_+^\varphi(z_0)\boxtimes K_-^\mu(\zeta_0)\xrightarrow{\cong}H_+^\varphi(z_1)\boxtimes K_-^\mu(\zeta_1)$ as the composition
\[
H_+^\varphi(z_0)\boxtimes K_-^\mu(\zeta_0)\xrightarrow{\cong}H_+^\varphi(\tilde{I})\boxtimes K_-^\mu(\tilde{J})\xrightarrow{\cong}H_+^\varphi(z_1)\boxtimes K_-^\mu(\zeta_1).
\]
For a general path $\gamma$, we choose $0 = t_0<t_1<\ldots <t_k = 1$ such that each $\gamma_n:=\gamma|_{[t_n, t_{n+1}]}$ is small enough in the sense above, and define $\gamma^\bullet: H_+^\varphi(z_0)\boxtimes K_-^\mu(\zeta_0)\xrightarrow{\cong}H_+^\varphi(z_1)\boxtimes K_-^\mu(\zeta_1)$ as 
\[
\gamma^\bullet = \gamma_{k-1}^\bullet\ldots\gamma_1^\bullet\gamma_0^\bullet.
\]
Since finer partitions give the same result, the map $\gamma^\bullet$ is independent of the choice of partition. We call $\gamma^\bullet$ the \emph{path-continuation} induced by $\gamma$.

Let $\tilde{I_0}, \tilde{I_1}, \tilde{J_0}, \tilde{J_1}\in \Jcal_\R$ be intervals such that $\tilde{J_0}\subset\tilde{I_0}^{c+}$ and $\tilde{J_1}\subset\tilde{I_1}^{c+}$. Let $\gamma$ be a path in $\ConfR$ from $\gamma(0) =: (z_0, \zeta_0)\in \tilde{I_0}\times\tilde{J_0}$ to $\gamma(1) =: (z_1, \zeta_1)\in \tilde{I_1}\times \tilde{J_1}$. We define the path continuation $\gamma^\bullet : H_+^\varphi(\tilde{I_0})\boxtimes K_-^\mu(\tilde{J_0})\xrightarrow{\cong} H_+^\varphi(\tilde{I_1})\boxtimes K_-^\mu(\tilde{J_1})$ as the composition
\[
H_+^\varphi(\tilde{I_0})\boxtimes K_-^\mu(\tilde{J_0})\xrightarrow{\cong}H_+^\varphi(z_0)\boxtimes K_-^\mu(\zeta_0)\xrightarrow{\gamma^\bullet} H_+^\varphi(z_1)\boxtimes K_-^\mu(\zeta_1)\xrightarrow{\cong}H_+^\varphi(\tilde{I_1})\boxtimes K_-^\mu(\tilde{J_1}).
\]
Homotopic paths produce the same path continuation.

We next introduce Connes fusion over a single interval. By the equalities in Equation \eqref{eq: DiffConnesFroms}, the Connes fusion $H^\varphi_+(\tilde{I})\boxtimes K^\mu_-(\tilde{J})$ can also be obtained as a completion of the vector space $H^\varphi_+(\tilde{I})\otimes K^\mu$ with respect to
\[
\langle \xi\otimes\eta\,|\,\xi'\otimes \eta'\rangle := \langle\pi^K_{\tilde{I}}\big(Z^+(\xi', \tilde{I})^*Z^+(\xi, \tilde{I})\big)(\eta)\,|\, \eta'\rangle
\]
for $\xi, \xi'\in H_+^\varphi(\tilde{I})$ and $\eta, \eta'\in K^\mu$. We call this completion the \emph{Connes fusion of $H^\varphi$ and $K^\mu$ over $\tilde{I}$ on the left} and denote it $H^\varphi_+(\tilde{I})\boxtimes K^\mu$. Then, if $\tilde{J}\in \Jcal_\R$ is an interval such that $\tilde{J}\subset \tilde{I}^{c+}$, the inclusion $H_+^\varphi(\tilde{I})\otimes K_-^\mu(\tilde{J})\hookrightarrow H^\varphi_+(\tilde{I})\otimes K^\mu$ induces a canonical equivalence $H_+^\varphi(\tilde{I})\boxtimes K_-^\mu(\tilde{J})\xrightarrow{\cong} H_+^\varphi(\tilde{I})\boxtimes K^\mu$. For any $z\in \R$, we define $H^\varphi_+(z)\boxtimes K^\mu := \varinjlim\limits_{z\in\tilde{I}}\, H^\varphi_+(\tilde{I})\boxtimes K^\mu$. Given a path $\alpha$ in $\R$ from $z_0$ to $z_1$, we also obtain a path continuation $\alpha^\bullet: H^\varphi_+(z_0)\boxtimes K^\mu\xrightarrow{\cong} H^\varphi_+(z_1)\boxtimes K^\mu$ as follows. If $\alpha$ is small enough so that $\alpha([0,1])$ can be covered by an interval $\tilde{I}\in \Jcal_\R$, we define
\[
\alpha^\bullet: H^\varphi_+(z_0)\boxtimes K^\mu\xrightarrow{\cong} H^\varphi_+(\tilde{I})\boxtimes K^\mu\xrightarrow{\cong} H^\varphi_+(z_1)\boxtimes K^\mu.
\]
For a generic path $\alpha$, we pick a partition $0 = t_0<t_1<\ldots<t_k = 1$ of $[0,1]$ so that each $\alpha_n:=\alpha|_{[t_n, t_{n+1}]}$ is small enough in the sense above, and define $\alpha^\bullet = \alpha_{k-1}^\bullet\ldots\alpha_0^\bullet$. Given two intervals $\tilde{I_0},\tilde{I_1}\in \Jcal_{\R}$ and a path $\alpha$ in $\R$ from a point in $\tilde{I_0}$ to a point in $\tilde{I_1}$, we define
\[
\alpha^\bullet:H^\varphi_+(\tilde{I_0})\boxtimes K^\mu\xrightarrow{\cong } H_+^\varphi(\alpha(0))\boxtimes K^\mu\xrightarrow{\alpha^\bullet} H^\varphi_+(\alpha(1))\boxtimes K^\mu\xrightarrow{\cong} H^\varphi_+(\tilde{I_1})\boxtimes K^\mu.
\]
Homotopic paths induce the same unitary equivalence. Similar properties hold for  $H^\varphi\boxtimes K^\mu_-(\tilde{J})$, the \emph{Connes fusion of $H^\varphi$ and $K^\mu$ over $\tilde{J}$ on the right}, defined in the obvious analogous~way.

\subsubsection{Endowing the Connes fusion with a twisted action}

From now on, we fix intervals $\tilde{I},\tilde{J}\in \Jcal_\R$ such that $\tilde{J}\subset \tilde{I}^{c+}$, as well as automorphisms $\varphi,\mu\in\Aut(\A)$ and twisted representations $H^\varphi = (H^\varphi, \pi^H)\in \Rep^\varphi(\A)$ and $K^\mu = (K^\mu, \pi^K)\in \Rep^\mu(\A)$. We recall how to endow the Hilbert space $H^\varphi(\tilde{I})\boxtimes K^\mu(\tilde{J})$ with a $(\varphi\circ\mu)$-twisted action of $\A$. We will do this by endowing $H^\varphi(\tilde{I})\boxtimes K^\mu(\tilde{J})$ with compatible actions $\pi^{H\boxtimes K}_{\tilde{L}}$ of the $*$-algebras $\A_\R(\tilde{L})$ such that $\pi^{H\boxtimes K}_{\tilde{L}} \circ (\varphi\circ\mu) = \pi^{H\boxtimes K}_{\tilde{L}+1}$ for all $\tilde{L}\in \Jcal_\R$. Note that we have natural actions of $\A_\R(\tilde{I} )$ and $\A_\R(\tilde{J})$ on $H^\varphi(\tilde{I})\boxtimes K^\mu(\tilde{J})$. Indeed, given $x\in \A(I) = \A_\R(\tilde{I})$ and $y\in \A(J) = \A_\R(\tilde{J})$, we define
\[
\pi^{H \boxtimes K}_{\tilde{I}}(x)(\xi\otimes \eta) = \Big(\pi_{\tilde{I}}^H(x)(\xi)\Big)\otimes \eta\hspace{3cm}
\pi^{H\boxtimes K}_{\tilde{J} }(y)(\xi\otimes\eta) = \xi\otimes \Big(\pi_{\tilde{J}}^K(y)(\eta)\Big)
\]
for $\xi\otimes\eta \in H_+^\varphi(\tilde{I})\otimes K_-^\mu(\tilde{J})\subset H_+^\varphi(\tilde{I})\boxtimes K_-^\mu(\tilde{J}) $. This also provides canonical actions of all $\A_\R(\tilde{L})$ for $\tilde{L}\subset \tilde{I}$ or $\tilde{L}\subset\tilde{J}$.

\begin{theorem}(\cite[Thm. 3.7]{GcrossedbraidedRep})\label{thm: RepresentationOnFusion}
    Let $\varphi,\mu\in \Aut(\A)$ be automorphisms and $H^\varphi$ and $K^\mu$ be $\varphi$- and $\mu$-twisted $\A$-representations respectively. Fix $\tilde{I},\tilde{J}\in\Jcal_\R$ with $\tilde{J}\subset\tilde{I}^{c+}$. Then,
    \begin{enumerate}
        \item there is a unique compatible collection $\pi^l$ of $*$-actions of the algebras $\A_\R(\tilde{L})$ on $H^\varphi_+(\tilde{I})\boxtimes K^\mu_-(\tilde{J})$ such that the following condition holds: for every $\tilde{L},\tilde{M}\in \Jcal_\R$ with $\tilde{M}\subset \tilde{L}^{c+}$, any path $\gamma$ in $\ConfR$ from $\tilde{I}\times\tilde{J}$ to $\tilde{L}\times\tilde{M}$, it holds that
        \begin{equation}\label{eq: leftRep}
        \pi_{\tilde{L}}^l(x) = (\gamma^\bullet)^{-1}\circ \pi^{H\boxtimes K}_{\tilde{L}}(x)\circ\gamma^\bullet
        \end{equation}
        for all $x\in \A_\R(\tilde{L})$;
        \item  there is a unique compatible collection $\pi^r$ of $*$-actions of the algebras $\A_\R(\tilde{L})$ on $H^\varphi_+(\tilde{I})\boxtimes K^\mu_-(\tilde{J})$ such that the following condition holds: for every $\tilde{L},\tilde{M}\in \Jcal_\R$ with $\tilde{L}\subset \tilde{M}^{c+}$, any path $\vartheta$ in $\ConfR$ from $\tilde{I}\times\tilde{J}$ to $\tilde{M}\times\tilde{L}$, it holds that
        \begin{equation}\label{eq: rightRep}
        \pi^r_{\tilde{L}}(x) = (\vartheta^\bullet)^{-1}\circ  \pi^{H\boxtimes K}_{\tilde{L}}(x)\circ \vartheta^\bullet
        \end{equation}
        for all $x\in \A_\R(\tilde{L})$;
        \item the actions $\pi^l$ and $\pi^r$ agree, and hence there is a natural compatible collection $\pi^{H\boxtimes K}$ of actions of the algebras $\A_\R(\tilde{L})$ on $H^\varphi_+(\tilde{I})\boxtimes K^\mu_-(\tilde{J})$;
    \item the unitary maps induced by inclusions of intervals, restrictions of intervals, and path continuations between Connes fusions intertwine the actions $\pi^{H\boxtimes K}$;
    \item the actions $\pi^{H\boxtimes K}$ satisfy that, for all $\tilde{L}\in \Jcal_\R$ and $x\in \A(L)$, 
    \[
    \pi^{H\boxtimes K}_{\tilde{L}}(\varphi\mu x) = \pi^{H\boxtimes K}_{\tilde{L}+1}(x),
    \]
    hence providing a $\varphi\mu$-twisted representation $\pi^{H\boxtimes K}$ of $\A$ on $H^\varphi_+(\tilde{I})\boxtimes K_-^\mu(\tilde{J})$.
    \end{enumerate}
\end{theorem}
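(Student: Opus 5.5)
We need to prove Theorem \ref{thm: RepresentationOnFusion}, which is cited from prior work. The plan is to follow the strategy of Gui's categorical extensions, adapted to twisted representations.

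\textbf{Step 1: definition of $\pi^l$ and independence of the path.} For $\tilde{L}\in\Jcal_\R$, choose any $\tilde{M}\subset\tilde{L}^{c+}$ and any path $\gamma$ in $\ConfR$ from $\tilde{I}\times\tilde{J}$ to $\tilde{L}\times\tilde{M}$, and define $\pi^l_{\tilde{L}}$ by \eqref{eq: leftRep}. Since $\ConfR$ is contractible, all such paths are homotopic, so $\gamma^\bullet$ depends only on the endpoints. We must also check independence of $\tilde{M}$. Given $\tilde{M},\tilde{M}'\subset\tilde{L}^{c+}$, connect the two configurations by a path keeping the first coordinate inside $\tilde{L}$. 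Decompose this path into small steps, each covered by some $\tilde{L}\times\tilde{M}''$ with $\tilde{M}''\subset\tilde{L}^{c+}$. Each step is induced by inclusions $H_+(\tilde{L}_1)\boxtimes K_-(\tilde{M}_1)\to H_+(\tilde{L})\boxtimes K_-(\tilde{M}'')$, and these commute with the left action of $\A_\R(\tilde{L})$ by construction. Hence the definition is consistent, and uniqueness is immediate. Compatibility under inclusions $\tilde{L}_1\subset\tilde{L}$ follows the same way, since $H_+(\tilde{L}_1)\subset H_+(\tilde{L})$ and $\pi^H_{\tilde{L}_1}=\pi^H_{\tilde{L}}|$ on the $\Jcal_\R$-net. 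Part (2) is symmetric, using $K_-(\tilde{L})$ and paths ending in $\tilde{M}\times\tilde{L}$.

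\textbf{Step 2: $\pi^l=\pi^r$.} This is the main obstacle. Given $\tilde{L}$, pick $\tilde{M},\tilde{N}$ with $\tilde{M}\subset\tilde{L}^{c+}$ (left configuration) and $\tilde{L}\subset\tilde{N}^{c+}$ (right configuration). By the locality of the net on $\R$ and Haag duality, we reduce to showing the following: the left action of $x\in\A_\R(\tilde{L})$ on $H_+(\tilde{L})\boxtimes K$ and the right action on $H\boxtimes K_-(\tilde{L})$ agree under the path continuation, for the path in which the first point moves from $\tilde{L}$ to $\tilde{N}$ while the second point moves from $\tilde{M}$ into $\tilde{L}$.

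I would first try the following approach. Both actions commute with the actions of $\A_\R(\tilde{L}^{c\pm})$ appearing in the intermediate fusions. Using single-interval fusion $H_+(\tilde{N})\boxtimes K$, with $\tilde{N}\subset\tilde{L}^{c-}$, both operators can be written as $\xi\otimes\eta\mapsto\xi\otimes\pi^K_{\tilde{L}}(x)\eta$ for $\xi\in H_+(\tilde{N})$. For the left action, this uses that $Z^+(\xi,\tilde{N})$ intertwines $\A_\R(\tilde{N}^{c+})\supset\A_\R(\tilde{L})$. The key check is that the lift conventions, namely which copy $\tilde{L}$ versus $\tilde{L}\pm1$ is used, match. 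This is exactly where the choice of path in $\ConfR$ and \eqref{eq: TwistedRepOnR} enter, and a careful bookkeeping of $\epsilon$ is needed here.

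\textbf{Steps 3 and 4: naturality and the twisting relation.} Part (4) follows formally: inclusions, restrictions and path continuations compose with the $\gamma^\bullet$ in the definition, so they intertwine by construction.

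For (5), compare $\pi_{\tilde{L}+1}$ with $\pi_{\tilde{L}}$. Compute $\pi_{\tilde{L}+1}$ on the left, using a configuration $(\tilde{L}+1)\times\tilde{M}'$; via the right action, move the configuration so that $\tilde{L}+1$ sits in the second slot. Then apply \eqref{eq: TwistedRepOnR} for $K^\mu$, which gives $\pi^K_{\tilde{L}+1}(x)=\pi^K_{\tilde{L}}(\mu x)$. After that, use Step 2 to transfer $\tilde{L}$ back to the first slot, and apply \eqref{eq: TwistedRepOnR} for $H^\varphi$, which yields $\pi^H_{\tilde{L}}(\varphi\mu x)$ read on the copy $\tilde{L}$. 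The result is $\pi_{\tilde{L}+1}(x)=\pi_{\tilde{L}}(\varphi\mu x)$. By the earlier discussion, a family of actions on $\Jcal_\R$ satisfying this relation is equivalent to a $\varphi\mu$-twisted representation.
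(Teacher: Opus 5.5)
The paper gives no proof of this theorem (it is quoted from \cite[Thm.~3.7]{GcrossedbraidedRep}), so your argument has to stand on its own. Steps 1 and 4, and the mechanism of Step 2, are fine. The argument for part (5), however, has a genuine gap.

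\textbf{The gap in part (5).} After moving $\pi_{\tilde L+1}(x)$ to a configuration $(\tilde N,\tilde L+1)$, where it acts as $1\otimes\pi^K_{\tilde L+1}(x)=1\otimes\pi^K_{\tilde L}(\mu x)$, you ``use Step 2 to transfer $\tilde L$ back to the first slot''. Step 2 only compares the left and right actions of one fixed lift, i.e.\ configurations $(\tilde L,\tilde M)$ and $(\tilde N',\tilde L)$. The operator you hold lives on a configuration containing $\tilde L+1$, and it is $\pi^r_{\tilde L+1}(x)$, not $\pi^r_{\tilde L}(\mu x)$.

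Rewriting an operator on the factor $K^\mu$ or $H^\varphi$ via \eqref{eq: TwistedRepOnR} never changes which lift of $\A_\R$ acts on the fusion. All that Step 2 and \eqref{eq: TwistedRepOnR} give you are rewritings of the single operator $\pi_{\tilde L+1}(x)$. For example, $1\otimes\pi^K_{\tilde L}(\mu x)$ on $(\tilde N,\tilde L+1)$ corresponds to $\pi^H_{\tilde L}(\varphi x)\otimes 1$ on $(\tilde L+1,\tilde M')$. None of these involves a configuration containing the lift $\tilde L$, which is what defines $\pi_{\tilde L}$.

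Read literally, your chain asserts $\pi_{\tilde L+1}(x)=\pi_{\tilde L}(\mu x)$. This is false already for $K^\mu=H_0$ and $\varphi\neq\id$, and the factor $\varphi$ is then inserted by hand. Passing from $\tilde L+1$ to $\tilde L$ requires a path continuation carrying a point past $\tilde L$, and that continuation is not the identity on elementary tensors. Already for $K^\mu=H_0$, the continuation $H^\varphi_+(\tilde L)\boxtimes H_0\to H^\varphi_+(\tilde L+1)\boxtimes H_0$ along a path from $\tilde L$ to $\tilde L+1$ is $\xi\otimes\eta\mapsto\xi\otimes V_\varphi^{-1}\eta$, because $Z^+(\xi,\tilde L+1)=Z^+(\xi,\tilde L)V_\varphi$.

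\textbf{The missing computation.} What you need is an explicit analysis of such a continuation. Take $\tilde N\subset\tilde L^{c+}$ and proceed as follows:
\begin{itemize}
\item Move from $(\tilde L,\tilde N)$ to $(\tilde L,\tilde P)$, with $\tilde P\subset\tilde L^{c+}$ just below $\partial_-\tilde L+1$.
\item Pass through a rectangle $\tilde O\times\tilde P$ with $\tilde L\cup\tilde N\subset\tilde O$.
\item End at $(\tilde N,\tilde Q)$, with $\tilde P\cup(\tilde L+1)\subset\tilde Q\subset\tilde N^{c+}$.
\end{itemize}
For $\xi\in H^\varphi_+(\tilde N)$, the operator $Z^+(\xi,\tilde N)$ intertwines $\A_\R(\tilde N^{c+})\supset\A_\R(\tilde L+1)$, but not $\A_\R(\tilde L)$. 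Hence $\pi^H_{\tilde L}(\varphi w)\xi=\pi^H_{\tilde L+1}(w)\xi=Z^+(\xi,\tilde N)\pi_0(w)\Omega$ for $w\in\A(L)$.

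Pair this against $\xi'\otimes\eta'$ with $\xi'\in H^\varphi_+(\tilde N)$ and $\eta'\in K^\mu_-(\tilde P)$, and use that $\pi_0(w)$ commutes with $Z^+(\xi',\tilde N)^*Z^+(\xi,\tilde N)\in\A(N)$. This gives $Z^+(\xi,\tilde N)\pi_0(w)\Omega\otimes\eta=\xi\otimes Z^-(\eta,\tilde P)\pi_0(w)\Omega$. Since $Z^-(\eta,\tilde P)$ intertwines $\A_\R(\tilde P^{c-})\supset\A_\R(\tilde L)$, the right-hand side equals $\xi\otimes\pi^K_{\tilde L}(w)\eta=\xi\otimes\pi^K_{\tilde L+1}(\mu^{-1}w)\eta$. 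Taking $w=\mu x$ yields $\pi_{\tilde L}(\varphi\mu x)=\pi_{\tilde L+1}(x)$.

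Both twist relations enter precisely because, after the move, $Z^+(\xi,\tilde N)$ sees the lift $\tilde L+1$, while $Z^-(\eta,\tilde P)$ sees $\tilde L$. By contrast, Step 2 needs no $\epsilon$-bookkeeping at all. Choose $\tilde N$ and $\tilde M$ adjacent to $\tilde L$ inside a window of length $<1$. Then the identity $\pi^H_{\tilde L}(x)\xi\otimes\eta=\xi\otimes\pi^K_{\tilde L}(x)\eta$ in $H^\varphi_+(\tilde O)\boxtimes K^\mu$ follows from $\pi^K_{\tilde O}(ax)=\pi^K_{\tilde N}(a)\pi^K_{\tilde L}(x)$, where $a=Z^+(\xi',\tilde N)^*Z^+(\xi,\tilde N)$. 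So you placed the difficulty in part (3), when it actually sits in part (5).
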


The Connes fusion of two morphisms of twisted $\A$-representations is again a morphism of twisted $\A$-representations.

We can define similarly twisted actions of $\A$ on fusions over single intervals. Fix $\tilde{I}\in \Jcal_\R$. If $\tilde{L}\in \Jcal_\R$ is a subinterval $\tilde{L}\subset \tilde{I}$, we define the action of $x\in\A_\R(\tilde{L})$ on $H_+^\varphi(\tilde{I})\boxtimes K^\mu$ by $\pi_{\tilde{L}}^{H\boxtimes K}(\xi\otimes \eta) = \pi_{\tilde{L}}^H(x)(\xi)\otimes \eta$ for all $\xi\in H_+^\varphi(\tilde{I})$ and $\eta\in K^\mu$. For a generic interval $\tilde{L}\in\Jcal_\R$, we choose a path $\alpha$ in $\R$ from $\tilde{I}$ to $\tilde{L}$, and define $\pi_{\tilde{L}}^{ H\boxtimes K}(x) = (\alpha^\bullet)^{-1}\circ \pi^{H\boxtimes K}_{\tilde{L}}(x)\circ \alpha^\bullet$. This is independent of the path chosen. Alternatively, one can define the action of $\A_\R$ on $H^\varphi_+(\tilde{I})\boxtimes K^\mu$ via the canonical equivalence $H^\varphi_+(\tilde{I})\boxtimes K^\mu_-(\tilde{J})\cong H^\varphi_+(\tilde{I})\boxtimes K^\mu$ for some $\tilde{J}\in \Jcal_\R$ with $\tilde{J}\subset\tilde{I}^{c+}$ and the action of $\A_\R$ on $H^\varphi_+(\tilde{I})\boxtimes K^\mu_-(\tilde{J})$ from Theorem \ref{thm: RepresentationOnFusion}. These two actions agree.

\subsubsection{The crossed balanced $\mathrm{W}^*$-structure}

In this section, we recall how to equip the category $$\Rep^{\Aut(\A)}(\A):= \bigoplus\limits_{\varphi\in \Aut(\A)}\Rep^\varphi(\A)$$ with the structure of an $\Aut(\A)$-crossed balanced $\mathrm{W}^*$-tensor category.

Let $\widetilde{S_-^1}:=(-\nicefrac{1}{2}, 0)$ and $\widetilde{S_+^1}:=(0,\nicefrac{1}{2})$. Given automorphisms $\varphi,\mu\in \Aut(A)$, for any twisted representations $H^\varphi\in \Rep^\varphi(\A)$ and $K^\mu\in \Rep^\mu(\A)$, we define their tensor product as $$H^\varphi\boxtimes K^\mu:=H^\varphi_+(\widetilde{S^1_-})\boxtimes K^\mu_-(\widetilde{S^1_+}).$$ We also identify $H^\varphi\boxtimes K^\mu$ with $H_+^\varphi(\widetilde{S^1_-})\boxtimes K^\mu$ and $H^\varphi\boxtimes K_-^\mu(\widetilde{S^1_+})$ using the canonical equivalences from the previous section. We extend this tensor product linearly to obtain a functor
\[
\boxtimes: \Rep^{\Aut(\A)}(\A)\times \Rep^{\Aut(\A)}(\A)\to \Rep^{\Aut(\A)}(\A).
\]
Let us denote $\widetilde{S^1_-}$ and $\widetilde{S^1_+}$ by $-$ and $+$ in the Connes fusions. Given three automorphisms $\varphi,\mu,\nu\in \Aut(\A)$ and representations $H^\varphi,K^\mu,R^\nu$ of $\A$ twisted by $\varphi,\mu$ and $\nu$ respectively, the map $(H^\varphi\otimes K^\mu)\otimes R^\nu\xrightarrow{\cong}H^\varphi\otimes (K^\mu\otimes R^\nu)$ given by $(\xi\otimes\eta)\otimes \chi\mapsto \xi\otimes(\eta\otimes\chi)$ induces a unitary equivalence of twisted $\A$-representations $(H^\varphi\boxtimes K^\mu)\boxtimes R^\nu\xrightarrow{\cong}H^\varphi\boxtimes (K^\mu\boxtimes R^\nu)$ which produces an associator for $\Rep^{\Aut(\A)}(\A)$. The unit object is $H_0$, for the unitors see \cite[Sec. 3.6]{GcrossedbraidedRep}.

The action of $\Aut(\A)$ on $\Rep^{\Aut(\A)}(\A)$ is given as follows. Given $\varphi,\nu\in \Aut(\A)$ and a twisted representation $H^\varphi\in\Rep^\varphi(\A)$, we define $T_\nu(H^\varphi)$ to be the $(\nu\varphi\nu^{-1})$-twisted representation of $\A$ on the underlying Hilbert space $H$ of $H^\varphi$ given by
\[
\pi^{\nu\ast H}_{\tilde{I}}(x)(\xi) = \pi_{\tilde{I}}^H(\nu^{-1}x)(\xi)
\]
for all $\tilde{I}\in\Jcal_\R$, $x\in\A_\R(\tilde{I})$ and $\xi\in H$. On morphisms, the action of $\nu$ is defined to be trivial. Let us write $\Gamma_\nu: H^\varphi\to T_\nu(H^\varphi)$ for the identity map of Hilbert spaces, which is not a morphism of twisted $\A$-representations. It is easy to see that $\Gamma_\nu$ induces equivalences $H^\varphi_-(\tilde{I}) \cong T_\nu(H^\varphi)_-(\tilde{I})$ and $H^\varphi_+(\tilde{I}) \cong T_\nu(H^\varphi)_+(\tilde{I})$. In addition, given an automorphism $\mu\in \Aut(\A)$, a twisted representation $K^\mu\in\Rep^\mu(\A)$ and an interval $\tilde{J}\in\Jcal_\R$ with $\tilde{J}\subset\tilde{I}^{c+}$, the map $H^\varphi_+(\tilde{I})\otimes K^\mu_-(\tilde{J})\to T_\nu(H^\varphi)_+(\tilde{I})\otimes T_\nu(K^\mu)_-(\tilde{J})$ given by $\xi\otimes\eta\to \Gamma_\nu\xi\otimes\Gamma_\nu\eta$ induces a unitary equivalence of twisted $\A$-representations 
\begin{equation}\label{eq: MonoidalT}
T_\nu(H^\varphi_+(\tilde{I})\boxtimes K^\mu_-(\tilde{J}))\cong T_\nu(H^\varphi)_+(\tilde{I})\boxtimes T_\nu(K^\mu)_-(\tilde{J}).
\end{equation}

The functors $T_\nu$ for $\nu\in\Aut(\A)$ provide an action $$T: \underline{\Aut(\A)}\to \underline{\Aut}_\otimes(\Rep^{\Aut(\A)}(\A)).$$ Given $\nu\in \Aut(\A)$, the monoidal structure of $T_\nu$ is given by
\[
T_\nu(H^\varphi)\boxtimes T_\nu(K^\mu) =  T_\nu\big(H^\varphi)_+(-)\boxtimes T_\nu\big(K^\mu\big)_-(+)\cong     T_\nu\big(H^\varphi_+(-)\boxtimes K^\mu_-(+)\big) =   T_\nu(H^\varphi\boxtimes K^\mu),
\]
using the unitary equivalence from Equation \eqref{eq: MonoidalT}. The monoidal structure of the functor $T$ is given, on $H^\varphi\in\Rep^\varphi(\A)$, by the identity map 
\(
T_\mu\circ T_\nu(H^\varphi) = T_{\mu\nu}(H^\varphi).
\)

We define next the $\Aut(\A)$-crossed braiding on $\Rep^{\Aut(\A)}(\A)$. Let $\varrho:[0,1]\to \R$ be the path $\varrho(t) = -\nicefrac{1}{4} + \nicefrac{t}{2}$, which goes from $\widetilde{S_-^1}$ to $\widetilde{S_+^1}$. Given twisted representations $H^\varphi\in\Rep^\varphi(\A)$ and $K^\mu\in \Rep^\mu(\A)$, we define the crossed-braid operator $\B_{H,K}: H^\varphi\boxtimes K^\mu\xrightarrow{\cong} T_\varphi(K^\mu)\boxtimes H^\varphi$~by
\[
\B_{H,
K}: H^\varphi\boxtimes K^\mu = H^\varphi_+(-)\boxtimes K^\mu\xrightarrow{\varrho^\bullet} H^\varphi_+(+)\boxtimes K^\mu\cong T_\varphi(K^\mu)\boxtimes H^\varphi_-(+) = T_\varphi(K^\mu)\boxtimes H^\varphi,
\]
where the equivalence in the middle is induced by the map $H^\varphi_+(+)\otimes K^\mu\cong T_\varphi(K^\mu)\otimes H^\varphi_-(+)$ given by $\xi\otimes\eta\mapsto \Gamma_\varphi\eta\otimes \xi$, see \cite[Prop. 3.13]{GcrossedbraidedRep}. 

Let us finally describe the $\Aut(\A)$-crossed balance on $\Rep^{\Aut(\A)}(\A)$. By \cite[Sec. 3.5]{GcrossedbraidedRep}, any twisted representation $H^\varphi\in\Rep^\varphi(\A)$ has an honest strongly continuous action of the universal cover $\widetilde{\Mob}$ of the Möbius group $\Mob$, constructed from the projective unitary action of $\Diff^+(S^1)$ on $H_0$. The group $\widetilde{\Mob}$ sits inside the universal cover $\widetilde{\Diff^+}(S^1)$ of $\Diff^+(S^1)$, which is an infinite dimensional Lie group whose Lie algebra is the Lie algebra $\text{Vec}(S^1)$ of vector fields on $S^1$ with the negative of the Lie bracket of vector fields. Let $\text{Vec}_\mathbb{C}(S^1)$ denote the complexification of $\text{Vec}(S^1)$ and write $\mathscr{W}\subset \text{Vec}_\mathbb{C}(S^1)$ for the Witt algebra, which is generated by the complex vector fields $L_n(e^{i\theta}):=-e^{in\theta}\frac{d}{d\theta}$ for all $n\in \Z$. The group $\widetilde{\Mob}$ is generated by the exponential of vectors of the form $i(\overline{a_1}L_{-1}+a_0L_0+a_1L_1)\in\text{Vec}(S^1)$ for $a_0\in\R$ and $a_1\in \C$. We write $\exp(iX)\in \widetilde{\Mob}$ for the exponential of $X = \overline{a_1}L_{-1}+a_0L_0+a_1L_1\in \text{Vec}_{\mathbb{C}}(S^1)$, and denote the action of $\exp(iX)$ on $H^\varphi$ by $e^{iX}$. Then, the balance on $H^\varphi$ is defined to be
\[
\theta_H:=e^{-2\pi i L_0}: H^\varphi\to T_\varphi(H^\varphi).
\]
The fact that $\theta_H$ is a well-defined unitary isomorphism of twisted $\A$-representations is the content of \cite[Prop. 3.18]{GcrossedbraidedRep}.

\begin{theorem}(\cite[Thm. 3.30]{GcrossedbraidedRep})\label{Thm: RepAutAIsCrossedBalanced}
    Let $\A$ be a conformal net. The $\mathrm{W}^*$-category $\Rep^{\Aut(\A)}(\A)$ of twisted representations of $\A$, equipped with the Connes fusion of twisted representations, the action $T$, and the families $\B$ and $\theta$ becomes an $\Aut(\A)$-crossed balanced $\mathrm{W}^*$-tensor category.
\end{theorem}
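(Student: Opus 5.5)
The statement is a recollection of \cite[Thm. 3.30]{GcrossedbraidedRep}. My plan is to assemble the structure already described in this section and check the axioms of an $\Aut(\A)$-crossed balanced $\mathrm{W}^*$-tensor category from Section \ref{Sec: G-X-balanced} one layer at a time.

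\textbf{Tensor structure.} By Theorem \ref{thm: RepresentationOnFusion}, $H^\varphi\boxtimes K^\mu$ is a $\varphi\mu$-twisted representation, and Connes fusion of morphisms gives morphisms. Hence $\boxtimes$ is a bifunctor covering multiplication in $\Aut(\A)$. The associator $(\xi\otimes\eta)\otimes\chi\mapsto\xi\otimes(\eta\otimes\chi)$ is isometric for the iterated form \eqref{eq: SesquilinearConnesForm}; this follows by rewriting both sides via \eqref{eq: DiffConnesFroms} as a vacuum expectation of a product of elements of $\A(I)$ for three disjoint intervals. Intertwining of the twisted actions follows from item (4) of Theorem \ref{thm: RepresentationOnFusion}, since the actions are characterised through path continuations that commute with the associator. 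The pentagon and triangle axioms hold because all maps are induced by the identity on elementary tensors. Normality of $\boxtimes$ on morphisms gives the $\mathrm{W}^*$ property.

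\textbf{Action and crossed braiding.} $T_\nu$ is the identity on underlying Hilbert spaces, so functoriality and the identity $T_\mu T_\nu=T_{\mu\nu}$ are immediate. The monoidal structure \eqref{eq: MonoidalT} is compatible with the associator, again because everything is the identity on elementary tensors. For $\B_{H,K}$, the middle map $\xi\otimes\eta\mapsto\Gamma_\varphi\eta\otimes\xi$ must be shown to be unitary. To do this, I would identify $Z^+(\xi,\widetilde{S^1_+})$ for $H^\varphi$ with $Z^-(\xi,\widetilde{S^1_+})$ twisted by $\varphi$ on the complement, using \eqref{eq: TwistedRepOnR}. This is the source of the $T_\varphi$. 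The map intertwines the actions by items (1)--(3) of Theorem \ref{thm: RepresentationOnFusion}, with left and right actions being exchanged. Naturality is clear. The two hexagons reduce to a statement about path continuations. Braiding past $K\boxtimes R$ equals braiding past $K$ and then past $R$, because the concatenated paths in $\ConfR$ are homotopic, and homotopic paths give equal continuations.

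\textbf{Crossed balance.} By \cite[Prop. 3.18]{GcrossedbraidedRep}, $\theta_H=e^{-2\pi iL_0}$ is a unitary morphism $H^\varphi\to T_\varphi(H^\varphi)$. It is natural because morphisms of twisted representations commute with the $\widetilde{\Mob}$-action, which is built from the net. The identity $T_h(\theta_X)=\theta_{T_h X}$ holds because $V_h$ commutes with $U$ on $H_0$ (here $V_h\Omega=\Omega$ together with uniqueness of the implementation), so the $\widetilde{\Mob}$-actions on $H$ and $T_h(H)$ coincide.

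\textbf{Main obstacle.} The hard step is diagram \eqref{eq: Xtwist}. My approach is to show that the $\widetilde{\Mob}$-action on $H\boxtimes K$ is implemented as follows: a rotation $e^{itL_0}$ acts by $e^{itL_0}\otimes e^{itL_0}$ composed with the path continuation along the rotated configuration $(z+t/2\pi,\zeta+t/2\pi)$. Rotating by a full turn, $\theta_{H\boxtimes K}$ then equals $\theta_H\boxtimes\theta_K$ composed with the continuation along the loop in $\mathrm{Conf}_2(S^1)$ that moves both points once around the circle. That loop is homotopic in $\ConfR$ to the path realising $\B_{T_g(Y),X}\circ\B_{X,Y}$, namely the double exchange. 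Tracking the $\Gamma$ identifications produces the $T$-factors in \eqref{eq: Xtwist}. The delicate part is justifying the covariance formula on fusions. I would first prove it for small $t$ using the locality of $U$ (item (2) of Definition \ref{def: ConformalNet}) by splitting rotations into diffeomorphisms localised near each point, and then extend to all $t$ by the group law.
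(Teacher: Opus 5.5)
The statement is quoted from \cite{GcrossedbraidedRep} and is not proved in this paper. The paper does record, however, that the hexagons there are proved using the operators $L$ and $R$. Measured against that, your outline has a genuine gap in the hexagon step, not only at diagram \eqref{eq: Xtwist}.

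Your justification, that the concatenated paths in $\ConfR$ are homotopic, carries no information. The space $\ConfR=\{(z,\zeta)\,|\,z<\zeta<z+1\}$ is a convex strip, so any two paths with the same endpoints are homotopic. More importantly, the path continuations available to you are defined only for two-factor fusions.

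The two sides of the hexagon are built differently:
\begin{itemize}
\item In $\B_{H,K\boxtimes R}$ you continue $H$ from $\widetilde{S^1_-}$ to $\widetilde{S^1_+}$ inside $H_+(\cdot)\boxtimes(K\boxtimes R)$. Here $K\boxtimes R$ is a single object whose internal localization is invisible. You then apply the swap $\xi\otimes\zeta\mapsto\Gamma_\varphi\zeta\otimes\xi$ with $\zeta\in K\boxtimes R$.
\item On the other side you continue $H$ inside $H\boxtimes K$, fuse the result with $\id_R$, reassociate, and then continue $H$ inside $H\boxtimes R$.
\end{itemize}
Comparing these requires knowing how bounded vectors of the fused object $K\boxtimes R$, and their operators $Z^{\pm}$, are built from those of $K$ and $R$. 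That is a genuinely three-object statement, and no homotopy argument in a two-point configuration space can supply it.

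This is exactly why the cited work introduces the operators $L(\xi,\tilde I)$ and $R(\xi,\tilde I)$, following \cite{Gui21}, as recalled before Definition \ref{def: CategoricalExtension}. The fusion rule \eqref{eq: JoinLs}, $L(\xi,\tilde I)L(\eta,\tilde J)=L(L(\xi,\tilde I)\eta,\tilde O)$, is the missing three-object compatibility. It matches a total set of vectors of $H\boxtimes(K\boxtimes R)$ with vectors of $(H\boxtimes K)\boxtimes R$ under the associator. Combine it with:
\begin{itemize}
\item the braiding formulas of Remark \ref{rk: FormulasLB}, e.g.\ $\B_{H,K}L(\xi,\tilde I)\eta=L(\Gamma_\varphi\eta,\tilde J)\xi$ for $\tilde I\subset\tilde J^{c+}$;
\item naturality of $L$ in both variables;
\item locality.
\end{itemize}
Together these let you evaluate both sides of each hexagon on vectors $L(\xi,\tilde I)L(\eta,\tilde J)\chi$. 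You can then check that moving $\xi$ past $\eta$ and then past $\chi$ agrees with moving it past the block $L(\eta,\tilde J)\chi$ in one step.

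By contrast, your plan for the balance is sound in outline. The real content there is not a homotopy in $\ConfR$; your ``full turn'' is in fact a path from $(z,\zeta)$ to $(z\pm1,\zeta\pm1)$. The content is that two swap identifications compose to the shift of both localization intervals by one period. Together with rotation covariance of the fusion, this is schematically the relation $e^{-2\pi iL_0}L(\xi,\tilde I)=L(e^{-2\pi iL_0}\xi,\tilde I-1)\,e^{-2\pi iL_0}$ combined with a twisted analogue of Lemma \ref{lemm: L(xi, I-1)}.
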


If we have a discrete group $G$ acting on $\A$ by a group homomorphism $\Phi:G\to \Aut(\A)$, we can pull back the $\Aut(\A)-$crossed balanced structure on $\Rep^{\Aut(\A)}(\A)$ along $\Phi$ to obtain a $G$-crossed balanced structure on $\Rep^G(\A)$.

\begin{theorem}\label{Thm: BigCorollary}
    Let $G$ be a discrete group acting faithfully on a conformal net $\A$ by a group homomorphism $\Phi: G\to \Aut(\A)$. Denoting $\Phi(g)$ by $g$, the $G$-crossed braided $\mathrm{W}^*$-tensor category
    \[
    \Rep^G(\A):=\bigoplus_{g\in G}\Rep^g(\A)
    \]
    admits a structure of a $G$-crossed balanced $\mathrm{W}^*$-tensor category.
\end{theorem}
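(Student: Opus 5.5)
The plan is to obtain the statement as a pullback of Theorem \ref{Thm: RepAutAIsCrossedBalanced} along $\Phi$. Since $\Phi$ is injective, $G\cong\Phi(G)$ is a subgroup of $\Aut(\A)$, and I would argue in two steps.

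\textbf{Step 1: closure of $\Rep^G(\A)$.} I would show that $\Rep^G(\A)=\bigoplus_{g\in G}\Rep^{\Phi(g)}(\A)$ is a full $\mathrm{W}^*$-subcategory of $\Rep^{\Aut(\A)}(\A)$ that is closed under all the structure.
\begin{itemize}
\item It is closed under Connes fusion, because $H^{\Phi(g)}\boxtimes K^{\Phi(h)}$ is $\Phi(g)\Phi(h)=\Phi(gh)$-twisted by Theorem \ref{thm: RepresentationOnFusion}(5).
\item It contains the unit $H_0$, since $H_0$ is $\Phi(e)=\id$-twisted.
\item It is closed under the functors $T_{\Phi(k)}$, because $T_{\Phi(k)}$ sends $\Rep^{\Phi(g)}(\A)$ to $\Rep^{\Phi(kgk^{-1})}(\A)$.
\item It is closed under countable orthogonal direct sums.
\end{itemize}
Fullness is automatic, since morphisms between differently twisted representations vanish. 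Consequently the associators, unitors and crossed braidings $\B_{H,K}\colon H\boxtimes K\to T_{\Phi(g)}(K)\boxtimes H$ all restrict to morphisms inside $\Rep^G(\A)$.

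\textbf{Step 2: transporting the structure along $\Phi$.} I would define the data on $\Rep^G(\A)$ as follows.
\begin{itemize}
\item The grading is $\Rep^G(\A)_g:=\Rep^{\Phi(g)}(\A)$.
\item The action is the composite $T\circ\underline{\Phi}\colon \underline{G}\to\underline{\Aut(\A)}\to\underline{\Aut}_\otimes(\Rep^G(\A))$. Here I use that $\underline{\Phi}$ is a monoidal functor between discrete monoidal categories, so the structure maps $n_{g,h}$, $s_g$ and $i_g$ are those of $T$ evaluated at $\Phi(g),\Phi(h)$.
\item The crossed braiding and the balance are the restrictions of $\B$ and $\theta$, with $\theta_H\colon H\to T_{\Phi(g)}(H)$ for $H\in\Rep^G(\A)_g$.
\end{itemize}
Every axiom involved — the hexagons, compatibility of $T$ with the grading and the conjugation action, naturality, $T_h(\theta_X)=\theta_{T_h(X)}$, and diagram \eqref{eq: Xtwist} — is an equation among morphisms that is indexed only by group elements and their products. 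Each such equation holds for the corresponding elements $\Phi(g),\Phi(h)\in\Aut(\A)$ by Theorem \ref{Thm: RepAutAIsCrossedBalanced}, and $\Phi$ preserves products and conjugates. Hence every axiom holds in $\Rep^G(\A)$.

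The only point needing care is bookkeeping rather than mathematics. In diagram \eqref{eq: Xtwist}, the objects $T_{ghgh^{-1}g^{-1}}T_{ghg^{-1}}(X)$ and the isomorphisms $n$ and $s$ must match exactly under $\Phi$. This follows because $\Phi$ is a strict homomorphism, so $T_{\Phi(a)}T_{\Phi(b)}=T_{\Phi(ab)}$ holds on the nose via identities. Faithfulness of $\Phi$ is not strictly needed for the pullback; it only ensures that the grading by $G$ matches the decomposition into distinct twisted sectors.
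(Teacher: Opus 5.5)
Your proposal is correct and is the same argument the paper uses: the paper obtains this theorem in one line by pulling back the $\Aut(\A)$-crossed balanced structure of Theorem \ref{Thm: RepAutAIsCrossedBalanced} along $\Phi$. Your two steps make explicit the bookkeeping the paper leaves implicit. Step 1 is the closure of $\bigoplus_{g\in G}\Rep^{\Phi(g)}(\A)$ under fusion, the unit, the functors $T_{\Phi(k)}$ and direct sums; Step 2 transports the grading, action, crossed braiding and balance along the strict homomorphism $\Phi$.
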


The $G$-crossed braided $\mathrm{W}^*$-tensor category $\Rep^G(\A)$ can be equivalently obtained via localized endomorphisms, as introduced in \cite{muger05}. We recall Müger's construction here and its relationship to $\Rep^G(\A)$ as it will be used in Section \ref{sec: FixedPoints}. Recall that we have fixed the point $\mathrm{p} = 1\in S^1$. We write $\mathcal{J}_\mathrm{p} : =\{I\in\Jcal\,|\,\mathrm{p}\notin\text{cl}(I)\}$ for the poset of intervals in $S^1$ not containing $\mathrm{p}$ in their closure, and $\A_\infty = \bigcup\limits_{I\in\Jcal_\mathrm{p}}\A(I)\subset B(H_0)$ for the sub $*$-algebra of $B(H_0)$ which is the union of all the algebras $\A(I)$ with $I\in \Jcal_\mathrm{p}$. We define $\End\,\A_\infty$ as the $\mathbb{C}$-linear category whose objects are endomorphisms of the $*$-algebra $\A_\infty$ and such that $\Hom_{\End\,\A_\infty}(\rho, \sigma) =\{s\in \A_\infty\,|\, s\rho(x) = \sigma(x)s\,\text{ for all $x\in \A_\infty$}\}$ for $\rho, \sigma\in \End\,\A_\infty$. Composition is given by multiplication in $\A_\infty$. Furthermore, $\End\,\A_\infty$ is a strict tensor category when equipped with the tensor product $\rho\otimes\sigma = 
\rho\circ\sigma$ and $s\otimes t:=s\rho(t) = \rho'(t)s$, where $\rho,\rho', \sigma, \sigma'\in \End\,\A_\infty$ and $s\in\Hom_{\End\,\A_\infty}(\rho, \rho')$ and $t\in  \Hom_{\End\,\A_\infty}(\sigma, \sigma')$. 

Given an interval $I\in \Jcal_\mathrm{p}$ and an element $g\in G$, we say that an endomorphism $\rho\in\End\,\A_\infty$ is \emph{$g$-localized in} $I$ if $\rho(x) = x$ for all $x\in \A(J)$ with $J\in \Jcal_\mathrm{p}$ an interval clockwise to $I$ and $\rho(x) = gx$ for all $x\in \A(J)$ with $J\in \Jcal_\mathrm{p}$ an interval counter-clockwise to $I$. By diffeomorphism covariance of $\A$, any such endomorphism $\rho$ is \emph{transportable}, meaning that, for every interval $J\in\Jcal_\mathrm{p}$, there is an endomorphism $\rho'$, unitarily isomorphic to $\rho$, which is $g$-localized in $J$.

Fix an interval $I\in\Jcal_\mathrm{p}$. The subcategory of $\End\,\A_\infty$ whose objects are the $g$-localized-in-$I$ endomorphisms of $\A$ and whose morphisms are morphisms in $\End\,\A_\infty$ contained in $\A(I)$ is denoted $g-\Loc_I(\A)$. We define the $\mathrm{W}^*$-category $G-\Loc_I(\A): = \bigoplus\limits_{g\in G}g-\Loc_I(\A)$, whose objects are countable direct sums of $g$-localized-in-$I$ endomorphisms of $\A$, for possibly different elements $g\in G$, and whose morphisms are morphisms in $\End\,\A_\infty$ contained in $\A(I)$. In \cite{muger05} it is shown that $G-\Loc_I(\A)$ admits the structure of a $G$-crossed braided tensor category, and in \cite{GcrossedbraidedRep} we showed that the $G$-crossed braided tensor category $G-\Loc_I(\A)$ is equivalent to $\Rep^G(\A)$. We recall both of these results~here.

The category $G-\Loc_I(\A)$ is a tensor subcategory of $\End\,\A_\infty$, since the tensor product induces morphisms
\[
g-\Loc_I(\A)\times h-\Loc_I(\A)\to gh-\Loc_I(\A)
\]
for all $g,h\in G$. The tensor product turns $G-\Loc_I(\A)$ into a $\mathrm{W}^*$-tensor category, as $\End_{\End\,\A_\infty}(\rho)$ is a von Neumann algebra for any $\rho\in G-\Loc_I(\A)$ and the tensor product is a $\mathrm{W}^*$-bifunctor. In addition, the group $G$ acts on $G-\Loc_I(\A)$ by sending $\rho\in G-\Loc_I(\A)$ to $\gamma_g(\rho):=g\rho g^{-1}$ for all $g\in G$. On morphisms, the element $g$ acts on $s\in\Hom_{\End\,\A_\infty}(\rho, \rho')$ by $s\mapsto \gamma_g(s):=g(s)$. It is easy to see that $g\to \gamma_g$ can be upgraded to an action of $G$ on $G-\Loc_I(\A)$ by tensor automorphisms which covers the conjugation action of $G$ on itself. Finally, the $G$-crossed braiding is defined as follows. Let $\rho,\sigma\in G-\Loc_I(\A)$ be endomorphisms with $\rho$ $g$-localized in $I_1\subset I$ and $\sigma$ $G$-localized in $I_2\subset I$, where $(\partial_-I_1, \partial_+I_2)$ can be covered by $I$. Here, $\partial_-$ and $\partial_+$ are defined using the counter-clockwise orientation of $S^1$. Then, it holds that $\rho\otimes \sigma = \gamma_g(\sigma)\otimes \rho$. If $\rho$ is $g$-localized in $I$ and $\sigma$ is $G$-localized in $I$, we may find, by transportability, $\rho'$ and $\sigma'$ localized as above, and unitary isomorphisms $u: \rho\cong \rho'$  and $v: \sigma\cong \sigma'$, and define the crossed-braiding $c_{\rho, \sigma}$ to be given by
\[
c_{\rho, \sigma}:\rho\otimes \sigma\xrightarrow{u\otimes v}\rho'\otimes \sigma' = \gamma_g(\sigma')\otimes \rho'\xrightarrow{\gamma_g(v^{-1})\otimes u^{-1}} \gamma_g(\sigma)\otimes \rho.
\]
This morphism is independent of the particular choice of $\rho', \sigma', u$ and $v$, and provides a $G$-crossed braiding on $G-\Loc_I(\A)$. The following is the statement of \cite[Thm. 2.21]{muger05}.

\begin{theorem}
    Fix an interval $I\in \Jcal_\mathrm{p}$. The $\mathrm{W}^*$-tensor category $G-\Loc_I(\A)$, with the $G$-action $\gamma$ and the crossed-braiding $c$, is a $G$-crossed braided $\mathrm{W}^*$-tensor category.
\end{theorem}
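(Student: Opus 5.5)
The plan is to verify the axioms of a $G$-crossed braided $\mathrm{W}^*$-tensor category directly on $G-\Loc_I(\A)$, working throughout with the strict tensor structure inherited from $\End\,\A_\infty$. First I check that $G-\Loc_I(\A)$ is a $\mathrm{W}^*$-tensor subcategory.
\begin{itemize}
\item If $\rho$ is $g$-localized and $\sigma$ is $h$-localized in $I$, then $\rho\sigma$ acts trivially clockwise of $I$. Counter-clockwise of $I$ it acts by $\rho(hx)=g(hx)$, because $\sigma$ preserves $\A(J)$ for $J$ away from $I$ and $h$ preserves $\A(J)$. So $\rho\sigma$ is $gh$-localized in $I$.
\item For intertwiners $s,t\in\A(I)$, the product $s\rho(t)$ lies in $\A(I)$, since $\rho$ maps $\A(I)$ into $\A(\tilde I)$ for any slightly larger $\tilde I\in\Jcal_\mathrm{p}$.
\item By Haag duality, an intertwiner of two $g$-localized-in-$I$ endomorphisms commutes with $\A(J)$ for every $J\in\Jcal_\mathrm{p}$ disjoint from $I$. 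By additivity it therefore lies in $\A(I^c)'=\A(I)$.
\item It follows that $\End(\rho)=\rho(\A(I))'\cap\A(I)$ is weakly closed, hence a von Neumann algebra. Normality of $\otimes$ follows from normality of $\rho$ on each $\A(K)$.
\end{itemize}

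Next I check that $\gamma_g(\rho)=g\rho g^{-1}$ is $ghg^{-1}$-localized in $I$ whenever $\rho$ is $h$-localized. This holds because $g$ preserves each $\A(J)$. The assignment $\gamma_g$ is a strict tensor automorphism with $\gamma_g\gamma_h=\gamma_{gh}$, so all structure maps $i_g,s_g,n_{g,h}$ are identities. It remains to check that $g(s)\in\A(I)$ is an intertwiner, which is immediate.

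For the crossed braiding, the key identity is $\rho\sigma=\gamma_g(\sigma)\rho$ for $\rho$ $g$-localized in $I_1$ and $\sigma$ localized in $I_2$ counter-clockwise to $I_1$. I prove it on $x\in\A(K)$ in three cases.
\begin{itemize}
\item For $K$ clockwise of both intervals, both sides act as the identity.
\item For $K\subset I_2$-side, $\sigma(x)$ lies counter-clockwise of $I_1$, so $\rho\sigma(x)=g\sigma(x)$, while $\gamma_g(\sigma)\rho(x)=g\sigma g^{-1}(x)$ for $x$ near $I_2$.
\item The remaining cases are handled by additivity.
\end{itemize}

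Independence of the choice of $\rho',\sigma',u,v$ is proved as in the DHR setting. Two choices differ by unitaries $w:\rho'\to\rho''$ and $z:\sigma'\to\sigma''$ localized in intervals. Naturality reduces to the identity $(w\otimes z)=\gamma_g(z)\otimes w$ on the equality $\rho'\sigma'=\gamma_g(\sigma')\rho'$. This follows from locality when the supports are arranged appropriately. In general, I pass through an intermediate configuration with both supports shifted far apart, using that the space of allowed configurations is connected. Connectedness is the only place where the fixed point $\mathrm{p}$ matters, since intervals may not cross $\mathrm{p}$.

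Naturality of $c$ in $\rho$ and $\sigma$ then follows from this independence.

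The hexagon axioms reduce, after transporting to suitably separated localizations, to strict equalities $\rho\sigma\tau=\gamma_g(\sigma\tau)\rho$ and $\rho\sigma\tau=\gamma_{gh}(\tau)\rho\sigma$. Compatibility $\gamma_k(c_{\rho,\sigma})=c_{\gamma_k\rho,\gamma_k\sigma}$ follows by applying $k$ to all the data.

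I expect the main obstacle to be the well-definedness of $c$. The subtle point is that $\sigma$ may be localized in any of the various $h$-twisted sectors, and the argument must track the twist on intertwiners across the relative positions. I would first reduce this to $\sigma$ homogeneous of degree $h$ and then extend additively over direct sums.
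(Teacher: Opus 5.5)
\textbf{Overall route.} Your strategy is the standard DHR-style verification. It uses the strict structure inherited from $\End\,\A_\infty$, the identity $\rho\sigma=\gamma_g(\sigma)\rho$ for separated localizations, independence of $c$ via an intermediate configuration, and naturality and hexagons checked on well-separated representatives. The paper gives no proof of its own. It only quotes \cite[Thm. 2.21]{muger05}, whose argument is of this type, so there is no competing route to compare. The problem is local: the one computation you actually carry out, the identity $\rho\sigma=\gamma_g(\sigma)\rho$, is not correct as written, and everything else rests on it.

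\textbf{First error: the $I_2$-side computation.} You write $\gamma_g(\sigma)\rho(x)=g\sigma g^{-1}(x)$, which does not match $\rho\sigma(x)=g\sigma(x)$. You dropped the fact that $\rho$ acts as $g$ there. The correct line is $\gamma_g(\sigma)\rho(x)=g\sigma g^{-1}(gx)=g\sigma(x)$.

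\textbf{Second error: a missing case.} Your two cases are intervals clockwise of $I_1$ and intervals counter-clockwise of $I_1$. These cannot be glued by additivity, because every interval through a point of $I_1$ meets $I_1$. The region where $\rho$ is non-trivial is exactly what is missing.

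The extra case you need is $K$ clockwise of $I_2$ but meeting $I_1$. There $\sigma$ is trivial, so $\rho\sigma(x)=\rho(x)$. Let $\hat K\in\Jcal_\mathrm{p}$ be the convex hull of $K\cup I_1$, which is still clockwise of $I_2$. Then $\gamma_g(\sigma)\rho(x)=g\sigma(g^{-1}\rho(x))=\rho(x)$, provided $\rho(\A(\hat K))\subset\A(\hat K)$.

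That inclusion is not additivity. For $x\in\A(\hat K)$, $\rho(x)$ commutes with $\rho(\A(L))=\A(L)$ for every $L\in\Jcal_\mathrm{p}$ disjoint from $\hat K$. To conclude $\rho(x)\in\A(\hat K)$ you need Haag duality for the net on $S^1\setminus\{\mathrm{p}\}$, since these $L$ miss $\mathrm{p}$ and do not literally cover $\hat K^c$. This is the same input behind $\Hom(\rho,\sigma)\subset\A(I)$ in \cite[Lem. 2.13]{muger05}, and your $I_2$-side case silently uses it too, for $\sigma$.

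With this in hand, take $I_1,I_2$ with disjoint closures. Cover any $K\in\Jcal_\mathrm{p}$ by intervals lying either clockwise of $I_2$ or counter-clockwise of $I_1$, and conclude by normality.

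\textbf{Knock-on fixes in your first bullet list.} The same inclusion $\rho(\A(I))\subset\A(I)$ is what your second bullet needs to land in $\A(I)$ rather than $\A(\tilde I)$. Your third bullet is unnecessary, since morphisms of $G-\Loc_I(\A)$ are required to lie in $\A(I)$ by definition.
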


The $G$-crossed braided $\mathrm{W}^*$-tensor category $G-\Loc_I(\A)$ is canonically equivalent to $\Rep^G(\A)$, as it was discussed in \cite[Sec. 4]{GcrossedbraidedRep}. The underlying functor $\mathfrak{E}: G-\Loc_I(\A)\to \Rep^G(\A)$ of this equivalence is defined as follows. Given $g\in G$ and $\rho\in g-\Loc_I(\A)$, we define $\mathfrak{E}(\rho)$ to be the $g$-twisted representation of $\A$ on $H_0$ given by
\[
\pi_J(x) = \pi_0(\rho(x))
\]
for all $J\in\Jcal_\mathrm{p}$ and $x\in \A(J)$. Since $\rho$ is $g$-localized in $I$, the formula above extends to a unique $g$-twisted representation of $\A$ on $H_0.$ We denote this representation by $(H_0, \pi_0\circ\rho)$. On morphisms, it holds by \cite[Lem. 2.13]{muger05} that $\Hom_{g-\Loc_I(\A)}(\rho, \sigma)\subset \A(I)$ for all $\rho, \sigma\in g-\Loc_I(\A)$ and hence we may define $\mathfrak{E}(s) = \pi_0(s)\in B(H_0)$ for all $s\in \Hom_{g-\Loc_I(\A)}(\rho, \sigma)$. Then, $\mathfrak{E}$ can be extended linearly to provide a $\mathrm{W}^*$-functor
\[
\mathfrak{E}: G-\Loc_I(\A)\to \Rep^G(\A).
\]
The following result is \cite[Thm. 4.9]{GcrossedbraidedRep}.
\begin{theorem}\label{Thm: RepIsMugerForG}
    The $\mathrm{W}^*$-functor $\mathfrak{E}: G-\Loc_I(\A)\to \Rep^G(\A)$ can be canonically upgraded to an equivalence of $G$-crossed braided $\mathrm{W}^*$-tensor categories.
\end{theorem}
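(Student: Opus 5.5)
The plan is to make $\mathfrak{E}$ into a tensor functor, add the comparison data $\Phi_g$ for the $G$-actions, check the braiding, and then show that $\mathfrak{E}$ is an equivalence of $\mathrm{W}^*$-categories.

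\textbf{Tensor structure.} Take $\rho\in g-\Loc_I(\A)$ and $\sigma\in h-\Loc_I(\A)$. We need a natural unitary
\[
\Psi_{\rho,\sigma}:(H_0,\pi_0\circ\rho)\boxtimes(H_0,\pi_0\circ\sigma)\to(H_0,\pi_0\circ\rho\sigma).
\]
\begin{itemize}
\item Using transportability, first reduce to the case where $\rho$ is localized in an interval lifting into $\widetilde{S^1_+}$. This is the interval fused on the right, so the left factor sees $\rho$ as "trivial" near the fusion interval.
\item Write $H_0\boxtimes K$ as the completion of $H_{0,+}(\widetilde{S^1_-})\otimes K$, where $H_{0,+}(\widetilde{S^1_-})=\A(S^1_-)\Omega$.
\item Send $x\Omega\otimes\eta\mapsto \pi_0(\rho(x))\,\eta$ for $x\in \A(S^1_-)$. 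Since $\A(S^1_-)$ lies on the side where $\rho$ acts trivially or by $g$, this map is explicit.
\item Isometry follows from the single-interval form of the Connes inner product, $\langle\pi^K_{\tilde I}(x'^*x)\eta,\eta'\rangle$, by Haag duality.
\item The intertwining property: for intervals inside the fusion interval it is immediate. For all other intervals, use the uniqueness clause of Theorem~\ref{thm: RepresentationOnFusion} (the path-continuation characterization), checking the twisted action on $\widetilde{S^1_+}$ and on its translates by $1$.
\item For general localization, conjugate by the transporting unitaries. Check that the result is independent of these choices and satisfies associativity; both follow because morphisms in $\Loc$ are elements of $\A(I)$ acting on the left.
\end{itemize}

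\textbf{Action and braiding.}
\begin{itemize}
\item For $\Phi_g$: as Hilbert spaces $T_g(\mathfrak{E}(\rho))=H_0$ with action $\pi_0\circ\rho\circ g^{-1}$, while $\mathfrak{E}(\gamma_g\rho)$ has action $\pi_0\circ g\rho g^{-1}$. So take $\Phi_g(\rho)=V_g$, the implementing unitary, and check its compatibility with $\Psi$ and with the structure maps $s_g, n_{g,h}$.
\item For the braiding: by transportability, reduce to $\rho'$ and $\sigma'$ localized in disjoint subintervals with $\rho'$ on the left. Then $c$ reduces to the equality $\rho'\sigma'=\gamma_g(\sigma')\rho'$, and one must show that the path continuation $\varrho^\bullet$ in $\B$ is carried by $\Psi$ to the identity in that configuration. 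This comes down to tracking the two different positions under $\Psi$ along the path $\varrho$.
\end{itemize}

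\textbf{Equivalence.}
\begin{itemize}
\item Full faithfulness: an intertwiner $(H_0,\pi_0\rho)\to(H_0,\pi_0\sigma)$ commutes with $\pi_0(\A(I^c))$ after twisting. By Haag duality for $\A$ it lies in $\pi_0(\A(I))$, which is exactly $\Hom_{\Loc}(\rho,\sigma)$ (Müger's Lemma~2.13).
\item Essential surjectivity: given $H\in\Rep^g(\A)$ on a separable space, the algebras $\A(J)$ are type $\mathrm{III}$ factors, so there is a unitary $u:H_0\to H$ intertwining the action of $\A_\R$ of an interval complementary to $I$.
\item Define $\rho(x)=u^*\pi^H(x)u$. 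Haag duality shows that $\rho$ preserves $\A_\infty$ and is $g$-localized in $I$. Countable direct sums are handled summand-wise using the same $\mathrm{III}$-factor argument, since $\Loc$ admits direct sums via isometries in $\A(I)$.
\end{itemize}

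\textbf{Main obstacle.} The hardest step is the braiding compatibility. Matching the path continuation $\varrho^\bullet$ with the trivial identification $\rho'\sigma'=\gamma_g\sigma'\rho'$ needs careful control of the twisting $\epsilon(\tilde I)$: as the point moves across $\mathrm{p}$, the $g$ shows up through $\Gamma_g$ and $V_g$. To handle it, I would first settle the untwisted special case and then track the single special inclusion where $\epsilon$ jumps.
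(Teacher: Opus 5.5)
The paper does not prove this statement. It quotes it from \cite[Thm.~4.9]{GcrossedbraidedRep}, so your outline can only be judged on its own. Your treatment of $\Phi_g(\rho)=V_g$, and of full faithfulness and essential surjectivity (Haag duality plus the type $\mathrm{III}$ property), is fine. There are two problems: the tensorator fails as written, and the braiding step is not argued.

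\textbf{The tensorator.} By definition, $\mathfrak{E}(\rho)_+(\widetilde{S^1_-})$ consists of vectors $T\Omega$ with $T$ intertwining the actions of $\A_\R(\widetilde{S^1_-}^{c+})=\A_\R(\widetilde{S^1_+})$. That is exactly where your reduction puts $\rho$. So for $\xi\neq0$, $Z^+(\xi,\widetilde{S^1_-})$ cannot lie in $\pi_0(\A(S^1_-))$, and the vacuum description you use does not hold. If you instead put $\rho$ in the lower half, that description becomes correct. But then the single-interval inner product is $\langle\pi^{K}_{\widetilde{S^1_-}}(x'^*x)\eta,\eta'\rangle$ with $K=\mathfrak{E}(\sigma)$. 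The isometry is therefore $x\Omega\otimes\eta\mapsto\pi^K_{\widetilde{S^1_-}}(x)\eta$, which involves $\sigma$ and not $\rho$. Your map $x\Omega\otimes\eta\mapsto\pi_0(\rho(x))\eta$ is not isometric unless $\rho$ and $\pi_0^{-1}\circ\pi^K_{\widetilde{S^1_-}}$ agree on $\A(S^1_-)$.

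The roles of the two factors must be swapped. Localize the right factor $\sigma$ (twist $h$) in $\widetilde{S^1_+}$. By \eqref{eq: TwistedRepOnR}, $\mathfrak{E}(\sigma)$ then acts by $\pi_0$ on $\A_\R(\widetilde{S^1_-})$. Hence $\mathfrak{E}(\sigma)_-(\widetilde{S^1_+})=\A(S^1_+)\Omega$ with $Z^-(y\Omega,\widetilde{S^1_+})=\pi_0(y)$. By the last line of \eqref{eq: DiffConnesFroms}, the map $\xi\otimes y\Omega\mapsto\pi^{\mathfrak{E}(\rho)}_{\widetilde{S^1_+}}(y)\xi$ is a unitary onto $H_0$. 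It turns the right-factor action into $\pi_0\circ\rho\circ\sigma$.

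\textbf{The braiding.} This is the real content of the theorem, and you only restate what must be shown. For both $\Psi_{\rho,\sigma}$ and $\Psi_{\gamma_g(\sigma),\rho}$ to be given by the explicit formula, $\rho$ and $\sigma$ must both lie in $\widetilde{S^1_+}$. Take $\rho$ clockwise of $\sigma$, so that $c_{\rho,\sigma}=\id$.

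Then $\mathfrak{E}(\rho)_+(\widetilde{S^1_-})$ is no longer explicit. You need a transporter $u\in\A_\infty$ from $\rho$ to some $\rho_0$ localized in the lower half, which gives vectors $\pi_0(u^*s)\Omega$. To compute $\varrho^\bullet$ on these, pass to $\mathfrak{E}(\rho)_+(\tilde I)\boxtimes\mathfrak{E}(\sigma)$, where $\tilde I$ contains the path and $\rho$ acts by $g$ on $\tilde I^{c+}$. There, use the balancing relation
$T\pi_0(a)\Omega\otimes\eta=T\Omega\otimes\pi^{\mathfrak{E}(\sigma)}_{\tilde I}(a)\eta$,
valid for $T\in\Hom_{\A_\R(\tilde I^{c+})}(H_0,\mathfrak{E}(\rho))$ and $a\in\A_\R(\tilde I)$. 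Apply it with $T=V_g$ and $\pi_0(a)=V_g^*\pi_0(u^*s)$. Note that $Z^+(\Omega,\widetilde{S^1_+})=V_g$ while $Z^-(\Omega,\widetilde{S^1_+})=1$.

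The twist is handled at one point. The transporter $u$ itself passes over the localization region of $\sigma$. However, $V_g^*\pi_0(u^*)$ commutes with $\pi_0(\A(L))$, where $L$ is the arc running counterclockwise from the region of $\rho$ to that of $\rho_0$; on $L$, $\rho$ acts by $g$ and $\rho_0$ acts trivially. By Haag duality, $V_g^*\pi_0(u^*)$ therefore lies in $\pi_0(\A(L^c))$. The arc $L^c$ passes through $\mathrm{p}$ and avoids $\sigma$, so $\mathfrak{E}(\sigma)$ acts on it by $\pi_0$. Both sides of the required identity then reduce to $u\rho(y)=\rho_0(y)u=yu$ for $y\in\A(S^1_+)$, with $\Phi_g(\sigma)=V_g$ absorbing the $V_g^*$.

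Your plan of doing the untwisted case and then tracking where $\epsilon$ jumps does not identify this mechanism. Without it, the crossed-braiding compatibility remains unproved.
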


\section{$G$-crossed categorical extensions of conformal nets}
\label{sec: CategoricalExtensions}

In this section, we extend the definition of a categorical extension of a conformal net $\A$ from \cite[Sec. 3]{Gui21} to the context of a group $G$ acting on $\A$. We call these $G$-crossed categorical extensions of $\A$. Actually, we will show that, up to equivalence, there is a unique $G$-crossed categorical extension of $\A$, induced by the Connes fusion of representations and the braiding on $\Rep^G(\A)$ described in Section \ref{Sec: CatOfTwistedReps}. This uniqueness result will be relevant in the next section when discussing the category of representations of the fixed points conformal net $\A^G$ (see Proposition \ref{prop: MisEquivalenceOFG-X-BraidedCats}).

Before defining $G$-crossed categorical extensions, we need the following two pieces of notation. The first is the notion of adjoint commutativity. Let $H_1, H_2, H_3, H_4$ be Hilbert spaces and let $A: H_1\to H_2$, $B:H_2\to H_4$, $C: H_1\to H_3$ and $D: H_3\to H_4$ be bounded linear maps. Then, we say that the diagram
\begin{equation}\label{eq: Adjoint1}\begin{tikzcd}
	{H_1} & {H_3} \\
	{H_2} & {H_4}
	\arrow["C", from=1-1, to=1-2]
	\arrow["A"', from=1-1, to=2-1]
	\arrow["D", from=1-2, to=2-2]
	\arrow["B"', from=2-1, to=2-2]
\end{tikzcd}\end{equation}
\emph{commutes adjointly} if both the diagram and 
\begin{equation}\label{eq: Adjoint2}\begin{tikzcd}
	{H_1} & {H_3} \\
	{H_2} & {H_4}
	\arrow["C", from=1-1, to=1-2]
	\arrow["{A^*}", from=2-1, to=1-1]
	\arrow["B"', from=2-1, to=2-2]
	\arrow["{D^*}"', from=2-2, to=1-2]
\end{tikzcd}\end{equation}
commute. Note that the commutativity of \eqref{eq: Adjoint2} is equivalent to the commutativity of
\[\begin{tikzcd}
	{H_1} & {H_3} \\
	{H_2} & {H_4}.
	\arrow["A"', from=1-1, to=2-1]
	\arrow["{C^*}"', from=1-2, to=1-1]
	\arrow["D", from=1-2, to=2-2]
	\arrow["{B^*}", from=2-2, to=2-1]
\end{tikzcd}\]
In addition, if either of $A,B$ or $C,D$ are unitary, the adjoint commutativity of \eqref{eq: Adjoint1} is equivalent to its commutativity in the usual sense.

 Let $\A$ be a conformal net. In \cite[Eqs. (16) and (17)]{GcrossedbraidedRep}, we introduced two families of operators, denoted $L$ and $R$, in order to prove the hexagon conditions for the crossed braiding on $\Rep^{\Aut(\A)}(\A)$, analogously to \cite{Gui21}. We recall their definition here. Fix an automorphism $\varphi\in\Aut(\A)$ and let $H^\varphi$ be a $\varphi$-twisted representation of $\A$ and $\tilde{I}\in \Jcal_\R$ be an interval. For any $K^\mu\in \Rep^\mu(\A)$ and $\xi\in H_+^\varphi(\tilde{I})$, we denote by $Z^+(\xi, \tilde{I})$ the bounded linear operator $K^\mu\to H^\varphi_+(\tilde{I})\boxtimes K^\mu$ induced by
\[
\begin{array}{cccc}
     Z^+(\xi, \tilde{I}):& K^\mu &\to &H_+^\varphi(\tilde{I})\otimes K^\mu \\
     & \eta&\mapsto &\xi\otimes \eta.
\end{array}
\]
It is clear that $Z^+(\xi, \tilde{I})$ commutes with the actions of $\A_\R(\tilde{I}^{c+})$. We will similarly write $Z^-(\xi',\tilde{I}):K^\mu\to K^\mu\boxtimes H^\varphi_-(\tilde{I})$ for the map induced by $\eta\mapsto \eta\otimes\xi'$ for $\xi'\in H_-^\varphi(\tilde{I})$. Let $\alpha_{\tilde{I}}:[0,1]\to \R$ be a path in $\R$ from $\tilde{I}$ to $-\nicefrac{1}{4}\in \widetilde{S^1_-}$. We write $L(\xi, \tilde{I}): K^\mu\to H^\varphi\boxtimes K^\mu$ for the map
\begin{equation}\label{eq: OperatorL}
L(\xi,\tilde{I}): K^\mu\xrightarrow{Z^+(\xi, \tilde{I})} H_+^\varphi(\tilde{I})\boxtimes K^\mu\xrightarrow{\alpha_{\tilde{I}}^\bullet} H^\varphi_+(-)\boxtimes K^\mu = H^\varphi\boxtimes K^\mu.
\end{equation}
Given $\xi'\in H^\varphi_-(\tilde{I})$, we write $R(\xi', \tilde{I}): K^\mu\to K^\mu\boxtimes H^\varphi$ for the map
\begin{equation}
\label{eq: OperatorR}
R(\xi', \tilde{I}): K^\mu\xrightarrow{Z^-(\xi', \tilde{I})}K^\mu\boxtimes H_-^\varphi(\tilde{I})\xrightarrow{(\varrho*\alpha_{\tilde{I}})^\bullet} K^\mu\boxtimes H_-^\varphi(+) = K^\mu\boxtimes H^\varphi.
\end{equation}
Since path continuations intertwine the actions of $\A_\R$, we have that $L(\xi, \tilde{I})\in \Hom_{\A_\R(\tilde{I}^{c+})}(K^\mu, H^\varphi\boxtimes K^\mu)$ and $R(\xi', \tilde{I})\in \Hom_{\A_\R(\tilde{I}^{c-})}(K^\mu, K^\mu\boxtimes H^\varphi)$.

Let $G$ be a discrete group acting faithfully on $\A$, and recall that we write $\Rep^G(\A)$ for the $\mathrm{W}^*$-category of $G$-twisted representations of $\A$. In Section \ref{Sec: CatOfTwistedReps} we have constructed an action $T$ of $G$ on the $\mathrm{W}^*$-category $\Rep^G(\A)$ compatible with the $G$-grading. 

\begin{definition}\label{def: CategoricalExtension}
    Let $\A$ be a conformal net being acted on faithfully by a discrete group $G$. A $G$-crossed categorical extension of $\A$ consists of the following data:
    \begin{enumerate}
        \item a $\mathrm{W}^*$-tensor structure $\boxdot: \Rep^G(\A)\times\Rep^G(\A)\to \Rep^G(\A)$ with unit the vacuum representation $H_0\in\Rep^G(\A)$ (and unitary associators and unitors);
        \item an upgrade of the action $T$ on the $\mathrm{W}^*$-category $\Rep^G(\A)$ to an action by $\mathrm{W}^*$-tensor automorphisms on $(\Rep^G(\A), \boxdot)$, which in particular includes compatibility isomorphisms $T_g(-\boxdot -)\cong T_g(-)\boxdot T_g(-)$ for all $g\in G$;
        \item a natural family of unitary isomorphisms $\beta_{H, K}: H^g\boxdot K^h\xrightarrow{\cong} T_g(K^h)\boxdot H^g$ indexed by $g,h\in G$ and $H^g\in \Rep^g(\A)$ and $K^h\in \Rep^h(\A)$;
        \item for every $\tilde{I}\in \Jcal_\R$, $g,h\in G$, $H^g\in \Rep^g(\A)$, $\xi\in H^g_+(\tilde{I})$ and $K^h\in \Rep^h(\A)$, a bounded linear operator
        \[
        L(\xi, \tilde{I})\in \Hom_{\A_\R(\tilde{I}^{c+})}(K^h, H^g\boxdot K^h);
        \]
\item for every $\tilde{I}\in \Jcal_\R$, $g,h\in G$, $H^g\in \Rep^g(\A)$, $\xi\in H^g_-(\tilde{I})$ and $K^h\in \Rep^h(\A)$, a bounded linear operator
        \[
        R(\xi, \tilde{I})\in \Hom_{\A_\R(\tilde{I}^{c-})}(K^h, K^h \boxdot H^g)
        \]
\end{enumerate}
such that the following conditions are satisfied, where $g,h, k\in G$ and $H^g\in \Rep^g(\A)$, $K^h,\hat{K}^h\in \Rep^h(\A)$ and $R^k\in \Rep^k(\A)$,
\begin{enumerate}
    \item compatibility with the $G$-grading: the representation $H^g\boxdot K^h$ is $gh$-twisted;
    \item isotony: if $\tilde{I_1},\tilde{I_2}\in\Jcal_\R$ are intervals with $\tilde{I_1}\subset \tilde{I_2}$, for all $\xi\in H^g_+(\tilde{I_1})$ and $\eta\in H^g_-(\tilde{I_1})$, it holds that $L(\xi,\tilde{I_1}) = L(\xi, \tilde{I_2})$ and $R(\eta, \tilde{I_1}) = R(\eta, \tilde{I_2})$ when acting on $K^h$;
\item naturality: for any $F\in \Hom_{\Rep^G(\A)}(K^h, \hat{K}^h)$, the following diagrams commute for every $\tilde{I}\in\Jcal_\R$ and $\xi\in H^g_+(\tilde{I})$ and $\eta\in H^g_-(\tilde{I})$,
\[\begin{tikzcd}
	{K^h} && {\hat{K}^h} && {K^h} && {K^h\boxdot H^g} \\
	{H^g\boxdot K^h} && {H^g\boxdot \hat{K}^h} && {\hat{K}^h} && {\hat{K}^h\boxdot H^g};
	\arrow["F", from=1-1, to=1-3]
	\arrow["{L(\xi, \tilde{I})}"', from=1-1, to=2-1]
	\arrow["{L(\xi, \tilde{I})}", from=1-3, to=2-3]
	\arrow["{R(\eta, \tilde{I})}", from=1-5, to=1-7]
	\arrow["F"', from=1-5, to=2-5]
	\arrow["{F\boxdot \id}", from=1-7, to=2-7]
	\arrow["{\id\boxdot F}"', from=2-1, to=2-3]
	\arrow["{R(\eta, \tilde{I})}"', from=2-5, to=2-7]
\end{tikzcd}\]
\item unitality: for every $\tilde{I}\in \Jcal_\R$ and every $\xi\in H^g_+(\tilde{I})$ and $\eta\in H^g_-(\tilde{I})$, it holds that
\[
L(\xi,\tilde{I})\Omega = \xi\hspace{1cm} R(\eta,\tilde{I})\Omega = \eta
\]
under the identifications $H^g\cong H_0\boxdot H^g\cong H^g\boxdot H_0$;
\item Reeh-Schlieder property: for any $\tilde{I}\in \Jcal_\R$, the set $\{L(\xi, \tilde{I})\Omega\,|\, \xi\in H^g_+(\tilde{I})\}$ spans a dense subset of $H^g\cong H^g\boxdot H_0$, and the set $\{R(\eta, \tilde{I})\Omega\,|\, \eta\in H^g_-(\tilde{I})\}$ spans a dense subset of $H^g\cong H_0\boxdot H^g$;

\item density of fusion products: for any $\tilde{I}\in\Jcal_\R$, the set $\{L(\xi, \tilde{I})\chi\ |\ \xi\in H^g_+(\tilde{I}),\ \chi\in K^h\}$ spans a dense subspace of $H^g\boxdot K^h$, and the set $\{R(\eta, \tilde{I})\chi\ |\ \eta\in H^g_-(\tilde{I}),\ \chi\in K^h\}$ spans a dense subset of $K^h\boxdot H^g$;

\item locality: for any intervals $\tilde{I}, \tilde{J}\in \Jcal_\R$ with $\tilde{J}\subset \tilde{I}^{c+}$ and any $\xi\in H^g_+(\tilde{I})$ and $\eta\in K^h_-(\tilde{J})$, the following diagram commutes adjointly
\[\begin{tikzcd}
	{R^k} && {R^k\boxdot K^h} \\
	{H^g\boxdot R^k} && {H^g\boxdot R^k\boxdot K^h};
	\arrow["{R(\eta, \tilde{J})}", from=1-1, to=1-3]
	\arrow["{L(\xi, \tilde{I})}"', from=1-1, to=2-1]
	\arrow["{L(\xi, \tilde{I})}", from=1-3, to=2-3]
	\arrow["{R(\eta, \tilde{J})}"', from=2-1, to=2-3]
\end{tikzcd}\]
\item compatibility with the braiding: the following diagram commutes for all $\tilde{I}\in\Jcal_\R$ and $\xi\in H^g_-(\tilde{I}),$
\[\begin{tikzcd}
	{K^h} && {T_g(H^g)\boxdot K^h} \\
	{K^h\boxdot H^g} && {T_g(K^h)\boxdot T_g(H^g)} \\
	& {T_g(K^h\boxdot H^g)}.
	\arrow["{L(\Gamma_g\xi, \tilde{I})}", from=1-1, to=1-3]
	\arrow["{R(\xi, \tilde{I})}"', from=1-1, to=2-1]
	\arrow["{\beta_{T_g(H), K}}", from=1-3, to=2-3]
	\arrow["{\Gamma_g}"', from=2-1, to=3-2]
	\arrow["\cong", from=2-3, to=3-2]
\end{tikzcd}\]
\end{enumerate} 
\end{definition}

\begin{remark}
    What we have defined here as a $G$-crossed categorical extension is a generalization to the $G$-twisted setting of what \cite{Gui21} calls a \emph{vector-labelled} categorical extension. Furthermore, Gui also considers categorical extensions of subcategories of representations. We are interested in the whole category $\Rep^G(\A)$ and hence we do not include such freedom.  
\end{remark}

\begin{remark}\label{rk: FormulasLB}
    Given a $G$-crossed categorical extension $(\boxdot, \beta, L, R)$ of $\A$, we have the following extra compatibility of the braiding with the operator $L$. Let $H^g\in \Rep^g(\A)$ and $K^h\in \Rep^h(\A)$ be representations of $\A$ twisted by $g,h\in G$ respectively. For any intervals $\tilde{I},\tilde{J}\in\Jcal_\R$ such that $\tilde{J}\subset \tilde{I}^{c+}$, and all $\xi\in H_-^g(\tilde{I})$ and $\eta\in K^h_+(\tilde{J})$, it holds that
    \[
L(\Gamma_h\xi,\tilde{I})\eta = \beta_{K, H}\circ L(\eta, \tilde{J})\xi.
    \]
    Similarly, if $\tilde{I}\subset \tilde{J}^{c+}$ and $\xi\in H_+^g(\tilde{I})$ and $\eta\in K^h_-(\tilde{J})$, it holds that
    \[
    L(\xi, \tilde{I})\eta = \beta_{H, K}^{-1}\circ L(\Gamma_g\eta, \tilde{J})\xi.
    \]
    For proofs of these statements, see below \cite[Eq. (19)]{GcrossedbraidedRep}.
    In addition, if $\tilde{I}, \tilde{J},\tilde{O}\in\Jcal_\R$ are intervals such that $\tilde{I},\tilde{J}\subset\tilde{O}$ and $\xi\in H_+^\varphi(\tilde{I}),\ \eta\in K^\mu(\tilde{J})$, it holds that
    \begin{equation}\label{eq: JoinLs}
        L(\xi, \tilde{I})L(\eta, \tilde{J}) = L(L(\xi, \tilde{I})\eta, \tilde{O})
    \end{equation}
    when acting on any $R^\nu\in \Rep^\nu(\A)$, see \cite[Prop. 3.28]{GcrossedbraidedRep} and \cite[Prop. 3.6]{Gui21}.
\end{remark}

The results in \cite[Sec. 3]{GcrossedbraidedRep} show, except for the locality axiom, that the tensor structure $\boxtimes$ on $\Rep^G(\A)$, together with the families of operators $L$ and $R$ in Equations \eqref{eq: OperatorL} and \eqref{eq: OperatorR} is a $G$-crossed categorical extension~of~$\A$.

\begin{theorem}\label{Thm: ConnesCategoricalExtension}
    Let $\A$ be a conformal net and $G$ be a discrete group acting faithfully on $\A$. The Connes fusion tensor structure on $\Rep^G(\A)$, together with the compatibility with the $G$-action and the braiding $\mathbb{B}$ defined in Theorem \ref{Thm: BigCorollary}, and the families of operators $L$ and $R$ from Equations \eqref{eq: OperatorL} and \eqref{eq: OperatorR}, is a $G$-crossed categorical extension of $\A$.
\end{theorem}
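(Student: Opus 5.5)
The paper has already said that everything except locality follows from \cite[Sec. 3]{GcrossedbraidedRep}. So I would first dispatch the other axioms by citation, and then prove locality directly from the definitions of $L$ and $R$ via path continuations.

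\textbf{The axioms other than locality.}
\begin{enumerate}
\item The $G$-grading is Theorem \ref{thm: RepresentationOnFusion}(5).
\item The action of $T$ by tensor automorphisms is the monoidal structure \eqref{eq: MonoidalT}.
\item Isotony holds because the canonical inclusion equivalences \eqref{eq: CanonicalEquivalenceInclusion} are compatible with path continuations. A path from $\tilde{I_1}$ is also a path from $\tilde{I_2}$, and homotopic paths give the same continuation.
\item Naturality holds because $F\boxtimes G$ is defined on algebraic tensors, and path continuations are built from canonical identifications that commute with Connes fusion of morphisms.
\item Unitality and the Reeh--Schlieder property follow from the unitors of \cite[Sec. 3.6]{GcrossedbraidedRep} identifying $\xi\otimes\Omega$ with $\xi$. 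Density of $H^g_\pm(\tilde I)$ then gives Reeh--Schlieder.
\item Density of fusion products holds because algebraic tensors are dense in the completion and path continuations are unitary.
\item Compatibility with the braiding is essentially the definition of $\B$. Here $\varrho*\alpha_{\tilde{I}}$ followed by the flip $\xi\otimes\eta\mapsto\Gamma\eta\otimes\xi$ is exactly how $R$ and $\B$ were built; this is \cite[Eq. (19)]{GcrossedbraidedRep} and its surrounding discussion.
\end{enumerate}

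\textbf{Locality.} Fix $\tilde{J}\subset\tilde{I}^{c+}$, $\xi\in H^g_+(\tilde{I})$ and $\eta\in K^h_-(\tilde{J})$. First I would check commutativity on the dense set of vectors $\chi\in R^k$. Using isotony, I can shrink the intervals and choose the paths $\alpha_{\tilde I}$ and $\varrho*\alpha_{\tilde J}$ so that both composites land in
\[
H^g_+(\tilde{I})\boxtimes R^k\boxtimes K^h_-(\tilde{J}),
\]
that is, a triple fusion over the fixed pair $\tilde I,\tilde J$. There both composites send $\chi$ to $\xi\otimes\chi\otimes\eta$.

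To justify this, I would transport along a path in $\ConfR$ from $\tilde I\times\tilde J$ to $(-)\times(+)$. By Theorem \ref{thm: RepresentationOnFusion}(4), such continuations are canonical, and the associator is the identity on elementary tensors. The path continuation in the first variable and the one in the second variable commute, because they act on different tensor factors and, by homotopy invariance, compose to the continuation along a product path. This is the same mechanism as \eqref{eq: JoinLs} in \cite[Prop. 3.28]{GcrossedbraidedRep}.

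For the adjoint square, I would compute $L(\xi,\tilde I)^*$ on elementary tensors. From the inner product formula,
\[
L(\xi,\tilde I)^*\bigl(\xi'\otimes\chi\bigr)=\pi_{\tilde I}\bigl(Z^+(\xi,\tilde I)^*Z^+(\xi',\tilde I)\bigr)\chi,
\]
which is the action of an element of $\A_\R(\tilde I)$. Since $R(\eta,\tilde J)$ intertwines $\A_\R(\tilde J^{c-})\supset\A_\R(\tilde I)$, the identity $R(\eta,\tilde J)L(\xi,\tilde I)^*=L(\xi,\tilde I)^*R(\eta,\tilde J)$ reduces to this intertwining property together with the triple-fusion identification above.

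\textbf{Main obstacle.} The step I expect to be hardest is verifying that the two path continuations, in the left variable and in the right variable, genuinely commute inside the iterated Connes fusion. This requires carefully tracking the twists $\varphi^{\epsilon}$ when the paths cross $\mathrm{p}$. I would handle it by working in $\ConfR$ and using Theorem \ref{thm: RepresentationOnFusion}(4) together with homotopy invariance, exactly as in Gui's untwisted argument \cite[Sec. 3]{Gui21}.
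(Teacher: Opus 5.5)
Your proposal is correct and follows essentially the paper's route: the non-locality axioms are cited from \cite[Sec.~3]{GcrossedbraidedRep}, commutativity of the locality square comes from commuting path continuations in the two variables (the paper cites \cite[Prop.~3.27, Prop.~3.10]{GcrossedbraidedRep} for exactly this), and adjointness comes from the formula $Z^+(\xi,\tilde I)^*(\xi'\otimes\chi)=\pi_{\tilde I}\bigl(Z^+(\xi,\tilde I)^*Z^+(\xi',\tilde I)\bigr)\chi$. That formula is combined with the fact that the right creation operator intertwines the actions of $\A_\R(\tilde J^{c-})$, hence of $\A_\R(\tilde I)$ because $\tilde I\subset\tilde J^{c-}$. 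The one thing to tidy is that this formula holds for $Z^+(\xi,\tilde I)^*$ on $H^g_+(\tilde I)\boxtimes R^k$, equivalently for $L(\xi,\tilde I)^*L(\xi',\tilde I)$, not for $L(\xi,\tilde I)^*$ applied to $\xi'\otimes\chi$; you then conclude by density of the vectors $L(\xi',\tilde I)\chi$, using the commutativity you have already established.
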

\begin{proof}
    All the properties except for locality are easy to extract from the results in \cite[Sec. 3]{GcrossedbraidedRep}. In particular, compatibility with the braiding follows from \cite[Prop. 3.26]{GcrossedbraidedRep}. Let us show locality. Fix $\tilde{I}, \tilde{J}\in \Jcal_\R$ intervals with $\tilde{J}\subset \tilde{I}^{c+}$ and let $H^g\in \Rep^g(\A)$ and $K^h\in \Rep^h(\A)$. Given $\xi\in H^g_+(\tilde{I})$ and $\eta\in K^h_-(\tilde{J})$, the commutativity of
\[\begin{tikzcd}
	{R^k} && {R^k\boxdot K^h} \\
	{H^g\boxdot R^k} && {H^g\boxdot R^k\boxdot K^h};
	\arrow["{R(\eta, \tilde{J})}", from=1-1, to=1-3]
	\arrow["{L(\xi, \tilde{I})}"', from=1-1, to=2-1]
	\arrow["{L(\xi, \tilde{I})}", from=1-3, to=2-3]
	\arrow["{R(\eta, \tilde{J})}"', from=2-1, to=2-3]
\end{tikzcd}\]
is proved in \cite[Prop. 3.27]{GcrossedbraidedRep}. Let us first show the adjoint commutativity of the diagram 
\[\begin{tikzcd}
	{R^k} && {R^k\boxtimes K^h_-(\tilde{J})} \\
	{H^g_+(\tilde{I})\boxtimes R^k} && {H^g_+(\tilde{I})\boxtimes R^k\boxtimes K^h_-(\tilde{J})}
	\arrow["{Z^-(\eta, \tilde{J})}", from=1-1, to=1-3]
	\arrow["{Z^+(\xi, \tilde{I})}"', from=1-1, to=2-1]
	\arrow["{Z^+(\xi, \tilde{I})}", from=1-3, to=2-3]
	\arrow["{Z^-(\eta, \tilde{J})}"', from=2-1, to=2-3]
\end{tikzcd}\]
Fix $\xi'\in H^g_+(\tilde{I})$ and $\chi'\in R^k$. Then, it is straightforward to show that
\begin{equation}\label{eq: Z*Z}
Z^+(\xi, \tilde{I})^*(\xi'\otimes \chi') = \pi_{\tilde{I}}^R(Z^+(\xi,\tilde{I})^*Z^+(\xi',\tilde{I}))\chi'
\end{equation}
by pairing both sides against an arbitrary vector $\chi\in R^k$. Similarly, we have
\[
Z^+(\xi, \tilde{I})^*Z^-(\eta, \tilde{J})(\xi'\otimes\chi') = Z^+(\xi, \tilde{I})^*(\xi'\otimes \chi'\otimes\eta) = \pi^{R\boxtimes K}_{\tilde{I}}(Z^+(\xi, \tilde{I})^*Z^+(\xi', \tilde{I}))(\chi'\otimes\eta).
\]
Since $Z^+(\xi, \tilde{I})^*Z^+(\xi', \tilde{I})\in \A_\R(\tilde{I})$, the right-hand side of the expression above equals
\[
\pi^R_{\tilde{I}}(Z^+(\xi, \tilde{I})^*Z^+(\xi',\tilde{I}))\chi'\otimes\eta = Z^-(\eta, \tilde{J})\circ \pi^R_{\tilde{I}}(Z^+(\xi, \tilde{I})^*Z^+(\xi', \tilde{I}))\chi',
\]
which, by Equation \eqref{eq: Z*Z}, equals $Z^-(\eta, \tilde{J})Z^+(\xi, \tilde{I})^*(\xi'\otimes\chi')$. Hence, we have that $Z^-(\eta, \tilde{J})Z^+(\xi, \tilde{I})^* = Z^+(\xi, \tilde{I})^*Z^-(\eta, \tilde{J})$, as needed.

We can now show the adjoint commutativity of the relevant diagram. Let $\alpha_{\tilde{I}}$ be a path in $\R$ from $\tilde{I}$ to $-\nicefrac{1}{4}\in \widetilde{S^1_-}$ and $\beta_{\tilde{J}}$ be a path in $\R$ from $\tilde{J}$ to $\nicefrac{1}{4}\in \widetilde{S^1_+}$. We can assume, by taking homotopic paths if needed, that $\gamma:=(\alpha_{\tilde{I}},\beta_{\tilde{J}})$ is a path in $\ConfR$. Note that here we use that $\tilde{J}\subset \tilde{I}^{c+}$. We have to prove the adjoint commutativity of the outer diagram of 
\[\begin{tikzcd}
	{R^k} && {R^k\boxtimes K^h_-(\tilde{J})} && {R^k\boxtimes K^h_-({+})} \\
	{H^g_+(\tilde{I})\boxtimes R^k} && {H^g_+(\tilde{I})\boxtimes R^k\boxtimes K^h_-(\tilde{J})} && {H^g_+(\tilde{I})\boxtimes (R^k\boxtimes K^h_-({+}))} \\
	{H^g_+(-)\boxtimes R^k} && {(H^g_+(-)\boxtimes R^k)\boxtimes K^h_-(\tilde{J})} && {H^g_+(-)\boxtimes R^k\boxtimes K^h_-({+})}
	\arrow["{Z^-(\eta, \tilde{J})}", from=1-1, to=1-3]
	\arrow["{Z^+(\xi, \tilde{I})}"', from=1-1, to=2-1]
	\arrow["{\beta_{\tilde{J}}^\bullet}", from=1-3, to=1-5]
	\arrow["{Z^+(\xi, \tilde{I})}", from=1-3, to=2-3]
	\arrow["{Z^+(\xi, \tilde{I})}", from=1-5, to=2-5]
	\arrow["{Z^-(\eta, \tilde{J})}", from=2-1, to=2-3]
	\arrow["{\alpha_{\tilde{I}}^\bullet}"', from=2-1, to=3-1]
	\arrow["{\id\boxtimes \beta_{\tilde{J}}^\bullet}", from=2-3, to=2-5]
	\arrow["{\alpha_{\tilde{I}}^\bullet\boxtimes \id}", from=2-3, to=3-3]
	\arrow["{\alpha_{\tilde{I}}^\bullet}", from=2-5, to=3-5]
	\arrow["{Z^-(\eta, \tilde{J})}", from=3-1, to=3-3]
	\arrow["{\beta_{\tilde{J}}^\bullet}", from=3-3, to=3-5]
\end{tikzcd}\]
The top-left diagram commutes adjointly by the discussion above. The top-right diagram commutes trivially, and since $\beta_{\tilde{J}}^\bullet$ and $\id\boxtimes \beta_{\tilde{J}}^\bullet$ are unitary, it commutes adjointly. The same arguments work for the bottom-left diagram. For the bottom-right diagram, commutativity, and hence adjoint commutativity, follows from \cite[Prop. 3.10]{GcrossedbraidedRep}.
\end{proof}
 
We call the categorical extension from Theorem \ref{Thm: ConnesCategoricalExtension} the \emph{Connes $G$-crossed categorical extension of $\A$}. The main reason to introduce $G$-crossed categorical extensions is the following uniqueness theorem, analogous to \cite[Thm. 3.10]{Gui21}. Let us denote by $L^{\boxtimes}$ and $R^\boxtimes$ the operators in the Connes $G$-crossed categorical extension of $\A$.

\begin{theorem}\label{thm: UniquenessCategoricalExtensions}
    Let $\A$ be a conformal net and $G$ a discrete group acting faithfully on $\A$. Let $(\boxdot, L^\boxdot, R^\boxdot, \beta)$ be a $G$-crossed categorical extension of $\A$. Given $g,h\in G$ and $H^g\in \Rep^g(\A)$ and $K^h\in \Rep^h(\A)$, there exists a unique unitary isomorphism
     \[
    \Phi_{H, K}: H^g\boxtimes K^h\xrightarrow{\cong} H^g\boxdot K^h 
    \]
    such that for all $\tilde{I}\in\Jcal_\R$ and $\xi\in H^g_+(\tilde{I})$, $\eta\in H^g_-(\tilde{I})$ and $\chi\in K^h$,
    \begin{align}\label{eq: RelationsPhiLR}
    \Phi_{H, K}L^\boxtimes(\xi, \tilde{I})\chi  &= L^\boxdot(\xi, \tilde{I})\chi\\
   \Phi_{H, K} R^\boxtimes(\eta, \tilde{I})\chi  &= R^\boxdot(\eta, \tilde{I})\chi\nonumber.
    \end{align}
    In addition, the family $\Phi$ is natural in $H^g$ and $K^h$ and makes the following diagrams commute:
\[\begin{tikzcd}
	{H^g\boxtimes R^k\boxtimes K^h} && {H^g\boxtimes(R^k\boxdot K^h)} && {H^g\boxtimes H_0} && {H^g\boxdot H_0} \\
	&&&&& {H^g} \\
	{(H^g\boxdot R^k)\boxtimes K^h} && {H^g\boxdot R^k\boxdot K^h} && {H_0\boxtimes H^g} && {H_0\boxdot H^g} \\
	&& {H^g\boxtimes K^h} && {T_g(K^h)\boxtimes H^g} \\
	&& {H^g\boxdot K^h} && {T_g(K^h)\boxdot H^g}
	\arrow["{\id\boxtimes \Phi_{R, K}}", from=1-1, to=1-3]
	\arrow["{\Phi_{H, R}\boxtimes\id}"', from=1-1, to=3-1]
	\arrow["{\Phi_{H, R\boxdot K}}", from=1-3, to=3-3]
	\arrow["{\Phi_{H, H_0}}", from=1-5, to=1-7]
	\arrow["\cong"', from=1-5, to=2-6]
	\arrow["\cong", from=1-7, to=2-6]
	\arrow["{\Phi_{H\boxdot R, K}}"', from=3-1, to=3-3]
	\arrow["\cong", from=3-5, to=2-6]
	\arrow["{\Phi_{H_0, H}}", from=3-5, to=3-7]
	\arrow["\cong"', from=3-7, to=2-6]
	\arrow["{\B_{H, K}}", from=4-3, to=4-5]
	\arrow["{\Phi_{H, K}}"', from=4-3, to=5-3]
	\arrow["{\Phi_{T_g(K), H}}", from=4-5, to=5-5]
	\arrow["{\beta_{H, K}}"', from=5-3, to=5-5]
\end{tikzcd}\]
where $k\in G$ and $R^k\in \Rep^k(\A)$ is a twisted representation. Moreover, $\Rep^G(\A)$, together with $\boxdot$, $T$ and $\beta$, becomes a $G$-crossed braided $\mathrm{W}^*$-tensor category equivalent to $(\Rep^G(\A), \boxtimes, T, \B)$ under the equivalence $(\text{Id}_{\Rep^G(\A)}, \Phi)$.   
\end{theorem}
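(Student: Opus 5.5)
We follow the strategy of \cite[Thm. 3.10]{Gui21}, adapted to twisted representations. The core step is defining $\Phi_{H,K}$ on the dense span of vectors $L^\boxtimes(\xi,\tilde{I})\chi$ by
\[
\Phi_{H,K}\Big(\sum_i L^\boxtimes(\xi_i,\tilde{I})\chi_i\Big) := \sum_i L^\boxdot(\xi_i,\tilde{I})\chi_i
\]
for a fixed $\tilde{I}$, and showing this is well defined and isometric. Both properties follow once we prove, for $\xi,\xi'\in H^g_+(\tilde{I})$,
\[
L^\boxdot(\xi',\tilde{I})^*L^\boxdot(\xi,\tilde{I}) = \pi^K_{\tilde{I}}\big(Z^+(\xi',\tilde{I})^*Z^+(\xi,\tilde{I})\big),
\]
and the same identity for $L^\boxtimes$. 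For $L^\boxtimes$ this is the computation \eqref{eq: Z*Z}, since path continuations are unitary. For $L^\boxdot$ the identity is the main obstacle.

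To establish the $L^\boxdot$ identity, we first note that $L^\boxdot(\xi',\tilde{I})^*L^\boxdot(\xi,\tilde{I})$ commutes with $\A_\R(\tilde{I}^{c+})$ on $K^h$. We then test it against vectors $R^\boxdot(\eta,\tilde{J})\Omega$ with $\tilde{J}\subset\tilde{I}^{c+}$, which span a dense subspace of $K^h$ by the Reeh--Schlieder axiom. Adjoint locality lets us move $R^\boxdot(\eta,\tilde{J})$ and $R^\boxdot(\eta,\tilde{J})^*$ past $L^\boxdot$ and $L^\boxdot{}^*$. This reduces the claim to the case $K=H_0$. There unitality gives $L^\boxdot(\xi,\tilde{I})\Omega=\xi$. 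The operator $L^\boxdot(\xi,\tilde{I})$, viewed as a map $H_0\to H^g\boxdot H_0\cong H^g$, intertwines $\A_\R(\tilde{I}^{c+})$, so by uniqueness it equals $Z^+(\xi,\tilde{I})$. Hence in $K=H_0$ the product is $Z^+(\xi',\tilde{I})^*Z^+(\xi,\tilde{I})$, and transporting back through $R^\boxdot(\eta,\tilde{J})$ yields the action $\pi^K_{\tilde{I}}$. Some care is needed with the unitor identifications and with the twisted indexing $\tilde{I}$ versus $\tilde{I}^{c+}$. By the density-of-fusion-products axiom the image of $\Phi$ is dense, so $\Phi_{H,K}$ extends to a unitary. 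Uniqueness is immediate from density.

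Next we check that $\Phi$ is independent of $\tilde{I}$ and intertwines the $\A$-actions. Isotony gives independence under inclusions $\tilde{I}_1\subset\tilde{I}_2$, and any two intervals are joined by a chain of inclusions. Compatibility with $\A_\R(\tilde{I}^{c+})$ holds because both $L$'s intertwine it; varying $\tilde{I}$ covers all of $\A_\R$. For the $R$-relation in \eqref{eq: RelationsPhiLR}, we expand $R^\boxdot(\eta,\tilde{J})\chi$ using density, $\chi=\sum L(\cdot)\Omega$-type approximations, and locality, comparing inner products as before. Alternatively, we can evaluate on $\chi=L(\zeta,\tilde{I})\Omega$ and use adjoint locality in both extensions. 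Naturality of $\Phi$ follows from the naturality axiom.

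Finally we verify the three diagrams on generating vectors. For associativity, we evaluate on $L(\xi,\tilde{I})L(\zeta,\tilde{O})\chi$ and use \eqref{eq: JoinLs} for both extensions; for $\boxdot$, the analogous join relation is derived by the same inner-product argument. The unitor diagram is unitality. For the braiding square, we use Remark~\ref{rk: FormulasLB}, whose proof relies only on the braiding-compatibility axiom and therefore holds for $\beta$ as well; it rewrites both sides in terms of $L(\Gamma_g\cdot)$ operators, which $\Phi$ intertwines. Compatibility with $T_g$ is checked in the same way using $\Gamma_g$. Since $(\text{Id},\Phi)$ is then a unitary tensor functor intertwining $T$ and the braidings, transporting the structure shows that $(\Rep^G(\A),\boxdot,T,\beta)$ satisfies the $G$-crossed braided axioms and that the two structures are equivalent.
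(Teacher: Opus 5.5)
Apart from the braiding and $T_g$ step, your argument is the paper's argument in slightly different packaging. You identify $L^\boxdot(\xi,\tilde{I})$ with $Z^+(\xi,\tilde{I})$ on $H_0$ via unitality, push everything down to the vacuum by adjoint locality against $R^\boxdot(\eta,\tilde{J})\Omega$, and define $\Phi$ on the dense span of the $L(\xi,\tilde{I})\chi$. Stating this as the operator identity $L^\boxdot(\xi',\tilde{I})^*L^\boxdot(\xi,\tilde{I})=\pi^K_{\tilde{I}}\big(Z^+(\xi',\tilde{I})^*Z^+(\xi,\tilde{I})\big)$ is tidier than the paper's version. So is deducing the intertwining property from independence of $\tilde{I}$, rather than carrying the extra factors $\pi_{\tilde{I}}(x)$. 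For associativity the paper evaluates on $L^\boxtimes(\xi,\tilde{I})R^\boxtimes(\eta,\tilde{J})\chi$, which needs only locality, naturality of $R$ and the $R$-relation. Your route through \eqref{eq: JoinLs} also works, but you must first prove the join relation for $\boxdot$, including $(\tilde{O},+)$-boundedness of $L^\boxdot(\xi,\tilde{I})\zeta$. This is most easily done by testing both sides on $R^\boxdot(\chi,\tilde{M})\Omega$ with $\tilde{M}\subset\tilde{O}^{c+}$, using locality and unitality.

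The step that does not go through is the last one: ``compatibility with $T_g$ is checked in the same way using $\Gamma_g$'', together with the claim that Remark~\ref{rk: FormulasLB} holds for $\beta$ because its proof only uses the braiding axiom. Write $J^\boxdot_g\colon T_g(K)\boxdot T_g(H)\to T_g(K\boxdot H)$ for the tensorator of the action on $\boxdot$. No axiom ties $J^\boxdot_g$ to $L^\boxdot$ or $R^\boxdot$; the braiding axiom only fixes the composite $J^\boxdot_g\circ\beta_{T_g(H),K}$ on vectors $L^\boxdot(\Gamma_g\xi,\tilde{I})\eta$. Deriving that Remark's formulas for $\boxdot$, and comparing $J^\boxdot_g$ with $J^\boxtimes_g$ through $\Phi$, both need $J^\boxdot_g\big(L^\boxdot(\Gamma_g\xi,\tilde{I})\Gamma_g\eta\big)=\Gamma_g L^\boxdot(\xi,\tilde{I})\eta$. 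This does not follow from the axioms.

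Here is a counterexample. Take $G=\Z/2=\{e,s\}$ with $\Rep^s(\A)\neq 0$, and identify $e,s$ with $0,1$. Start from the Connes extension and multiply $J^\boxtimes_s$ on $\Rep^a(\A)\times\Rep^b(\A)$ by $(-1)^{ab}$; this is still an action by tensor automorphisms, since $(a,b)\mapsto(-1)^{ab}$ is a bicharacter. Replace $\B_{X,Y}$ by $(-1)^{b}\B_{X,Y}$ for $X\in\Rep^s(\A)$, $Y\in\Rep^b(\A)$. The braiding axiom still holds and $L,R$ are unchanged, so the unique $\Phi$ is the identity. Yet $\beta=-\B$ on $\Rep^s(\A)\times\Rep^s(\A)$, so the braiding square fails.

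The paper's proof makes the same implicit assumption. It drops the isomorphism $T_g(K)\boxdot T_g(H)\cong T_g(K\boxdot H)$ and uses $\Phi_{T_g(K),T_g(H)}R^\boxtimes(\Gamma_g\xi,\tilde{I})\Gamma_g\eta=\Gamma_g\Phi_{K,H}R^\boxtimes(\xi,\tilde{I})\eta$. The repair is to add this compatibility as an axiom in Definition~\ref{def: CategoricalExtension}; with it, your braiding and $T_g$ checks go through as you describe. In the application to $\M$ the axiom holds, because $\mathrm{T}_g$ leaves underlying objects unchanged and its tensorator is induced by the identity of $H\boxtimes^G K$.
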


\begin{proof}[Proof of Theorem \ref{thm: UniquenessCategoricalExtensions}]
    We will first produce the natural family of unitary isomorphisms $\Phi$ and prove the relations \eqref{eq: RelationsPhiLR}. Fix $g,h\in  G$ and let $H^g\in \Rep^g(\A)$ and $K^h\in \Rep^h(\A)$ be twisted representations. Let $\tilde{I},\tilde{J}\in\Jcal_\R$ be intervals such that $\tilde{J}\subset \tilde{I}^{c+}$. Then, for every $\xi\in H^g_+(\tilde{I})$ and every $x\in \A(J)$, it holds that
    \[\hspace{-.2cm}
    L^\boxdot(\xi, \tilde{I})\circ\pi_0(x)(\Omega) = \pi_{\tilde{I}^{c+}}^{H}(x)\circ L^\boxdot(\xi, \tilde{I})\Omega = \pi^H_{\tilde{I}^{c+}}(x)(\xi) = \pi^H_{\tilde{I}^{c+}}(x)\circ L^\boxtimes(\xi, \tilde{I})\Omega  = L^\boxtimes(\xi, \tilde{I})\circ \pi_0(x)(\Omega),
    \]
    which implies that $L^\boxdot(\xi, \tilde{I}) = L^\boxtimes(\xi, \tilde{I})$ when acting on $H_0$. We can similarly show that, for every $\eta\in H^g_-(\tilde{I})$, the operators $R^\boxdot(\eta, \tilde{I})$ and $R^\boxtimes(\eta, \tilde{I})$ agree when acting on $H_0$. Using locality, we compute, for every $x,x'\in \A(I)$, $\xi, \xi'\in H^g_+(\tilde{I})$ and $\eta, \eta'\in K^h_-(\tilde{J})$,
    \begin{align*}
        \langle\pi^{H\boxdot K}_{\tilde{I}}(x) L^\boxdot(\xi, \tilde{I})\eta,& \pi^{H\boxdot K}_{\tilde{I}}(x') L^\boxdot(\xi', \tilde{I})\eta'\rangle \\& =  \langle\pi^{H\boxdot K}_{\tilde{I}}(x) L^\boxdot(\xi, \tilde{I})R^{\boxdot}(\eta, \tilde{J})\Omega, \pi^{H\boxdot K}_{\tilde{I}}(x') L^\boxdot(\xi', \tilde{I})R^{\boxdot}(\eta', \tilde{J})\Omega\rangle\\ & = \langle R^{\boxdot}(\eta', \tilde{J})^*L^{\boxdot}(\xi', \tilde{I})^*\pi_{\tilde{I}}^{H\boxdot K}(x'^*x)L^\boxdot(\xi, \tilde{I})R^{\boxdot}(\eta, \tilde{J})\Omega, \Omega\rangle\\& = \langle R^{\boxdot}(\eta', \tilde{J})^*R^{\boxdot}(\eta, \tilde{J})L^{\boxdot}(\xi', \tilde{I})^*\pi_{\tilde{I}}^{H\boxdot K}(x'^*x)L^\boxdot(\xi, \tilde{I})\Omega, \Omega\rangle.
    \end{align*}
  Note that, on the right-hand side of the equation above, all of $L^\boxdot(\xi, \tilde{I}), L^\boxdot(\xi', \tilde{I}), R^\boxdot(\eta, \tilde{I}), R^\boxdot(\eta', \tilde{I})$ act on $H_0$. Similarly, 
  \[\hspace{-.2cm}
  \langle\pi^{H\boxtimes K}_{\tilde{I}}(x) L^\boxtimes(\xi, \tilde{I})\eta, \pi^{H\boxtimes K}_{\tilde{I}}(x') L^\boxtimes(\xi', \tilde{I})\eta'\rangle = \langle R^{\boxtimes}(\eta', \tilde{J})^*R^{\boxtimes}(\eta, \tilde{J})L^{\boxtimes}(\xi', \tilde{I})^*\pi_{\tilde{I}}^{H\boxtimes K}(x'^*x)L^\boxtimes(\xi, \tilde{I})\Omega, \Omega\rangle,
  \]
  and on the right-hand side all the operators $L^\boxtimes$ and $R^\boxtimes$ are acting on $H_0$. Hence, 
  \[
  \langle\pi^{H\boxdot K}_{\tilde{I}}(x) L^\boxdot(\xi, \tilde{I})\eta, \pi^{H\boxdot K}_{\tilde{I}}(x') L^\boxdot(\xi', \tilde{I})\eta'\rangle = \langle\pi^{H\boxtimes K}_{\tilde{I}}(x) L^\boxtimes(\xi, \tilde{I})\eta, \pi^{H\boxtimes K}_{\tilde{I}}(x') L^\boxtimes(\xi', \tilde{I})\eta'\rangle.
  \]
  By density of fusion products and the Reeh-Schlieder property, there is a unique unitary map $\Phi^{\tilde{I},\tilde{J}}_{H,K}: H^g\boxtimes K^h\to H^g\boxdot K^h$ such that
  \[
  \Phi^{\tilde{I}, \tilde{J}}_{H, K}\circ \pi_{\tilde{I}}^{H\boxtimes K}(x)\circ L^{\boxtimes}(\xi, \tilde{I})\eta = \pi_{\tilde{I}}^{H\boxdot K}(x)\circ L^\boxdot(\xi, \tilde{I})\eta
  \]
  for all $x\in \A(I)$, $\xi\in H^g_+(\tilde{I})$ and $\eta\in K^h_-(\tilde{J})$. In particular, $\Phi^{\tilde{I}, \tilde{J}}_{H, K}$ intertwines the actions of $\A_\R(\tilde{I})$ on $H^g\boxtimes K^h$ and $H^g\boxdot K^h$. It is clear that, if $\tilde{I_0},\tilde{J_0}\in \Jcal_\R$ are intervals with $\tilde{I_0}\subset \tilde{I}$ and $\tilde{J_0}\subset \tilde{J}$, then $\Phi^{\tilde{I_0},\tilde{J_0}}_{H,K} = \Phi^{\tilde{I},\tilde{J}}_{H,K}.$ Hence, it follows that, for any $\tilde{I_1},\tilde{J_1}\in \Jcal_\R$ such that $\tilde{J_1}\subset \tilde{I_1}^{c+}$, then $\Phi^{\tilde{I_1},\tilde{J_1}}_{H,K} = \Phi^{\tilde{I}, \tilde{J}}_{H,K}$. We write $\Phi_{H,K} = \Phi^{\tilde{I}, \tilde{J}}_{H,K}$, which intertwines the actions of $\A_\R(\tilde{L})$ on $H^g\boxtimes K^h$ and $H^g\boxdot K^h$ for all $\tilde{L}\in \Jcal_\R$.

  By the Reeh-Schlieder property, it holds that $\Phi^{\tilde{I}, \tilde{J}}_{H, K} L^{\boxtimes}(\xi, \tilde{I})\eta = L^\boxdot(\xi, \tilde{I})\eta$ for all $\xi\in H^g_+(\tilde{I})$ and all $\eta\in K^h$. Hence, by naturality of $L^\boxtimes(\xi, \tilde{I})$ and $L^\boxdot(\xi, \tilde{I})$, it also holds that $\Phi_{H, \hat{K}}(\id \boxtimes F) = (\id\boxdot F)\Phi_{H,K}$ for any $\hat{H}^k\in \Rep^k(\A)$ and $F\in \Hom_{\Rep^k(\A)}(H^k, \hat{H}^k).$ Therefore, $\Phi_{H,K}$ is natural in $K$. Given $\xi\in H^g_+(\tilde{I})$ and $\eta\in K^h_-(\tilde{J})$, we also have that
  \[
  \Phi_{H, K}R^\boxtimes(\eta,\tilde{J})\xi = \Phi_{H,K}L^\boxtimes(\xi, \tilde{I})\eta = L^{\boxdot}(\xi, \tilde{I})\eta = R^{\boxdot}(\eta, \tilde{J})\xi.
  \]
  Since $H^g_+(\tilde{I})\subset H^g$ is dense, the equality $\Phi_{H, K}R^\boxtimes(\eta,\tilde{J})\xi = R^{\boxdot}(\eta, \tilde{J})\xi$ also holds when $\xi\in H^g.$ A similar argument to the one above shows that $\Phi_{H,K}$ is natural in $H$. Hence, we have proved the first part of the statement.

  We next prove the commutativity of the three diagrams in the statement. Fix $g,h,k\in G$ and let $H^g\in\Rep^g(\A)$, $K^h\in\Rep^h(\A)$ and $R^k\in\Rep^k(\A)$ be twisted representations. We first show the commutativity of the top-left diagram. Given $\tilde{I},\tilde{J}\in\Jcal_\R$ with $\tilde{J}\subset \tilde{I}^{c+}$, and $\xi\in H^g_+(\tilde{I})$ and $\eta\in K^h_+(\tilde{J})$, we have, for every $\chi\in R^k$,
  \begin{align*}
  L^\boxtimes(\xi, \tilde{I})R^\boxtimes(\eta, \tilde{J})\chi&\xrightarrow{\id\boxtimes \Phi_{R, K}}L^\boxtimes (\xi, \tilde{I})\Phi_{R, K}R^\boxtimes(\eta,\tilde{J})\chi  = L^\boxtimes(\xi, \tilde{I})R^\boxdot(\eta, \tilde{J})\chi\\ &\xrightarrow{\Phi_{H, R\boxdot K}} L^\boxdot(\xi, \tilde{I})R^{\boxdot}(\eta, \tilde{J})\chi.
  \end{align*}
  On the other hand, we have
  \begin{align*}
      L^\boxtimes(\xi, \tilde{I})R^\boxtimes(\eta, \tilde{J})\chi = R^{\boxtimes}(\eta, \tilde{J})L^\boxtimes(\xi, \tilde{I})\chi &\xrightarrow{\Phi_{H, R}\boxtimes \id}R^{\boxtimes}(\eta, \tilde{J})\Phi_{H, R}L^\boxtimes(\xi, \tilde{I})\chi = R^{\boxtimes}(\eta, \tilde{J})L^\boxdot(\xi, \tilde{I})\chi\\& \xrightarrow{\Phi_{H\boxdot R, K} } R^\boxdot(\eta, \tilde{J})L^\boxdot(\xi, \tilde{I})\chi = L^\boxdot(\xi,\tilde{I})R^\boxdot(\eta, \tilde{J})\chi,
  \end{align*}
  hence the top-left diagram commutes. For the diagrams involving the unitors, let $\tilde{I}\in \Jcal_\R$ and $\xi\in H^g_+(\tilde{I})$. We write $r^\boxtimes$ and $r^{\boxdot}$ for the right unitors of $\boxtimes$ and $\boxdot$ respectively. Then,
  \[
  r^{\boxdot}_H\circ \Phi_{H, H_0}\circ L^\boxtimes(\xi, \tilde{I})\Omega = r^{\boxdot}_H\circ L^\boxdot(\xi, \tilde{I})\Omega = \xi = r^{\boxtimes}\circ L^\boxtimes(\xi, \tilde{I})\Omega,
  \]
  hence the top triangle in the top-right diagram commutes. The other triangle commutes analogously. Thus, we have shown that $\Phi$ (together with the identity functor) produces an equivalence of $\mathrm{W}^*$-tensor categories between $(\Rep^G(\A), \boxtimes)$ and $(\Rep^G(\A), \boxdot).$ We will finally argue compatibility with the braidings, that is, the commutativity of the bottom diagram in the statement. Note that it is equivalent to show that, for every $\tilde{I}\in\Jcal_\R$ and every $\xi\in H^g_-(\tilde{I})$, the diagram
\[\begin{tikzcd}
	{T_g(H^g)\boxtimes K^h} && {T_g(K^h)\boxtimes T_g(H^g)} \\
	{T_g(H^g)\boxdot K^h} && {T_g(K^h)\boxdot T_g(H^g)}
	\arrow["{\B_{T_g(H), K}}", from=1-1, to=1-3]
	\arrow["{\Phi_{T_g(H), K}}"', from=1-1, to=2-1]
	\arrow["{\Phi_{T_g(K), T_g(H)}}", from=1-3, to=2-3]
	\arrow["{\beta_{T_g(H), K}}"', from=2-1, to=2-3]
\end{tikzcd}\]
commutes. Let $\tilde{I}\in \Jcal_\R$, $\xi\in H^g_-(\tilde{I})$ and $\eta\in K^h$. Then, the left-bottom leg of the diagram applied to $L^{\boxtimes}(\Gamma_g\xi, \tilde{I})\eta$ reads
  \[
  L^{\boxtimes}(\Gamma_g\xi, \tilde{I})\eta\xrightarrow{\Phi_{T_g(H),K}}L^{\boxdot}(\Gamma_g\xi, \tilde{I})\eta\xrightarrow{\beta_{T_g(H),K}}\beta_{T_g(H),K}L^\boxdot(\Gamma_g\xi, \tilde{I})\eta  = \Gamma_gR^\boxdot(\xi, \tilde{I})\eta,
  \]
  where we have dropped the isomorphism $T_g(K^h)\boxdot T_g(H^g)\cong T_g(K^h\boxdot H^g)$ for readability. Similarly, the top-right leg of the diagram applied to $L^{\boxtimes}(\Gamma_g\xi, \tilde{I})\eta$ reads
  \begin{align*}
  L^{\boxtimes}(\Gamma_g\xi, \tilde{I})\eta&\xrightarrow{\B_{T_g(H), K}}\B_{T_g(H), K}L^{\boxtimes}(\Gamma_g\xi, \tilde{I})\eta =\Gamma_g R^\boxtimes(\xi, \tilde{I})\eta\\& \xrightarrow{\Phi_{T_g(K), T_g(H)}}  \Phi_{T_g(K), T_g(H)}  R^\boxtimes(\Gamma_g\xi, \tilde{I})\Gamma_g\eta = \Gamma_g \Phi_{K, H} R^\boxtimes(\xi, \tilde{I})\eta = \Gamma_g R^\boxdot(\xi, \tilde{I})\eta.
  \end{align*}
  Hence, the last diagram commutes.

  The fact that $(\Rep^G(\A), \boxdot, T, \beta)$ is a $G$-crossed braided $\mathrm{W}^*$-tensor category equivalent to $(\Rep^G(\A), \boxtimes, T, \B)$ is clear by the commutativity of the three diagrams.
    \end{proof}

\section{The category of representations of the fixed-points conformal net}
\label{sec: FixedPoints}

Given a conformal net $\A$, acted on faithfully by a finite group $G$, the category $G-\Loc_I(\A)$ of $G$-localized endomorphisms of $\A$ on some fixed interval $I\in\Jcal_\mathrm{p}$ was introduced in \cite{muger05} to study the categories of localized endomorphisms of the fixed-points conformal net $\A^G$, constructed as follows. The category $\Rep^G(\A)$ serves the same purpose in this article.

Let $(\A,H_0, U, \Omega)$ be a conformal net and let $G\to \Aut(\A)$ be an injective group homomorphism defining a faithful action of a finite group $G$ on $\A$. In order to avoid cluttering the notation, we will continue denoting by $g$ the image of an element $g\in G$ in $\Aut(\A)$. Each element $g\in G$ comes equipped with a unitary $V_g\in U(H_0)$ implementing the automorphism $g\in \Aut(\A)$. Hence, the group $G$ acts on the vacuum Hilbert space $H_0$ of $\A$ via $g\mapsto V_g$. We denote by $H_0^G$ the subspace of $H_0$ of $G$-invariant vectors. The vacuum vector $\Omega$ is in $H_0^G$ by hypothesis. Given an interval $I\in \Jcal$, we define $\A^G(I):=\A(I)^G|_{H_0^G}$, which is a von Neumann algebra acting on $H_0^G$. The projective action $U$ of $\Diff^+(S^1)$ on $H_0$ restricts to a projective action $U^G$ on $H_0^G$, see \cite[Rk. 3.14]{GcrossedbraidedRep} or \cite{gf93} and \cite[Thm. 6.1.2]{weiner2007conformalcovariancerelatedproperties}. The tuple $(\A^G, H_0^G,  U^G, \Omega)$ is a conformal net, see for example \cite[Prop. 2.1]{MR1867572}.

\begin{definition}\label{Def: FixedPointsNet}
    The \emph{fixed-points conformal net} of $\A$ with respect to the action $G\to \Aut(\A)$ of a finite group $G$ is $\A^G:=(A^G, H_0^G, U^G, \Omega)$.
\end{definition}

The main goal of this section is to obtain an equivalence of balanced $\mathrm{W}^*$-tensor categories
\[
\Rep(\A^G)\cong (\Rep^G(\A))^G.
\]
We outline the proof strategy here. The main part of the proof is producing an equivalence of braided $\mathrm{W}^*$-tensor categories
\(
\Rep(\A^G)\cong (\Rep^G(\A))^G,
\) which is done in Theorem \ref{thm: RepFixedPointsIsEquivTwistedReps}. Checking that the equivalence is compatible with the balance is done in Theorem \ref{Thm: EquivariantizationCompatibleBalance}. In order to produce a braided $\mathrm{W}^*$-tensor equivalence $\Rep(\A^G)\cong (\Rep^G(\A))^G$ we introduce an auxiliary category $\M$, as follows. The vacuum representation $H_0\in\Rep(\A)$ can be restricted to an honest representation $r(H_0)\in \Rep(\A^G)$ of $\A^G$ under the inclusion $\A^G\subset \A$. Then, $r(H_0)$ has a canonical structure of a commutative separable $\mathrm{C}^*$-Frobenius algebra object in $\Rep(\A^G)$ in the sense of \cite[Def. 2.1]{Gui_2025}. We denote this algebra object by $\Gamma$ and write $\M$ for its $\mathrm{W}^*$-tensor category of unitary modules, with tensor structure $\boxtimes_\Gamma$. We define on $\M$ a $G$-grading, a $G$-action $\mathrm{T}$ and a natural family of unitary $G$-crossed braiding isomorphisms $\mathbb{B}^\Gamma$, without arguing the hexagon conditions yet, see Proposition \ref{Prop: ModGammaIsGGraded} and Lemma \ref{lemm: braidingModGamma}. The category $\M$ is the de-equivariantization of $\Rep(\A^G)$, but we will not argue this statement, see Remark \ref{rk: CommentDeequivariantization}. Instead, we obtain in Proposition \ref{prop: MEquivalenceOfGCats} an equivalence $\mathfrak{M}:\M\to \Rep^G(\A)$ of $G$-graded $\mathrm{W}^*$-categories with a $G$-action. Identifying $\M$ and $\Rep^G(\A)$ via $\mathfrak{M}$, we extend $(\boxtimes_\Gamma, \mathbb{B}^\Gamma)$ to a $G$-crossed categorical extension of $\A$ in the sense of Definition \ref{def: CategoricalExtension}. Then, by uniqueness of categorical extensions (see Theorem \ref{thm: UniquenessCategoricalExtensions}) we find that $(\M, \mathrm{T}, \boxtimes_\Gamma, \mathbb{B}^\Gamma)$ is a $G$-crossed braided $\mathrm{W}^*$-tensor category and that $\mathfrak{M}$ can be upgraded to an equivalence of $G$-crossed braided $\mathrm{W}^*$-tensor categories $\mathfrak{M}: \M\to \Rep^G(\A)$, see Proposition \ref{prop: MisEquivalenceOFG-X-BraidedCats}. Taking $G$-equivariantizations, we obtain an equivalence of braided $\mathrm{W}^*$-tensor categories $\mathfrak{M}^G: \M^G\to (\Rep^G(\A))^G$. The proof is concluded by defining a functor of braided $\mathrm{W}^*$-tensor categories $\mathfrak{D}: \Rep(\A^G)\to \M^G$ and finding an inverse functor $\mathfrak{R}$ to the composition
\[
\Rep(\A^G)\xrightarrow{\mathfrak{D}}\M^G\xrightarrow{\mathfrak{M}^G}(\Rep^G(\A))^G.
\]
The functor $\mathfrak{R}: (\Rep^G(\A))^G\to \Rep(\A^G)$ restricts a $G$-equivariant, $G$-twisted representation of $\A$ on a Hilbert space $H$ to the honest representation of $\A^G$ on the $G$-fixed points of $H$, see above Remark \ref{rk: NonHomogeneous}. In order to upgrade the equivalence
\[\begin{tikzcd}
	{\Rep(\A^G)} && {(\Rep^G(\A))^G}
	\arrow["{\mathfrak{M}^G\circ\mathfrak{D}}", shift left, from=1-1, to=1-3]
	\arrow["{\mathfrak{R}}", shift left, from=1-3, to=1-1]
\end{tikzcd}\]
of braided $\mathrm{W}^*$-tensor categories to an equivalence of balanced $\mathrm{W}^*$-tensor categories, we argue in Theorem \ref{Thm: EquivariantizationCompatibleBalance} that $\mathfrak{R}$ preserves the balance.

\subsection{The equivalence of braided $\mathrm{W}^*$-tensor categories}

In this section, we follow \cite{Gui_2025} to produce an equivalence of braided $\mathrm{W}^*$-tensor categories 
\[
\Rep(\A^G)\cong (\Rep^G(\A))^G.
\]
In order to differentiate the Connes fusions of representations in $\Rep(\A^G)$ and $\Rep^G(\A)$, we denote the former by $-\boxtimes ^G-$, and keep the notation $-\boxtimes-$ for the Connes fusion of twisted representations in $\Rep^G(\A)$. We also write $\beta$ for the braiding on $\Rep(\A^G)$, and keep the notation $\B$ for the $G$-crossed braiding on $\Rep^G(\A)$ and $\B^G$ for the braiding on its equivariantization $(\Rep^G(\A))^G$. We write $L^G$ and $R^G$ for the families of operators of the (non-crossed) Connes categorical extension of $\A^G$.

Let us recall the notion of a $\mathrm{C}^*$-Frobenius algebra object in $\Rep(\A^G)$, following \cite[Def. 2.1]{Gui_2025}.

\begin{definition}
    A \emph{$\mathrm{C}^*$-Frobenius algebra object} in $\Rep(\A^G)$ consists of an object $H\in \Rep(\A^G)$ together with a unit $\iota: H_0^G\to H$ and a multiplication $\mu: H\boxtimes^G H\to H$ such that $\mu\circ(\iota\boxtimes^G\id_H) = \id_H = \mu\circ(\id_H\boxtimes^G\iota)$ and the following diagram commutes adjointly,
\begin{equation}\label{eq: AssociativityFrobeniusCondition}\begin{tikzcd}
	{H\boxtimes^G H\boxtimes^G H} && {H\boxtimes^G H} \\
	{H\boxtimes^G H} && H.
	\arrow["{\id\boxtimes^G \mu}", from=1-1, to=1-3]
	\arrow["{\mu\boxtimes^G\id}"', from=1-1, to=2-1]
	\arrow["\mu", from=1-3, to=2-3]
	\arrow["\mu"', from=2-1, to=2-3]
\end{tikzcd}\end{equation}
Furthermore, if the unit $\iota$ is an isometry, meaning $\iota^*\iota = \id_{H_0^G}$, we say that the algebra object is \emph{normalized}. 
\end{definition}

Let $r: \Rep^G(\A)\to \Rep(\A^G)$ be the functor that restricts a $G$-twisted representation of $\A$ to an honest representation of $\A^G$ under the inclusion $\A^G\subset \A$. We will produce a commutative Frobenius $\mathrm{C}^*$-algebra structure on the object $r(H_0)\in\Rep(\A^G)$. Let $\iota: H_0^G\hookrightarrow r(H_0)$ be the inclusion of $\A^G$-representations. Given $\tilde{I}\in\Jcal_\R$, we define the linear map
\[
\begin{array}{cccc}
    \mu_{\tilde{I}}:  &r(H_0)\boxtimes^G r(H_0) & \to &r(H_0) \\
     & L^G(\pi_0(x)\Omega,\tilde{I})\xi&\mapsto  & \pi_0(x)\xi
\end{array}
\]
for $x\in\A(I)$ and $\xi\in H_0$. Since $G$ is finite, the inclusion $\A^G(I)\subset \A(I)$ is of finite index. Therefore, the same arguments as step 1 of the proof of the implication $(3)\implies (4)$ in \cite[Thm. 2.12]{Gui_2025} show that $\mu_{\tilde{I}}$ is bounded. If $y\in\A^G(I^c)$, it holds that
\begin{align*}
\pi_0(y)\mu_{\tilde{I}} (L^G(\pi_0(x)\Omega, \tilde{I})\xi) &= \pi_0(y)\pi_0(x)\xi = \pi_0(x)\pi_0(y)\xi \\&= \mu_{\tilde{I}}(L^G(\pi_0(x)\Omega, \tilde{I})\pi_0(y)\xi) = \mu_{\tilde{I}}(\pi^{r(H_0)\boxtimes^G r(H_0)}_{I^c}(y)L^G(\pi_0(x)\Omega, \tilde{I})\xi),
\end{align*}
showing that $\mu_{\tilde{I}}$ intertwines the actions of $\A^G(I^c)$. If $\tilde{J}\in\Jcal_\R$ is an interval with $\tilde{J}\subset\tilde{I}$ and $x\in \A(J)$, it then holds that $\mu_{\tilde{J}}(L^G(\pi_0(x)\Omega, \tilde{J})\xi) = \mu_{\tilde{I}}(L^G(\pi_0(x)\Omega, \tilde{I})\xi)$ for all $\xi\in H_0$. Hence, $\mu_{\tilde{I}}$ and $\mu_{\tilde{J}}$ agree. Therefore, $\mu_{\tilde{I}} = \mu_{\tilde{J}}$ for any intervals $\tilde{I}, \tilde{J}\in\Jcal_\R$, and we may write $\mu:=\mu_{\tilde{I}}$, which is a morphism of $\A^G$-representations.

\begin{proposition}\label{prop: GammaFrobeniusAlgebra}
    The object $r(H_0)\in\Rep(\A^G)$, equipped with the unit $\iota: H^G_0\to r(H_0)$ and the multiplication $\mu: r(H_0)\boxtimes^G r(H_0)\to r(H_0)$ is a normalized commutative separable $\mathrm{C}^*$-Frobenius algebra object in $\Rep(\A^G)$.
\end{proposition}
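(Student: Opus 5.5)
We verify the axioms one at a time, always computing on the dense vectors $L^G(\pi_0(x)\Omega,\tilde{I})\xi$ with $x\in\A(I)$, $\xi\in H_0$. By the Reeh--Schlieder property of the Connes categorical extension of $\A^G$ together with density of fusion products, these vectors span a dense subspace of $r(H_0)\boxtimes^G r(H_0)$. Two facts are used throughout: naturality of $L^G$, and the rule \eqref{eq: JoinLs} for composing $L$'s.

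\textbf{Unit and normalization.} The unit is an isometric inclusion, so $\iota^*\iota=\id$ and $\iota$ is normalized. For $\mu\circ(\id\boxtimes^G\iota)$, naturality of $L^G$ in the second slot gives
\[
(\id\boxtimes^G\iota)\bigl(L^G(\pi_0(x)\Omega,\tilde{I})\chi\bigr)=L^G(\pi_0(x)\Omega,\tilde{I})\iota\chi \quad\text{for } \chi\in H_0^G.
\]
Taking $\chi=\Omega$, applying $\mu$ yields $\pi_0(x)\Omega$, which is exactly $L^G(\pi_0(x)\Omega,\tilde I)\Omega$ under the unitor. Density of $\A(I)\Omega$ in $H_0$ then gives $\mu\circ(\id\boxtimes^G\iota)=\id$.

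For $\mu\circ(\iota\boxtimes^G\id)$, take $x\in\A^G(I)$, so that $\pi_0(x)\Omega\in H_0^G$. Then $L^G(\iota\pi_0(x)\Omega,\tilde I)\xi=(\iota\boxtimes^G\id)L^G(\pi_0(x)\Omega,\tilde I)\xi$, and under the left unitor this equals $\pi_0(x)\xi$. Since $\A^G(I)\Omega$ is dense in $H_0^G$, this settles the second unit identity.

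\textbf{Associativity and commutativity.} For associativity, evaluate both legs of \eqref{eq: AssociativityFrobeniusCondition} on vectors $L^G(\pi_0(x)\Omega,\tilde I)L^G(\pi_0(y)\Omega,\tilde J)\xi$ with $\tilde I,\tilde J$ disjoint, $x\in\A(I)$, $y\in\A(J)$. By naturality and the definition of $\mu$, both legs give $\pi_0(x)\pi_0(y)\xi$. For one leg we use \eqref{eq: JoinLs} to merge the two $L$'s into $L^G(\pi_0(x)\pi_0(y)\Omega,\tilde O)$. These vectors are dense by iterated density of fusion products.

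For commutativity, $\mu\circ\beta=\mu$, use Remark \ref{rk: FormulasLB}: $\beta\, L^G(\pi_0(x)\Omega,\tilde I)\pi_0(y)\Omega = L^G(\pi_0(y)\Omega,\tilde J)\pi_0(x)\Omega$ for suitably placed disjoint intervals. Applying $\mu$ gives $\pi_0(y)\pi_0(x)\Omega$ on one side and $\pi_0(x)\pi_0(y)\Omega$ on the other, and these agree by locality of $\A$.

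\textbf{Adjoint (Frobenius) commutativity.} This is the main obstacle. I would first compute $\mu^*$ explicitly. The goal is
\[
\mu^*\pi_0(x)\xi=\sum_k L^G(\pi_0(x)\pi_0(e_k)\Omega,\tilde I)\,\pi_0(e_k)^*\xi,
\]
where $\{e_k\}$ is a Pimsner--Popa basis for the finite-index inclusion $\A^G(I)\subset\A(I)$, whose conditional expectation is $E=\frac{1}{|G|}\sum_g g$. An equivalent, more conceptual route is to identify $\mu^*$ through $E$ via the formula $\langle L^G(\pi_0(x)\Omega,\tilde I)\xi,L^G(\pi_0(x')\Omega,\tilde I)\xi'\rangle=\langle \pi_0(E(x'^*x))\xi,\xi'\rangle$, following step 1 of \cite[Thm. 2.12]{Gui_2025}. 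With $\mu^*$ in hand, the Frobenius relation $(\id\boxtimes^G\mu)(\mu^*\boxtimes^G\id)=\mu^*\mu$ reduces, on the dense vectors above, to an identity between basis expansions that follows from the module property of $E$.

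\textbf{Separability.} Separability is $\mu\mu^*=\lambda\,\id$ with $\lambda>0$. Using the formula for $\mu^*$, we get $\mu\mu^*=\sum_k\pi_0(e_ke_k^*)$, which is the index $\lambda=|G|$, a scalar. Since the inclusion has finite index, the relevant sums are finite, which keeps everything bounded. If the Pimsner--Popa route becomes messy, I would instead transport the whole statement from \cite[Thm. 2.12]{Gui_2025}, whose implication $(3)\Rightarrow(4)$ establishes exactly this Frobenius structure for a finite-index extension $\A^G\subset\A$.
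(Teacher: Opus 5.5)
Your proposal is correct. For unitality, associativity and commutativity you follow the paper. The paper evaluates associativity on $L^G(\pi_0(x)\Omega,\tilde I)L^G(\pi_0(y)\Omega,\tilde I)\xi$ over a single interval via \eqref{eq: JoinLs}, and it proves commutativity exactly through Remark \ref{rk: FormulasLB} and locality of $\A$. The genuine difference is in how you obtain the Frobenius relation and separability.

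The paper never computes $\mu^*$. It embeds the already-proved square \eqref{eq: AssociativityFrobeniusCondition} in a $3\times 3$ grid bordered by $L^G(\pi_0(x)\Omega,\tilde I)$ and $R^G(\pi_0(y)\Omega,\tilde J)$. Adjoint commutativity of the other three cells comes from locality and naturality of the Connes categorical extension of $\A^G$. The outer square reads $\mu R^G(\pi_0(y)\Omega,\tilde J)\pi_0(x)=\pi_0(x)\mu R^G(\pi_0(y)\Omega,\tilde J)$, so it commutes adjointly simply by replacing $x$ with $x^*$. Density of fusion products then pushes adjoint commutativity into the inner square. Separability is obtained softly: $\iota\boxtimes^G\id$ is a bounded right inverse of $\mu$, so $z=\mu\mu^*$ is positive and invertible, and $\Delta=\mu^*z^{-1}$ splits $\mu$.

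You instead identify $\mu^*$ from the inner-product formula $\langle L^G(\pi_0(x)\Omega,\tilde I)\xi,L^G(\pi_0(x')\Omega,\tilde I)\xi'\rangle=\langle\pi_0(E(x'^*x))\xi,\xi'\rangle$ (this is Lemma \ref{lemm: ProjectOntoAG}), together with a finite basis satisfying $b=\sum_k e_kE(e_k^*b)$. After that the Frobenius relation is immediate. By naturality in the first slot and \eqref{eq: JoinLs}, both $(\id\boxtimes^G\mu)(\mu^*\boxtimes^G\id)$ and $\mu^*\mu$ send $L^G(\pi_0(x)\Omega,\tilde I)\xi$ to $\sum_k L^G(\pi_0(xe_k)\Omega,\tilde I)\pi_0(e_k^*)\xi$. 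Moreover $\mu\mu^*=\sum_k\pi_0(e_ke_k^*)$ is the Watatani index of $E$, which is central in the factor $\A(I)$ and hence a positive scalar $\lambda$.

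Your route yields more than the paper states: an explicit comultiplication, and $\mu\mu^*=\lambda\,\id$, so that $\Delta=\lambda^{-1}\mu^*$. The price is importing subfactor theory. You need the existence of a finite quasi-basis for $E$ and the centrality of its index, and also outerness of the action on $\A(I)$ if you want $\lambda=|G|$ rather than just $\lambda>0$. The paper's argument needs none of this. It is also the template reused for the Frobenius relation of the module actions $\mu_H$ in Propositions \ref{Prop: ExtensionFunctor} and \ref{prop: MEquivalenceOfGCats}, where no basis is available.

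Two small points should be tightened.
\begin{enumerate}
\item To get separability in the paper's sense, a bimodule splitting of $\mu$, you should state that $\lambda^{-1}\mu^*$ is a bimodule map. This follows from the Frobenius relation you have just proved.
\item Density of the vectors $L^G(\pi_0(x)\Omega,\tilde I)\chi$ with $x\in\A(I)$ is not literally Reeh--Schlieder plus density of fusion products, since those allow arbitrary $\zeta\in\Gamma(I)$ in the first slot. To restrict to $\zeta\in\pi_0(\A(I))\Omega$, use $L^G(\zeta,\tilde I)\chi=R^G(\chi,\tilde J)\zeta$ for $\chi\in K(J)$ with $\tilde J\subset\tilde I^{c+}$, where $R^G(\chi,\tilde J)$ is bounded.
\end{enumerate}
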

\begin{proof}
    The unitality condition is clear. To argue the associativity condition for $\mu$, that is, the commutativity of diagram \eqref{eq: AssociativityFrobeniusCondition}, let $\tilde{I}\in\Jcal_\R$, $x,y\in\A(I)$ and $\xi\in H_0$. Then, the top-right leg of the diagram \eqref{eq: AssociativityFrobeniusCondition} applied to $L^G(\pi_0(x)\Omega, \tilde{I})L^G(\pi_0(y)\Omega, \tilde{I})\xi$ reads
    \[
    L^G(\pi_0(x)\Omega, \tilde{I})L^G(\pi_0(y)\Omega, \tilde{I})\xi\xmapsto{\id\boxtimes^G\mu} L^G(\pi_0(x)\Omega, \tilde{I})\pi_0(y)\xi\xmapsto{\mu}\pi_0(x)\pi_0(y)\xi = \pi_0(xy)\xi, 
    \]
    and the left-bottom leg applied to the same vector $L^G(\pi_0(x)\Omega, \tilde{I})L^G(\pi_0(y)\Omega, \tilde{I})\xi = L^G(L^G(\pi_0(x)\Omega, \tilde{I})\pi_0(y)\Omega, \tilde{I})\xi$ (using Equation \eqref{eq: JoinLs}) reads
    \[
    L^G(L^G(\pi_0(x)\Omega, \tilde{I})\pi_0(y)\Omega, \tilde{I})\xi\xmapsto{\mu\boxtimes^G\id}
    L^G(\pi_0(xy)\Omega, \tilde{I})\xi\xmapsto{\mu}\pi_0(xy)\xi.\]
    To prove the adjoint commutativity of \eqref{eq: AssociativityFrobeniusCondition}, let $\tilde{I},\tilde{J}\in \Jcal_\R$ be intervals with $\tilde{J}\subset\tilde{I}^{c+}$ and fix $x\in \A(I)$ and $y\in\A(J)$. Consider the diagram
\[\begin{tikzcd}
	{r(H_0)} && {r(H_0)\boxtimes^G r(H_0)} && {r(H_0)} \\
	{r(H_0)\boxtimes^Gr(H_0)} && {r(H_0)\boxtimes^Gr(H_0)\boxtimes^G r(H_0)} && {r(H_0)\boxtimes^G r(H_0)} \\
	{r(H_0)} && {r(H_0)\boxtimes^G r(H_0)} && {r(H_0)}.
	\arrow["{R^G(\pi_0(y)\Omega, \tilde{J})}", from=1-1, to=1-3]
	\arrow["{L^G(\pi_0(x)\Omega, \tilde{I})}"', from=1-1, to=2-1]
	\arrow["\mu", from=1-3, to=1-5]
	\arrow["{L^G(\pi_0(x)\Omega, \tilde{I})}", from=1-3, to=2-3]
	\arrow["{L^G(\pi_0(x)\Omega, \tilde{I})}", from=1-5, to=2-5]
	\arrow["{R^G(\pi_0(y)\Omega, \tilde{J})}"', from=2-1, to=2-3]
	\arrow["\mu"', from=2-1, to=3-1]
	\arrow["{\id\boxtimes^G\mu}"', from=2-3, to=2-5]
	\arrow["{\mu\boxtimes^G\id}", from=2-3, to=3-3]
	\arrow["\mu", from=2-5, to=3-5]
	\arrow["{R^G(\pi_0(y)\Omega, \tilde{J})}"', from=3-1, to=3-3]
	\arrow["\mu"', from=3-3, to=3-5]
\end{tikzcd}\]
By locality of the Connes categorical extension of $\A^G$, the top-left inner diagram commutes adjointly. By naturality and the fact that $(F_1\boxtimes^G F_2)^* = F_1^*\boxtimes^G F_2^*$, the two off-diagonal inner diagrams commute adjointly. We have shown above that the bottom-right diagram commutes, hence the outer diagram commutes, meaning that $\mu\circ R^G(\pi_0(y)\Omega, \tilde{J})\circ \pi_0(x) = \pi_0(x)\circ \mu \circ R^G(\pi_0(y)\Omega, \tilde{J})$ as maps from $H_0$ to $H_0$. Therefore, it holds that $$\hspace{-.4cm}\mu R^G(\pi_0(y)\Omega, \tilde{J}) \pi_0(x)^* = \mu R^G(\pi_0(y)\Omega, \tilde{J}) \pi_0(x^*) = \pi_0(x^*) \mu  R^G(\pi_0(y)\Omega, \tilde{J}) = \pi_0(x)^* \mu  R^G(\pi_0(y)\Omega, \tilde{J})$$
    as maps from $H_0$ to $H_0$. This is exactly the adjoint commutativity of the outer diagram. Since all the other three inner diagrams commute adjointly, by density of fusion products, the bottom-right diagram also commutes adjointly. This is the Frobenius relation for $\mu$. Since $\iota: H_0^G\to r(H_0)$ is an inclusion, it is trivially an isometry.

    To argue the separability of $(r(H_0), \mu, \iota)$, note that $\mu$ is a surjective morphism with bounded right-inverse $(\iota\boxtimes^G\id)$, hence $z:=\mu\mu^*\in \End_{\Rep(\A^G)}(r(H_0))$ is positive and invertible. The map $\Delta:=\mu^*\circ z^{-1}: r(H_0)\to r(H_0)\boxtimes^G r(H_0)$ is a splitting of $\mu$ as an $(r(H_0)-r(H_0))$-bimodule.

    Hence, it remains to argue commutativity of the algebra $(r(H_0), \mu)$. Let $\tilde{I},\tilde{J}\in\Jcal_\R$ be intervals with $\tilde{I}\subset\tilde{J}^{c+}$ and fix $x\in A(I)$, $y\in\A(J)$. Then, using Remark \ref{rk: FormulasLB},
    \begin{align*}
    \mu\circ\beta_{\Gamma, \Gamma}\circ L^G(\pi_0(x)\Omega, \tilde{I})\pi_0(y)\Omega &= \mu L^G(\pi_0(y)\Omega, \tilde{J})\pi_0(x)\Omega = \pi_0(y)\pi_0(x)\Omega\\& = \pi_0(x)\pi_0(y)\Omega = \mu L^G(\pi_0(x)\Omega, \tilde{I})\pi_0(y)\Omega.
    \end{align*}
\end{proof}

When referring to $r(H_0)$ as a $\mathrm{C}^*$-Frobenius algebra object in $\Rep(\A^G)$, we will denote it by $\Gamma$. Recall that we write $\Delta = \mu^*\circ (\mu\mu^*)^{-1}: \Gamma\to \Gamma\boxtimes^G\Gamma$ for the splitting of $\mu$ as a $(\Gamma-\Gamma)$-bimodule witnessing the fact that $\Gamma$ is separable. It is straightforward to check that the diagram
\begin{equation}\label{eq: DeltaAdjointCommutativity}\begin{tikzcd}
	\Gamma && {\Gamma\boxtimes^G\Gamma} \\
	{\Gamma\boxtimes^G\Gamma} && {\Gamma\boxtimes^G\Gamma\boxtimes^G\Gamma}
	\arrow["\Delta", from=1-1, to=1-3]
	\arrow["\Delta"', from=1-1, to=2-1]
	\arrow["{\id\boxtimes^G\Delta}", from=1-3, to=2-3]
	\arrow["{\Delta\boxtimes^G \id}"', from=2-1, to=2-3]
\end{tikzcd}\end{equation}
commutes adjointly. We next introduce the category of unitary modules over $\Gamma$.

\begin{definition}
    A \emph{unitary left module} over $\Gamma$ is an object $H\in\Rep(\A^G)$ together with a morphism $\mu_H:\Gamma\boxtimes^G H\to H$ in $\Rep(\A^G)$ such that $\mu_H\circ (\iota\boxtimes^G\id_{H}) = \id_H$ and the following diagram commutes adjointly
\begin{equation}\label{eq: ModuleCondition}\begin{tikzcd}
	{\Gamma\boxtimes^G\Gamma\boxtimes^G H} && {\Gamma\boxtimes^GH} \\
	{\Gamma\boxtimes^GH} && H.
	\arrow["{\id\boxtimes^G\mu_H}", from=1-1, to=1-3]
	\arrow["{\mu\boxtimes^G\id}"', from=1-1, to=2-1]
	\arrow["{\mu_H}", from=1-3, to=2-3]
	\arrow["{\mu_H}"', from=2-1, to=2-3]
\end{tikzcd}\end{equation}
We call the adjoint commutativity of the diagram above the \emph{module and Frobenius relation} for $(H, \mu_H)$. We write $\M$ for the $\mathrm{W}^*$-category of unitary left $\Gamma$-modules in $\Rep(\A^G)$.
\end{definition}

We may endow $\M$ with a tensor structure as follows. Given $(H,\mu_H),(K,\mu_K)\in \M$, we define their tensor product to be
\begin{equation}\label{eq: TensorModules}
H\boxtimes_\Gamma K:= \Gamma\boxtimes^G_{\Gamma\boxtimes^G \Gamma}(H\boxtimes ^G K),
\end{equation}
where the right-hand side denotes the tensor product of the left $(\Gamma\boxtimes ^G\Gamma)$-module $H\boxtimes ^G K$ with the right $(\Gamma\boxtimes ^G\Gamma)$-module $\Gamma$ in $\Rep(\A^G).$ Note that, since $\Gamma$ is separable, the right-hand side of Equation \eqref{eq: TensorModules} makes sense, even though $\Rep(\A^G)$ is not abelian. With this tensor product, the category $\M$ is a $\mathrm{W}^*$-tensor category with unit $\Gamma$, see \cite[Sec. 4.2.4]{braidedFC}. By definition, there is a canonical morphism $H\boxtimes^G K\to H\boxtimes _\Gamma K$, which we denote $\mu_{H,K}$. The tensor product $H\boxtimes_\Gamma K$ can also be defined by regarding $H$ as a $\Gamma$-bimodule via the right-action $H\boxtimes^G \Gamma\xrightarrow{\beta^{-1}_{\Gamma, H}} \Gamma\boxtimes^G H \xrightarrow{\mu_H} H$, see \cite[Sec. 4.4.7]{braidedFC}. This means that $H\boxtimes_\Gamma K$ is the splitting of the idempotent
\begin{align}\label{eq: Idempotent}
p_{H, K}:H\boxtimes^G K\cong H\boxtimes^G H_0^G\boxtimes^G K&\xrightarrow{\id\boxtimes^G (\Delta\circ \iota)\boxtimes^G \id} H\boxtimes^G \Gamma\boxtimes^G\Gamma\boxtimes^G K\\&\xrightarrow{\beta^{-1}_{\Gamma, H}\boxtimes^G\id}\Gamma\boxtimes^G H\boxtimes^G \Gamma\boxtimes^G K\xrightarrow{\mu_H\boxtimes^G \mu_K} H\boxtimes^G K.\nonumber
\end{align}
Since $\Rep(\A^G)$ is idempotent complete, the object $H\boxtimes_\Gamma K\in \Rep(\A^G)$ exists. In addition, $H\boxtimes_\Gamma K$ is also the coequalizer of $H\boxtimes^G\Gamma\boxtimes^G K\xrightarrow{\id\boxtimes^G \mu_K} H\boxtimes^G K$ and $H\boxtimes^G\Gamma\boxtimes^G K\xrightarrow{\beta^{-1}_{\Gamma, H}\boxtimes^G \id} \Gamma\boxtimes^G H\boxtimes^G K\xrightarrow{\mu_H\boxtimes^G\id} H\boxtimes^G K$, see for example the proof of \cite[Lem. 2.3]{MR3459961}. We stress again that $\Rep(\A^G)$ is not abelian, but since $\Gamma$ is separable, the coequalizer exists, equals the splitting in Equation \eqref{eq: Idempotent} and the morphism $\mu_{H,K}$ is a projection onto the object splitting $p_{H, K}$. The object $H\boxtimes_\Gamma K$ gets equipped with the structure of a unitary left $\Gamma$-module by the morphism $\Gamma\boxtimes^G H\boxtimes ^G K\xrightarrow{\mu_H\boxtimes^G\id} H\boxtimes^G K$. The left unitor on a module $(H, \mu_H)\in\M$ is induced by the map $\mu_H: \Gamma\boxtimes^G H\to H$, and the right unitor is induced by $H\boxtimes^G\Gamma\xrightarrow{\beta_{\Gamma, H}^{-1}}\Gamma\boxtimes^G H\xrightarrow{\mu_H}H$. We will need the following properties of $\mu_{H, K}$. 

\begin{lemma}\label{lemm: technicalLemmasMuHK}
    Let $(H, \mu_H), (K, \mu_K)\in \M$. Then, it holds that
    \begin{enumerate}
        \item the morphism $\mu_{H, K}: H\boxtimes^GK\to H\boxtimes_\Gamma K$ is surjective;
        \item the following diagrams commute
\[\begin{tikzcd}
	{\Gamma\boxtimes^G H\boxtimes^G K} && {H\boxtimes^G \Gamma\boxtimes^G K} \\
	{H\boxtimes^G K} && {H\boxtimes^G K} \\
	& {H\boxtimes_\Gamma K}
	\arrow["{\beta_{\Gamma, H}\boxtimes^G\id}", from=1-1, to=1-3]
	\arrow["{\mu_H\boxtimes^G \id}"', from=1-1, to=2-1]
	\arrow["{\id\boxtimes^G \mu_K}", from=1-3, to=2-3]
	\arrow["{\mu_{H, K}}"', from=2-1, to=3-2]
	\arrow["{\mu_{H, K}}", from=2-3, to=3-2]
\end{tikzcd}\]
\[\begin{tikzcd}
	{\Gamma\boxtimes^G H\boxtimes^G K} && {H\boxtimes^G\Gamma\boxtimes^G K} \\
	{H\boxtimes^G K} & {H\boxtimes_{\Gamma} K } & {H\boxtimes^G K} \\
	{H\boxtimes^G \Gamma\boxtimes^G K} && {\Gamma\boxtimes^G H\boxtimes ^G K}.
	\arrow["{\beta_{\Gamma, H}^*\boxtimes^G\id}", from=1-1, to=1-3]
	\arrow["{\id\boxtimes\mu_K}", from=1-3, to=2-3]
	\arrow["{\mu_H^*\boxtimes^G\id}", from=2-1, to=1-1]
	\arrow["{\mu_{H, K}}"', from=2-1, to=2-2]
	\arrow["{\id\boxtimes^G\mu_K^*}"', from=2-1, to=3-1]
	\arrow["{\mu_{H, K}^*}"', from=2-2, to=2-3]
	\arrow["{\beta_{H, \Gamma}\boxtimes^G\id}"', from=3-1, to=3-3]
	\arrow["{\mu_H\boxtimes^G \id}"', from=3-3, to=2-3]
\end{tikzcd}\]
    \end{enumerate}
\end{lemma}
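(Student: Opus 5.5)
The whole lemma rests on one fact: $\mu_{H,K}$ is the co-isometric projection of $H\boxtimes^G K$ onto the splitting of the idempotent $p_{H,K}$ from \eqref{eq: Idempotent}. Concretely, write $p_{H,K} = \mu_{H,K}^*\mu_{H,K}$ with $\mu_{H,K}\mu_{H,K}^* = \id_{H\boxtimes_\Gamma K}$. This is legitimate once we check that $p_{H,K}$ is a self-adjoint idempotent, and that check uses the two Frobenius-type identities available:
\begin{itemize}
\item the adjoint commutativity \eqref{eq: ModuleCondition} for $H$ and $K$;
\item the adjoint commutativity \eqref{eq: DeltaAdjointCommutativity} for $\Delta$.
\end{itemize}
In the unitary setting these are exactly what makes the standard projection onto the relative tensor product an orthogonal projection. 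Since $\Rep(\A^G)$ is idempotent complete as a $\mathrm{W}^*$-category, we may split $p_{H,K}$ by an isometry $\mu_{H,K}^*$.

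With this in hand, part (1) is immediate. $\mu_{H,K}\mu_{H,K}^*=\id$, so $\mu_{H,K}$ is a co-isometry and hence surjective. If one prefers the coequalizer description, surjectivity also follows because $\mu_{H,K}$ restricted to the image of $p_{H,K}$ is unitary.

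For the first diagram in (2), I would use the coequalizer description recalled just before the lemma. That description says $\mu_{H,K}$ equalizes two maps $H\boxtimes^G\Gamma\boxtimes^G K\to H\boxtimes^G K$:
\begin{itemize}
\item $\id\boxtimes^G\mu_K$;
\item $(\mu_H\boxtimes^G\id)\circ(\beta^{-1}_{\Gamma,H}\boxtimes^G\id)$.
\end{itemize}
Precomposing both with the unitary $\beta_{\Gamma,H}\boxtimes^G\id$ gives exactly the first diagram. Should one want to avoid relying on the coequalizer statement, one can verify $\mu_{H,K}(\id\boxtimes\mu_K) = \mu_{H,K}(\mu_H\boxtimes\id)(\beta_{\Gamma,H}^{-1}\boxtimes\id)$ directly. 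To do so, write $\mu_{H,K}=\mu_{H,K}p_{H,K}$ and expand $p_{H,K}$. The two expressions then agree after using:
\begin{itemize}
\item associativity of the module actions;
\item the bimodule property $\Delta\mu=(\mu\boxtimes\id)(\id\boxtimes\Delta)$ of $\Delta$ (part of separability);
\item naturality of $\beta$ together with commutativity of $\Gamma$, i.e.\ $\mu\beta_{\Gamma,\Gamma}=\mu$.
\end{itemize}

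The second diagram in (2) is the adjoint version of the first. Because $\mu_{H,K}^*\mu_{H,K}=p_{H,K}$, the middle composite $\mu_{H,K}^*\mu_{H,K}$ can be replaced by $p_{H,K}$, and the claim becomes two expressions for $p_{H,K}$. The top route is $(\id\boxtimes\mu_K)(\beta^*_{\Gamma,H}\boxtimes\id)(\mu_H^*\boxtimes\id)$. The bottom route is $(\mu_H\boxtimes\id)(\beta_{H,\Gamma}\boxtimes\id)(\id\boxtimes\mu_K^*)$.

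To identify these with $p_{H,K}$, I would first rewrite $\mu_H^*$ and $\mu_K^*$ using the module-Frobenius relation \eqref{eq: ModuleCondition} together with unitality. This gives $\mu_H^* = (\mu\boxtimes\id)\ldots$, or more usefully $\mu_H^*=(\id_\Gamma\boxtimes\mu_H)\circ(\Delta'\iota\boxtimes\id)$ up to the normalization $z=\mu\mu^*$. Substituting this into \eqref{eq: Idempotent} then identifies each route with $p_{H,K}$, possibly after inserting $z^{-1}$. Here I expect normalization issues to be the main obstacle: since $\Gamma$ is only normalized and separable, not necessarily standard with $\mu\mu^*=\id$, the factors of $z$ must cancel correctly. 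I would first try to show that $z$ is central and commutes with everything, so that it can be moved through the diagram. Failing that, I would prove the second diagram by pairing both sides with vectors of the form $L^G(\pi_0(x)\Omega,\tilde I)\xi$, where the maps act explicitly as in the proof of Proposition \ref{prop: GammaFrobeniusAlgebra}.
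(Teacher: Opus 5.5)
Your outline matches the paper's proof in its first steps:
\begin{itemize}
\item part (1) follows from the splitting of $p_{H,K}$;
\item the first diagram follows from $\mu_{H,K}$ equalizing $\id\boxtimes^G\mu_K$ and $(\mu_H\boxtimes^G\id)(\beta_{\Gamma,H}^{-1}\boxtimes^G\id)$. Quoting the coequalizer description recalled before the lemma is legitimate, and your direct check uses the same ingredients as the paper's string diagrams;
\item the second diagram is reduced to comparing both routes with $p_{H,K}$.
\end{itemize}

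The gap is in that last step. First, the labels. The printed top arrow does not type-check. With the printed bottom label $\beta_{H,\Gamma}$ and $H\in\M_g$, $g\neq e$, the grading relation $\mu_H\beta_{H,\Gamma}=\mu_H(V_{g^{-1}}\boxtimes^G\id)\beta_{\Gamma,H}^{-1}$ introduces a twist, so that route is not $p_{H,K}$. The diagram must be read with $\beta_{\Gamma,H}$ on top and $\beta_{\Gamma,H}^{-1}=\beta_{\Gamma,H}^*$ at the bottom to match \eqref{eq: Idempotent}. Read that way, the top route is just the adjoint of the bottom one. So the second diagram says only this: the bottom route is self-adjoint and equals $\mu_{H,K}^*\mu_{H,K}$. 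That self-adjointness is exactly what you assume without proof, and you already need it to split $p_{H,K}$ by an isometry at all. The paper proves it by a graphical computation of $p_{H,K}^*$, using unitarity of the modules and the adjoint commutativity of \eqref{eq: DeltaAdjointCommutativity}. You name these ingredients but never carry out the computation.

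Your normalization worry is justified, but the proposed remedy cannot work. Precompose the module--Frobenius identity $(\id\boxtimes^G\mu_K)(\mu^*\boxtimes^G\id)=\mu_K^*\mu_K$ with $\iota\boxtimes^G\id$. This gives $\mu_K^*=(\id\boxtimes^G\mu_K)((\mu^*\iota)\boxtimes^G\id)=(\id\boxtimes^G\mu_K)((\Delta z\iota)\boxtimes^G\id)$, where $z=\mu\mu^*$.

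This $z$ is indeed central. It is a $\Gamma$-bimodule endomorphism of $\Gamma$, and $\Hom(H_0^G,\Gamma)=\mathbb{C}\iota$ because the vacuum of $\A^G$ occurs once in $H_0$; hence $z$ is a scalar. But that scalar is $|G|$ (the index of $\A^G(I)\subset\A(I)$), not $1$. Already by unitality, $\mu^*$ restricted to a non-trivial isotypic summand $H_\alpha\subset\Gamma$ has two orthogonal isometric components, into $H_0^G\boxtimes^G H_\alpha$ and $H_\alpha\boxtimes^G H_0^G$, so $z\geq 2$.

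So $z$ can be moved anywhere but never cancels. The bottom route equals $|G|\,p_{H,K}$. Your convention $\mu_{H,K}\mu_{H,K}^*=\id$ gives $\mu_{H,K}^*\mu_{H,K}=p_{H,K}$, so the diagram is off by a factor $|G|$. Pairing against vectors $L^G(\pi_0(x)\Omega,\tilde I)\xi$ would only confirm the discrepancy.

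The diagram holds exactly when $\mu_{H,K}$ is $\sqrt{|G|}$ times the coisometric splitting, i.e.\ $\mu_{H,K}^*\mu_{H,K}=|G|\,p_{H,K}$. This is also the normalization compatible with the later identification $\mu_{\Gamma,H}=\mu_H$, since $\mu_H\mu_H^*=|G|$. The paper's own proof passes over the same point: it asserts $\mu_K^*=(\id_\Gamma\boxtimes^G\mu_K)((\Delta\circ\iota)\boxtimes^G\id_K)$, which holds only if $\mu\mu^*=\id$. The repair is to fix the scale of $\mu_{H,K}$, not to move $z$ through the diagram.
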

\begin{proof}
    Part $(\mathrm{i})$ is obvious since $H\boxtimes_\Gamma K$ is defined as the splitting of an idempotent on $H\boxtimes^G K$. The first diagram in part $(\mathrm{ii})$ follows from the following graphical calculus. We read diagrams from bottom to top. We do not include the labels for the objects and morphisms for readability, but make them explicit here. The algebra $\Gamma$ is denoted in black, the module $H$ in red and $K$ in blue. The unit $\iota: H_0^G\to \Gamma$ is denoted by a hollow circle, and the $(\Gamma-\Gamma)$-bimodule morphism $\Delta: \Gamma\to \Gamma\boxtimes^G\Gamma$ by a square. The module actions $\mu_H$, $\mu_K$ and the multiplication $\mu$ are denoted by solid bullets.
\[
\scalebox{.7}{ 
\begin{tikzpicture}[ipe stylesheet]
  \filldraw[shift={(450.462, 522.636)}, xscale=0.6058, yscale=0.5511, darkred, line width=2.7]
    (0, 0)
     -- (0, -200);
  \draw[white, line width=10.0]
    (433.577, 416.409)
     .. controls (432, 428) and (460, 426) .. (478.6667, 425.6667)
     .. controls (497.3333, 425.3333) and (506.6667, 426.6667) .. (511.9296, 431.0493)
     .. controls (517.1925, 435.432) and (518.385, 442.864) .. (528, 456);
  \filldraw[shift={(279.385, 522.227)}, xscale=0.6058, yscale=0.5511, darkred, line width=2.7]
    (0, 0)
     -- (0, -200);
  \draw[white, line width=10.0]
    (257.577, 416.409)
     .. controls (256, 428) and (284, 426) .. (302.6667, 425.6667)
     .. controls (321.3333, 425.3333) and (330.6667, 426.6667) .. (335.9296, 431.0493)
     .. controls (341.1925, 435.432) and (342.385, 442.864) .. (337.538, 464.909);
  \draw[shift={(427.154, 546.886)}, xscale=0.6058, yscale=0.5511, line width=2.7]
    (0, 0)
     .. controls (0, 28) and (4, 44) .. (8.6667, 57.3333)
     .. controls (13.3333, 70.6667) and (18.6667, 81.3333) .. (22, 91.3333)
     .. controls (25.3333, 101.3333) and (26.6667, 110.6667) .. (25.3333, 119.3333)
     .. controls (24, 128) and (20, 136) .. (4, 144);
  \filldraw[shift={(115.385, 654.205)}, xscale=0.6058, yscale=0.5511, darkred, line width=2.7]
    (0, 0)
     -- (0, -200);
  \draw[shift={(154.154, 585.864)}, xscale=0.6058, yscale=0.5511, white, line width=10.0]
    (0, 0)
     .. controls (-68, 0) and (-82, 6) .. (-93, 13)
     .. controls (-104, 20) and (-112, 28) .. (-114, 40)
     .. controls (-116, 52) and (-112, 68) .. (-64, 96);
  \draw[shift={(256.538, 543.977)}, xscale=0.6058, yscale=0.5511, line width=2.7]
    (0, 0)
     .. controls (0, 36) and (0, 58) .. (1.3333, 75)
     .. controls (2.6667, 92) and (5.3333, 104) .. (12.6667, 114)
     .. controls (20, 124) and (32, 132) .. (40, 132);
  \filldraw[shift={(280.769, 654.205)}, xscale=0.6058, yscale=0.5511, darkred, line width=2.7]
    (0, 0)
     -- (0, -200);
  \draw[shift={(319.538, 585.864)}, xscale=0.6058, yscale=0.5511, white, line width=10.0]
    (0, 0)
     .. controls (-68, 0) and (-82, 6) .. (-93, 13)
     .. controls (-104, 20) and (-112, 28) .. (-114, 40)
     .. controls (-116, 52) and (-112, 68) .. (-64, 96);
  \draw[shift={(319.538, 585.864)}, xscale=0.6058, yscale=0.5511, line width=2.7]
    (0, 0)
     .. controls (-68, 0) and (-82, 6) .. (-93, 13)
     .. controls (-104, 20) and (-112, 28) .. (-114, 40)
     .. controls (-116, 52) and (-112, 68) .. (-64, 96);
  \filldraw[shift={(154.154, 585.864)}, xscale=0.6058, yscale=0.5511, line width=2.7, fill=white]
    (0, 0)
     -- (0, -32);
  \filldraw[shift={(192.923, 654.205)}, xscale=0.6058, yscale=0.5511, draw=darkblue, line width=2.7, fill=darkred]
    (0, 0)
     -- (0, -200);
  \pic[ipe mark scale=5.0]
     at (192.9231, 638.7727) {ipe disk};
  \pic[ipe mark scale=5.0]
     at (154.1538, 585.8636) {ipe square};
  \pic[ipe mark scale=5.0, fill=white]
     at (154.1538, 568.2273) {ipe fdisk};
  \draw[shift={(91.154, 543.977)}, xscale=0.6058, yscale=0.5511, line width=2.7]
    (0, 0)
     .. controls (0, 24) and (6, 32) .. (12, 38)
     .. controls (18, 44) and (24, 48) .. (40, 52);
  \pic[ipe mark scale=5.0]
     at (115.3846, 572.6364) {ipe disk};
  \draw[shift={(154.154, 585.864)}, xscale=0.6058, yscale=0.5511, line width=2.7]
    (0, 0)
     .. controls (-68, 0) and (-82, 6) .. (-93, 13)
     .. controls (-104, 20) and (-112, 28) .. (-114, 40)
     .. controls (-116, 52) and (-112, 68) .. (-64, 96);
  \draw[shift={(154.154, 585.864)}, xscale=0.6058, yscale=0.5511, line width=2.7]
    (0, 0)
     .. controls (24, 4) and (36, 26) .. (41.3333, 41)
     .. controls (46.6667, 56) and (45.3333, 64) .. (45.6667, 73)
     .. controls (46, 82) and (48, 92) .. (64, 96);
  \filldraw[shift={(319.538, 585.864)}, xscale=0.6058, yscale=0.5511, line width=2.7, fill=white]
    (0, 0)
     -- (0, -32);
  \filldraw[shift={(358.308, 654.205)}, xscale=0.6058, yscale=0.5511, draw=darkblue, line width=2.7, fill=darkred]
    (0, 0)
     -- (0, -200);
  \pic[ipe mark scale=5.0]
     at (358.3077, 638.7727) {ipe disk};
  \pic[ipe mark scale=5.0]
     at (319.5385, 585.8636) {ipe square};
  \pic[ipe mark scale=5.0, fill=white]
     at (319.5385, 568.2273) {ipe fdisk};
  \draw[shift={(319.538, 585.864)}, xscale=0.6058, yscale=0.5511, line width=2.7]
    (0, 0)
     .. controls (24, 4) and (36, 26) .. (41.3333, 41)
     .. controls (46.6667, 56) and (45.3333, 64) .. (45.6667, 73)
     .. controls (46, 82) and (48, 92) .. (64, 96);
  \pic[ipe mark scale=5.0]
     at (280.7692, 616.7273) {ipe disk};
  \draw[shift={(115.385, 645.386)}, xscale=0.6058, yscale=0.5511, darkred, line width=2.7]
    (0, 0)
     -- (0, -28);
  \pic[ipe mark scale=5.0]
     at (115.3846, 638.7727) {ipe disk};
  \draw[shift={(280.769, 643.182)}, xscale=0.6058, yscale=0.5511, darkred, line width=2.7]
    (0, 0)
     -- (0, -24);
  \pic[ipe mark scale=5.0]
     at (280.7692, 638.7727) {ipe disk};
  \filldraw[shift={(451.385, 654.909)}, xscale=0.6058, yscale=0.5511, darkred, line width=2.7]
    (0, 0)
     -- (0, -200);
  \draw[shift={(490.154, 586.568)}, xscale=0.6058, yscale=0.5511, white, line width=10.0]
    (0, 0)
     .. controls (-68, 0) and (-82, 6) .. (-93, 13)
     .. controls (-104, 20) and (-112, 28) .. (-114, 40)
     .. controls (-116, 52) and (-112, 68) .. (-64, 96);
  \draw[shift={(490.154, 586.568)}, xscale=0.6058, yscale=0.5511, line width=2.7]
    (0, 0)
     .. controls (-68, 0) and (-82, 6) .. (-93, 13)
     .. controls (-104, 20) and (-112, 28) .. (-114, 40)
     .. controls (-116, 52) and (-112, 68) .. (-64, 96);
  \filldraw[shift={(490.154, 586.568)}, xscale=0.6058, yscale=0.5511, line width=2.7, fill=white]
    (0, 0)
     -- (0, -32);
  \filldraw[shift={(528.923, 654.909)}, xscale=0.6058, yscale=0.5511, draw=darkblue, line width=2.7, fill=darkred]
    (0, 0)
     -- (0, -200);
  \pic[ipe mark scale=5.0]
     at (528.9231, 639.4773) {ipe disk};
  \pic[ipe mark scale=5.0]
     at (490.1538, 586.5682) {ipe square};
  \pic[ipe mark scale=5.0, fill=white]
     at (490.1538, 568.9318) {ipe fdisk};
  \draw[shift={(490.154, 586.568)}, xscale=0.6058, yscale=0.5511, line width=2.7]
    (0, 0)
     .. controls (24, 4) and (36, 26) .. (41.3333, 41)
     .. controls (46.6667, 56) and (45.3333, 64) .. (45.6667, 73)
     .. controls (46, 82) and (48, 92) .. (64, 96);
  \draw[shift={(451.385, 643.886)}, xscale=0.6058, yscale=0.5511, darkred, line width=2.7]
    (0, 0)
     -- (0, -24);
  \pic[ipe mark scale=5.0]
     at (451.3846, 639.4773) {ipe disk};
  \pic[ipe mark scale=5.0]
     at (429.5769, 626.25) {ipe disk};
  \draw[shift={(440.481, 597.223)}, xscale=0.6058, yscale=0.5511, line width=2.7]
    (0, 0)
     .. controls (3.3333, 10) and (4.6667, 19.3333) .. (3.3333, 28)
     .. controls (2, 36.6667) and (-2, 44.6667) .. (-18, 52.6667);
  \filldraw[shift={(114.462, 522.909)}, xscale=0.6058, yscale=0.5511, darkred, line width=2.7]
    (0, 0)
     -- (0, -200);
  \draw[shift={(153.231, 454.568)}, xscale=0.6058, yscale=0.5511, white, line width=10.0]
    (0, 0)
     .. controls (-68, 0) and (-82, 6) .. (-93, 13)
     .. controls (-104, 20) and (-112, 28) .. (-114, 40)
     .. controls (-116, 52) and (-112, 68) .. (-64, 96);
  \draw[shift={(153.231, 454.568)}, xscale=0.6058, yscale=0.5511, line width=2.7]
    (0, 0)
     .. controls (-68, 0) and (-82, 6) .. (-93, 13)
     .. controls (-104, 20) and (-112, 28) .. (-114, 40)
     .. controls (-116, 52) and (-112, 68) .. (-64, 96);
  \filldraw[shift={(153.231, 454.568)}, xscale=0.6058, yscale=0.5511, line width=2.7, fill=white]
    (0, 0)
     -- (0, -32);
  \filldraw[shift={(192, 522.909)}, xscale=0.6058, yscale=0.5511, draw=darkblue, line width=2.7, fill=darkred]
    (0, 0)
     -- (0, -200);
  \pic[ipe mark scale=5.0]
     at (192, 507.4773) {ipe disk};
  \pic[ipe mark scale=5.0]
     at (153.2308, 454.5682) {ipe square};
  \pic[ipe mark scale=5.0, fill=white]
     at (153.2308, 436.9318) {ipe fdisk};
  \draw[shift={(153.231, 454.568)}, xscale=0.6058, yscale=0.5511, line width=2.7]
    (0, 0)
     .. controls (24, 4) and (36, 26) .. (41.3333, 41)
     .. controls (46.6667, 56) and (45.3333, 64) .. (45.6667, 73)
     .. controls (46, 82) and (48, 92) .. (64, 96);
  \draw[shift={(114.462, 511.886)}, xscale=0.6058, yscale=0.5511, darkred, line width=2.7]
    (0, 0)
     -- (0, -24);
  \pic[ipe mark scale=5.0]
     at (114.4615, 507.4773) {ipe disk};
  \pic[ipe mark scale=5.0]
     at (87.9231, 466.9773) {ipe disk};
  \draw[shift={(318.154, 453.886)}, xscale=0.6058, yscale=0.5511, white, line width=10.0]
    (0, 0)
     .. controls (-68, 0) and (-82, 6) .. (-93, 13)
     .. controls (-104, 20) and (-112, 28) .. (-114, 40)
     .. controls (-116, 52) and (-112, 68) .. (-64, 96);
  \draw[shift={(318.154, 453.886)}, xscale=0.6058, yscale=0.5511, line width=2.7]
    (0, 0)
     .. controls (-68, 0) and (-82, 6) .. (-93, 13)
     .. controls (-104, 20) and (-112, 28) .. (-114, 40)
     .. controls (-116, 52) and (-112, 68) .. (-64, 96);
  \filldraw[shift={(318.154, 453.886)}, xscale=0.6058, yscale=0.5511, line width=2.7, fill=white]
    (0, 0)
     -- (0, -32);
  \filldraw[shift={(356.923, 522.227)}, xscale=0.6058, yscale=0.5511, draw=darkblue, line width=2.7, fill=darkred]
    (0, 0)
     -- (0, -200);
  \pic[ipe mark scale=5.0]
     at (356.9231, 506.7955) {ipe disk};
  \pic[ipe mark scale=5.0]
     at (318.1538, 453.8864) {ipe square};
  \pic[ipe mark scale=5.0, fill=white]
     at (318.1538, 436.25) {ipe fdisk};
  \draw[shift={(318.154, 453.886)}, xscale=0.6058, yscale=0.5511, line width=2.7]
    (0, 0)
     .. controls (24, 4) and (36, 26) .. (41.3333, 41)
     .. controls (46.6667, 56) and (45.3333, 64) .. (45.6667, 73)
     .. controls (46, 82) and (48, 92) .. (64, 96);
  \draw[shift={(279.385, 511.205)}, xscale=0.6058, yscale=0.5511, darkred, line width=2.7]
    (0, 0)
     -- (0, -24);
  \pic[ipe mark scale=5.0]
     at (279.3846, 506.7955) {ipe disk};
  \draw[line width=2.7]
    (257.5769, 416.4091)
     .. controls (256, 428) and (284, 426) .. (302.6667, 425.6667)
     .. controls (321.3333, 425.3333) and (330.6667, 426.6667) .. (335.9295, 431.0492)
     .. controls (341.1923, 435.4318) and (342.3846, 442.8636) .. (337.5385, 464.9091);
  \pic[ipe mark scale=5.0]
     at (337.5385, 464.9091) {ipe disk};
  \draw[shift={(489.231, 454.295)}, xscale=0.6058, yscale=0.5511, white, line width=10.0]
    (0, 0)
     .. controls (-68, 0) and (-82, 6) .. (-93, 13)
     .. controls (-104, 20) and (-112, 28) .. (-114, 40)
     .. controls (-116, 52) and (-112, 68) .. (-64, 96);
  \draw[shift={(489.231, 454.295)}, xscale=0.6058, yscale=0.5511, line width=2.7]
    (0, 0)
     .. controls (-68, 0) and (-82, 6) .. (-93, 13)
     .. controls (-104, 20) and (-112, 28) .. (-114, 40)
     .. controls (-116, 52) and (-112, 68) .. (-64, 96);
  \filldraw[shift={(489.231, 454.295)}, xscale=0.6058, yscale=0.5511, line width=2.7, fill=white]
    (0, 0)
     -- (0, -32);
  \filldraw[shift={(528, 522.636)}, xscale=0.6058, yscale=0.5511, draw=darkblue, line width=2.7, fill=darkred]
    (0, 0)
     -- (0, -200);
  \pic[ipe mark scale=5.0]
     at (528, 507.2045) {ipe disk};
  \pic[ipe mark scale=5.0]
     at (489.2308, 454.2955) {ipe square};
  \pic[ipe mark scale=5.0, fill=white]
     at (489.2308, 436.6591) {ipe fdisk};
  \draw[shift={(489.231, 454.295)}, xscale=0.6058, yscale=0.5511, line width=2.7]
    (0, 0)
     .. controls (24, 4) and (36, 26) .. (41.3333, 41)
     .. controls (46.6667, 56) and (45.3333, 64) .. (45.6667, 73)
     .. controls (46, 82) and (48, 92) .. (64, 96);
  \draw[shift={(450.462, 511.614)}, xscale=0.6058, yscale=0.5511, darkred, line width=2.7]
    (0, 0)
     -- (0, -24);
  \pic[ipe mark scale=5.0]
     at (450.4615, 507.2045) {ipe disk};
  \pic[ipe mark scale=5.0, white]
     at (528, 456.5) {ipe disk};
  \pic[ipe mark scale=5.0]
     at (528, 456.5) {ipe disk};
  \node[ipe node]
     at (220, 596) {=};
  \node[ipe node]
     at (388, 596) {=};
  \node[ipe node]
     at (52, 460) {=};
  \node[ipe node]
     at (220, 456) {=};
  \draw[line width=2.7]
    (433.577, 416.409)
     .. controls (432, 428) and (460, 426) .. (478.6667, 425.6667)
     .. controls (497.3333, 425.3333) and (506.6667, 426.6667) .. (511.9296, 431.0493)
     .. controls (517.1925, 435.432) and (518.385, 442.864) .. (528, 456);
  \node[ipe node]
     at (392, 456) {=};
  \filldraw[line width=2.7, fill=white]
    (88, 468)
     -- (88, 412);
\end{tikzpicture}}\]

Let us show the commutativity of the bottom part of the second diagram. The top part follows analogously. Since the module $(K, \mu_K)\in \M$ is unitary by definition, it holds that $\mu_K^* = (\id_\Gamma\boxtimes^G \mu_K)\circ ((\Delta\circ\iota)\boxtimes^G\id_K)$. Hence, the bottom leg of the diagram equals $p_{H,K}$. Therefore, it remains to show that $p_{H, K}^*p_{H,K}^{\phantom{*}} = p_{H, K}^{\phantom{*}}$. Since $p_{H, K}$ is an idempotent, it is enough to argue it is self-adjoint. We do so again by graphical calculus. A morphism in green denotes the adjoint of the corresponding morphism in black. We have
\[
\scalebox{.65}{
\begin{tikzpicture}[ipe stylesheet]
  \draw[shift={(74.577, 492.8)}, xscale=0.6827, yscale=0.6278, line width=2.7]
    (0, 0)
     .. controls (12, 0) and (18, 6) .. (21, 15)
     .. controls (24, 24) and (24, 36) .. (24, 56)
     .. controls (24, 76) and (24, 104) .. (40, 116);
  \filldraw[shift={(101.885, 470.2)}, xscale=0.6827, yscale=-0.6278, darkred, line width=2.7]
    (0, 0)
     -- (0, -200);
  \draw[shift={(74.577, 492.8)}, xscale=0.6827, yscale=0.6278, white, line width=10.0]
    (0, 0)
     .. controls (-16, 0) and (-24, 8) .. (-24, 18)
     .. controls (-24, 28) and (-16, 40) .. (-2.6667, 48)
     .. controls (10.6667, 56) and (29.3333, 60) .. (55.6667, 72)
     .. controls (82, 84) and (116, 104) .. (168, 112);
  \draw[shift={(333.577, 696.2)}, xscale=0.6827, yscale=0.6278, line width=2.7]
    (0, 0)
     .. controls (12, 0) and (14, -4) .. (15, -14.6667)
     .. controls (16, -25.3333) and (16, -42.6667) .. (16.6667, -55.3333)
     .. controls (17.3333, -68) and (18.6667, -76) .. (20.6667, -80.6667)
     .. controls (22.6667, -85.3333) and (25.3333, -86.6667) .. (28.6667, -87.3333)
     .. controls (32, -88) and (36, -88) .. (39.3333, -88)
     .. controls (42.6667, -88) and (45.3333, -88) .. (48, -86.6667)
     .. controls (50.6667, -85.3333) and (53.3333, -82.6667) .. (54.6667, -79.3333)
     .. controls (56, -76) and (56, -72) .. (64, -68);
  \filldraw[shift={(289.885, 618.356)}, xscale=0.6827, yscale=-0.6278, darkred, line width=2.7]
    (0, 0)
     -- (0, -200);
  \draw[shift={(330.846, 696.2)}, xscale=0.6827, yscale=0.6278, white, line width=10.0]
    (0, 0)
     .. controls (-32, 0) and (-60, -6) .. (-78.6667, -11.6667)
     .. controls (-97.3333, -17.3333) and (-106.6667, -22.6667) .. (-114, -28.6667)
     .. controls (-121.3333, -34.6667) and (-126.6667, -41.3333) .. (-128.6667, -50.6667)
     .. controls (-130.6667, -60) and (-129.3333, -72) .. (-124.6667, -78.6667)
     .. controls (-120, -85.3333) and (-112, -86.6667) .. (-104, -87.3333)
     .. controls (-96, -88) and (-88, -88) .. (-83, -84)
     .. controls (-78, -80) and (-76, -72) .. (-60, -68);
  \draw[shift={(147, 696.2)}, xscale=0.6827, yscale=-0.6278, line width=2.7]
    (0, 0)
     .. controls (24, 4) and (36, 26) .. (41.3333, 41)
     .. controls (46.6667, 56) and (45.3333, 64) .. (45.6667, 73)
     .. controls (46, 82) and (48, 92) .. (64, 96);
  \filldraw[shift={(103.308, 618.356)}, xscale=0.6827, yscale=-0.6278, darkred, line width=2.7]
    (0, 0)
     -- (0, -200);
  \draw[shift={(147, 696.2)}, xscale=0.6827, yscale=-0.6278, white, line width=10.0]
    (0, 0)
     .. controls (-68, 0) and (-82, 6) .. (-93, 13)
     .. controls (-104, 20) and (-112, 28) .. (-114, 40)
     .. controls (-116, 52) and (-112, 68) .. (-64, 96);
  \filldraw[shift={(147, 696.2)}, xscale=0.6827, yscale=-0.6278, line width=2.7, fill=white]
    (0, 0)
     -- (0, -32);
  \filldraw[shift={(190.692, 618.356)}, xscale=0.6827, yscale=-0.6278, draw=darkblue, line width=2.7, fill=darkred]
    (0, 0)
     -- (0, -200);
  \pic[ipe mark scale=5.0, darkgreen]
     at (190.6923, 635.9333) {ipe disk};
  \pic[ipe mark scale=5.0, draw=darkgreen, fill=white]
     at (147, 716.2889) {ipe fdisk};
  \draw[shift={(147, 696.2)}, xscale=0.6827, yscale=-0.6278, line width=2.7]
    (0, 0)
     .. controls (-68, 0) and (-82, 6) .. (-93, 13)
     .. controls (-104, 20) and (-112, 28) .. (-114, 40)
     .. controls (-116, 52) and (-112, 68) .. (-64, 96);
  \draw[shift={(103.308, 628.4)}, xscale=0.6827, yscale=-0.6278, darkred, line width=2.7]
    (0, 0)
     -- (0, -28);
  \pic[ipe mark scale=5.0, darkgreen]
     at (103.3077, 635.9333) {ipe disk};
  \pic[ipe mark scale=5.0, darkgreen]
     at (147, 696.2) {ipe square};
  \filldraw[shift={(333.577, 696.2)}, xscale=0.6827, yscale=-0.6278, line width=2.7, fill=white]
    (0, 0)
     -- (0, -32);
  \filldraw[shift={(377.269, 618.356)}, xscale=0.6827, yscale=-0.6278, draw=darkblue, line width=2.7, fill=darkred]
    (0, 0)
     -- (0, -200);
  \pic[ipe mark scale=5.0]
     at (377.2692, 653.5111) {ipe disk};
  \pic[ipe mark scale=5.0, draw=darkgreen, fill=white]
     at (333.5769, 716.2889) {ipe fdisk};
  \draw[shift={(289.885, 628.4)}, xscale=0.6827, yscale=-0.6278, darkred, line width=2.7]
    (0, 0)
     -- (0, -28);
  \filldraw[shift={(262.577, 620.867)}, xscale=0.6827, yscale=-0.6278, line width=2.7, fill=white]
    (0, 0)
     -- (0, -32);
  \pic[ipe mark scale=5.0]
     at (262.5769, 640.9556) {ipe square};
  \pic[ipe mark scale=5.0, fill=white]
     at (262.5769, 620.8667) {ipe fdisk};
  \draw[shift={(330.846, 696.2)}, xscale=0.6827, yscale=0.6278, line width=2.7]
    (0, 0)
     .. controls (-32, 0) and (-60, -6) .. (-78.6667, -11.6667)
     .. controls (-97.3333, -17.3333) and (-106.6667, -22.6667) .. (-114, -28.6667)
     .. controls (-121.3333, -34.6667) and (-126.6667, -41.3333) .. (-128.6667, -50.6667)
     .. controls (-130.6667, -60) and (-129.3333, -72) .. (-124.6667, -78.6667)
     .. controls (-120, -85.3333) and (-112, -86.6667) .. (-104, -87.3333)
     .. controls (-96, -88) and (-88, -88) .. (-83, -84)
     .. controls (-78, -80) and (-76, -72) .. (-60, -68);
  \draw[shift={(289.885, 643.467)}, xscale=0.6827, yscale=-0.6278, darkred, line width=2.7]
    (0, 0)
     -- (0, -28);
  \pic[ipe mark scale=5.0]
     at (289.8846, 653.5111) {ipe disk};
  \filldraw[shift={(360.885, 620.867)}, xscale=0.6827, yscale=-0.6278, line width=2.7, fill=white]
    (0, 0)
     -- (0, -32);
  \pic[ipe mark scale=5.0]
     at (360.8846, 640.9556) {ipe square};
  \pic[ipe mark scale=5.0, fill=white]
     at (360.8846, 620.8667) {ipe fdisk};
  \pic[ipe mark scale=5.0, darkgreen]
     at (333.5769, 696.2) {ipe square};
  \filldraw[shift={(475.308, 618.2)}, xscale=0.6827, yscale=-0.6278, darkred, line width=2.7]
    (0, 0)
     -- (0, -200);
  \draw[shift={(516.269, 696.044)}, xscale=0.6827, yscale=0.6278, white, line width=10.0]
    (0, 0)
     .. controls (-32, 0) and (-60, -6) .. (-78.6667, -11.6667)
     .. controls (-97.3333, -17.3333) and (-106.6667, -22.6667) .. (-114, -28.6667)
     .. controls (-121.3333, -34.6667) and (-126.6667, -41.3333) .. (-128.6667, -50.6667)
     .. controls (-130.6667, -60) and (-129.3333, -72) .. (-124.6667, -78.6667)
     .. controls (-120, -85.3333) and (-112, -86.6667) .. (-104, -87.3333)
     .. controls (-96, -88) and (-88, -88) .. (-83, -84)
     .. controls (-78, -80) and (-76, -72) .. (-60, -68);
  \filldraw[shift={(519, 696.044)}, xscale=0.6827, yscale=-0.6278, line width=2.7, fill=white]
    (0, 0)
     -- (0, -32);
  \filldraw[shift={(562.692, 618.2)}, xscale=0.6827, yscale=-0.6278, draw=darkblue, line width=2.7, fill=darkred]
    (0, 0)
     -- (0, -200);
  \draw[shift={(475.308, 628.244)}, xscale=0.6827, yscale=-0.6278, darkred, line width=2.7]
    (0, 0)
     -- (0, -28);
  \filldraw[shift={(448, 620.711)}, xscale=0.6827, yscale=-0.6278, line width=2.7, fill=white]
    (0, 0)
     -- (0, -32);
  \pic[ipe mark scale=5.0]
     at (448, 640.8) {ipe square};
  \pic[ipe mark scale=5.0, fill=white]
     at (448, 620.7111) {ipe fdisk};
  \draw[shift={(516.269, 696.044)}, xscale=0.6827, yscale=0.6278, line width=2.7]
    (0, 0)
     .. controls (-32, 0) and (-60, -6) .. (-78.6667, -11.6667)
     .. controls (-97.3333, -17.3333) and (-106.6667, -22.6667) .. (-114, -28.6667)
     .. controls (-121.3333, -34.6667) and (-126.6667, -41.3333) .. (-128.6667, -50.6667)
     .. controls (-130.6667, -60) and (-129.3333, -72) .. (-124.6667, -78.6667)
     .. controls (-120, -85.3333) and (-112, -86.6667) .. (-104, -87.3333)
     .. controls (-96, -88) and (-88, -88) .. (-83, -84)
     .. controls (-78, -80) and (-76, -72) .. (-60, -68);
  \draw[shift={(475.308, 643.311)}, xscale=0.6827, yscale=-0.6278, darkred, line width=2.7]
    (0, 0)
     -- (0, -28);
  \pic[ipe mark scale=5.0]
     at (475.3077, 653.3556) {ipe disk};
  \pic[ipe mark scale=5.0, fill=white]
     at (532.6538, 678.4667) {ipe fdisk};
  \pic[ipe mark scale=5.0]
     at (519, 716.1333) {ipe square};
  \draw[line width=2.7]
    (502.6154, 733.7111)
     .. controls (502.6154, 726.1778) and (503.9808, 721.1556) .. (506.484, 718.6444)
     .. controls (508.9872, 716.1333) and (512.6282, 716.1333) .. (515.8141, 716.1333)
     .. controls (519, 716.1333) and (521.7308, 716.1333) .. (526.3526, 716.1111)
     .. controls (530.9744, 716.0889) and (537.4872, 716.0444) .. (542.6859, 719.1944)
     .. controls (547.8846, 722.3444) and (551.7692, 728.6889) .. (562.6923, 733.7111);
  \pic[ipe mark scale=5.0]
     at (562.6923, 733.7111) {ipe disk};
  \draw[shift={(519, 696.044)}, xscale=0.6827, yscale=0.6278, line width=2.7]
    (0, 0)
     .. controls (16, 0) and (20, -12) .. (20, -24);
  \pic[ipe mark scale=5.0, draw=darkgreen, fill=white]
     at (502.6154, 733.7111) {ipe fdisk};
  \pic[ipe mark scale=5.0, darkgreen]
     at (519, 696.0444) {ipe square};
  \filldraw[shift={(189.269, 470.2)}, xscale=0.6827, yscale=-0.6278, draw=darkblue, line width=2.7, fill=darkred]
    (0, 0)
     -- (0, -200);
  \filldraw[shift={(74.577, 472.711)}, xscale=0.6827, yscale=-0.6278, line width=2.7, fill=white]
    (0, 0)
     -- (0, -32);
  \pic[ipe mark scale=5.0]
     at (74.5769, 492.8) {ipe square};
  \pic[ipe mark scale=5.0, fill=white]
     at (74.5769, 472.7111) {ipe fdisk};
  \draw[shift={(101.885, 555.578)}, xscale=0.6827, yscale=-0.6278, darkred, line width=2.7]
    (0, 0)
     -- (0, -28);
  \pic[ipe mark scale=5.0]
     at (101.8846, 565.6222) {ipe disk};
  \pic[ipe mark scale=5.0]
     at (189.2692, 563.1111) {ipe disk};
  \draw[shift={(74.577, 492.8)}, xscale=0.6827, yscale=0.6278, line width=2.7]
    (0, 0)
     .. controls (-16, 0) and (-24, 8) .. (-24, 18)
     .. controls (-24, 28) and (-16, 40) .. (-2.6667, 48)
     .. controls (10.6667, 56) and (29.3333, 60) .. (55.6667, 72)
     .. controls (82, 84) and (116, 104) .. (168, 112);
  \filldraw[shift={(291.308, 467.533)}, xscale=0.6827, yscale=-0.6278, darkred, line width=2.7]
    (0, 0)
     -- (0, -200);
  \filldraw[shift={(378.692, 467.533)}, xscale=0.6827, yscale=-0.6278, draw=darkblue, line width=2.7, fill=darkred]
    (0, 0)
     -- (0, -200);
  \draw[shift={(291.308, 477.578)}, xscale=0.6827, yscale=-0.6278, darkred, line width=2.7]
    (0, 0)
     -- (0, -28);
  \draw[shift={(291.308, 562.956)}, xscale=0.6827, yscale=0.6278, white, line width=10.0]
    (0, 0)
     .. controls (-40, -44) and (-60, -68) .. (-66.6667, -84.6667)
     .. controls (-73.3333, -101.3333) and (-66.6667, -110.6667) .. (-58, -114.6667)
     .. controls (-49.3333, -118.6667) and (-38.6667, -117.3333) .. (-26, -116)
     .. controls (-13.3333, -114.6667) and (1.3333, -113.3333) .. (27.6667, -98.6667)
     .. controls (54, -84) and (92, -56) .. (128, -4);
  \draw[shift={(291.308, 562.956)}, xscale=0.6827, yscale=0.6278, line width=2.7]
    (0, 0)
     .. controls (-40, -44) and (-60, -68) .. (-66.6667, -84.6667)
     .. controls (-73.3333, -101.3333) and (-66.6667, -110.6667) .. (-58, -114.6667)
     .. controls (-49.3333, -118.6667) and (-38.6667, -117.3333) .. (-26, -116)
     .. controls (-13.3333, -114.6667) and (1.3333, -113.3333) .. (27.6667, -98.6667)
     .. controls (54, -84) and (92, -56) .. (128, -4);
  \draw[shift={(291.308, 552.911)}, xscale=0.6827, yscale=-0.6278, darkred, line width=2.7]
    (0, 0)
     -- (0, -28);
  \pic[ipe mark scale=5.0]
     at (291.3077, 562.9556) {ipe disk};
  \draw[shift={(378.692, 547.889)}, xscale=0.6827, yscale=-0.6278, darkblue, line width=2.7]
    (0, 0)
     -- (0, -28);
  \pic[ipe mark scale=5.0]
     at (378.6923, 560.4444) {ipe disk};
  \filldraw[shift={(264, 470.044)}, xscale=0.6827, yscale=-0.6278, line width=2.7, fill=white]
    (0, 0)
     -- (0, -32);
  \pic[ipe mark scale=5.0]
     at (264, 490.1333) {ipe square};
  \pic[ipe mark scale=5.0, fill=white]
     at (264, 470.0444) {ipe fdisk};
  \draw[shift={(520.308, 515.711)}, xscale=0.6827, yscale=0.6278, line width=2.7]
    (0, 0)
     .. controls (24, 4) and (36, 26) .. (41.3333, 41)
     .. controls (46.6667, 56) and (45.3333, 64) .. (45.6667, 73)
     .. controls (46, 82) and (48, 92) .. (64, 96);
  \filldraw[shift={(476.615, 593.556)}, xscale=0.6827, yscale=0.6278, darkred, line width=2.7]
    (0, 0)
     -- (0, -200);
  \draw[shift={(520.308, 515.711)}, xscale=0.6827, yscale=0.6278, white, line width=10.0]
    (0, 0)
     .. controls (-68, 0) and (-82, 6) .. (-93, 13)
     .. controls (-104, 20) and (-112, 28) .. (-114, 40)
     .. controls (-116, 52) and (-112, 68) .. (-64, 96);
  \filldraw[shift={(520.308, 515.711)}, xscale=0.6827, yscale=0.6278, line width=2.7, fill=white]
    (0, 0)
     -- (0, -32);
  \filldraw[shift={(564, 593.556)}, xscale=0.6827, yscale=0.6278, draw=darkblue, line width=2.7, fill=darkred]
    (0, 0)
     -- (0, -200);
  \pic[ipe mark scale=5.0]
     at (564, 575.9778) {ipe disk};
  \pic[ipe mark scale=5.0, fill=white]
     at (520.3077, 495.6222) {ipe fdisk};
  \draw[shift={(520.308, 515.711)}, xscale=0.6827, yscale=0.6278, line width=2.7]
    (0, 0)
     .. controls (-68, 0) and (-82, 6) .. (-93, 13)
     .. controls (-104, 20) and (-112, 28) .. (-114, 40)
     .. controls (-116, 52) and (-112, 68) .. (-64, 96);
  \draw[shift={(476.615, 583.511)}, xscale=0.6827, yscale=0.6278, darkred, line width=2.7]
    (0, 0)
     -- (0, -28);
  \pic[ipe mark scale=5.0]
     at (476.6154, 575.9778) {ipe disk};
  \pic[ipe mark scale=5.0]
     at (520.3077, 515.7111) {ipe square};
  \node[ipe node]
     at (212, 672) {$=$};
  \node[ipe node]
     at (396, 672) {$=$};
  \node[ipe node]
     at (40, 528) {$=$};
  \node[ipe node]
     at (224, 528) {$=$};
  \node[ipe node]
     at (412, 528) {$=$};
\end{tikzpicture}
}
\]
where we use, among others, the adjoint commutativity of the diagram \eqref{eq: DeltaAdjointCommutativity}.

\end{proof}

Given $g\in G$, we continue denoting by $V_g: \Gamma\to \Gamma$ the algebra automorphism induced by $V_g: H_0\to H_0$. For an element $g\in G$ and a module $(H,\mu_H)\in \M$, we say that $(H,\mu_H)$ is a $g$-graded module if the diagram
\[\begin{tikzcd}
	{\Gamma\boxtimes^GH} && {\Gamma\boxtimes^GH} \\
	{\Gamma\boxtimes^GH} && H
	\arrow["{\beta_{H,\Gamma}\circ\beta_{\Gamma, H}}", from=1-1, to=1-3]
	\arrow["{V_{g^{-1}}\boxtimes^G \id}"', from=1-1, to=2-1]
	\arrow["{\mu_H}", from=1-3, to=2-3]
	\arrow["{\mu_H}"', from=2-1, to=2-3]
\end{tikzcd}\]
commutes. We write $\M_g$ for the full subcategory of $\M$ on the $g$-graded modules.
\begin{proposition}\label{Prop: ModGammaIsGGraded}The category $\M$ splits as a direct sum
        \[
        \M\cong\bigoplus_{g\in G}\M_g.
        \]
\end{proposition}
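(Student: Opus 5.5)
The plan is to produce, for every $(H,\mu_H)\in\M$, a family $\{e_g\}_{g\in G}$ of mutually orthogonal projections in $\End_{\M}(H)$ summing to $\id_H$, such that $e_gH$ is a $g$-graded module and $e_g$ kills $\M_h$ for $h\neq g$. The splitting $\M\cong\bigoplus_g\M_g$ then follows from two facts. First, $\M$ is idempotent complete, since it is a $\mathrm{W}^*$-category of modules in the idempotent complete $\Rep(\A^G)$. Second, there are no nonzero morphisms between $\M_g$ and $\M_h$ for $g\neq h$: a morphism $f\colon H\to K$ of modules commutes with $\mu$ and with the monodromy by naturality of $\beta$, so it intertwines the two $G$-indexed decompositions.

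To build the $e_g$, I will use the monodromy $M_{\Gamma,H}:=\beta_{H,\Gamma}\circ\beta_{\Gamma,H}$ and the map
\[
\nu_H:=\mu_H\circ M_{\Gamma,H}\colon \Gamma\boxtimes^G H\to H .
\]
Because $\Gamma$ is commutative (Proposition \ref{prop: GammaFrobeniusAlgebra}), the hexagon axioms and naturality show that $\nu_H$ is again a unitary left $\Gamma$-module structure on $H$. I then compare $\nu_H$ with the twisted actions $\mu_H\circ(V_{g^{-1}}\boxtimes^G\id)$. I will do this concretely on the dense vectors $L^G(\pi_0(x)\Omega,\tilde I)\chi$, with $x\in\A(I)$ and $\chi\in H$.
\begin{itemize}
\item Via $\mu_H$, the module structure gives an action $x\mapsto \mu_H L^G(\pi_0(x)\Omega,\tilde I)$ of $\A(I)$ on $H$ for each $\tilde I$.
\item Using the path-continuation description of the braiding (Remark \ref{rk: FormulasLB}), the monodromy corresponds to transporting $\tilde I$ to $\tilde I+1$.
\item So $\nu_H$ corresponds to the action attached to $\tilde I+1$. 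The two actions agree on $\A^G(I)$, since the braiding of $H_0^G$-vacuum vectors in $\Rep(\A^G)$ is trivial.
\end{itemize}
Hence $x\mapsto \nu_H(L^G(\pi_0(x)\Omega,\tilde I)\,\cdot)$ and $x\mapsto\mu_H(L^G(\pi_0(x)\Omega,\tilde I)\,\cdot)$ are two normal representations of the type $\mathrm{III}$ factor $\A(I)$ on $H$ that coincide on the finite-index subfactor $\A(I)^G$.

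The key input is then a Galois-type fact for the inclusion $\A(I)^G\subset\A(I)$ with $G$ finite acting outerly. It says that $H$ decomposes canonically, via projections $e_g$ in the commutant of both actions, as $H=\bigoplus_g e_gH$, where on $e_gH$ the second action equals the first composed with $g^{-1}$. My first attempt at producing the $e_g$ will be to define them through the relative commutant: on $\A(I)$-bimodules restricted to $\A(I)^G$, the $\A(I)$-$\A(I)$ bimodule $L^2(\A(I))$ restricted to $\A(I)^G$ decomposes as $\bigoplus_g L^2(\A(I))_g$. Alternatively I will write
\[
e_g=\tfrac{1}{|G|}\,\mu_H\circ\big((\text{averaging of }V_{g}\text{ against }M)\boxtimes^G\id\big)\circ\mu_H^*,
\]
suitably normalized using separability ($\Delta$). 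I will then check algebraically that these are self-adjoint idempotents, orthogonal, summing to $\id_H$, and that they commute with $\mu_H$. The last step uses the Frobenius/module relation \eqref{eq: ModuleCondition} and Lemma \ref{lemm: technicalLemmasMuHK}.

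The main obstacle is showing that $\nu_H$ decomposes exactly as $\sum_g \mu_H\circ(V_{g^{-1}}\boxtimes^G e_g)$, with $e_g$ independent of $\tilde I$ and intertwining the $\A^G$-actions. This is the step where faithfulness of the action, which makes the automorphisms $g$ outer on $\A(I)$, enters. I will first try to reduce it to the statement that the $\A(I)$-$\A(I)$ intertwiner space of the two actions, relative to $\A(I)^G$, is spanned by the implementers $V_g$. Independence of $\tilde I$ then follows from isotony, exactly as for $\mu_{\tilde I}$ above.
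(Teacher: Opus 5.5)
There is a genuine gap in the step you call the key input. The ``Galois-type fact'' you invoke is the following: two normal representations $\pi_1,\pi_2$ of $M=\A(I)$ on $H$ that coincide on $N=\A(I)^G$ split $H$ into pieces on which $\pi_2=\pi_1\circ g^{-1}$. This is false, even for an outer action.

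Here is a counterexample. Take $G=\mathbb{Z}/2$ and an odd unitary $v\in M$, which exists because $N$ is of type III; then $v^2\in N$ and $M=N+Nv$. Let $H=L^2(M)$ with $\pi_1$ the standard action, and let $e_N\in N'$ be the projection onto the even subspace. Then $ve_Nv^*=1-e_N$, so the unitary $w=e^{i\theta(2e_N-1)}\in N'$ satisfies $vwv^*=w^*$. It follows that $n_1+n_2v\mapsto n_1+n_2wv$ is a normal representation $\pi_2$ of $M$ that agrees with $\pi_1$ on $N$. However, $\pi_2(v)\pi_1(v)^*=w$ has spectrum $\{e^{\pm i\theta}\}$. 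So for $\theta\notin\pi\mathbb{Z}$, no nonzero subspace invariant under both actions carries $\pi_2=\pi_1$ or $\pi_2=\pi_1\circ g$.

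Extensions of a fixed $N$-representation to $M$ thus come in continuous families. No statement about $N\subset M$ alone, including one about relative commutants spanned by the $V_g$, can force the two actions to differ by group elements. In your situation this is forced by global information that your reduction throws away. The second action is $\pi_{\tilde I\pm1}$ for a single compatible family $\{\mu_H L^G(\pi_0(\cdot)\Omega,\tilde L)\}_{\tilde L\in\Jcal_\R}$. Equivalently, it arises from $\mu_H$ through the monodromy with the commutative algebra $\Gamma$, whose fixed points are $\Gamma^G=H_0^G$. Your preliminary observations are correct and usable: $\nu_H$ is a module structure, the monodromy corresponds to the shift by $\pm1$, and the two actions agree on $\A^G$. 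The local subfactor simply cannot finish the argument.

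The paper never passes to the local subfactor. Following McRae (Theorem 3.3 and Corollary 3.4 of the cited work), it sets $\pi_g=\frac1{|G|}\Pi_g$, where $\Pi_g$ does the following in order:
\begin{enumerate}
\item inserts $\Delta\circ\iota\colon H_0^G\to\Gamma\boxtimes^G\Gamma$;
\item applies the monodromy $\beta_{H,\Gamma}\circ\beta_{\Gamma,H}$ between the second copy of $\Gamma$ and $H$, and then $V_g$ on that copy;
\item acts on $H$ with the second copy and then the first via $\mu_H$.
\end{enumerate}
What is imported from McRae, with $\varepsilon=\frac1{|G|}\sum_gV_g$ and $i=\Delta\circ\iota$, is exactly this: these maps are orthogonal idempotents in $\End_{\M}(H)$ summing to $\id_H$, $\pi_g(H)\in\M_g$, and morphisms between different grades vanish.

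Your fallback formula points in this direction but does not work as written. It is of the form $\mu_H\circ(X\boxtimes^G\id_H)\circ\mu_H^*$. If $X$ is an endomorphism of $\Gamma$, it cannot see the monodromy with $H$, which is the only thing that detects the grade. If $X$ is meant to contain the monodromy, it is not of that form. The monodromy has to act on the second leg of $\Delta\circ\iota$ before the two actions are applied.

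Moreover, the checks you postpone are the whole content of the proposition: orthogonality, completeness, $\pi_g(H)$ being $g$-graded, and $\pi_g$ vanishing on $\M_h$ for $h\neq g$. For instance, completeness rests on $\sum_gV_g=|G|\,\iota\circ\varepsilon$ together with triviality of the monodromy on $H_0^G$. Faithfulness enters only to separate the grades: if $V_g=V_h$ with $g\neq h$, then $\M_g=\M_h$. It does not enter through outerness of the local automorphisms.
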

\begin{proof}
    Let $(H, \mu_H)\in\M$. For every $g\in G$, we define $\Pi_g: H\to H$ by
    \begin{align*}
    H\cong H_0^G\boxtimes^G H&\xrightarrow{\iota\boxtimes^G \id}\Gamma\boxtimes ^G H\\ &\xrightarrow{\Delta\boxtimes^G\id}\Gamma\boxtimes^G\Gamma\boxtimes^G H\\& \xrightarrow{\id\boxtimes^G (\beta_{H, \Gamma}\circ\beta_{\Gamma, H})} \Gamma\boxtimes^G\Gamma\boxtimes^G H\\&\xrightarrow{\id\boxtimes^GV_{g}\boxtimes^G\id}\Gamma\boxtimes^G\Gamma\boxtimes^G H\\ &\xrightarrow{\id\boxtimes\mu_H} \Gamma\boxtimes^G H\\ &\xrightarrow{\mu_H} H,
    \end{align*}
    and $\pi_g : = \frac{1}{|G|}\Pi_g$. Then, by the same arguments as \cite[Thm. 3.3]{MR4244264}, $\pi_g$ is a morphism in $\Rep(\A^G)$, the pair $(\pi_g(H),\mu_H|_{\pi_g(H)})$ is a well-defined $g$-graded $\Gamma$-module, and for all $g,h\in G$, it holds that $\pi_g\pi_h = \delta_{g,h}\pi_g$ and $\sum\limits_{g\in G}\pi_g = \id_{H}$. Note that the arguments in that reference can be restricted to the non-super case, and that the relevant hypotheses in \cite[Assumption 3.1]{MR4244264} are satisfied taking, in particular, $\varepsilon: \Gamma\to H_0^G$ to be $\varepsilon(x) = \frac{1}{|G|}\sum\limits_{g\in G} g(x)$ and $i = \Delta\circ \iota$. Hence, $(H, \mu_H) = \bigoplus\limits_{g\in G} (\pi_g(H), \mu_H|_{\pi_g(H)})$, with $(\pi_g(H), \mu_H|_{\pi_g(H)})\in \M_g$ for all $g\in G$. 
    
    It remains to show that, given distinct $g,h\in G$ and $(H, \mu_H)\in\M_g$ and $(K, \mu_K)\in \M_h$, it holds that
    \[
    \Hom_{\M}((H, \mu_H), (K, \mu_K)) = 0.
    \]
    This follows from analogous arguments to \cite[Cor. 3.4]{MR4244264}.
\end{proof}

\begin{remark}
    The following comments are relevant regarding our use of arguments~in~\cite{MR4244264}. 
    \begin{enumerate}
        \item As we have mentioned, the reference deals with $G$-crossed braided supercategories and superalgebras. However, the arguments can be straightforwardly adapted to the non-super case.
        \item McRae defines (in our notation) the product of two $\Gamma$-modules $(H, \mu_H),(K,\mu_K)$ as the coequalizer of $\Gamma\boxtimes^G H\boxtimes^G K\xrightarrow{\mu_H\boxtimes^G\id} H\boxtimes^G K$ and $\Gamma\boxtimes^G H\boxtimes^G K\xrightarrow{\beta_{\Gamma, H}\boxtimes^G\id}H\boxtimes^G\Gamma\boxtimes^G K\xrightarrow{\id\boxtimes^G \mu_K}H\boxtimes^G K$. This coequalizer is canonically equivalent to our $H\boxtimes_\Gamma K$ via the unitary $H\boxtimes^G\Gamma\boxtimes^G K\xrightarrow{\beta_{\Gamma, H}^{-1}\boxtimes^G \id}\Gamma\boxtimes^G H\boxtimes^G K$.
        \item McRae works in an ambient abelian category and assumes that the tensor functors $-\boxtimes^GH$ and $H\boxtimes^G-$ are right exact for all objects $H$. The category $\Rep(\A^G)$ is not abelian, but as we have discussed, $\Gamma$ being separable ensures that $H\boxtimes_\Gamma K$ exists and is a coequalizer. Also, in $\Rep(\A^G)$ all short exact sequences split and the relevant functors in this section will preserve these splittings.
        \item Whilst we work with unitary modules, \cite{MR4244264} does not deal with unitarity. However, the inclusion of the category of unitary $\Gamma$-modules into the category of all $\Gamma$-modules is fully faithful.
    \end{enumerate}
    Hence, we will use the arguments from \cite{MR4244264} without making further reference to these hypotheses.
\end{remark}

We can also define an action $\mathrm{T}$ of $G$ on $\M$. Given $g\in G$ and $(H,\mu_H)\in \M$, we define $\mathrm{T}_g(H, \mu_H) = (H, g^{-1}\mu_H)$, where
\[
g^{-1}\mu_H: \Gamma\boxtimes^G H\xrightarrow{V_{g^{-1}}\boxtimes^G\id} \Gamma\boxtimes ^G H\xrightarrow{\mu_H} H.
\]
The action of $G$ on morphisms is trivial, and it is easy to upgrade the functors $\mathrm{T}_g$ to an action of $G$ on $\M$ compatible with the $G$-grading, see \cite[Sec. 4.2.4]{braidedFC} or \cite[App. A]{MR4244264}. Finally, given modules $(H,\mu_H),(K,\mu_K)\in \M$ such that $(H,\mu_H)$ is homogeneous of degree $g$, the crossed braiding $\mathbb{B}^\Gamma_{(H,\mu_H), (K,\mu_K)}; H\boxtimes_\Gamma K\xrightarrow{\cong} T_g(K)\boxtimes_\Gamma H$ is the unique morphism making the following diagram commute,
\begin{equation}\label{eq: braidingModGamma}\begin{tikzcd}
	{H\boxtimes^G K} &&& {K\boxtimes^GH} \\
	{H\boxtimes_\Gamma K} &&& {\mathrm{T}_g(K)\boxtimes_\Gamma H}.
	\arrow["{\beta_{H,K}}", from=1-1, to=1-4]
	\arrow["{\mu_{H,K}}"', from=1-1, to=2-1]
	\arrow["{\mu_{\mathrm{T}_g(K), H}}", from=1-4, to=2-4]
	\arrow["{\mathbb{B}^\Gamma_{(H,\mu_H), (K,\mu_K)}}"', from=2-1, to=2-4]
\end{tikzcd}\end{equation}
\begin{lemma}\label{lemm: braidingModGamma}
    Fix $g\in G$ and let $(H, \mu_H)\in \M_g$ and $(K, \mu_K)\in \M$. Then, Equation \eqref{eq: braidingModGamma} defines a unique unitary morphism $\B^\Gamma_{(H, \mu_H), (K, \mu_K)}: H\boxtimes_\Gamma K\to \mathrm{T}_g(K)\boxtimes_\Gamma H$ in $\M$. In addition, $\B^\Gamma_{(H, \mu_H), (K, \mu_K)}$ is natural in both entries.
\end{lemma}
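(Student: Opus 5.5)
Since $\mu_{H,K}$ is a surjective coisometry onto the splitting of the self-adjoint idempotent $p_{H,K}$ (Lemma \ref{lemm: technicalLemmasMuHK}), a morphism out of $H\boxtimes_\Gamma K$ is determined by its precomposition with $\mu_{H,K}$. The plan is to show that $\mu_{\mathrm{T}_g(K),H}\circ\beta_{H,K}$ vanishes on the complement of the image of $p_{H,K}$, i.e. that
\[
\mu_{\mathrm{T}_g(K),H}\circ\beta_{H,K}\circ p_{H,K} = \mu_{\mathrm{T}_g(K),H}\circ\beta_{H,K}.
\]
Then $\B^\Gamma := \mu_{\mathrm{T}_g(K),H}\circ\beta_{H,K}\circ\mu_{H,K}^*$ makes \eqref{eq: braidingModGamma} commute, since $\mu_{H,K}^*\mu_{H,K}=p_{H,K}$, and uniqueness follows from surjectivity of $\mu_{H,K}$. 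Equivalently, one checks that $\mu_{\mathrm{T}_g(K),H}\circ\beta_{H,K}$ coequalizes the two maps $H\boxtimes^G\Gamma\boxtimes^G K\rightrightarrows H\boxtimes^G K$ defining $H\boxtimes_\Gamma K$.

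This coequalizer check is the main step. After naturality of $\beta$ moves the $\Gamma$ strand through $H$ and $K$, one side yields $\Gamma$ acting on $K$ through $\mu_K$. The other side yields $\Gamma$ braided around $H$ and then acting on $H$. Rewriting $\beta_{\Gamma,H}$ against $\beta^{-1}_{H,\Gamma}$ produces the double braiding $\beta_{H,\Gamma}\beta_{\Gamma,H}$. The $g$-grading condition on $(H,\mu_H)$ converts $\mu_H\circ\beta_{H,\Gamma}\beta_{\Gamma,H}$ into $\mu_H\circ(V_{g^{-1}}\boxtimes^G\id)$. This is exactly the twisted action $g^{-1}\mu_K$ defining $\mathrm{T}_g(K)$, moved to the other factor, and it matches the right-action convention of $\mathrm{T}_g(K)\boxtimes_\Gamma H$ via $\beta^{-1}_{\Gamma,\mathrm{T}_g(K)}$. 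I expect the delicate point to be keeping track of which braiding versus inverse braiding appears in the right action, so that the double braiding lands on $H$ and not on $K$. I would do this by graphical calculus as in Lemma \ref{lemm: technicalLemmasMuHK}, using commutativity of $\Gamma$ to absorb $\beta_{\Gamma,\Gamma}$ where needed.

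Next, $\B^\Gamma$ must be a morphism in $\M$, i.e. it must intertwine the left $\Gamma$-actions, which are induced by $\mu_H\boxtimes^G\id$ and $\mu_{K}$-type maps. On $\mathrm{T}_g(K)\boxtimes_\Gamma H$ the left action can equally be computed through $H$, by the first diagram of Lemma \ref{lemm: technicalLemmasMuHK} (ii). Naturality of $\beta$ together with the same grading manipulation then gives the intertwining after precomposition with the surjection $\mu_{H,K}$.

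For unitarity, the same construction applied with $\beta_{H,K}^{-1}$ gives a candidate inverse $\mathrm{T}_g(K)\boxtimes_\Gamma H\to H\boxtimes_\Gamma K$, checked again via the coequalizer property, with $\mathrm{T}_{g^{-1}}$ and the grading used in reverse. The two composites are the identity after precomposition with the surjections, so $\B^\Gamma$ is invertible. To see it is an isometry, compute
\[
\B^\Gamma(\B^\Gamma)^* = \mu_{\mathrm{T}_g(K),H}\,\beta_{H,K}\,p_{H,K}\,\beta_{H,K}^*\,\mu_{\mathrm{T}_g(K),H}^*.
\]
Using the second diagram of Lemma \ref{lemm: technicalLemmasMuHK}, $p_{H,K}$ is conjugated by $\beta_{H,K}$ into $p_{\mathrm{T}_g(K),H}$, since both are built from $\Delta\circ\iota$ and module actions. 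Hence the expression reduces to $\mu\mu^*\mu\mu^*=\id$. Finally, naturality in both entries follows directly: for module maps $F,F'$, one has $(F\boxtimes_\Gamma F')\mu_{H,K}=\mu(F\boxtimes^G F')$, combined with naturality of $\beta$ and surjectivity of $\mu_{H,K}$.
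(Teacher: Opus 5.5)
Your proposal is correct and takes the same route as the paper. The paper defines $\B^\Gamma$ through the universal property of the coisometry $\mu_{H,K}$ and defers the coequalizer, module-map and naturality checks to \cite[App.~A, Eqs.~(A.4)--(A.5)]{MR4244264}; you carry out those checks explicitly using the grading of $H$, and, like the paper, you get unitarity from that of $\beta_{H,K}$. Your separate isometry computation can be dropped, and with it the only loosely justified step, that $\beta_{H,K}$ conjugates $p_{H,K}$ to $p_{\mathrm{T}_g(K),H}$: since $(\B^\Gamma)^*=\mu_{H,K}\beta_{H,K}^{-1}\mu_{\mathrm{T}_g(K),H}^*$ is exactly the inverse you already built from $\beta_{H,K}^{-1}$, unitarity follows directly, which is the precise content of the paper's one-line claim.
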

\begin{proof}
    All of the statement, except unitarity, follows from the arguments in \cite[App. A]{MR4244264}, in particular from Equations (A.4) and (A.5). Since $\beta_{H, K}$ is unitary, so is $\B^\Gamma_{(H, \mu_H), (K, \mu_K)}$.
\end{proof}

\begin{remark}\label{rk: CommentDeequivariantization}
    It is well-known that, given a faithful action of a finite group $G$ on a conformal net $\A$, there is an inclusion of braided tensor categories $\Rep\,G\hookrightarrow \Loc_{I_0}(\A^G)$ from the braided tensor category of unitary representations of $G$ into the braided tensor category of $I_0$-localized endomorphisms of $\A^G$ for any $I_0\in\Jcal_\mathrm{p}$, see \cite[Rk. 3.3]{muger05} and \cite[Sec. 2.1]{MR1867563}. This induces, via the equivalence $\mathfrak{E}: \Loc_{I_0}(\A^G)\to \Rep(\A^G)$, an inclusion of braided tensor categories $\Rep\,G\hookrightarrow\Rep(\A^G).$ Then, the $\mathrm{C}^*$-Frobenius algebra object $\Gamma$ is isomorphic to the image of the regular representation of $G$ under this inclusion (see \cite[Sec. 3]{MR258394}, \cite[Thm. 3.6]{MR1062748} and \cite[Sec. 2.1 and Thm. 2.8]{MR1867563} for partial statements). The category $\M$ is therefore the $G$-de-equivariantization of $\Rep(\A^G)$ under the inclusion $\Rep\,G\hookrightarrow \Rep(\A^G)$, meaning that it is a $G$-crossed braided tensor category whose $G$-equivariantization is $\Rep(\A^G)$. We do not prove these statements here, as the theory of equivariantizations and de-equivariantizations of braided non-fusion categories is not fully developed in the literature. Instead, we take a different approach similar to that in \cite{MR4244264} and using the theory of $G$-crossed categorical extensions from Section \ref{sec: CategoricalExtensions}.
\end{remark}

We will show that, with the tensor product $\boxtimes^\Gamma$, the grading $\M = \bigoplus_{g\in G}\M_g$, the action $\mathrm{T}$ and the unitaries $\B^\Gamma$, the $\mathrm{W}^*$-category $\M$ is a $G$-crossed braided $\mathrm{W}^*$-tensor category equivalent to $\Rep^G(\A)$. We construct first the underlying functor $\mathfrak{M}: \M\to \Rep^G(\A)$, inspired by \cite[Thm. 2.10]{Gui_2025}. We need the following lemma, which is analogous to \cite[Lem. 1.5]{Gui_2025} for our choice of braiding.

\begin{lemma}\label{lemm: L(xi, I-1)}
    Let $\mathcal{B}$ be a conformal net and let $H, K\in \Rep(\mathcal{B})$ be representations. Then, for any $\tilde{I}\in\Jcal_\R$ and any $\xi\in H(I)$, the following diagram commutes,
\[\begin{tikzcd}
	K && {H\boxtimes K} \\
	{H\boxtimes K} && {K\boxtimes  H},
	\arrow["{L(\xi, \tilde{I})}", from=1-1, to=1-3]
	\arrow["{L(\xi, \tilde{I}-1)}"', from=1-1, to=2-1]
	\arrow["{\B_{H, K}}", from=1-3, to=2-3]
	\arrow["{\B_{K, H}}", from=2-3, to=2-1]
\end{tikzcd}\]
where $L$ and $R$ are the operators in the Connes categorical extension of $\mathcal{B}.$
\end{lemma}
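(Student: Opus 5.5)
The plan is to check the identity $\B_{K,H}\circ\B_{H,K}\circ L(\xi,\tilde{I}) = L(\xi,\tilde{I}-1)$ on a dense set of vectors $\eta\in K$. The two formulas of Remark \ref{rk: FormulasLB}, applied with the trivial group (so every $\Gamma_g$ is the identity and $T$ is trivial), each trade an $L$-operator for a braiding. Chaining them with a suitably placed auxiliary interval $\tilde{J}$ should produce exactly the monodromy.

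Since $\mathcal{B}$ is untwisted, the spaces of bounded vectors satisfy $H_\pm(\tilde{I}) = H(I) = H_\pm(\tilde{I}-1)$, because both lifts project to the same interval $I$. So $\xi\in H(I)$ may be used as a $\pm$-bounded vector for both $\tilde{I}$ and $\tilde{I}-1$. Likewise any $\eta\in K(J)$ is both $+$- and $-$-bounded.

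Fix $\tilde{J}\in\Jcal_\R$ lying strictly between $\tilde{I}-1$ and $\tilde{I}$, i.e. $\tilde{J}\subset(\tilde{I}-1)^{c+}$. This is equivalent to $\tilde{I}\subset\tilde{J}^{c+}$. Take $\eta\in K(J)$. The steps are:
\begin{enumerate}
\item Apply the first formula of Remark \ref{rk: FormulasLB} to the pair $(\tilde{I}-1,\tilde{J})$, with $\xi\in H_-(\tilde{I}-1)$ and $\eta\in K_+(\tilde{J})$. This gives $L(\xi,\tilde{I}-1)\eta = \B_{K,H}\,L(\eta,\tilde{J})\xi$.
\item Apply the second formula to the pair $(\tilde{I},\tilde{J})$, using $\tilde{I}\subset\tilde{J}^{c+}$, with $\xi\in H_+(\tilde{I})$ and $\eta\in K_-(\tilde{J})$. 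This gives $L(\xi,\tilde{I})\eta = \B_{H,K}^{-1}L(\eta,\tilde{J})\xi$, i.e. $L(\eta,\tilde{J})\xi = \B_{H,K}L(\xi,\tilde{I})\eta$.
\item Substituting the second equality into the first yields
\[
L(\xi,\tilde{I}-1)\eta = \B_{K,H}\B_{H,K}L(\xi,\tilde{I})\eta
\]
for all $\eta\in K(J)$.
\end{enumerate}

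Finally, both sides of the last equality are bounded operators $K\to H\boxtimes K$, and $K(J)$ is dense in $K$. Hence the equality extends to all of $K$, which is the commutativity of the diagram.

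The main point needing care is the bookkeeping in the two applications of Remark \ref{rk: FormulasLB}. One must check that the inclusion hypotheses ($\tilde{J}\subset(\tilde{I}-1)^{c+}$ in the first, $\tilde{I}\subset\tilde{J}^{c+}$ in the second) hold simultaneously for a single $\tilde{J}$, which they do by the choice of $\tilde{J}$ between $\tilde{I}-1$ and $\tilde{I}$. One must also check that $L(\eta,\tilde{J})\xi$ in the two formulas denotes the same vector of $K\boxtimes H$. It does, since in both cases it is the same operator applied to the same $\xi$, and isotony lets us use the same lift $\tilde{J}$. The remaining identifications of bounded-vector spaces across the lifts $\tilde{I}$ and $\tilde{I}-1$ are automatic because there is no twist.
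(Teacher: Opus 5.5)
Your proposal is correct and is essentially the paper's proof. The paper takes the specific interval $\tilde{J}=\tilde{I}^{c-}=(\tilde{I}-1)^{c+}$ and writes out the chain
$L(\xi,\tilde{I}-1)\eta = R(\eta,\tilde{J})\xi = \B_{K,H}L(\eta,\tilde{J})\xi = \B_{K,H}R(\xi,\tilde{I})\eta = \B_{K,H}\B_{H,K}L(\xi,\tilde{I})\eta$
directly from locality and braiding compatibility; your two applications of Remark \ref{rk: FormulasLB} unpack to exactly these steps, and the paper closes with the same density argument.
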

\begin{proof}
    We use the properties of the (non-crossed) Connes categorical extension of $\mathcal{B}$, see Definition \ref{def: CategoricalExtension} and Theorem \ref{Thm: ConnesCategoricalExtension} for $\A = \mathcal{B}$ and $G$ the trivial group. Let $\eta\in K((\tilde{I}-1)^{c+}) = K(\tilde{I}^{c-})$. Then, using locality and compatibility with the braiding of the operators $L$ and $R$,
    \begin{align*}
        L(\xi, \tilde{I}-1)\eta = R(\eta, \tilde{I}^{c-})\xi = \B_{K, H}\circ L(\eta, \tilde{I}^{c-})\xi = \B_{K, H}\circ R(\xi, \tilde{I})\eta = \B_{K, H}\circ\B_{H, K}\circ L(\xi, \tilde{I})\eta.
    \end{align*}
    Since $K(\tilde{I}^{c-})\subset K$ is dense, the claim follows.
\end{proof}

\begin{proposition}\label{Prop: ExtensionFunctor}
    Fix $g\in G$ and let $\big((H, \pi_H),\mu_H\big)\in \M_g$ be a $g$-graded module over $\Gamma$. Then, there exists a unique $g$-twisted representation $\pi^{\mathfrak{M}(H)}$ of $\A$ on $H$ such that, for all $\tilde{I}\in \Jcal_\R$ and all $x\in \A_\R(\tilde{I})$, it holds that
    \begin{equation}\label{eq: ExtensionDefinition}
    \pi^{\mathfrak{M}(H)}_{\tilde{I}}(x): H\xrightarrow{L^G(\pi_0(x)\Omega,\tilde{I})}\Gamma\boxtimes^G H\xrightarrow{\mu_H} H.  
    \end{equation}
\end{proposition}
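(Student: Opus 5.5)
Uniqueness is immediate, since \eqref{eq: ExtensionDefinition} prescribes every operator $\pi^{\mathfrak{M}(H)}_{\tilde{I}}(x)$. The work is existence. I would define $\pi_{\tilde{I}}(x):=\mu_H\circ L^G(\pi_0(x)\Omega,\tilde{I})$ and verify, in order: that each $\pi_{\tilde{I}}$ is a $*$-homomorphism $\A_\R(\tilde{I})\to B(H)$; compatibility with inclusions $\tilde{J}\subset\tilde{I}$ in $\Jcal_\R$; and the twisting relation \eqref{eq: TwistedRepOnR}, $\pi_{\tilde{I}}(gx)=\pi_{\tilde{I}+1}(x)$. By the discussion after Definition \ref{def: TwistedRep}, these three facts give a $g$-twisted representation of $\A$ on $H$.

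Compatibility with inclusions follows from isotony of $L^G$ in the Connes categorical extension of $\A^G$: for $x\in\A(J)$ we have $L^G(\pi_0(x)\Omega,\tilde{J})=L^G(\pi_0(x)\Omega,\tilde{I})$ on $H$.

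For multiplicativity I would use \eqref{eq: JoinLs}:
\[
L^G(\pi_0(x)\Omega,\tilde{I})L^G(\pi_0(y)\Omega,\tilde{I})=L^G(L^G(\pi_0(x)\Omega,\tilde{I})\pi_0(y)\Omega,\tilde{I}).
\]
Naturality of $L^G$ with respect to $\mu$ then turns this into $(\mu\boxtimes^G\id)$ applied to the left-hand side, which equals $L^G(\pi_0(xy)\Omega,\tilde{I})$, since $\mu L^G(\pi_0(x)\Omega,\tilde I)\pi_0(y)\Omega=\pi_0(xy)\Omega$. The module associativity $\mu_H(\id\boxtimes^G\mu_H)=\mu_H(\mu\boxtimes^G\id)$ then gives $\pi_{\tilde{I}}(x)\pi_{\tilde{I}}(y)=\pi_{\tilde{I}}(xy)$. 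For the $*$-property I would use the adjoint commutativity of \eqref{eq: ModuleCondition}, the Frobenius half of the unitary module condition, in the same way that the proof of Proposition \ref{prop: GammaFrobeniusAlgebra} upgraded commutativity to adjoint commutativity. Concretely, I expect $L^G(\pi_0(x)\Omega,\tilde{I})^*$ composed with $\mu_H^*$ to reduce to $\mu_H L^G(\pi_0(x^*)\Omega,\tilde I)$, using $\mu^*$ and $\pi_0(x)^*=\pi_0(x^*)$ on $\Gamma$. Boundedness is automatic because $\mu_H$ and $L^G$ are bounded. Unitality uses $\mu_H(\iota\boxtimes^G\id)=\id$ together with $L^G(\Omega,\tilde I)=\iota\boxtimes^G\id$.

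The twisting relation is the main obstacle, and it is where $g$-gradedness enters. By Lemma \ref{lemm: L(xi, I-1)} applied to $\mathcal{B}=\A^G$,
\[
L^G(\pi_0(x)\Omega,\tilde{I}+1)=\beta_{H,\Gamma}\beta_{\Gamma,H}L^G(\pi_0(x)\Omega,\tilde{I}),
\]
after shifting $\tilde{I}$ by one and noting $\A_\R(\tilde{I}+1)=\A(I)$. We must be careful about which of $\pm1$ yields the monodromy versus its inverse; I would choose the convention so that the sign matches the grading condition. Applying $\mu_H$ and the defining diagram of $\M_g$, we get
\[
\mu_H\beta_{H,\Gamma}\beta_{\Gamma,H}=\mu_H(V_{g^{-1}}\boxtimes^G\id).
\]
Naturality of $L^G$ in its vector argument then gives $(V_{g^{-1}}\boxtimes^G\id)L^G(\pi_0(x)\Omega,\tilde{I})=L^G(V_{g^{-1}}\pi_0(x)\Omega,\tilde{I})=L^G(\pi_0(g^{-1}x)\Omega,\tilde I)$, because $V_{g^{-1}}\Omega=\Omega$ and $V$ implements the action. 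Hence $\pi_{\tilde{I}+1}(x)=\pi_{\tilde{I}}(g^{\mp1}x)$. Matching this with \eqref{eq: TwistedRepOnR}, which requires exponent $+1$, is the delicate bookkeeping step. If the sign comes out wrong, I would recheck the orientation in Lemma \ref{lemm: L(xi, I-1)} (replace $\tilde I$ by $\tilde I-1$) rather than the grading convention.
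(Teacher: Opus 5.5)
Your argument is correct in outline, and your twisting step is the same as the paper's. The $*$-homomorphism part, however, takes a different route.

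The paper never computes $\pi_{\tilde I}(x)\pi_{\tilde I}(y)$ or $\pi_{\tilde I}(x)^*$ directly. For $\tilde J\subset\tilde I^{c+}$ and $\eta\in H(J)$ it forms the diagram \eqref{eq: AdjointCommutativityConstruction FunctorM}. Each small square commutes adjointly, by locality and naturality of the Connes categorical extension of $\A^G$ and by the module and Frobenius relations for $\mu_H$. The outer square then gives $\pi_{\tilde I}(x)A=A\pi_0(x)$ and $\pi_{\tilde I}(x)^*A=A\pi_0(x)^*$ for $A=\mu_H R^G(\eta,\tilde J)\colon\Gamma\to H$. Choosing $\eta$ with $R^G(\eta,\tilde J)$ unitary makes $A$ surjective, and \cite[Lem.~2.9]{Gui_2025} transports the $*$-representation $\pi_0$ to $H$.

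You instead get multiplicativity from \eqref{eq: JoinLs}, naturality in the vector slot and module associativity. This is essentially the computation the paper runs in the other direction in Propositions \ref{prop: GammaFrobeniusAlgebra} and \ref{prop: MEquivalenceOfGCats}. You get the $*$-property from the Frobenius relation. Your route needs neither locality nor the choice of $\eta$. The paper's route gets $*$-preservation for free, and its diagram is reused, with the logic reversed, to produce the Frobenius relation in Proposition \ref{prop: MEquivalenceOfGCats}.

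Two points need tightening.

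\textbf{The $*$-step.} You only announce it, but it does close. Write $\xi=\pi_0(x)\Omega$.
\begin{enumerate}
\item Unitality and adjoint commutativity of \eqref{eq: ModuleCondition} give $\mu_H^*=\mu_H^*\mu_H(\iota\boxtimes^G\id)=(\id\boxtimes^G\mu_H)\big((\mu^*\iota)\boxtimes^G\id_H\big)$.
\item Applying naturality of $L^G(\xi,\tilde I)$ to the morphism $\mu_H^*$ and taking adjoints gives $L^G(\xi,\tilde I)^*(\id\boxtimes^G\mu_H)=\mu_H L^G(\xi,\tilde I)^*$.
\item You then need $L^G(\xi,\tilde I)|_{\Gamma\boxtimes^G H}=L^G(\xi,\tilde I)|_\Gamma\boxtimes^G\id_H$. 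This is a property of the Connes extension of the same kind as your $L^G(\Omega,\tilde I)=\iota\boxtimes^G\id$.
\item Combine this with $\big(\mu L^G(\xi,\tilde I)|_\Gamma\big)^*=\pi_0(x)^*=\pi_0(x^*)$ and $L^G(\pi_0(x^*)\Omega,\tilde I)|_{H_0^G}=\pi_0(x^*)\iota$. This yields $\pi_{\tilde I}(x)^*=\mu_H\big((\pi_0(x^*)\iota)\boxtimes^G\id_H\big)=\mu_H L^G(\pi_0(x^*)\Omega,\tilde I)=\pi_{\tilde I}(x^*)$.
\end{enumerate}

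\textbf{The sign.} There is no convention to choose. Lemma \ref{lemm: L(xi, I-1)} as stated gives $L^G(\xi,\tilde I-1)=\beta_{H,\Gamma}\beta_{\Gamma,H}L^G(\xi,\tilde I)$ on $H$. Your display with $\tilde I+1$ is the inverse monodromy and would produce a $g^{-1}$-twist. Using the lemma as stated, the grading condition gives $\pi_{\tilde I-1}(x)=\pi_{\tilde I}(g^{-1}x)$. Replacing $\tilde I$ by $\tilde I+1$ and $x$ by $gx$, this is exactly \eqref{eq: TwistedRepOnR} with $\varphi=g$.
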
 
\begin{proof}
    Fix $g\in G$ and let $(H, \mu_H)\in \M_g$. Let $\tilde{I},\tilde{J}\in\Jcal_\R$ be intervals with $\tilde{J}\subset\tilde{I}^{c+}$, and $\xi\in \Gamma(I)$, $\eta\in H(J)$. Consider the following diagram,
\begin{equation}\label{eq: AdjointCommutativityConstruction FunctorM}\begin{tikzcd}
	\Gamma && {\Gamma\boxtimes^G H} && H \\
	{\Gamma\boxtimes^G\Gamma} && {\Gamma\boxtimes^G\Gamma\boxtimes^G H} && {\Gamma\boxtimes^G H} \\
	\Gamma && {\Gamma\boxtimes^G H} && H.
	\arrow["{R^G(\eta, \tilde{J})}", from=1-1, to=1-3]
	\arrow["{L^G(\xi, \tilde{I})}"', from=1-1, to=2-1]
	\arrow["{\mu_H}", from=1-3, to=1-5]
	\arrow["{L^G(\xi, I)}", from=1-3, to=2-3]
	\arrow["{L^G(\xi, I)}", from=1-5, to=2-5]
	\arrow["{R^G(\eta, \tilde{J})}", from=2-1, to=2-3]
	\arrow["\mu"', from=2-1, to=3-1]
	\arrow["{\id\boxtimes^G\mu_H}", from=2-3, to=2-5]
	\arrow["{\mu\boxtimes^G\id}", from=2-3, to=3-3]
	\arrow["{\mu_H}", from=2-5, to=3-5]
	\arrow["{R^G(\eta, \tilde{J})}", from=3-1, to=3-3]
	\arrow["{\mu_H}", from=3-3, to=3-5]
\end{tikzcd}
\end{equation}
    By locality and naturality of the Connes categorical extension of $\A^G$, together with the fact that $(F_1\boxtimes^G F_2)^* = F_1^*\boxtimes^G F_2^*$ and the module and Frobenius relations for $\mu_H$, each of the small diagrams commutes adjointly. Note that $\mu_H$ is surjective, since $H\cong H_0^G\boxtimes^G H\xrightarrow{\iota\boxtimes^G \id}\Gamma \boxtimes^G H\xrightarrow{\mu_H} H$ equals the identity morphism on $H$. Thus, if we choose $\eta$ so that $R(\eta, \tilde{J}): H_0^G\to \Gamma$ is unitary, then $R(\eta, \tilde{J}):\Gamma\to \Gamma\boxtimes^G H$ is unitary by \cite[Lem. 1.4]{Gui_2025} and $\mu_HR(\eta, \tilde{J})$ is surjective. Note that such $\eta$ always exists, as $\A^G(J^c)$ is a type $\mathrm{III}$ factor and $H_0^G$ and $\Gamma$ are nonzero modules over it. Hence, by \cite[Lem. 2.9]{Gui_2025}, there is a unique representation $\pi^{\mathfrak{M}(H)}_{\tilde{I}}$ of $\A_\R(\tilde{I})$ on $H$ such that Equation \eqref{eq: ExtensionDefinition} holds.

     Isotony is clear. Let us show that the constructed representation is $g$-twisted. Let $\tilde{I}\in\Jcal_\R$ and $x\in \A_\R(\tilde{I}) = \A(I)  = \A_\R(\tilde{I} - 1)$. Then, we need to show the commutativity of the outer diagram in 
\[\begin{tikzcd}
	H &&&&& {\Gamma\boxtimes^GH} \\
	&&& {\Gamma\boxtimes^GH} \\
	\\
	{\Gamma\boxtimes^G H} &&&&& H.
	\arrow["{L^G(\pi_0(x)\Omega, \tilde{I}-1)}", from=1-1, to=1-6]
	\arrow["{L^G(\pi_0(x)\Omega, \tilde{I})}"'{pos=0.8}, from=1-1, to=2-4]
	\arrow["{L^G(\pi_0(g^{-1}x)\Omega, \tilde{I})}"', from=1-1, to=4-1]
	\arrow["{\mu_H}", from=1-6, to=4-6]
	\arrow["{\beta_{H, \Gamma}\circ \beta_{\Gamma,H}}"'{pos=0.3}, from=2-4, to=1-6]
	\arrow["{V_{g^{-1}}\boxtimes^G \id}", from=2-4, to=4-1]
	\arrow["{\mu_H}"', from=4-1, to=4-6]
\end{tikzcd}\]
The top triangle commutes by Lemma \ref{lemm: L(xi, I-1)} for $\mathcal{B} = \A^G$, the left triangle commutes trivially (using that $V_{g}\Omega  = \Omega$) and the lower quadrilateral commutes by the hypothesis that $(H, \mu_H)$ is a $g$-graded module.
\end{proof}

By Proposition \ref{Prop: ExtensionFunctor}, we can define a $\mathrm{W}^*$-functor $$\mathfrak{M}: \M\to \Rep^G(\A)$$ by linearly extending the assignment obtained by sending a $g$-graded module $(H,\mu_H)\in \M_g$ to $\mathfrak{M}(H,\mu_H):=(H,\pi^{\mathfrak{M}(H)})\in \Rep^g(\A)$ and a morphism $F$ in $\M$ to the same map of Hilbert spaces. Indeed, fix $g\in G$ and let $F:(H, \mu_H)\to (K, \mu_K)$ be a morphism in $\M_g$. We show that $F$ intertwines the $g$-twisted actions $\pi^{\mathfrak{M}(H)}$ and $\pi^{\mathfrak{M}(K)}$ on $H$ and $K$ respectively. Let $\tilde{I}\in\Jcal_\R$, $x\in \A_\R(\tilde{I})$ and $\xi\in H$. Then, we can compute
\begin{align*}
    \pi_{\tilde{I}}^{\mathfrak{M}(K)}(x)\circ F(\xi) &= \mu_K\circ L^G(\pi_0(x)\Omega, \tilde{I}) F(\xi)\\ & =  \mu_K\circ (\id\boxtimes^G F)\circ L^G(\pi_0(x)\Omega, \tilde{I})\xi \\& = F\circ \mu_H\circ L^G(\pi_0(x)\Omega, \tilde{I})\\ & = F\circ\pi_{\tilde{I}}^{\mathfrak{M}(H)}(x)(\xi).
\end{align*}

\begin{proposition}\label{prop: MEquivalenceOfGCats}
    The functor $\mathfrak{M}: \M\to \Rep^G(\A)$ is an equivalence of $\mathrm{W}^*$-categories. In addition, it respects the $G$-grading and satisfies
    \[
\mathfrak{M}\circ \mathrm{T}_g
 =  T_g\circ \mathfrak{M}    \]
for all $g\in G$, making it into an equivalence of $G$-graded $\mathrm{W}^*$-categories with a $G$-action.
\end{proposition}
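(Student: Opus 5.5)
I would prove the statement in three parts: full faithfulness, essential surjectivity via an explicit inverse on objects, and the two compatibilities (grading and $G$-action).

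\textbf{Full faithfulness.} Faithfulness is immediate, since $\mathfrak{M}$ is the identity on underlying maps. For fullness, fix $g\in G$, modules $(H,\mu_H),(K,\mu_K)\in\M_g$, and a bounded map $F:H\to K$ intertwining the twisted actions $\pi^{\mathfrak{M}(H)}$ and $\pi^{\mathfrak{M}(K)}$.
\begin{enumerate}
\item Restricting to $\A^G(I)\subset\A(I)$, we have $\mu_H L^G(\pi_0(x)\Omega,\tilde I)=\pi_H(x)$ for $x\in\A^G(I)$. Indeed, $\pi_0(x)\Omega=x\iota(\Omega)$, and unitality of $\mu_H$ gives this identity. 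Hence $F$ is a morphism in $\Rep(\A^G)$.
\item To show $F$ is a module map, I would check $F\mu_H=\mu_K(\id\boxtimes^G F)$ on the spanning vectors $L^G(\pi_0(x)\Omega,\tilde I)\xi$. These span a dense subspace by density of fusion products, together with the Reeh--Schlieder density of $\A(I)\Omega$ in $\Gamma$ and boundedness.
\item On these vectors the required identity reads $F\pi^{\mathfrak{M}(H)}_{\tilde I}(x)\xi=\pi^{\mathfrak{M}(K)}_{\tilde I}(x)F\xi$, which is exactly the intertwining hypothesis.
\end{enumerate}
Morphisms between different degrees vanish on both sides, by Proposition~\ref{Prop: ModGammaIsGGraded} and the remark on twisted representations.

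\textbf{Essential surjectivity.} Given $K\in\Rep^g(\A)$, I take $H=r(K)$ with module action $\mu_K:\Gamma\boxtimes^G r(K)\to r(K)$ determined by
\[
\mu_K\bigl(L^G(\pi_0(x)\Omega,\tilde I)\xi\bigr)=\pi^K_{\tilde I}(x)\xi .
\]
This mirrors the construction of $\mu$ before Proposition~\ref{prop: GammaFrobeniusAlgebra}.
\begin{enumerate}
\item Boundedness comes from finite index of $\A^G(I)\subset\A(I)$, as in \cite[Thm.~2.12]{Gui_2025}.
\item Independence of $\tilde I$ follows from isotony of the twisted action on $\Jcal_\R$. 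The $\A^G(I^c)$-intertwining holds because $\A^G$ acts untwistedly on $K$.
\item Associativity follows by the same computation as in Proposition~\ref{prop: GammaFrobeniusAlgebra}, using \eqref{eq: JoinLs}.
\item The module--Frobenius (adjoint) relation follows by the same adjoint-commutativity argument. I use the locality of $L^G,R^G$ and the fact that $\pi^K_{\tilde I}(x)^*=\pi^K_{\tilde I}(x^*)$.
\item The $g$-grading follows from Lemma~\ref{lemm: L(xi, I-1)} combined with \eqref{eq: TwistedRepOnR}: $\pi^K_{\tilde I-1}(x)=\pi^K_{\tilde I}(g^{-1}x)$. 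This is the computation in Proposition~\ref{Prop: ExtensionFunctor} run backwards.
\end{enumerate}
By the uniqueness in Proposition~\ref{Prop: ExtensionFunctor}, $\mathfrak{M}(r(K),\mu_K)=K$ on the nose, so $\mathfrak{M}$ is essentially surjective (in fact surjective on homogeneous objects; direct sums then follow).

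\textbf{Compatibility with the grading and the $G$-action.} Grading preservation is built into Proposition~\ref{Prop: ExtensionFunctor}. For the action, both $\mathfrak{M}(\mathrm{T}_g(H))$ and $T_g(\mathfrak{M}(H))$ live on the Hilbert space $H$, so it suffices to compare their actions:
\[
\pi^{\mathfrak{M}(\mathrm{T}_gH)}_{\tilde I}(x)=\mu_H(V_{g^{-1}}\boxtimes^G\id)L^G(\pi_0(x)\Omega,\tilde I)=\mu_H L^G(\pi_0(g^{-1}x)\Omega,\tilde I)=\pi^{\mathfrak{M}(H)}_{\tilde I}(g^{-1}x).
\]
The middle step uses naturality of $L^G$, $V_{g^{-1}}\pi_0(x)\Omega=\pi_0(g^{-1}x)\Omega$, and that $V_{g^{-1}}$ is a morphism of $\A^G$-representations. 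The right-hand side is $\pi^{T_g\mathfrak{M}(H)}_{\tilde I}(x)$. On morphisms both functors act trivially, and the structure isomorphisms can then be taken to be identities.

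\textbf{Main obstacle.} The hardest step is establishing that $\mu_K$ is a well-defined bounded morphism satisfying the adjoint module relation for an arbitrary twisted $K$. Boundedness needs the finite-index argument adapted to twisted representations, and I would try to reduce it to the untwisted case of \cite{Gui_2025} by working within a single interval $\tilde I$, where $K$ is an honest $\A(I)$-module.
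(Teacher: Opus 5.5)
Your proposal is correct and follows essentially the same route as the paper. For essential surjectivity you use the same module structure $\mu_K\bigl(L^G(\pi_0(x)\Omega,\tilde I)\xi\bigr)=\pi^K_{\tilde I}(x)\xi$, with boundedness from Gui's finite-index argument, associativity via \eqref{eq: JoinLs}, and the Frobenius relation via the same adjoint-commutativity diagram. Fullness is obtained by testing on the vectors $L^G(\pi_0(x)\Omega,\tilde I)\xi$, and compatibility with the action comes from the identical computation $\mathfrak{M}\circ\mathrm{T}_g=T_g\circ\mathfrak{M}$.

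Two of your steps are more careful than the paper's text. You check explicitly that $(r(K),\mu_K)$ is $g$-graded, using Lemma \ref{lemm: L(xi, I-1)} and \eqref{eq: TwistedRepOnR}, which the paper leaves implicit in ``it is clear that $\mathfrak{M}(r(H),\mu_H)=(H,\pi^H)$''. You also check that twisted intertwiners are $\A^G$-morphisms, which the paper leaves implicit.
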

\begin{proof}
    Let us first show that $\mathfrak{M}$ is essentially surjective. Fix $g\in G$ and let $H^g = (H, \pi^H)\in \Rep^g(\A)$ be a $g$-twisted representation of $\A$. Let $r(H)\in \Rep(\A^G)$ denote the restriction of $H$ to an $\A^G$-representation. We next provide $r(H)$ with the structure of a left $\Gamma$-module. Fix $\tilde{I}\in\Jcal_\R$. We define the linear map
\begin{equation}\label{eq: InverseToM}
\begin{array}{cccc}
 \mu_{H, \tilde{I}}: &\Gamma\boxtimes^G r(H)   &\to & r(H)  \\
     & L^G(\pi_0(x)\Omega,\tilde{I})\eta&\mapsto &\pi_{\tilde{I}}^H(x)(\eta). 
\end{array}
\end{equation}
We note that $\mu_{H, \tilde{I}}$ is bounded by the same arguments as step 1 of the second part of the proof of \cite[Thm. 2.10]{Gui_2025}. If $\tilde{J}\in \Jcal_\R$ is an interval with $\tilde{J}\subset\tilde{I}$, then $\mu_{H, \tilde{I}} = \mu_{H, \tilde{J}}$. Hence, $\mu_{H, \tilde{I}}  = \mu_{H, \tilde{J}}$ for any pair of intervals $\tilde{I},\tilde{J}\in\Jcal_\R$. We denote $\mu_{H}:=\mu_{H,\tilde{I}}$ for any $\tilde{I}\in\Jcal_\R$. We next show that $\mu_H$ is a morphism of $\A^G$-representations. Let $\tilde{I}\in \Jcal_\R$ and $y\in \A^G(I)$. Then, for any $x\in \A(I)$ and any $\eta\in r(H)$, it holds that $\pi^{\Gamma\boxtimes^Gr(H)}_I(y)\circ L^G(\pi_0(x)\Omega, \tilde{I}) = L^G(\pi_0(yx)\Omega, \tilde{I})$ by \cite[Prop. 2.3]{MR4339084}, and hence
\begin{align*}
    \mu_{H}\circ \pi^{\Gamma\boxtimes^Gr(H)}_I(y)\circ L^G(\pi_0(x)\Omega, \tilde{I})\eta & = \mu_H\big(L^G(\pi_0(yx)\Omega, \tilde{I})\eta\big) \\& = \pi^H_{\tilde{I}}(yx)(\eta)\\& = \pi_{\tilde{I}}^H(y)\circ \mu_H\big(L^G(\pi_0(x)\Omega, \tilde{I})\eta\big).
\end{align*}
Let us show that $(H, \mu_H)$ is indeed a unitary module over $\Gamma$. To check unitality, we compute, for $\eta\in H$,
\[
\mu_H\circ (\iota\boxtimes^G\id_{H})\eta = \mu_H(L^G(\Omega, \tilde{I})\eta) = \pi^H_{\tilde{I}}(1)(\eta) = \eta
\]
for any $\tilde{I}\in\Jcal_\R$. We next check the associativity and Frobenius relation. Fix $\tilde{I}\in \Jcal_\R$ and let $x, y\in \A(I)$ and $\eta\in H$. Then, we compute
\begin{align*}
\mu_H\circ(\id\boxtimes^G \mu_H)(L^G(\pi_0(x)\Omega, \tilde{I})L^G(\pi_0(y)\Omega, \tilde{I})\eta)&= \mu_H(L^G(\pi_0(x)\Omega, \tilde{I})\pi_{\tilde{I}}^H(y)(\eta))\\ & = \pi_{\tilde{I}}^H(xy)(\eta),
\end{align*}
and using Equation \eqref{eq: JoinLs},
\begin{align*}
\mu_H\circ(\mu\boxtimes^G\id)(L^G(\pi_0(x)\Omega, \tilde{I})L^G(\pi_0(y)\Omega, \tilde{I})\eta) & = \mu_H\circ(\mu\boxtimes^G\id)(L^G(L^G(\pi_0(x)\Omega, \tilde{I})\pi_0(y)\Omega,\tilde{I})\eta)\\ & = \mu_H(L^G(\pi_0(xy)\Omega, \tilde{I})\eta)\\ & = \pi_{\tilde{I}}^H(xy)\eta,
\end{align*}
which shows the commutativity of \eqref{eq: ModuleCondition}. Recall the diagram \eqref{eq: AdjointCommutativityConstruction FunctorM}. We have just argued that the bottom-right inner diagram commutes, and we know that the other three commute adjointly. Hence, the outer diagram commutes. Namely, if $\tilde{J}\in\Jcal_\R$ is an interval such that $\tilde{J}\subset\tilde{I}^{c+}$ and $\eta\in H(J)$, we have $\mu_H\circ R^G(\eta, \tilde{J})\circ \pi_0(x) = \pi_{\tilde{I}}^H(x)\circ \mu_H\circ R^G(\eta, \tilde{J})$ as maps $H_0\to H$. It therefore holds that $\mu_H\circ R^G(\eta, \tilde{J})\circ \pi_0(x^*) = \pi_{\tilde{I}}^H(x^*)\circ \mu_H\circ R^G(\eta, \tilde{J})$ as maps $H_0\to H$, and since both $\pi_{0, I}$ and $\pi^H_{\tilde{I}}$ are $*$-actions, we find $\mu_H\circ R^G(\eta, \tilde{J})\circ \pi_0(x)^* = \pi_{\tilde{I}}^H(x)^*\circ \mu_H\circ R^G(\eta, \tilde{J})$ as maps $H_0\to H$. This is exactly the adjoint commutativity of the outer diagram in \eqref{eq: AdjointCommutativityConstruction FunctorM} for $\xi = \pi_0(x)\Omega$. By density of the fusion product, the bottom-right inner diagram in \eqref{eq: AdjointCommutativityConstruction FunctorM} commutes adjointly. This is the Frobenius relation for $\mu_H$.

It is clear that $\mathfrak{M}(r(H), \mu_H) = (H, \pi^H)$, hence proving essential surjectivity of $\mathfrak{M}$. 

To show that $\mathfrak{M}$ is fully faithful, we fix $H,K\in \Rep(\A^G)$ and $F\in \Hom_{\Rep(\A^G)}(H, K)$. For every $x\in \A_\R(\tilde{I})$, we have
\[
F\circ \pi^{\mathfrak{M}(H)}_{\tilde{I}}(x)\eta = F\circ\mu_H\circ L^G(\pi_0(x)\Omega, \tilde{I})\eta
\]
for any $\eta\in H$. Similarly, 
\[
\pi^{\mathfrak{M}(K)}_{\tilde{I}}(x)\circ F(\eta) = \mu_K\circ L^G(\pi_0(x)\Omega, \tilde{I})\circ F(\eta) = \mu_K\circ (\id_\Gamma\boxtimes F)\circ L^G(\pi_0(x)\Omega, \tilde{I})\eta.
\]
Therefore, the morphism $F$ intertwines the $\Gamma$-actions if and only if it intertwines the $\A_\R(\tilde{I})$-actions for all $\tilde{I}\in\Jcal_\R$. This shows that $$\Hom_{\M}((H, \mu_H), (K, \mu_K)) = \Hom_{\Rep^G(\A)}(\mathfrak{M}(H, \mu_H), \mathfrak{M}(K, \mu_K)).$$

That $\mathfrak{M}$ preserves the $G$-gradings follows from Proposition \ref{Prop: ExtensionFunctor}. 
For the compatibility with the $G$-action, we claim that $\mathfrak{M}\circ \mathrm{T}_g(H,\mu_H)  = T_g\circ\mathfrak{M}(H, \mu_H)$ for all modules $(H, \mu_H)\in \M$ and all $g\in G$. Indeed, this equality is equivalent to the commutativity of 
\[\begin{tikzcd}
	H && {\Gamma\boxtimes^GH} && {\Gamma\boxtimes^GH} \\
	\\
	{\Gamma\boxtimes^G H} &&&& H,
	\arrow["{L^G(\pi_0(x)\Omega, \tilde{I})}", from=1-1, to=1-3]
	\arrow["{L^G(\pi_0(g^{-1}x)\Omega, \tilde{I})}"', from=1-1, to=3-1]
	\arrow["{V_{g^{-1}}\boxtimes^G \id_H}", from=1-3, to=1-5]
	\arrow["{\mu_H}", from=1-5, to=3-5]
	\arrow["{\mu_H}"', from=3-1, to=3-5]
\end{tikzcd}\]
but since $L^G(\pi_0(g^{-1}x)\Omega, \tilde{I}) = L^G(V_{g^{-1}}\circ\pi_0(x)\Omega, \tilde{I}) = (V_{g^{-1}}\boxtimes^G\id_H)\circ L^G(\pi_0(x)\Omega, \tilde{I})$, the claim follows. Hence, the identity natural transformation provides the compatibility with the $G$-actions, meaning
\[
\mathfrak{M}\circ \mathrm{T}_g = T_g\circ\mathfrak{M}\]
for all $g\in G$.
\end{proof}

We will next show that $\M$ is naturally a $G$-crossed braided $\mathrm{W}^*$-tensor category equivalent to $\Rep^G(\A)$ via (an upgrade to a tensor functor of) the functor $\mathfrak{M}$. To do so, we first need the following lemma, which is analogous to \cite[Lem. 5.8]{Gui_2025}. Recall that we write $\iota: H_0^G\hookrightarrow \Gamma$ for the inclusion of $\A^G$-representations.

\begin{lemma}
    Let $(H, \mu_H)\in \M$. For any interval $\tilde{I}\in\Jcal_\R$, it holds that $\mathfrak{M}(H, \mu_H)_+(\tilde{I}) = H(I) = \mathfrak{M}(H, \mu_H)_-(\tilde{I})$.
\end{lemma}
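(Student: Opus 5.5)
The plan is to prove two inclusions for the $(\tilde{I},-)$-bounded vectors and then get the $(\tilde{I},+)$ case for free from a shift of intervals. Write $\pi_H$ for the original $\A^G$-action on $H$.

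\textbf{Step 0: the twisted action restricts to $\pi_H$.} For $y\in\A^G(I)$ we have $L^G(\pi_0(y)\Omega,\tilde{I})=\pi^{\Gamma\boxtimes^G H}_I(y)\,L^G(\Omega,\tilde{I})$ by \cite[Prop. 2.3]{MR4339084}. Since $\mu_H$ is a morphism of $\A^G$-representations and $\mu_H\circ(\iota\boxtimes^G\id)=\id$, this gives $\pi^{\mathfrak{M}(H)}_{\tilde{I}}(y)=\pi_{H,I}(y)$. So restricting $\mathfrak{M}(H,\mu_H)$ along $\A^G\subset\A$ gives back $(H,\pi_H)$.

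\textbf{Step 1: reduce the $+$ case to the $-$ case.} Directly from the definitions of the positive and negative complements,
\[
\tilde{I}^{c+}=(\partial_+\tilde{I},\partial_-\tilde{I}+1)=(\tilde{I}+1)^{c-}.
\]
Hence $\mathfrak{M}(H,\mu_H)_+(\tilde{I})=\mathfrak{M}(H,\mu_H)_-(\tilde{I}+1)$. The space $H(I)$ depends only on $I=q(\tilde{I})=q(\tilde{I}+1)$. So it suffices to prove $\mathfrak{M}(H,\mu_H)_-(\tilde{J})=H(J)$ for every $\tilde{J}\in\Jcal_\R$.

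\textbf{Step 2: the inclusion $\subset$.} Let $\eta=T\Omega$ with $T\in\Hom_{\A_\R(\tilde{J}^{c-})}(H_0,H)$. Consider $T\circ\iota\colon H_0^G\to H$. By Step 0 it intertwines the actions of $\A^G(J^c)$, since $\A^G(J^c)\subset\A(J^c)=\A_\R(\tilde{J}^{c-})$ and the restricted actions agree. Moreover $T\iota\Omega=\eta$ because $\Omega\in H_0^G$. Therefore $\eta\in H(J)$.

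\textbf{Step 3: the inclusion $\supset$.} Let $\eta\in H(J)$ and set
\[
T:=\mu_H\circ R^G(\eta,\tilde{J})\colon \Gamma=H_0\to\Gamma\boxtimes^G H\to H.
\]
\begin{itemize}
\item[(a)] By unitality of the categorical extension of $\A^G$ and of the module, $T\Omega=\mu_H(\iota\boxtimes^G\id)\eta=\eta$.
\item[(b)] Take $\tilde{I}$ with $\tilde{J}\subset\tilde{I}^{c+}$, equivalently $\tilde{I}\subset\tilde{J}^{c-}$. The outer square of diagram \eqref{eq: AdjointCommutativityConstruction FunctorM} in the proof of Proposition \ref{Prop: ExtensionFunctor}, with $\xi=\pi_0(x)\Omega$, gives
\[
\pi^{\mathfrak{M}(H)}_{\tilde{I}}(x)\circ\mu_H R^G(\eta,\tilde{J})=\mu_H R^G(\eta,\tilde{J})\circ\pi_0(x),\qquad x\in\A(I).
\]
That square commutes by locality, naturality and the module relation, as already argued there. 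So $T$ intertwines $\A_\R(\tilde{I})$ for every such $\tilde{I}\subset\tilde{J}^{c-}$.
\item[(c)] Every $\tilde{I}\in\Jcal_\R$ with $\tilde{I}\subset\tilde{J}^{c-}$ gives a subinterval, and these subintervals exhaust $\tilde{J}^{c-}$. By additivity of $\A$, $T$ then intertwines all of $\A_\R(\tilde{J}^{c-})=\A(J^c)$. Weak continuity of the normal representations $\pi_0$ and $\pi^{\mathfrak{M}(H)}_{\tilde{J}^{c-}}$ passes the intertwining relation to the generated von Neumann algebra. Hence $\eta\in\mathfrak{M}(H,\mu_H)_-(\tilde{J})$.
\end{itemize}

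The main obstacle I expect is bookkeeping in Step 3. One must check that the orientation conventions in diagram \eqref{eq: AdjointCommutativityConstruction FunctorM} ($R^G$ at $\tilde{J}$, $L^G$ at $\tilde{I}$, with $\tilde{J}\subset\tilde{I}^{c+}$) really produce the negative complement, and that the additivity passage is compatible with the possibly special inclusions. For the latter, the inclusions of subintervals $\tilde{I}\subset\tilde{J}^{c-}$ inside $\Jcal_\R$ are standard in $\R$, so the actions $\pi^{\mathfrak{M}(H)}_{\tilde{I}}$ are restrictions of $\pi^{\mathfrak{M}(H)}_{\tilde{J}^{c-}}$, and no twist by $g$ enters.
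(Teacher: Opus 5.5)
Your proof is correct and follows the paper's argument. One inclusion comes from the fact that $\mathfrak{M}(H,\mu_H)$ restricts to $(H,\pi_H)$ on $\A^G$ (via \cite[Prop. 2.3]{MR4339084}), and the other from $\mu_H R^G(\eta,\cdot)$ together with the outer square of \eqref{eq: AdjointCommutativityConstruction FunctorM}. Your orientation bookkeeping is the consistent one: that square requires $\tilde{J}\subset\tilde{I}^{c+}$, equivalently $\tilde{I}\subset\tilde{J}^{c-}$, so it yields $(\tilde{J},-)$-boundedness, and the shift $\tilde{I}^{c+}=(\tilde{I}+1)^{c-}$ then gives the $+$ case. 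The paper's write-up instead pairs $R^G(\eta,\tilde{I})$ with $L^G(\xi,\tilde{I}^{c+})$, which does not meet that hypothesis, since $(\tilde{I}^{c+})^{c+}=\tilde{I}+1$. Your additivity step is unnecessary, because $\tilde{I}=\tilde{J}^{c-}$ itself satisfies $\tilde{J}\subset\tilde{I}^{c+}$.
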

\begin{proof}
    The fact that $\mathfrak{M}(H, \mu_H)_+(\tilde{I}) \subset H(I)$, follows directly from the equality of $\A^G$-representations $r\circ\mathfrak{M}(H) = H$. To prove this equality, consider $\tilde{I}\in\Jcal_\R$ and $x\in \A^G(I)$. Then, 
    \[\hspace{-.1cm}
    \pi^{\mathfrak{M}(H)}_{\tilde{I}} (x) =\mu_H\circ  L^G(\pi_0(x)\Omega, \tilde{I}) = \mu_H\circ \pi_{\tilde{I}}^{\Gamma\boxtimes^G H}(x)\circ L^G(\Omega, \tilde{I}) = \pi_{\tilde{I}}^H(x)\circ \mu_H\circ L^G(\Omega, \tilde{I}) = \pi^H_{\tilde{I}}(x),
    \]
    using \cite[Prop. 2.3]{MR4339084}.
    
   For the other inclusion, let $\eta\in H(I)$. We have that $\mu_HR^G(\eta,\tilde{I})\iota\Omega = \mu_H(\iota\boxtimes^G \id)R^G(\eta,\tilde{I})\Omega = \eta$. Moreover, $\mu_H R^G(\eta, \tilde{I}): H_0 = \Gamma\to \Gamma\boxtimes^G H\to H$ intertwines the actions $\pi_0$ and $\pi^{\mathfrak{M}(H)}_{\tilde{I}^{c+}}$ of $\A_\R(\tilde{I}^{c+})$, since for each $\xi\in \Gamma(I^c)$,
    \[
    \mu_HR^G(\eta, \tilde{I})\mu L^G(\xi, \tilde{I}^{c+}) = \mu_H L^G(\xi, \tilde{I}^{c+})\mu_H R^G(\eta, \tilde{I}) 
    \]
    when acting on $\Gamma$, by Diagram \eqref{eq: AdjointCommutativityConstruction FunctorM}. Hence, $\eta\in \mathfrak{M}(H, \mu_H)_+(\tilde{I})$ and $\mathfrak{M}(H, \mu_H)_+(\tilde{I}) = H(I)$. We can similarly prove that $\mathfrak{M}(H, \mu_H)_-(\tilde{I}) = H(I)$.
\end{proof}

We may now introduce the following notation. Let $H^g\in \Rep^g(\A)$ and $K^h\in \Rep^h(\A)$ be representations twisted by $g,h\in G$ respectively. Let $\mu_H: \Gamma\boxtimes^Gr(H)\to r(H)$ be the morphism $L^G(\pi_0(x)\Omega, \tilde{I})\xi\mapsto \pi_{\tilde{I}}^H(x)(\xi)$ defined in the proof of Proposition \ref{prop: MEquivalenceOfGCats} (see Equation \eqref{eq: InverseToM}). We similarly define $\mu_K$. Then, $(H, \mu_H),(K, \mu_K)\in \M$ and $\mathfrak{M}(H, \mu_H) = H$ and $\mathfrak{M}(K, \mu_K) = K$. Given $\tilde{I}\in \Jcal_\R$ and $\xi\in H^g_+(\tilde{I})$, we define the maps of Hilbert spaces
\begin{align*}
L^\Gamma(\xi, \tilde{I}):&\ K\xrightarrow{L^G(\xi, \tilde{I})} H\boxtimes ^G K \xrightarrow{\mu_{H, K}} H\boxtimes_\Gamma K\\
R^\Gamma(\xi, \tilde{I}):&\ K\xrightarrow{R^G(\xi, \tilde{I})} K\boxtimes ^G H \xrightarrow{\mu_{K, H}} K\boxtimes_\Gamma H.
\end{align*}

\begin{proposition}\label{prop: MisEquivalenceOFG-X-BraidedCats}
    Let $\A$ be a conformal net being acted on faithfully by a finite group $G$. Let $\Gamma\in \Rep(\A^G)$ be the vacuum representation $H_0$ of $\A$ seen as an $\A^G$-representation, with its canonical commutative Frobenius $\mathrm{C}^*$-algebra structure from Proposition \ref{prop: GammaFrobeniusAlgebra}. Let $\M$ be the $\mathrm{W}^*$-category of unitary left modules over $\Gamma$. Then,
    \begin{enumerate}
        \item the category $\M$, with the tensor structure $\boxtimes_\Gamma$, the grading $\M = \bigoplus_{g\in G}\M_g$, the $G$-action $\mathrm{T}$, and the unitaries $\B^\Gamma$, is a $G$-crossed braided $\mathrm{W}^*$-tensor category;
        \item the equivalence of $\mathrm{W^*}$-categories $\mathfrak{M}$ can be extended to an equivalence
    \[
    (\mathfrak{M}, \Phi): \M\to \Rep^G(\A)
    \]
    of $G$-crossed braided $\mathrm{W}^*$-tensor categories. The data witnessing the compatibility with the $G$-action is trivial. 
    \end{enumerate}
\end{proposition}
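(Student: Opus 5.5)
The plan is to transport the structure of $\M$ to $\Rep^G(\A)$ along the equivalence $\mathfrak{M}$ of Proposition \ref{prop: MEquivalenceOfGCats}. We then check that the transported data is a $G$-crossed categorical extension in the sense of Definition \ref{def: CategoricalExtension}, and conclude with the uniqueness Theorem \ref{thm: UniquenessCategoricalExtensions}.

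\textbf{Transported structure.} Since $\mathfrak{M}$ is an equivalence of $G$-graded $\mathrm{W}^*$-categories with strictly compatible $G$-action, we may identify each $H^g\in\Rep^g(\A)$ with the module $(r(H),\mu_H)$ of Equation \eqref{eq: InverseToM}. We then define
\[
H^g\boxdot K^h:=\mathfrak{M}\big((r(H),\mu_H)\boxtimes_\Gamma(r(K),\mu_K)\big)
\]
and take as $\beta_{H,K}$ the image of $\B^\Gamma$ from Lemma \ref{lemm: braidingModGamma}. The associators and unitors are those of $\M$. The compatibility isomorphisms $T_g(-\boxdot-)\cong T_g(-)\boxdot T_g(-)$ come from the standard tensor structure of $\mathrm{T}_g$ on modules, namely $V_{g}$ acting on the $\Gamma$ factor. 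For the operators we take $L:=L^\Gamma$ and $R:=R^\Gamma$, as defined just before the statement.

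\textbf{Checking the axioms.} Several axioms are immediate:
\begin{enumerate}
\item $L^\Gamma(\xi,\tilde I)$ intertwines $\A_\R(\tilde I^{c+})$: it is an $\A^G$-intertwiner, and the full $\A$-action on $H\boxtimes_\Gamma K$ is given by $\mu_{H\boxtimes_\Gamma K}\circ L^G(\pi_0(x)\Omega,\tilde L)$. Using Equation \eqref{eq: JoinLs} and locality of $L^G,R^G$, together with Lemma \ref{lemm: technicalLemmasMuHK} to pass $\mu_{H,K}$ through, this reduces to the identity $\mu_K L^G(\pi_0(x)\Omega,\tilde L)=\pi^K_{\tilde L}(x)$ for $\tilde L\subset\tilde I^{c+}$.
\item Isotony, naturality, unitality and the Reeh--Schlieder property follow from the same properties of $L^G,R^G$, using that $\mu_{H,H_0}$ is the unitor. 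For unitality we also use that the preceding lemma identifies $H_\pm(\tilde I)$ with $H(I)$.
\item Density of fusion products follows from density for $\boxtimes^G$ together with surjectivity of $\mu_{H,K}$ (Lemma \ref{lemm: technicalLemmasMuHK}(i)).
\item Grading compatibility follows from the fact that $\boxtimes_\Gamma$ maps $\M_g\times\M_h$ to $\M_{gh}$, as in \cite{MR4244264}.
\item Locality, including the adjoint part, comes from adjoint locality for $\A^G$ combined with the second diagram of Lemma \ref{lemm: technicalLemmasMuHK}. That diagram lets $\mu_{H,K}^*\mu_{H,K}$ be expressed through module maps that commute with $L^G$ and $R^G$ by naturality.
\item Compatibility with the braiding comes from the defining square \eqref{eq: braidingModGamma}, the corresponding identity $\beta L^G=R^G$ for $\A^G$, and $\mathfrak{M}\circ\mathrm{T}_g=T_g\circ\mathfrak{M}$.
\end{enumerate}

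\textbf{Conclusion.} Theorem \ref{thm: UniquenessCategoricalExtensions} then gives a natural unitary $\Phi:\boxtimes\cong\boxdot$ compatible with the associators, unitors and braidings. It also shows that $(\Rep^G(\A),\boxdot,T,\beta)$ is $G$-crossed braided, in particular that the hexagons hold. Transporting back along $\mathfrak{M}$ yields (i), with the hexagons for $\B^\Gamma$ inherited. Moreover $(\mathfrak{M},\Phi)$, with trivial $G$-action data, is the desired equivalence, which gives (ii).

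The main obstacle I expect is the adjoint half of the locality axiom and the verification that $L^\Gamma$ intertwines the \emph{twisted} $\A$-actions rather than merely the $\A^G$-actions. I would attack both through the self-adjointness of $p_{H,K}$ established in Lemma \ref{lemm: technicalLemmasMuHK}, reducing them to the corresponding statements for the Connes extension of $\A^G$.
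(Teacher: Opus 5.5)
Your proposal is correct and follows the paper's proof: you identify $\M$ with $\Rep^G(\A)$ via $\mathfrak{M}$, verify that $(\boxtimes_\Gamma,\B^\Gamma,L^\Gamma,R^\Gamma)$ is a $G$-crossed categorical extension, and conclude with Theorem \ref{thm: UniquenessCategoricalExtensions}. The differences are local and both of your versions go through. The paper gets the $\A_\R(\tilde I^{c+})$-equivariance of $L^\Gamma$ by specializing locality to $H=\Gamma$ or $K=\Gamma$, and it cites Gui for the adjoint commutativity of the square built from $\mu_{H,R}\boxtimes^G\id$ and $\id\boxtimes^G\mu_{R,K}$; you instead argue directly, via the balancing diagram of Lemma \ref{lemm: technicalLemmasMuHK} for equivariance and via $\mu_{H,K}^*\mu_{H,K}=p_{H,K}$ plus naturality for adjoint locality.
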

\begin{proof}
    By Proposition \ref{prop: MEquivalenceOfGCats}, we can identify $\M$ with $\Rep^G(\A)$ via $\mathfrak{M}$ as $G$-graded categories with $G$-action. By Theorem \ref{thm: UniquenessCategoricalExtensions}, it is enough to show that, under this identification, $(\boxtimes_\Gamma, \B^\Gamma, L^\Gamma, R^\Gamma)$ is a $G$-crossed categorical extension of $\A$. We check the hypotheses of Definition \ref{def: CategoricalExtension}. 
    
    Locality follows from the adjoint commutativity of the outer diagram in 
\[\begin{tikzcd}
	R && {R\boxtimes^G K} && {R\boxtimes_\Gamma K} \\
	{H\boxtimes^GR} && {H\boxtimes^GR\boxtimes^G K} && {H\boxtimes^G(R\boxtimes_\Gamma K)} \\
	{H\boxtimes_\Gamma R} && {(H\boxtimes_\Gamma R)\boxtimes^G K} && {H\boxtimes_\Gamma R\boxtimes_\Gamma K},
	\arrow["{R^G(\eta, \tilde{J})}", from=1-1, to=1-3]
	\arrow["{L^G(\xi, \tilde{I})}"', from=1-1, to=2-1]
	\arrow["{\mu_{R, K}}", from=1-3, to=1-5]
	\arrow["{L^G(\xi, \tilde{I})}", from=1-3, to=2-3]
	\arrow["{L^G(\xi, \tilde{I})}", from=1-5, to=2-5]
	\arrow["{R^G(\eta, \tilde{J})}", from=2-1, to=2-3]
	\arrow["{\mu_{H, R}}"', from=2-1, to=3-1]
	\arrow["{\id\boxtimes^G\mu_{R, K}}", from=2-3, to=2-5]
	\arrow["{\mu_{H, R}\boxtimes^G\id}", from=2-3, to=3-3]
	\arrow["{\mu_{H, R\boxtimes_\Gamma K}}", from=2-5, to=3-5]
	\arrow["{R^G(\eta, \tilde{J})}"', from=3-1, to=3-3]
	\arrow["{\mu_{H\boxtimes_\Gamma T, K}}"', from=3-3, to=3-5]
\end{tikzcd}\]
    where $\tilde{I}, \tilde{J}\in\Jcal_\R$ are intervals such that $\tilde{J}\subset \tilde{I}^{c+}$ and we take twisted representations $H^g\in \Rep^g(\A)$, $K^h\in \Rep^h(\A)$ and $R^k\in \Rep^k(\A)$, and vectors $\xi\in H^g_+(\tilde{I})$ and $\eta\in K^h_-(\tilde{J})$. The top-left diagram commutes adjointly by locality of the Connes categorical extension of $\A^G$, see Theorem \ref{Thm: ConnesCategoricalExtension} or \cite[Thm. 3.4]{Gui21}. The off-diagonal diagrams commute adjointly by naturality of $L^G(\xi, \tilde{I})$ and $R^G(\eta, \tilde{J})$. For the adjoint commutativity of the bottom-right diagram, see \cite[Diag. (5.5)]{Gui_2025} and \cite[Sec. 3.2 and 3.4]{MR4418715} and diagrams (3.10) and (3.11) in the last reference. Note that we can apply Gui's results by Lemma \ref{lemm: technicalLemmasMuHK}, which implies that \cite[Eqs. (5.2), (5.3)]{Gui_2025} hold.

    Using similar techniques to the proof of Lemma \ref{lemm: technicalLemmasMuHK}, it is easy to see that, after dropping the unitors, $\mu_{\Gamma, H} = \mu_H$ and $\mu_{H, \Gamma} = \mu_H\circ \beta^{-1}_{\Gamma, H}$. Hence, by letting either $H$ or $K$ be $\Gamma$, locality implies that $L^\Gamma(\xi, \tilde{I})\in \Hom_{\A_\R(\tilde{I}^{c+})}(R, H\boxtimes_\Gamma R)$ and $R^\Gamma(\eta, \tilde{J})\in \Hom_{\A_\R(\tilde{J}^{c-})}(R, R\boxtimes_\Gamma K)$.
    
    Compatibility with the $G$-grading has been argued in Proposition \ref{prop: MEquivalenceOfGCats}. Isotony of $L^\Gamma$ and $R^\Gamma$ follows from isotony of $L^G$ and $R^G$. Given $H^g\in \Rep^g(\A)$, $\tilde{I}\in\Jcal_\R$ and $\xi\in H^g_+(\tilde{I})$ and $\eta\in H^g_-(\tilde{I})$, naturality of $L^\Gamma(\xi, \tilde{I})$ and $R^\Gamma(\eta, \tilde{I})$ acting on $K^h\in \Rep^h(\A)$ follow from the naturality of $\mu_{H,K}$ together with the naturality of $L^G(\xi, \tilde{I})$ and $R^G(\eta, \tilde{I})$ respectively. Unitality of $L^\Gamma(\xi, \tilde{I})$ can be argued as follows,
    \[
    \mu_{H, \Gamma} L^G(\xi, \tilde{I})\Omega = \mu_H\beta_{ \Gamma, H}^{-1} L^G(\xi, \tilde{I})\Omega = \mu_H \beta_{\Gamma, H}^{-1} R^G(\Omega, \tilde{I}^{c+})\xi = \mu_H  L^G(\Omega, \tilde{I}^{c+})\xi = \mu_H R^G(\xi, \tilde{I})\Omega = \xi.
    \]
   Similarly, $R^{\Gamma}(\eta, \tilde{I})\Omega = \eta.$

    Both the Reeh-Schlieder property and density of fusion products follow from the fact that $\mu_{H,K}$ is surjective by Lemma \ref{lemm: technicalLemmasMuHK}. Finally, for the braiding property, let $\tilde{I}\in\Jcal_\R$ and $H^g\in\Rep^g(\A)$, $K^h\in\Rep^h(\A)$. Then, for every $\xi\in H^g_-(\tilde{I})$, we need to show the commutativity of the outer diagram in
\[\begin{tikzcd}
	{K^h} &&&& {T_g(H^g)\boxtimes_\Gamma K^h} \\
	\\
	&& {T_g(H^g)\boxtimes^G K^h} \\
	& {K^h\boxtimes^G H^g} & {K^h\boxtimes^G T_g(H^g)} \\
	\\
	{K^h\boxtimes_\Gamma H^g} && {T_g(K^h\boxtimes_\Gamma H^g)} && {T_g(K^h)\boxtimes_\Gamma T_g(H^g)}
	\arrow["{L^\Gamma(\Gamma_g\xi, \tilde{I})}", from=1-1, to=1-5]
	\arrow["{L^G(\Gamma_g\xi, \tilde{I})}", from=1-1, to=3-3]
	\arrow["{R^G(\xi, \tilde{I})}", from=1-1, to=4-2]
	\arrow["{R^\Gamma(\xi, \tilde{I})}"', from=1-1, to=6-1]
	\arrow["{\B^\Gamma_{T_g(H), K}}", from=1-5, to=6-5]
	\arrow["{\mu_{T_g(H), K}}", from=3-3, to=1-5]
	\arrow["{\beta_{T_g(H), K}}", from=3-3, to=4-3]
	\arrow["{\mu_{K, H}}", from=4-2, to=6-1]
	\arrow["\id", from=4-3, to=4-2]
	\arrow["{\mu_{K, T_g(H)}}", from=4-3, to=6-5]
	\arrow["{\Gamma_g}"', from=6-1, to=6-3]
	\arrow["\cong", from=6-5, to=6-3]
\end{tikzcd}\]
The top and left triangles commute by definition. The right quadrilateral also commutes by definition. The inner-most quadrilateral commutes by the fact that the Connes categorical extension of $\A^G$ is indeed a categorical extension, see \cite[Thm. 3.4]{Gui21} or \cite[Prop. 3.26]{GcrossedbraidedRep}. The bottom diagram commutes trivially.

Hence, we have argued that $(\boxtimes_\Gamma, \B^\Gamma, L^\Gamma, R^\Gamma)$ is a $G$-crossed categorical extension of $\A$. By Theorem \ref{thm: UniquenessCategoricalExtensions}, $\M$ is a $G$-crossed braided $\mathrm{W}^*$-tensor category and there exists a unitary family of isomorphisms $\Phi_{H, K}:H^g\boxtimes K^h\xrightarrow{\cong} H^g\boxtimes_\Gamma K^h$ inducing (together with $\mathfrak{M}$) an equivalence of $G$-crossed braided $\mathrm{W}^*$-tensor categories $\M\cong \Rep^G(\A)$.
\end{proof}

Let $\M^G$ denote the $G$-equivariantization of $\M$, and let us write
\[
\mathfrak{D}: \Rep(\A^G)\to \M^G
\]
for the functor sending $H\in \Rep(\A^G)$ to the unitary $\Gamma$-module $(\Gamma\boxtimes^G H, \mu\boxtimes^G\id_H)\in \M$ together with the family of isomorphisms $u_g = V_{g}\boxtimes^G\id_H$ for all $g\in G$. For a morphism $F\in \Hom_{\Rep(\A^G)}(H, K)$, we write $\mathfrak{D}(F):=\id_\Gamma\boxtimes^G F\in \Hom_{\M}(\mathfrak{D}(H), \mathfrak{D}(K))$, which is trivially compatible with the $G$-equivariant data. We also write $\mathfrak{M}^G: \M^G\to (\Rep^G(\A))^G$ for the equivalence of braided $\mathrm{W}^*$-tensor categories induced by $\mathfrak{M}.$ We therefore obtain a $\mathrm{W}^*$-functor
    \begin{equation}\label{eq: InductionFunctor}
        \Rep(\A^G)\xrightarrow{\mathfrak{D}}\M^G\xrightarrow{\mathfrak{M}^G} (\Rep^G(\A))^G,
    \end{equation}
which we will show produces an equivalence of braided $\mathrm{W}^*$-tensor categories. Given $((H, \pi_H), u)\in (\Rep^G(\A))^G$, we write $H^G : = \{\xi\in H\,|\, u_g\xi  = \xi\ \text{ for all $g \in G$}\}$ for the fixed-points Hilbert space of $H$ under $u: G\to U(H)$. Note that $\pi^H$ restricts to an honest action of $\A^G$ on $H^G$, which we continue denoting by $\pi_H$. In addition, if $F: (H, u)\to (K, v)$ is a morphism in $(\Rep^G(\A))^G$, it restricts to a morphism $F|_{H^G}:H^G\to K^G$ in $\Rep(\A^G)$. We~write
\[
\mathfrak{R}: (\Rep^G(\A))^G\to \Rep(\A^G)
\]
for the $\mathrm{W}^*$-functor sending $((H, \pi^H), u)\in (\Rep^G(\A))^G$ to $(H^G, \pi^H)$ and $F: (H, u)\to (K, v) $ to $F|_{H^G}:H^G\to K^G$.

\begin{remark}\label{rk: NonHomogeneous}
    The underlying object in $\Rep^G(\A)$ of $((H, \pi_H), u)$ will usually be a direct sum of homogeneous objects each belonging to a different $G$-component, even if $((H, \pi_H), u)$ is simple in $(\Rep^G(\A))^G$. However, we still use the notation $(H, \pi^H)$, understanding that the Hilbert space $H$ splits as a direct sum of subspaces, on each of which $\pi^H$ restricts to a twisted representation, possibly with a different twist $g\in G$ for every subspace.
\end{remark}

We claim that $\mathfrak{R}$ produces an inverse functor to the functor $\mathfrak{M}^G\circ\mathfrak{D}$ in Equation \eqref{eq: InductionFunctor}. Before proving this, we need some preliminaries.

Recall that, given $I_0\in \Jcal_{\mathrm{p}}$, by Theorem \ref{Thm: RepIsMugerForG}, there is an equivalence of $G$-crossed braided $\mathrm{W}^*$-tensor categories $\mathfrak{E}: G-\Loc_{I_0}(\A)\to \Rep^G(\A)$, which induces an equivalence between the equivariantizations $\mathfrak{E}^G: (G-\Loc_{I_0}(\A))^G\to (\Rep^G(\A))^G$. Let us denote by $(G-\Loc_{I_0}(\A))^G_0$ the full subcategory of $(G-\Loc_{I_0}(\A))^G$ on objects whose $G$-equivariance data is the identity. The inclusion $(G-\Loc_{I_0}(\A))^G_0\hookrightarrow (G-\Loc_{I_0}(\A))^G$ is an equivalence, by \cite[Rk. 3.7.2]{muger05}. We continue writing $\mathfrak{E}^G$ for the equivalence 
\[
\mathfrak{E}^G: (G-\Loc_{I_0}(\A))^G_0\hookrightarrow (G-\Loc_{I_0}(\A))^G\xrightarrow{\mathfrak{E}^G}(\Rep^G(\A))^G.
\]
Nonzero objects of $(G-\Loc_{I_0}(\A))^G_0$ can be constructed as follows. Fix $g\in G$ and denote by $Z_G(g)$ the centralizer of $g$ in $G$ and by $C_g$ the conjugacy class of $g$. For every $h\in C_g$, we fix an element $h_g\in G$ conjugating $g$ to $h$. Fix $\rho_g\in (g-\Loc_{I_0}(\A))^{Z_G(g)}_0$, that is, $\rho_g$ is a $g$-localized-in-$I_0$ endomorphism of $\A$ which is fixed by the action of $Z_G(g)$ on the nose. We write $\rho_h:=\gamma_{h_g}(\rho_g)\in (h-\Loc_{I_0}(\A))^{Z_G(h)}_0$ for $h\in C_g$. Note that, since $\rho_g$ is $Z_G(g)$-invariant, $\rho_h$ is independent of the chosen $h_g$. Then, we can construct $\rho: = \bigoplus\limits_{h\in C_g}\rho_h\in (G-\Loc_{I_0}(\A))^G_0$, and all objects of $(G-\Loc_{I_0}(\A))^G_0$ are direct sums of objects constructed as such. The image of $\rho$ under $\mathfrak{E}^G$ has underlying $G$-twisted representation $\bigoplus\limits_{h\in C_g}(H_0, \pi_0\circ \rho_h)$ and the following $G$-equivariant data $u$. Given $k,h\in G$ and $\xi\in (H_0, \pi_0\circ \rho_h)$, then $u_k\xi : = V_k\xi\in (H_0, \pi_0\circ \rho_{khk^{-1}}).$

\begin{lemma}\label{eq: Rdoesnotkill}
    Let $((H, \pi^H), u)\in (\Rep^G(\A))^G$ be a nonzero object. Then, $\mathfrak{R}((H, \pi^H), u)\in \Rep(\A^G)$ is nonzero.
\end{lemma}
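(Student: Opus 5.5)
Since the condition of being nonzero is invariant under unitary equivalence, and both $\mathfrak{R}$ and $\mathfrak{E}^G$ are $\mathrm{W}^*$-functors, I will reduce to the explicit objects described just before the statement. By Theorem \ref{Thm: RepIsMugerForG}, the induced functor $\mathfrak{E}^G: (G-\Loc_{I_0}(\A))^G_0\to(\Rep^G(\A))^G$ is an equivalence. Hence $((H,\pi^H),u)$ is unitarily isomorphic to $\mathfrak{E}^G(\rho)$ for some nonzero $\rho\in(G-\Loc_{I_0}(\A))^G_0$. A unitary isomorphism in $(\Rep^G(\A))^G$ intertwines the equivariance data, so it restricts to a unitary between fixed-point subspaces. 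Consequently $\mathfrak{R}((H,\pi^H),u)\neq 0$ if and only if $\mathfrak{R}(\mathfrak{E}^G(\rho))\neq 0$. Since $\rho$ is a nonzero direct sum of the building blocks $\bigoplus_{h\in C_g}\rho_h$, and $\mathfrak{R}$ preserves direct sums (fixed points of a direct sum of equivariant objects is the direct sum of fixed points), it suffices to treat a single block $\rho=\bigoplus_{h\in C_g}\rho_h$ with $\rho_g\neq 0$.

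For such a block, the underlying Hilbert space of $\mathfrak{E}^G(\rho)$ is $\bigoplus_{h\in C_g}H_0$, with the $h$-summand carrying $\pi_0\circ\rho_h$. The equivariance data is $u_k$, which maps the $h$-summand to the $khk^{-1}$-summand by $V_k$. I will exhibit an explicit nonzero fixed vector. The natural candidate is $\xi:=\bigoplus_{h\in C_g}\Omega$, i.e. the vacuum vector placed in every summand. Since $V_k\Omega=\Omega$ for all $k\in G$ and conjugation by $k$ permutes $C_g$ bijectively, we get $u_k\xi=\bigoplus_{h}V_k\Omega=\xi$ for every $k$. Thus $\xi\in H^G$ and $\xi\neq 0$, so $\mathfrak{R}(\mathfrak{E}^G(\rho))\neq 0$.

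The only delicate point is to confirm that $u_k$ is literally $V_k$ on each summand, with the index relabelled. Otherwise the averaged vector $\frac{1}{|G|}\sum_k u_k(\Omega\text{ in the }g\text{-summand})$ could in principle vanish, for instance if stray $Z_G(g)$-phases were present. This is exactly why one works in $(G-\Loc_{I_0}(\A))^G_0$, where the equivariance data is the identity on endomorphisms and $\rho_g$ is $Z_G(g)$-invariant on the nose. Its image under $\mathfrak{E}$ then has equivariance data given purely by the vacuum implementers $V_k$, as recorded above. With that in hand, the vacuum-vector argument goes through directly. Alternatively, one could average $\Omega$ in the $g$-summand over $G$: the $g$-component of the average is $\frac{1}{|G|}\sum_{k\in Z_G(g)}V_k\Omega=\frac{|Z_G(g)|}{|G|}\Omega\neq0$.
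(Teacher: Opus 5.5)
Your proposal is correct and follows the paper's proof. Both arguments use the equivalence $\mathfrak{E}^G$ to reduce to a block $\bigoplus_{h\in C_g}\rho_h$ whose equivariance data is given by the vacuum implementers $V_k$. Your fixed vector $(\Omega)_h$ is exactly the image of $\Omega$ under the paper's isomorphism $H_0^{Z_G(g)}\to\big(\bigoplus_{h\in C_g}H_0\big)^G$, $\xi\mapsto(V_{h_g}\xi)_h$; the paper describes the whole fixed-point space, which the nonvanishing claim does not need.
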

\begin{proof}
    Let $I_0\in \Jcal_\mathrm{p}$ be an interval. By the discussion immediately above the lemma, it is enough to show the statement when $((H, \pi^H), u) = \mathfrak{E}^G(\oplus_{h\in C_g}\rho_h)$ for some $g\in G$ and $\rho_g\in (g-\Loc_{I_0}(\A))^{Z_G(g)}_0$. In this case, the underlying Hilbert space of $\mathfrak{R}\circ\mathfrak{E}^G(\oplus_{h\in C_g}\rho_h)$ is $\big(\bigoplus_{h\in C_g} H_0\big)^G$, where the action of $k\in G$ sends $\xi\in H_0$ in the $h$-component to $V_k\xi$ in the $(khk^{-1})$-component. Hence, the map
    \[
    \begin{array}{ccc}
        H_0^{Z_G(g)} &\to & \big(\bigoplus_{h\in C_g} H_0\big)^G  \\
        \xi &\mapsto & (V_{h_g}\xi )_h 
    \end{array}
    \]
    is an isomorphism. In particular, $\mathfrak{R}\circ\mathfrak{E}^G(\oplus_{h\in C_g}\rho_h)$ is nonzero, as $\mathbb{C}\cdot \Omega\subset H_0^{Z_G(g)}$.
\end{proof}

We also need the following relation between $\A$ and $\A^G$. Let $I\in \Jcal$ and fix $x\in \A(I)$. Then, the vector $\pi_0(x)\Omega\in r(H_0)$ is trivially $I$-bounded as witnessed by the $\A^G(I^c)$-equivariant map $Z^G(\pi_0(x)\Omega,\tilde{I}) = \pi_0(x): H^G_0\to H_0$. Given another $y\in\A(I)$, we can consider the composition
\[
H_0^G\xrightarrow{Z^G(\pi_0(x)\Omega, \tilde{I})}H_0\xrightarrow{Z^G(\pi_0(y)\Omega, \tilde{I})^*}H_0^G,
\]
which is $\A^G(I^c)$-equivariant, hence an element of $\A^G(I)$ by Haag duality.

\begin{lemma}\label{lemm: ProjectOntoAG}
    For any $I\in \Jcal$ and $x,y\in \A(I)$, we have
    \[
    Z^G(\pi_0(y)\Omega, \tilde{I})^*Z^G(\pi_0(x)\Omega, \tilde{I}) = \pi_0^G\Big(\frac{1}{|G|}\sum\limits_{g\in G} g(y^*x)\Big)\in B(H_0^G).
    \]
\end{lemma}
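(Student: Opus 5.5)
The plan is to compute the operator $Z^G(\pi_0(y)\Omega,\tilde{I})^*Z^G(\pi_0(x)\Omega,\tilde{I})$ directly, using the fact that it is the compression of $\pi_0(y)^*\pi_0(x)$ to the fixed-point subspace. The map $Z^G(\pi_0(x)\Omega,\tilde{I})$ is just $\pi_0(x)$ restricted to $H_0^G$, viewed as a map $H_0^G\to H_0$. Its adjoint as a map $H_0^G\to H_0$ is then $P\pi_0(x)^*$, where $P: H_0\to H_0^G$ is the orthogonal projection onto the $G$-invariant vectors. Since $G$ is finite, we have $P=\frac{1}{|G|}\sum_{g\in G}V_g$. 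Hence the composite in the statement equals $P\pi_0(y^*x)|_{H_0^G}$, and it remains to identify this compression with $\pi_0^G$ of the averaged element.

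For the identification, take $\xi\in H_0^G$, so that $V_g^*\xi=\xi$ for all $g\in G$. The implementation property of the automorphisms gives $V_g\pi_0(z)V_g^*=\pi_0(gz)$ for $z\in\A(I)$. Therefore
\[
P\pi_0(z)\xi=\frac{1}{|G|}\sum_{g\in G}V_g\pi_0(z)V_g^*\xi=\pi_0\Big(\frac{1}{|G|}\sum_{g\in G}g(z)\Big)\xi .
\]
Taking $z=y^*x$ yields exactly the claimed formula. The averaged element $E(z):=\frac{1}{|G|}\sum_g g(z)$ is $G$-invariant, so it lies in $\A(I)^G$, and its action on $H_0$ preserves $H_0^G$. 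Its restriction is therefore $\pi_0^G(E(z))$, an element of $\A^G(I)$. This is consistent with the Haag duality remark made just before the lemma.

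The only step needing care, rather than a real obstacle, is the adjoint computation. I will check that the adjoint of the inclusion $H_0^G\hookrightarrow H_0$ is $P$. I will also confirm that $V_g\mapsto$ implementation is a genuine group action on $H_0$, so that $P$ is indeed the average of the $V_g$. For this, note that $V_g$ fixes $\Omega$, and by the Reeh–Schlieder theorem and the implementation property $V_gV_h$ and $V_{gh}$ agree on the dense set $\A(I)\Omega$. This is implicit in the paper's statement that $G$ acts on $H_0$ via $g\mapsto V_g$, so I will simply invoke it.
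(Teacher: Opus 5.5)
Your proposal is correct and is essentially the paper's argument. The paper pairs both sides against $\xi,\xi'\in H_0^G$ and uses $\pi_0(g(y^*x))=V_g\pi_0(y^*x)V_g^*$ together with $V_g$-invariance of $\xi,\xi'$; this is the matrix-coefficient form of your compression computation. Because it works weakly, the paper never needs to identify the projection onto $H_0^G$ with $\frac{1}{|G|}\sum_g V_g$, so your Reeh–Schlieder check that $g\mapsto V_g$ is a homomorphism, while correct, is not needed on that route.
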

\begin{proof}
    It is clear that $\frac{1}{|G|}\sum\limits_{g\in G} g(y^*x)\in \A^G(I)$. Let $\xi, \xi'\in H_0^G$. We compute
    \begin{align*}
        \langle\pi_0^G\Big(\frac{1}{|G|}\sum\limits_{g\in G}g(y^*x)\Big)(\xi), \xi'\rangle & =  \langle \pi_0\Big(\frac{1}{|G|}\sum\limits_{g\in G}g(y^*x)\Big)(\xi), \xi'\rangle \\ & = \frac{1}{|G|}\sum\limits_{g\in G} \langle \pi_0(g(y^*x))(\xi), \xi'\rangle \\ &=\frac{1}{|G|}\sum_{g\in G}\langle V_g \pi_0(y^*)\pi_0(x)(\xi), V_g\xi'\rangle\\ & = \frac{1}{|G|}\sum_{g\in G}\langle \pi_0(y^*)\pi_0(x)(\xi), \xi'\rangle \\ & = \langle  \pi_0(x)(\xi), \pi_0(y)(\xi')\rangle \\& = \langle Z^G(\pi_0(x)\Omega,\tilde{I} )\xi, Z^G(\pi_0(y)\Omega,\tilde{I} )\xi'\rangle.
    \end{align*}
\end{proof}

We can now prove the main result of this section.

\begin{theorem}\label{thm: RepFixedPointsIsEquivTwistedReps}
    Let $\A$ be a conformal net being acted on faithfully by a finite group $G$. Then, the functors
\[\begin{tikzcd}
	{\Rep(\A^G)} && {(\Rep^G(\A))^G}
	\arrow["{\mathfrak{M}^G\circ\mathfrak{D}}", shift left=2, from=1-1, to=1-3]
	\arrow["{\mathfrak{R}}", shift left=2, from=1-3, to=1-1]
\end{tikzcd}\]
provide an equivalence of braided $\mathrm{W}^*$-tensor categories.
\end{theorem}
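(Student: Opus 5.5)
The plan is to show that $\mathfrak{D}$ is a braided $\mathrm{W}^*$-tensor functor, and then that $\mathfrak{R}$ is a quasi-inverse of $\mathfrak{M}^G\circ\mathfrak{D}$. By Proposition \ref{prop: MisEquivalenceOFG-X-BraidedCats} and the discussion at the end of Section \ref{Sec: G-X-balanced}, $\mathfrak{M}^G$ is already an equivalence of braided $\mathrm{W}^*$-tensor categories. Braided monoidality of the composite therefore reduces to that of $\mathfrak{D}$, and the equivalence reduces to exhibiting natural unitaries $\mathfrak{R}\circ\mathfrak{M}^G\circ\mathfrak{D}\cong\id$ and $\mathfrak{M}^G\circ\mathfrak{D}\circ\mathfrak{R}\cong \id$.

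For the tensor structure on $\mathfrak{D}$, I would follow the standard free-module argument of \cite[Sec. 4]{braidedFC} and \cite{MR4244264}. The canonical map $(\Gamma\boxtimes^G H)\boxtimes^G(\Gamma\boxtimes^G K)\xrightarrow{\beta^{-1}}\Gamma\boxtimes^G\Gamma\boxtimes^G H\boxtimes^G K\xrightarrow{\mu}\Gamma\boxtimes^G H\boxtimes^G K$ factors through $\mu_{\mathfrak{D}(H),\mathfrak{D}(K)}$. The resulting unitary $\mathfrak{D}(H)\boxtimes_\Gamma\mathfrak{D}(K)\cong\mathfrak{D}(H\boxtimes^G K)$ would be checked using Lemma \ref{lemm: technicalLemmasMuHK}, and it intertwines the equivariant data $V_g\boxtimes^G\id$ because $V_g$ is an algebra automorphism of $\Gamma$.

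The free module $\Gamma\boxtimes^G H$ lives entirely in degree $e$ after equivariantization is accounted for; more precisely, I must check the degree decomposition here. Only the braiding component with $\Gamma$ moving across appears, and commutativity of $\Gamma$ together with Diagram \eqref{eq: braidingModGamma} shows that $\mathbb{B}^{\Gamma,G}$ restricted to free modules is induced by $\id_\Gamma\boxtimes^G\beta_{H,K}$. One subtlety is that free modules need not be $e$-graded: $\Gamma\boxtimes^G H$ decomposes by Proposition \ref{Prop: ModGammaIsGGraded}. The crossed braiding composed with $v_g^h$ should nevertheless still reduce to $\beta$, by the standard computation in \cite[App. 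A]{MR4244264}.

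For the natural isomorphisms, first note that $\mathfrak{R}\circ\mathfrak{M}^G\circ\mathfrak{D}(H)=(\Gamma\boxtimes^G H)^G$, with $G$ acting by $V_g\boxtimes^G\id$. The map $H\cong H_0^G\boxtimes^G H\xrightarrow{\iota\boxtimes\id}\Gamma\boxtimes^G H$ lands in the fixed points. It is isometric because $\iota$ is. It is surjective onto the fixed points because the projection $\frac1{|G|}\sum_g V_g$ on $\Gamma$ equals $\iota\iota^*$, which is the content of Lemma \ref{lemm: ProjectOntoAG}. This gives a natural unitary $H\cong\mathfrak{R}\mathfrak{M}^G\mathfrak{D}(H)$ of $\A^G$-representations. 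In the other direction, for $(K,u)\in(\Rep^G(\A))^G$ I would define $\mathfrak{M}^G\mathfrak{D}\mathfrak{R}(K,u)=\Gamma\boxtimes^G K^G\to K$ by $L^G(\pi_0(x)\Omega,\tilde I)\xi\mapsto\pi^K_{\tilde I}(x)\xi$, that is, the module action $\mu_K$ restricted to $\Gamma\boxtimes^G K^G$. This map intertwines the twisted $\A$-actions by construction and intertwines the equivariant data, since $u_g\pi^K(x)\xi=\pi^K(gx)u_g\xi$. It is isometric: the inner product computation uses Lemma \ref{lemm: ProjectOntoAG} to rewrite $Z^G(\pi_0(y)\Omega)^*Z^G(\pi_0(x)\Omega)$ as the average $\frac1{|G|}\sum_g g(y^*x)$, and on $G$-fixed vectors $\langle\pi^K(g(y^*x))\xi,\xi'\rangle=\langle\pi^K(y^*x)\xi,\xi'\rangle$ by invariance under $u_g$.

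The main obstacle is surjectivity of this isometry. Its image is a closed subobject of $(K,u)$ in $(\Rep^G(\A))^G$. Applying $\mathfrak{R}$ to the orthogonal complement yields a subspace of $K^G$ orthogonal to the image. But the image already contains $K^G$ via $\iota$, so $\mathfrak{R}$ of the complement is zero. Lemma \ref{eq: Rdoesnotkill} then forces the complement to be zero, so the map is unitary. Naturality in both directions is immediate since all maps are built from $L^G$, $\mu$ and restriction. Finally, I would check that these unitaries are compatible with the tensor structures, so that $\mathfrak{R}$ inherits a braided tensor structure making the pair a braided equivalence; by fullness and faithfulness this reduces to the previously checked monoidality of $\mathfrak{D}$.
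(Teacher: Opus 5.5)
Your proposal is correct and follows the paper's proof closely. Braided monoidality of $\mathfrak{D}$ is deferred to McRae, the unit comes from $\iota\boxtimes^G\id$, and the counit is $L^G(\pi_0(x)\Omega,\tilde I)\xi\mapsto\pi_{\tilde I}(x)\xi$, shown isometric via Lemma \ref{lemm: ProjectOntoAG} and unitary via Lemma \ref{eq: Rdoesnotkill}. The one deviation is that you define the counit on an arbitrary $(K,u)$ and use $u_g$-invariance of fixed vectors directly, instead of first reducing to the objects $\mathfrak{E}^G(\rho)$ as the paper does; this shortens the equivariance and isometry checks, and leaves the dependence on localized endomorphisms only inside Lemma \ref{eq: Rdoesnotkill}.
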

\begin{proof}
   By the same arguments as \cite[Thm. 2.11]{MR4244264} the functor $\mathfrak{D}: \Rep(\A^G)\to \M^G$ is a functor of braided $\mathrm{W}^*$-tensor categories when equipped with the tensor data
   \begin{align*}
   \Psi_{H, K}^{-1}: \Gamma\boxtimes^G H\boxtimes^G K\cong \Gamma\boxtimes^G H\boxtimes^G H_0^G\boxtimes^G K&\xrightarrow{\id\boxtimes^G\iota\boxtimes^G\id} \Gamma\boxtimes^G H\boxtimes^G\Gamma\boxtimes^G K\\&\xrightarrow{\mu_{\Gamma\boxtimes^G H, \Gamma\boxtimes^G K}}(\Gamma\boxtimes^G H)\boxtimes_\Gamma(\Gamma\boxtimes^G K).
   \end{align*}
Hence, the composition $\Rep(\A^G)\xrightarrow{\mathfrak{D}}\M^G\xrightarrow{\mathfrak{M}^G}(\Rep^G(\A))^G$ is a braided $\mathrm{W}^*$-tensor functor. It remains to show that $\mathfrak{R}$ is an inverse to $\mathfrak{M}^G\circ \mathfrak{D}$.

Let $(H, \pi_H)\in \Rep(\A^G)$. Then, $(\Gamma\boxtimes^G H)^G \cong \Gamma^G\boxtimes ^G H = H_0^G\boxtimes^G H\xrightarrow{\text{unitor}} H$ provides a unitary isomorphism $\mathfrak{R}\circ \mathfrak{M}^G\circ\mathfrak{D}(H)\cong H$ which is natural in $H$. Hence, we have $\mathfrak{R}\circ \mathfrak{M}^G\circ\mathfrak{D}\cong \text{Id}_{\Rep(\A^G)}$. 

We show next that $\mathfrak{M}^G\circ \mathfrak{D}\circ \mathfrak{R}\cong \text{Id}_{(\Rep^G(\A))^G}$. Let $I_0\in \Jcal_\mathrm{p}$. Since $\mathfrak{E}^G: (G-\Loc_{I_0}(\A))^G_0\to (\Rep^G(\A))^G$ is an equivalence, it is enough to construct, for every $\sigma\in (G-\Loc_{I_0}(\A))^G_0$, a unitary equivalence $\mathfrak{M}^G\circ \mathfrak{D}\circ \mathfrak{R}\circ \mathfrak{E}^G(\sigma)\cong \mathfrak{E}^G(\sigma)$ natural in $\sigma$. Fix $g\in G$ and $\rho_g\in (g-\Loc_{I_0}(\A))^{Z_G(g)}_0$. For every $h\in C_g$, let $h_g\in G$ be an element conjugating $g$ to $h$, and write $\rho_h:=\gamma_{h_g}(\rho_g)\in (h-\Loc_{I_0}(\A))^{Z_G(h)}_0$. We define $\rho:=\bigoplus\limits_{h\in C_g} \rho_h\in (G-\Loc_{I_0}(\A))^G_0$. Any object of $(G-\Loc_{I_0}(\A))^G_0$ is isomorphic to a direct sum of objects constructed as $\rho$. The underlying Hilbert space of $\mathfrak{M}\circ\mathfrak{D}\circ \mathfrak{R}\circ\mathfrak{E}^G(\rho)\in (\Rep^G(\A))^G$ is
\[
\Gamma\boxtimes^G (\bigoplus\limits_{h\in C_g} H_0)^G
\]
with $G$-equivariance data $\{V_g\boxtimes^G\id\}_{g\in G}$ and action of $x\in \A_\R(\tilde{I})$ given by
\[
\Gamma\boxtimes^G (\bigoplus\limits_{h\in C_g} H_0)^G\xrightarrow{L^G(\pi_0(x)\Omega, \tilde{I})}\Gamma\boxtimes^G \Gamma\boxtimes^G (\bigoplus\limits_{h\in C_g} H_0)^G\xrightarrow{\mu\boxtimes^G\id}\Gamma\boxtimes^G (\bigoplus\limits_{h\in C_g} H_0)^G
\]
for all $\tilde{I}\in \Jcal_\R$. Fix $\tilde{I}\in\Jcal_\R$. We claim that the linear map
\[
\begin{array}{cccc}
    \Phi_{\rho, \tilde{I}}: & \Gamma\boxtimes^G (\bigoplus\limits_{h\in C_g} H_0)^G &\to & \bigoplus\limits_{h\in C_g} H_0 \\
     & L^G(\pi_0(x)\Omega, \tilde{I})(\xi_h)_h&\mapsto &\pi^{\mathfrak{E}^G(\rho)}_{\tilde{I}}(x)(\xi_h)_h = (\pi_0(h^{\epsilon(\tilde{I})}\rho_hx)\xi_h)_h 
\end{array}
\]
provides a unitary equivalence in $(\Rep^G(\A))^G$ between $\mathfrak{M}^G\circ\mathfrak{D}\circ\mathfrak{R}\circ\mathfrak{E}^G(\rho)$ and $\mathfrak{E}^G(\rho)$. The map is bounded by the same arguments as the map in Equation \eqref{eq: InverseToM}. Similarly, by the same arguments as those below Equation \eqref{eq: InverseToM}, $\Phi_{\rho, \tilde{I}}$ is independent of $\tilde{I}$, and we write $\Phi_{\rho} := \Phi_{\rho, \tilde{I}}$ (cf. the proof of Proposition \ref{prop: MEquivalenceOfGCats}). We first argue that $\Phi_\rho$ is $G$-equivariant. Let $(\xi_h)_h\in (\bigoplus\limits_{h\in C_g} H_0)^G$. This means that, for every $h\in C_g$ and any element $k\in G$, it holds that $V_k\xi_h = \xi_{khk^{-1}}$. Then, for any $k\in G$ and $x\in \A(I)$ we have
\begin{align*}
\Phi_\rho\big((V_k\boxtimes^G\id)\circ L^G(\pi_0(x)\Omega,\tilde{I})(\xi_h)_h\big)
& = \Phi_\rho\big(L^G(\pi_0(kx)\Omega,\tilde{I})(\xi_h)\big)\\
& = \pi^{\mathfrak{E}^G(\rho)}_{\tilde{I}}(kx)(\xi_h)_h\\
& = (\pi_0(h^{\epsilon(\tilde{I})}\rho_h kx)\xi_h)_h\\
& = (\pi_0(kh^{\epsilon(\tilde{I})}k^{-1}\rho_{khk^{-1}}kx)\xi_{khk^{-1}})_{khk^{-1}}\\
& = (\pi_0(kh^{\epsilon(\tilde{I})}\rho_h x)V_k\xi_h)_{khk^{-1}}\\
& = V_k(\pi_0(h^{\epsilon(\tilde{I})}\rho_h x)\xi_h)_h\\
& = V_k\circ \Phi_\rho\big(L^G(\pi_0(x)\Omega,\tilde{I})(\xi_h)_h\big),
\end{align*}
as needed. We next show that $\Phi_\rho$ is a morphism of twisted $\A$-representations. Let $\tilde{I}\in\Jcal_\R$ be an interval and fix $x,y\in \A_\R(\tilde{I})$. Then, denoting $\xi:=(\xi_h)_h$,
\begin{align*}
\Phi_\rho\big(\pi_{\tilde{I}}^{\mathfrak{M}(\mathfrak{D}\circ\mathfrak{R}\circ\mathfrak{E}^G(\rho))}(y)\circ L^G(\pi_0(x)\Omega, \tilde{I})\xi\big)& = \Phi_\rho\big((\mu\boxtimes^G\id)\circ L^G(\pi_0(y)\Omega, \tilde{I}) L^G(\pi_0(x)\Omega, \tilde{I})\xi\big)\\ & = \Phi_\rho\big((\mu\boxtimes^G\id)\circ L^G(L^G(\pi_0(y)\Omega, \tilde{I})\pi_0(x)\Omega, \tilde{I})\xi\big)\\ & = \Phi_\rho\big(L^G(\mu L^G(\pi_0(y)\Omega, \tilde{I})\pi_0(x)\Omega, \tilde{I})\xi\big)
\\& = \Phi_{\rho}(L^G(\pi_0(yx)\Omega, \tilde{I}),\xi)
\\ & = \pi_{\tilde{I}}^{\mathfrak{E}^G(\rho)}(yx)\xi\\ & = \pi_{\tilde{I}}^{\mathfrak{E}^G(\rho)}(y)\circ \Phi_\rho\big(L^G(\pi_0(x)\Omega, \tilde{I})\xi\big),
\end{align*}
     where we have used Equation \eqref{eq: JoinLs}. It remains to show that $\Phi_\rho$ is an isometry with dense image. Let us first argue it is an isometry. Let $\xi = (\xi_h)_h,\ \xi' = (\xi'_h)_h\in (\bigoplus\limits_{h\in C_g} H_0)^G$ and $x,x'\in \A(I)$. Then, using Lemma \ref{lemm: ProjectOntoAG},
    \begin{align*}
         \langle L^G(\pi_0(x)\Omega, \tilde{I})\xi,  L^G(\pi_0(x')\Omega, \tilde{I})\xi' \rangle &= \langle \pi_0(x)\Omega\otimes \xi,  \pi_0(x')\Omega\otimes\xi'\rangle \\ & = \langle\pi_{\tilde{I}}^{\mathfrak{E}^G}(Z^G(\pi_0(x')\Omega, \tilde{I})^*Z^G(\pi_0(x)\Omega, \tilde{I}))(\xi), \xi'\rangle\\ & = \frac{1}{|G|}\sum\limits_{k\in G}\langle \pi_{\tilde{I}}^{\mathfrak{E}^G(\rho)}(k(x'^*x))(\xi), \xi'\rangle \\ & = \frac{1}{|G|}\sum\limits_{k\in G}\langle \pi_{\tilde{I}}^{\mathfrak{E}^G(\rho)}(kx)(\xi), \pi_{\tilde{I}}^{\mathfrak{E}^G(\rho)}(kx')(\xi')\rangle\\ & = \frac{1}{|G|}\sum\limits_{k\in G}\sum\limits_{h\in C_g}\langle \pi_0(h^{\epsilon(\tilde{I})} \rho_h k (x))(\xi_h), \pi_0(h^{\epsilon(\tilde{I})} \rho_h k (x'))(\xi_{h}')\rangle.
     \end{align*}
     Note that $\pi_0(h^{\epsilon(\tilde{I})} \rho_h k (x))(\xi_h) = \pi_0(h^{\epsilon(\tilde{I})} k\rho_{k^{-1}hk}  (x))V_k(\xi_{k^{-1}hk}) = V_k\circ \pi_0((k^{-1}hk)^{\epsilon(\tilde{I})}\rho_{k^{-1}hk}(x))(\xi_{k^{-1}hk})$. Therefore, using that the $V_k$'s are unitary, the sum above equals
     \begin{align*}
          \langle L^G(\pi_0(x)\Omega,& \tilde{I})\xi,  L^G(\pi_0(x')\Omega, \tilde{I})\xi' \rangle \\&= \frac{1}{|G|}\sum\limits_{k\in G}\sum\limits_{h\in C_g}\langle \pi_0(h^{\epsilon(\tilde{I})} \rho_h k (x))(\xi_h), \pi_0(h^{\epsilon(\tilde{I})} \rho_h k (x'))(\xi_{h}')\rangle\\ & = \frac{1}{|G|}\sum_{k\in G}\sum_{h\in C_g}\langle \pi_0((k^{-1}hk)^{\epsilon(\tilde{I})}\rho_{k^{-1}hk}(x))(\xi_{k^{-1}hk}), \pi_0((k^{-1}hk)^{\epsilon(\tilde{I})}\rho_{k^{-1}hk}(x'))(\xi'_{k^{-1}hk})\rangle  \\ & = \frac{1}{|G|}\sum\limits_{k\in G}\langle \pi_{\tilde{I}}^{\mathfrak{E}^G(\rho)}(x)(\xi), \pi_{\tilde{I}}^{\mathfrak{E}^G(\rho)}(x')(\xi')\rangle \\ & = \langle \pi_{\tilde{I}}^{\mathfrak{E}^G(\rho)}(x)(\xi), \pi_{\tilde{I}}^{\mathfrak{E}^G(\rho)}(x')(\xi')\rangle\\ & = \langle \Phi_{\rho}\big(L^G(\pi_0(x)\Omega, \tilde{I})\xi\big), \Phi_{\rho}\big( L^G(\pi_0(x')\Omega, \tilde{I})\xi'\big)\rangle,
     \end{align*}
     and $\Phi_\rho$ is an isometry. It remains to show that $\Phi_\rho$ has dense image. 
     
     The following argument is extracted from the proof of \cite[Thm. 4.17]{MR4244264}.  Let us denote for the rest of the proof $H_\rho:=\mathfrak{E}^G(\rho)$. Let $\hat{G}$ be a set whose elements are a choice of irreducible representation of $G$ for every isomorphism class. We may break, as a $G$-representation, $H_0\cong \bigoplus\limits_{\alpha\in \hat{G}} H_\alpha$, where $H_\alpha$ is the direct sum of the subspaces of $H_0$ transforming as the irreducible representation $\alpha\in\hat{G}$. Note that, if $\1\in \hat{G}$ denotes the trivial representation, $H_{\1} = H^G_0$. Then, as a $G$-representation,
     \[
     \Gamma\boxtimes^G H_\rho^G = \bigoplus\limits_{\alpha\in \hat{G}}(H_\alpha\boxtimes^G H_\rho^G).
     \]
     Similarly, $H_\rho$ can be split as a $G$-representation as $H_\rho=\bigoplus\limits_{\alpha\in \hat{G}}(H_\rho)_\alpha$, where $(H_\rho)_\alpha$ is the sum of all the subspaces of $H_\rho$ transforming under the representation $\alpha$ for every $\alpha\in \hat{G}$, with $(H_\rho)_{\1} = H_\rho^G$. Since we have proven that $\Phi_\rho: \Gamma\boxtimes^G H_\rho^G\to H_\rho$ is $G$-equivariant, it respects the grading given by $\hat{G}$. In particular, the restriction of $\Phi_\rho$ to the trivial component $H_0^G\boxtimes^G H_\rho^G\to H_\rho^G$ is a unitary isomorphism, by inspecting the definition of $\Phi_\rho$. Therefore, the cokernel of $\Phi_\rho$ is an object of $(\Rep^G(\A))^G$ whose $G$-invariant subspace is zero. By Lemma \ref{eq: Rdoesnotkill}, we deduce that the cokernel of $\Phi_\rho$ is zero. Hence, $\Phi_\rho$ is a unitary. Since $\Phi_\rho$ is clearly natural in $\mathfrak{E}^G(\rho)$, the claim follows.
\end{proof}

\subsection{Compatibility with the balance}
We finish this paper by upgrading the equivalence of braided $\mathrm{W}^*$-tensor categories in Theorem \ref{thm: RepFixedPointsIsEquivTwistedReps} to an equivalence of balanced $\mathrm{W}^*$-tensor categories $\Rep(\A^G)\cong (\Rep^G(\A))^G$. We denote by $\vartheta$ the canonical balance on $\Rep(\A^G)$, so that we reserve the notation $\theta$ for the $G$-crossed balance on $\Rep^G(\A)$ and $\theta^G$ for the balance on $(\Rep^G(\A))^G$. Recall that, on a representation $H\in\Rep(\A^G)$, the unitary $\vartheta_H:H\to H$ is given by the action $e^{-2\pi i L_0}$ of $\widetilde{\exp}(-2\pi i L_0)\in\widetilde{\Mob}$. 

\begin{theorem}\label{Thm: EquivariantizationCompatibleBalance} 
    Let $\A$ be a conformal net being acted on faithfully by a finite group $G$. Then, the functors
\[\begin{tikzcd}
	{\Rep(\A^G)} && {(\Rep^G(\A))^G}
	\arrow["{\mathfrak{M}^G\circ\mathfrak{D}}", shift left=2, from=1-1, to=1-3]
	\arrow["{\mathfrak{R}}", shift left=2, from=1-3, to=1-1]
\end{tikzcd}\]
provide an equivalence of balanced $\mathrm{W}^*$-tensor categories.
\end{theorem}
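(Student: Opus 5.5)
By Theorem \ref{thm: RepFixedPointsIsEquivTwistedReps}, $\mathfrak{M}^G\circ\mathfrak{D}$ and $\mathfrak{R}$ are already mutually inverse braided $\mathrm{W}^*$-tensor equivalences. A braided equivalence whose inverse preserves the balance automatically preserves it in the other direction. So it is enough to show that $\mathfrak{R}$ intertwines the balances:
\[
\mathfrak{R}(\theta^G_{(H,u)}) = \vartheta_{\mathfrak{R}(H,u)}
\]
for every $((H,\pi^H),u)\in(\Rep^G(\A))^G$. By naturality of both balances and additivity of $\mathfrak{R}$, it suffices to treat an object whose underlying representation decomposes as $H=\bigoplus_{g\in G}H_g$ with $H_g\in\Rep^g(\A)$. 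Recall that $\theta^G$ acts on the summand $H_g$ as $u^g_g\circ\theta_{H_g}=u_g^g\circ e^{-2\pi i L_0}$, where $e^{-2\pi i L_0}$ is the action of $\widetilde{\exp}(-2\pi i L_0)\in\widetilde{\Mob}$ on the twisted representation $H_g$.

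The key step is to compare the $\widetilde{\Mob}$-actions. The action of $\widetilde{\Mob}$ on a twisted representation $H_g$ from \cite[Sec. 3.5]{GcrossedbraidedRep} is built from the (projective) action of $\Diff^+(S^1)$ on $H_0$ via covariance. On $H^G$, we need it to restrict to the canonical $\widetilde{\Mob}$-action on the $\A^G$-representation $\mathfrak{R}(H,u)$, which is built from $U^G$. I would argue this in two parts.

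First, the equivariance data $u_k: T_k(H)\to H$ commutes with the $\widetilde{\Mob}$-actions. The action is uniquely characterised by implementing the covariance $U(\varphi)\A(I)U(\varphi)^*=\A(\varphi I)$, and the $V_k$ commute with $U$ on $H_0$, since $G$ acts by automorphisms and $U$ restricts to $U^G$. Hence $H^G$ is invariant under $\widetilde{\Mob}$.

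Second, on $H^G$ the restricted action implements the covariance of $\A^G$, so by uniqueness of positive-energy covariant implementations (the same uniqueness argument used in \cite{GcrossedbraidedRep}) it coincides with the canonical action on $\mathfrak{R}(H,u)$. The safest way to make this rigorous is to check it on the objects $\mathfrak{E}^G(\rho)$ from the discussion preceding Lemma \ref{eq: Rdoesnotkill}, where everything lives on copies of $H_0$ and the actions are given explicitly by transport of $U$ through $\rho_h$.

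Granting this, the computation is short. A vector $\xi\in H^G$ decomposes as $\xi=\sum_g\xi_g$ with $\xi_g\in H_g$, and $e^{-2\pi iL_0}$ preserves each $H_g$ as a Hilbert space. Then
\[
\theta^G\xi=\sum_g u_g\big(e^{-2\pi iL_0}\xi_g\big)=\sum_g e^{-2\pi iL_0}\,u_g(\xi_g),
\]
using that $u_g$ commutes with the $\widetilde{\Mob}$-action. Since $\xi$ is $G$-invariant, the component of $u_g\xi$ in $H_g$ is $\xi_g$, and the projection onto $H_g$ commutes with $e^{-2\pi i L_0}$. Therefore $\theta^G\xi=e^{-2\pi iL_0}\xi=\vartheta\xi$, which is the required identity.

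The main obstacle is the first part of the key step: identifying the $\widetilde{\Mob}$-action on twisted representations with that on their $G$-fixed subspace, and checking that $u_g$ intertwines the actions. The $\widetilde{\Mob}$-action is only characterised up to a uniqueness statement, so this requires care. If the abstract argument fails, I would fall back on the explicit models $\mathfrak{E}^G(\rho)$, together with naturality and the fact that they generate $(\Rep^G(\A))^G$.
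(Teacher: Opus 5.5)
This is essentially the paper's proof. You reduce to showing that $\mathfrak{R}$ preserves the balance, observe that the $u^g_g$ act trivially on $G$-invariant vectors, and identify the restriction of $e^{-2\pi i L_0}$ to $H^G$ with $\vartheta_{\mathfrak{R}(H,u)}$. The paper obtains that last identification directly from the construction: the $\widetilde{\Mob}$-actions are built from the projective $\Diff^+(S^1)$-action, and $U^G$ is the restriction of $U$ to $H_0^G$. You should use that argument rather than a uniqueness principle, because positive-energy covariant implementations are not unique on reducible representations: one can tensor in a positive-energy representation of $\widetilde{\Mob}$ on a multiplicity space, and this changes $e^{-2\pi i L_0}$.

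Your point that $u^g_g$ must commute with $e^{-2\pi i L_0}$ is left implicit in the paper, and it follows at once from the crossed-balance axioms. Apply naturality of $\theta$ to $u^g_g\colon T_g(H_g)\to H_g$, and use $T_g(\theta_{H_g})=\theta_{T_g(H_g)}$ together with the fact that $T_g$ is the identity on morphisms.
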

\begin{proof}
    It is enough to show that $\mathfrak{R}: (\Rep^G(\A))^G\to \Rep(\A^G)$ preserves the balance. Let $X:=(\bigoplus\limits_{g\in G}H^g, u)\in (\Rep^G(\A))^G$ be an object, with $H^g\in\Rep^g(\A)$ a (possibly zero) $g$-twisted representation. Then, for every $g,k\in  G$, the unitary $u_k: \bigoplus\limits_{h\in G}T_k(H^h)\to \bigoplus\limits_{h\in G}H^h$ restricts to a unitary
    \[
    u_k^g: T_k(H^g)\to H^{gkg^{-1}}.
    \]
 The balance $\theta^G_{X}$ on $(\bigoplus\limits_{g\in G}H^g, u)$ is
    \[
    \bigoplus\limits_{g\in G}H^g\xrightarrow{\bigoplus_g e^{-2\pi i L_0}} \bigoplus\limits_{g\in G}T_g(H^g)\xrightarrow{\bigoplus_g u_g^g} \bigoplus\limits_{g\in G}H^g,
    \]
    where we abuse the notation to denote by $e^{-2\pi i L_0}$ the action of $\widetilde{\exp}(-2\pi i L_0)\in\widetilde{\Mob}$ on each of the $H^g$. Now, we have
    \[
    \mathfrak{R}(\theta^G_{X}) = \mathfrak{R}(\bigoplus_{g\in G}u_g^g)\circ \mathfrak{R}(\bigoplus_{g\in G} e^{-2\pi i L_0}) = \mathfrak{R}(\bigoplus_{g\in G} e^{-2\pi i L_0}),
    \]
    where we use that the action of the $u_g^g$'s on $\mathfrak{R}(\bigoplus_{g\in G}H^g)$ is trivial by definition. Since the projective $\Diff^+(S^1)$-action on $H_0^G$ is the restriction of the projective $\Diff^+(S^1)$-action on $H_0$, the action of $\widetilde{\exp}(-2\pi i L_0)\in \widetilde{\Mob}$ on $\mathfrak{R}(\bigoplus_{g\in G}H^g)$ is the restriction of the action of $\widetilde{\exp}(-2\pi i L_0)$ on $\bigoplus_{g\in G}H^g$. Therefore, 
\[\vartheta_{\mathfrak{R}(X)} = \mathfrak{R}(\bigoplus\limits_{g\in G}e^{-2\pi i L_0}) = \mathfrak{R}(\theta^G_X),\]
    as needed.
\end{proof}

\appendix
\section{Bi-involutive structures}
\label{Appendix}
In \cite[App. A]{GcrossedbraidedRep}, we constructed an involutive $\mathrm{W}^*$-tensor structure on the $\mathrm{W}^*$-tensor category $\Rep^{\Aut(\A)}(\A)$. By restricting, one also obtains an involutive $\mathrm{W}^*$-tensor structure on $\Rep^G(\A)$, and on $\Rep(\A^G)$. The category $(\Rep^G(\A))^G$ also comes equipped with an involutive $\mathrm{W}^*$-tensor structure by the fact that it is an equivariantization of an involutive $\mathrm{W}^*$-tensor category and $G$ acts by $\mathrm{W}^*$-tensor functors, see below. We note that, in \cite{henriques2024completewcategories}, what we refer to as involutive $\mathrm{W}^*$-tensor categories are called bi-involutive $\mathrm{W}^*$-tensor categories.

An involutive structure on a $\mathrm{W}^*$-tensor category $\mathcal{C}$ consists of an anti-linear $\mathrm{W}^*$-functor $\overline{\,\cdot\,}:\mathcal{C}\to \mathcal{C}$, equipped with a unitary anti-tensor structure $\nu_{X, Y}: \overline{X}\otimes \overline{Y}\to \overline{X\otimes Y}$ natural in $X,Y\in\mathcal{C}$ and compatible with the associator, a unitary isomorphism $r: \1\to \overline{\1}$ and a natural family of unitary isomorphisms $\varphi_X: X\to \overline{\overline{X}}$. The obvious compatibilities between these pieces of data are spelled out, for example, in \cite[Def. 2.1]{MR2861112} or \cite[App. A]{GcrossedbraidedRep}. An involutive $\mathrm{W}^*$-functor between two involutive $\mathrm{W}^*$-tensor categories consists of a $\mathrm{W}^*$-tensor functor $F$ together with a unitary natural transformation $\chi: \overline{F(-)}\cong F(\overline{\,-\,})$. The family $\chi$ satisfies the obvious compatibility conditions with the tensor structure data of $F$, see \cite[App. A]{GcrossedbraidedRep}. Natural transformations between involutive $\mathrm{W}^*$-tensor functors are also defined in the obvious way. We write $\underline{\Aut_{\otimes}^{\overline{\,\cdot\,}}}(\mathcal{C})$ for the tensor category of involutive $\mathrm{W}^*$-tensor automorphisms of $\mathcal{C}$ and unitary involutive natural transformations.

Let $\mathcal{C}$ be an involutive $\mathrm{W}^*$-tensor category and $G$ be a discrete group acting on the underlying $\mathrm{W}^*$-tensor category of $\mathcal{C}$ by a tensor functor $T:\underline{G}\to \underline{\Aut_\otimes(\mathcal{C})}$. Recall that $T$ consists of tensor functors $(T_g, s_g): \mathcal{C}\to \mathcal{C}$ for every $g\in G$ together with unitary natural transformations $n_{g,h}: T_gT_h\cong T_{gh}$ for all $g, h\in G$. A \emph{lift of $T$ to an action by involutive $\mathrm{W}^*$-tensor functors} consists of a unitary family of natural isomorphisms $\chi_g: \overline{T_g(-)}\cong T_g(\overline{\,-\,})$ upgrading $(T_g,s_g)$ to an involutive $\mathrm{W}^*$-tensor functor for every $g\in G$ and making the following diagrams commute 
\begin{equation}\label{eq: CompatibilityBraidingInvolution}\begin{tikzcd}
	{\overline{X\otimes Y}} && {\overline{T_g(Y)\otimes X}} & {\overline{X}\otimes \overline{T_g(Y)}} \\
	{\overline{Y}\otimes \overline{X}} & {T_e(\overline{Y})\otimes \overline{X}} & {T_{g^{-1}}\circ T_g(\overline{Y})\otimes \overline{X}} & {\overline{X}\otimes T_g(\overline{Y})}
	\arrow["{\overline{\beta_{X, Y}}}", from=1-1, to=1-3]
	\arrow["{\nu_{X, Y}}"', from=1-1, to=2-1]
	\arrow["{\nu_{T_g(Y), X}}", from=1-3, to=1-4]
	\arrow["{\id\otimes \chi_g(Y)}", from=1-4, to=2-4]
	\arrow["\cong"', from=2-1, to=2-2]
	\arrow["\cong"', from=2-2, to=2-3]
	\arrow["{\beta_{\overline{X}, T_g(\overline{Y})}}", from=2-4, to=2-3]
\end{tikzcd}\end{equation}
for all $g\in G$, $X\in \mathcal{C}_g$, and $Y\in\mathcal{C}$, and
\begin{equation}\begin{tikzcd}\label{eq: CompatibilityNChi}
	{\overline{T_gT_h(X)}} && {\overline{T_{gh}(X)}} \\
	{T_g(\overline{T_h(X)})} && {T_{gh}(\overline{X})} \\
	& {T_gT_h(\overline{X})}
	\arrow["{\overline{n_{g,h}(X)}}", from=1-1, to=1-3]
	\arrow["{\chi_g(T_h(X))}"', from=1-1, to=2-1]
	\arrow["{\chi_{gh}(X)}", from=1-3, to=2-3]
	\arrow["{T_g(\chi_h(X))}"', from=2-1, to=3-2]
	\arrow["{n_{g,h}(\overline{X})}"', from=3-2, to=2-3]
\end{tikzcd}\end{equation}
for all $g,h\in G$ and $X\in \mathcal{C}.$ In particular, the families $\chi_g$ upgrade $T$ to a tensor functor $T: \underline{G}\to \underline{\Aut_{\otimes}^{\overline{\,\cdot\,}}}(\mathcal{C})$.

A \emph{$G$-crossed braided involutive $\mathrm{W}^*$-tensor category} consists of a $G$-crossed braided $\mathrm{W}^*$-tensor category $\mathcal{C}$, together with a lift of the $G$-action to an action by involutive $\mathrm{W}^*$-tensor functors and such that $\overline{\,\cdot\,}:\mathcal{C}\to \mathcal{C}$ restricts to equivalences $\mathcal{C}_g\to \mathcal{C}_{g^{-1}}$ for all $g\in G$. An \emph{involutive $G$-crossed balanced $\mathrm{W}^*$-tensor category} is an involutive $G$-crossed braided $\mathrm{W}^*$-tensor category $\mathcal{C}$ together with a $G$-crossed balance $\theta$ on its underlying $G$-crossed braided $\mathrm{W}^*$-tensor category such that the diagram
\begin{equation}\begin{tikzcd}\label{eq: RibbonCondition}
	{\overline{T_g(X)}} && {\overline{X}} \\
	&& {T_e(\overline{X})} \\
	{T_g(\overline{X})} && {T_{g^{-1}}\circ T_g(\overline{X})}
	\arrow["{\overline{\theta_X^*}}", from=1-1, to=1-3]
	\arrow["{\chi_g(X)}"', from=1-1, to=3-1]
	\arrow["\cong"', from=2-3, to=1-3]
	\arrow["{\theta_{T_g(\overline{X})}}"', from=3-1, to=3-3]
	\arrow["{n_{g^{-1}, g}(\overline{X})}"', from=3-3, to=2-3]
\end{tikzcd}\end{equation}
commutes for all $g\in G$ and $X\in\mathcal{C}_g$. The commutativity of Diagram \eqref{eq: RibbonCondition} is a generalization to the context of involutive $\mathrm{W}^*$-tensor categories of the usual ribbon condition in rigid semisimple $G$-crossed balanced tensor categories, see \cite[Rk. A.4]{GcrossedbraidedRep} and \cite[Def. 2.3 and Eq. (2.4.a)]{MR2674592}. Functors between involutive $G$-crossed braided and $G$-crossed balanced $\mathrm{W}^*$-tensor categories are defined in the obvious way.

In Section \ref{Sec: G-X-balanced}, we have argued how the equivariantization of a $G$-crossed balanced $\mathrm{W}^*$-tensor category is canonically balanced. We generalize this statement here to the context of involutive $\mathrm{W}^*$-tensor categories.

\begin{lemma}\label{lemm: EquivariantizationIsInvolutive}
    Let $G$ be a discrete group and $(\mathcal{C}, \nu, \varphi, r)$ be an involutive $\mathrm{W}^*$-tensor category. Let $G$ act on $\mathcal{C}$ by a tensor functor $T: \underline{G}\to \underline{\Aut}_{\otimes }^{\overline{\,\cdot\,}}(\mathcal{C})$. Then, the equivariantization $\mathcal{C}^G$ has the canonical structure of an involutive $\mathrm{W}^*$-tensor category. If $\mathcal{C}$ is a $G$-crossed braided (balanced) involutive $\mathrm{W}^*$-tensor category, then $\mathcal{C}^G$ is canonically a braided (resp. balanced) involutive $\mathrm{W}^*$-tensor category.
\end{lemma}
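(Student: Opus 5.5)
The plan is to write down the involutive data on $\mathcal{C}^G$ explicitly and reduce every axiom to the corresponding one in $\mathcal{C}$, together with the compatibility of the $\chi_g$ with the action data. Given $(X,u)\in\mathcal{C}^G$, set $\overline{(X,u)}:=(\overline{X},\bar u)$ with
\[
\bar u_g:\ T_g(\overline{X})\xrightarrow{\chi_g(X)^{-1}}\overline{T_g(X)}\xrightarrow{\overline{u_g}}\overline{X}.
\]
On morphisms the conjugate is $f\mapsto\overline{f}$. The equivariance condition for $\bar u$ (the square involving $n_{g,h}$) follows from that for $u$ after applying $\overline{\,\cdot\,}$, using naturality of $\chi_g$ and Diagram \eqref{eq: CompatibilityNChi}. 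The normalisation $\bar u_e$ should follow from the unit compatibility of $\chi_e$ with the trivialisation $\mathrm{Id}\cong T_e$. If $f$ intertwines $u$ and $v$, then $\overline f$ intertwines $\bar u$ and $\bar v$ by naturality of $\chi_g$. Since $\overline{\,\cdot\,}$ is an anti-linear $\mathrm{W}^*$-functor, the result is an anti-linear $\mathrm{W}^*$-functor on $\mathcal{C}^G$.

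Next I would check that the structure maps $\nu_{X,Y}$, $r$ and $\varphi_X$ of $\mathcal{C}$ are morphisms in $\mathcal{C}^G$.
\begin{itemize}
\item For $\nu$: $\nu_{X,Y}:\overline{(X,u)}\otimes\overline{(Y,v)}\to\overline{(X,u)\otimes(Y,v)}$ must intertwine $(\bar u_g\otimes\bar v_g)\circ s_g^{-1}$ with $\overline{(u_g\otimes v_g)\circ s_g^{-1}}\circ\chi_g^{-1}$. This is exactly the statement that $\chi_g$ is compatible with the tensorators $s_g$ and with $\nu$, which is part of $(T_g,s_g,\chi_g)$ being an involutive tensor functor, combined with naturality of $\nu$.
\item For $r$: compatibility of $r$ with $i_g$ is the unit axiom for $\chi_g$.
\item For $\varphi$: $\varphi_X$ is equivariant from $u$ to $\bar{\bar u}$ by the axiom relating $\chi_g$, $\overline{\chi_g}$ and $\varphi$ in the definition of an involutive functor.
\end{itemize}
Since all coherence axioms among $\nu,r,\varphi$ are equalities of morphisms in $\mathcal{C}$, and morphisms in $\mathcal{C}^G$ are morphisms in $\mathcal{C}$, they hold automatically. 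This gives the involutive $\mathrm{W}^*$-tensor structure.

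For the braided case, the involution must be compatible with the braiding $\beta^G$, i.e.\ $\overline{\beta^G}$ should correspond to $\beta^G$ with the tensor factors reversed via $\nu$. I would verify this componentwise on $X_g\otimes Y_h$. By construction, $\beta^G$ there is $(v^h_g\otimes\id)\circ\beta_{X_g,Y_h}$. Conjugating and applying Diagram \eqref{eq: CompatibilityBraidingInvolution} turns $\overline{\beta_{X_g,Y_h}}$ into a $G$-crossed braiding of $\overline{X_g}\in\mathcal{C}_{g^{-1}}$ with $T_g(\overline{Y_h})$. It then remains to see that the $\bar v$-correction, namely $\overline{v^h_g}$ composed with $\chi_g$, matches the equivariant data $\bar v_{g^{-1}}$ used in $\beta^G$. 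This again uses \eqref{eq: CompatibilityNChi} with $g^{-1},g$ and the identification $T_{g^{-1}}T_g\cong T_e$. I expect this bookkeeping of the $n_{g^{-1},g}$ and $\chi$ isomorphisms to be the main obstacle.

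For the balanced case, I need $\overline{(\theta^G)^*}$ to equal $\theta^G$ on $\overline{(X,u)}$. On the component $X_g$, the map $\theta^G$ is $u_g^g\circ\theta_{X_g}$. Conjugating its adjoint and using the ribbon condition \eqref{eq: RibbonCondition} rewrites $\overline{\theta_{X_g}^*}$ as $\theta_{T_g(\overline{X_g})}$ composed with $\chi_g$ and $n_{g^{-1},g}$. Unitarity of $u$ and the same identity relating $\bar u_{g^{-1}}$ to $\overline{u_g}$ then reduce everything to the definition of $\theta^G$ on $\overline{X}_{g^{-1}}$.
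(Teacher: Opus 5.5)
Your proposal is correct and follows essentially the same route as the paper. It uses the same conjugate equivariant structure $\bar u_g=\overline{u_g}\circ\chi_g(X)^{-1}$, justified by \eqref{eq: CompatibilityNChi} and naturality of $\chi$, and the same reuse of $\nu$, $r$, $\varphi$ as morphisms in $\mathcal{C}^G$. It also matches the paper's componentwise verification of the braiding and ribbon compatibilities via \eqref{eq: CompatibilityBraidingInvolution}, \eqref{eq: RibbonCondition}, \eqref{eq: CompatibilityNChi} and the equivariance of $u$ and $v$. The one ingredient you leave implicit, and which the paper invokes explicitly, is naturality of the crossed braiding and of the crossed balance $\theta$. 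For example, naturality of $\theta$ is needed to move $\theta_{T_g(\overline{X_g})}$ past $\chi_g(X_g)\circ\overline{u_g^g}^{-1}$.
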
 
\begin{proof}
    Given an object $(X,u)\in \mathcal{C}^G$, we define $\overline{(X,u)}:=(\overline{X}, \overline{u})$, where $\overline{u}_g: T_g(\overline{X})\xrightarrow{\chi_g(X)^{-1}}\overline{T_g(X)}\xrightarrow{\overline{u_g}}\overline{X}$. It is easy to see that this produces a well-defined functor $\mathcal{C}^G\to \mathcal{C}^G$, using the compatibility \eqref{eq: CompatibilityNChi} and naturality of the unitaries $\chi$.
    
    The unit of $\mathcal{C}^G$ is given by $(\1,\{i_g^{-1}\}_{g\in G})$, and it is straightforward to check that $r:\1\to \overline{\1}$ provides a valid morphism $(\1,\{i_g^{-1}\}_{g\in G})\cong \overline{(\1,\{i_g^{-1}\}_{g\in G})}$. Similarly, the isomorphisms
    \[
    \varphi_{(X,u)} = \varphi_X \hspace{1cm} \nu_{(X,u),(Y,v)} = \nu_{X,Y}
    \]
    for $(X,u),(Y,v)\in\mathcal{C}^G$ provide the rest of the necessary structure for $\mathcal{C}^G$ to be an involutive $\mathrm{W}^*$-tensor category.

    If $\mathcal{C}$ is a $G$-crossed braided involutive $\mathrm{W}^*$-tensor category, we show that $\mathcal{C}^G$ is a braided involutive $\mathrm{W}^*$-tensor category. Fix objects $(X, u), (Y, v)\in\mathcal{C}^G$. Let $X = \bigoplus\limits_{g\in G} X_g$, with $X_g\in \mathcal{C}_g$. For every $g,h\in G$, recall that we write $u^h_g$ for the restriction of $u_g: T_g(X)\xrightarrow{\cong } X$ to $T_g(X_h)\xrightarrow{\cong} X_{ghg^{-1}}$, and similarly for $Y$ and $v$. The specialization of Equation \eqref{eq: CompatibilityBraidingInvolution} to $(X, u), (Y, v)\in \mathcal{C}^G$ is
    \begin{equation}\label{eq SpecialCompatilityBRaidingInolution}
        \nu^G_{(X, u), (Y, v)} = \beta^G_{\overline{(X, u)}, \overline{(Y, v)}}\circ \nu^G_{(Y, v), (X, u)}\circ \overline{\beta^G_{(X, u), (Y, v)}}.
    \end{equation}
    Given $g,h\in G$, the component of Equation \eqref{eq SpecialCompatilityBRaidingInolution} on $\overline{X_g\otimes Y_h}$ is the commutativity of the outer diagram in
\[\begin{tikzcd}
	{\overline{X_g\otimes Y_h}} && {\overline{T_g(Y_h)\otimes X_g}} && {\overline{Y_h\otimes X_g}} & {\overline{X_g}\otimes \overline{Y_h}} \\
	&& {\overline{X_g}\otimes \overline{T_g(Y_h)}} &&& {T_{g^{-1}}(\overline{Y_h})\otimes \overline{X_g}} \\
	&& {\overline{X_g}\otimes T_g(\overline{Y_h)}} \\
	&& {T_{g^{-1}}\circ T_g(\overline{Y_h})\otimes \overline{X_g}} \\
	{\overline{Y_h}\otimes \overline{X_g}} &&&&& {\overline{T_{g^{-1}}(Y_h)}\otimes\overline{X_g}}.
	\arrow["{\overline{\beta_{X_g, Y_h}}}", from=1-1, to=1-3]
	\arrow["{\nu_{X_g, Y_h}}"', from=1-1, to=5-1]
	\arrow["{\overline{v_g\otimes\id}}", from=1-3, to=1-5]
	\arrow["{\nu_{T_g(Y_h), X_g}}", from=1-3, to=2-3]
	\arrow["{\nu_{Y_h, X_g}}", from=1-5, to=1-6]
	\arrow["{\beta_{\overline{X_g}, \overline{Y_h}}}", from=1-6, to=2-6]
	\arrow["{\id\otimes \overline{v_g}}"{description}, from=2-3, to=1-6]
	\arrow["{\id\otimes \chi_g(Y_h)}", from=2-3, to=3-3]
	\arrow["{\chi_{g^{-1}}^{-1}(Y_h)\otimes \id}", from=2-6, to=5-6]
	\arrow["{\beta_{\overline{X_g}, T_g(\overline{Y_h})}}", from=3-3, to=4-3]
	\arrow["\cong"', from=4-3, to=5-1]
	\arrow["{T_{g^{-1}}(\overline{v_g})\otimes \id}"{description}, from=4-3, to=5-6]
	\arrow["{\overline{v_{g^{-1}}}\otimes\id}", from=5-6, to=5-1]
\end{tikzcd}\]
    The leftmost diagram commutes by Equation \eqref{eq: CompatibilityBraidingInvolution} and the lowest diagram commutes by hypothesis on $v$. The top-right diagram commutes by naturality of $\nu$, and the remaining diagram commutes by naturality of $\beta$ and $\chi$. Hence, the claim follows.

    Assume that $\mathcal{C}$ has, in addition, a $G$-crossed balance $\theta$. Then, we need to show that, for every object $(X,u)\in \mathcal{C}^G$, it holds that
    \begin{equation}
    \label{eq: RibbonConditionUncrossed}
    \theta^G_{\overline{(X,u)}} = \overline{(\theta^G_{(X,u)})^*},
    \end{equation}
    which is the specialization of Equation \eqref{eq: RibbonCondition} when the trivial group is acting. Here, $\theta^G$ is the induced balance on $\mathcal{C}^G$. Given $g\in G$, the $g$-component of Equation \eqref{eq: RibbonConditionUncrossed} is exactly the commutativity of the outer diagram in 
\[\hspace{-.6cm}\begin{tikzcd}[column sep=large]
	{\overline{T_{g^{-1}}(X_{g^{-1}})}} &&&&& {\overline{X_{g^{-1}}}} \\
	& {T_{g^{-1}}(\overline{X_{g^{-1}}})} && {T_e(\overline{X_{g^{-1}}})} \\
	\\
	& {T_g\circ T_{g^{-1}}(\overline{X_{g^{-1}}})} && {\overline{T_e(X_{g^{-1}})}} \\
	& {T_g(\overline{T_{g^{-1}}(X_{g^{-1}})})} && {\overline{T_g\circ T_{g^{-1}}(X_{g^{-1}})}} \\
	\\
	{\overline{X_{g^{-1}}}} & {T_g(\overline{X_{g^{-1}}})} &&&& {\overline{T_g(X_{g^{-1}})}}
	\arrow["{\overline{\theta_{X_{g^{-1}}}^*}}", from=1-1, to=1-6]
	\arrow["{\chi_{g^{-1}}(X_{g^{-1}})}", from=1-1, to=2-2]
	\arrow["{\theta_{T_{g^{-1}}(\overline{X_{g^{-1}}})}}"', from=2-2, to=4-2]
	\arrow["\cong", from=2-4, to=1-6]
	\arrow["{n_{g,g^{-1}}(\overline{X_{g^{-1}}})}", from=4-2, to=2-4]
	\arrow["{\overline{u_e^{g^{-1}}}}"', from=4-4, to=1-6]
	\arrow["{\chi_{e}(X_{g^{-1}})}", from=4-4, to=2-4]
	\arrow["{T_g(\chi_{g^{-1}}(X_{g^{-1}}))}"', from=5-2, to=4-2]
	\arrow["{T_g(\overline{u_{g^{-1}}^{g^{-1}}})}"', from=5-2, to=7-2]
	\arrow["{\overline{n_{g, g^{-1}}(X_{g^{-1}})}}"', from=5-4, to=4-4]
	\arrow["{\chi_{g}(T_{g^{-1}}(X_{g^{-1}}))}", from=5-4, to=5-2]
	\arrow["{\overline{T_g(u_{g^{-1}}^{g^{-1}})}}"', from=5-4, to=7-6]
	\arrow["{\overline{(u_{g^{-1}}^{g^{-1}})^*}}", from=7-1, to=1-1]
	\arrow["{\theta_{\overline{X_{g^{-1}}}}}"', from=7-1, to=7-2]
	\arrow["{\chi_g(\overline{X_{g^{-1}}})^{-1}}"', from=7-2, to=7-6]
	\arrow["{\overline{u_g^{g^{-1}}}}"', from=7-6, to=1-6]
\end{tikzcd}\]
We now argue the commutativity of all the inner diagrams. The upper diagram is the ribbon condition \eqref{eq: RibbonCondition}. The left-most diagram commutes by the naturality of the $G$-crossed balance $\theta$. The central diagram commutes by \eqref{eq: CompatibilityNChi}. The right-most diagram commutes by hypothesis on $u$, and the unique triangle commutes trivially. The lower diagram commutes by naturality of the family $\chi$. Hence, condition \eqref{eq: RibbonConditionUncrossed} is satisfied, and the claim follows.
\end{proof}

Let $\A$ be a conformal net. We now recall from \cite[App. A]{GcrossedbraidedRep} how to endow the $\mathrm{W}^*$-tensor category $\Rep^{\Aut(\A)}(\A)$ with the structure of an involutive $\mathrm{W}^*$-balanced tensor category. If $G$ is a discrete group acting on $\A$ by a group homomorphism $G\to \Aut(\A)$, we can pull back the balanced involutive $\mathrm{W}^*$-tensor structure on $\Rep^{\Aut(\A)}(\A)$ under this homomorphism to obtain a $G$-crossed balanced involutive $\mathrm{W}^*$-tensor structure on $\Rep^G(\A)$.

Recall that we fix the point $\mathrm{p}=1\in S^1$, and that we define $\widetilde{S^1_+} := (0,\nicefrac{1}{2})\in \Jcal_\R$, and $S^1_+:=q(\widetilde{S^1_+})$, the upper semi-circle. The von Neumann algebra $\A(S^1_+)$ acts on $H_0$ by definition, and the vacuum vector $\Omega\in H_0$ is cyclic for the action of $\A(S^1_+)$, by the Reeh-Schlieder Theorem. One can therefore define an anti-linear unbounded operator $S: \A(S^1_+)\Omega\to \A(S^1_+)\Omega$ such that, for every $x\in \A(S^1_+)$,
\[
S\pi_{0, S_+^1}(x)(\Omega) = \pi_{0, S_+^1}(x^*)(\Omega), 
\]
following Tomita-Takesaki theory. The operator $S$ is preclosed and we continue denoting its closure by $S$. Let $S = \mathrm{J}\circ \Delta^{1/2}$ be the polar decomposition of $S$. Then, the anti-unitary map $\mathrm{J}: H_0\to H_0$ is called the modular conjugation and satisfies $\mathrm{J}\Omega = \Omega$ and $\mathrm{J}^2 = \id_{H_0}$. By the Bisognano-Wichmann Theorem \cite{bgl93}, it also holds that, for every $I\in\Jcal$ and $x\in \A(I)$,
\[
\mathrm{J}\circ \pi_{0, I}(x)\circ \mathrm{J}\in \Hom_{\A(\overline{I})^c}(H_0,H_0)\cong \A(\overline{I}),
\]
where $\overline{I} := \{\overline{z}\,|\, z\in I\}\in \Jcal$ is the image of $I$ under complex conjugation in $S^1$. Hence, we obtain a von Neumann algebra isomorphism
\[
\begin{array}{cccc}
     j: &\A(I) &\to & \A(\overline{I})^{\text{op}} \\
     & x&\mapsto & j(x),
\end{array}
\]
where $j(x)$ is the unique element in $\A(\overline{I})$ such that $\pi_{0, \overline{I}}(j(x)) = \mathrm{J}\circ \pi_{0, I}(x^*)\circ \mathrm{J}$. The isomorphisms $j$ are compatible with inclusions of intervals and commute with automorphisms of~$\A$.

Fix an automorphism $\varphi\in\Aut(\A)$ and let $(H^\varphi, \pi^H)\in \Rep^\varphi(\A)$ be a $\varphi$-twisted representation of $\A$. We define a new representation $\overline{(H^\varphi, \pi^H)}:=(\overline{H^\varphi}, \overline{\pi^H})\in \Rep^{\varphi^{-1}}(\A)$. Here, $\overline{H^\varphi}$ denotes the complex conjugate Hilbert space of $H^\varphi$ and $\overline{\pi^H}$ is defined as follows. Let $\tilde{I}\in \Jcal_{\R}$ and $x\in \A_\R(\tilde{I})$. Then, we write
\begin{equation}\label{eq: defInvolutionRepGA}
\overline{\pi^H}_{\tilde{I}}(x)(\overline{\xi}) = \overline{\pi^H_{-\tilde{I}}(j(x)^*)(\xi)}.
\end{equation}
for all $\overline{\xi}\in \overline{H^\varphi}$. It is easy to see that Equation \eqref{eq: defInvolutionRepGA} indeed defines a $\varphi^{-1}$-twisted action of $\A$ on $\overline{H^\varphi}$. It is also clear that, if $K^\varphi\in \Rep^\varphi(\A)$ is a twisted representation and $F: H^\varphi\to K^\varphi$ is a morphism of $\varphi$-twisted representations, the complex conjugate map $\overline{F}: \overline{H^\varphi}\to \overline{K^\varphi}$ intertwines the $\varphi^{-1}$-twisted actions $\overline{\pi^H}$ and $\overline{\pi^K}$ of $\A$. We write $\overline{\,\cdot\,}: \Rep^{\Aut(\A)}(\A)\to \Rep^{\Aut(\A)}(\A)$ for the anti-linear functor that sends a twisted representation $(H^\varphi, \pi^H)\in \Rep^\varphi(\A)$ to $(\overline{H^\varphi}, \overline{\pi^H})$, and a morphism of twisted representations $F: H^\varphi\to K^\varphi$ to the complex conjugate morphism $\overline{F}: \overline{H^\varphi}\to \overline{K^\varphi}$.

The functor $\overline{\,\cdot\,}$ is upgraded to an involutive $\mathrm{W}^*$-tensor structure on $\Rep^{\Aut(\A)}(\A)$ as follows. The compatibility with the unit is given by the unitary $i:(H_0, \pi_0)\xrightarrow{\cong} \overline{(H_0,\pi_0)}$, $\xi\mapsto \overline{\mathrm{J}\xi}$, and the trivialization of the square of $\overline{\,\cdot\,}$ is defined to be the family $\phi_{H}: (H^\varphi,\pi^H)\xrightarrow{\cong}\overline{\overline{(H^\varphi, \pi^H)}}$, $\xi\mapsto \overline{\overline{\xi}}$, for $H\in\Rep^{\Aut(\A)}(\A)$.

Let us next introduce the anti-tensor structure $\nu$. Given an interval $\tilde{I}\in \Jcal_\R$ and a twisted representation $H^\varphi\in \Rep^\varphi(\A)$, there are canonical isomorphisms $H_-^\varphi(\tilde{I})\cong \overline{H^\varphi}_+(-\tilde{I})$ and $H_+^\varphi(\tilde{I})\cong \overline{H^\varphi}_-(-\tilde{I})$ given by $\xi\mapsto \overline{\xi}$. In addition, for $\xi\in H_{\pm}^\varphi(\tilde{I})$, we have
\[
Z^{\mp}(\overline{\xi}, -\tilde{I}) = \overline{Z^{\pm}(\xi, \tilde{I})}\circ i.
\]
Therefore, given another twisted representation $K^\mu\in \Rep^\mu(\A)$, the map $\overline{H^\varphi}_+(\tilde{I})\otimes \overline{K^\mu}\to \overline{K^\mu\otimes H^\varphi_-(-\tilde{I})}$ given by $\overline{\xi}\otimes\overline{\eta}\mapsto \overline{\eta\otimes\xi}$ defines a unitary isomorphism
\[
\nu^{\tilde{I}}_{H, K}: \overline{H^\varphi}_+(\tilde{I})\boxtimes \overline{K^\mu}\to \overline{K^\mu\boxtimes H^\varphi_-(-\tilde{I})},
\]
see \cite[Prop. A.9]{GcrossedbraidedRep}. The anti-tensor structure $\nu$ of $\overline{\,\cdot\,}$ is then defined, on the twisted representations $H^\varphi$ and $K^\mu$,~as
\[
\nu_{H, K}: \overline{H^\varphi}\boxtimes \overline{K^\mu} = \overline{H^\varphi}_+(\widetilde{S^1_-})\boxtimes \overline{K^\mu}\xrightarrow{\nu_{H, K}^{\widetilde{S^1_-}}}\overline{K^\mu\boxtimes H_-^\varphi(-\widetilde{S_-^1})} = \overline{K^\mu\boxtimes H_-^\varphi(\widetilde{S_+^1})} = \overline{K^\mu\boxtimes H^\varphi}.
\]
The lift of the action $T$ of $\Aut(\A)$ on $\Rep^{\Aut(\A)}(\A)$ to an action by involutive $\mathrm{W}^*$-tensor automorphisms is given trivially by
\[
T_\mu(\overline{\,\cdot\,}) = \overline{T_\mu(-)}
\]
for all $\mu\in\Aut(\A)$. The data presented here upgrades $\Rep^{\Aut(\A)}(\A)$ to an involutive $\Aut(\A)$-crossed balanced $\mathrm{W}^*$-tensor category.

\begin{theorem}(\cite[Thm. A.15]{GcrossedbraidedRep})
     Let $\A$ be a conformal net. The $\Aut(\A)$-crossed balanced $\mathrm{W}^*$-tensor category $\Rep^{\Aut(\A)}(\A)$ is naturally an $\Aut(\A)$-crossed balanced involutive $\mathrm{W}^*$-tensor category.
\end{theorem}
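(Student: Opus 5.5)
The statement is a cited result, \cite[Thm. A.15]{GcrossedbraidedRep}. To reprove it, I would check that the data just listed satisfy the axioms of an involutive $\Aut(\A)$-crossed balanced $\mathrm{W}^*$-tensor category: the involutive tensor axioms for $(\overline{\,\cdot\,},\nu,i,\phi)$, the lift axioms \eqref{eq: CompatibilityBraidingInvolution} and \eqref{eq: CompatibilityNChi}, the grading condition, and the ribbon condition \eqref{eq: RibbonCondition}.

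\textbf{Step 1: routine checks.} The grading condition $\overline{\Rep^\varphi(\A)}\subset\Rep^{\varphi^{-1}}(\A)$ follows from \eqref{eq: defInvolutionRepGA}, since $-(\tilde I+1)=-\tilde I-1$ and $j$ commutes with automorphisms. Because $T_\mu$ acts trivially on Hilbert spaces and $\chi_\mu=\id$, diagram \eqref{eq: CompatibilityNChi} commutes trivially; the only thing to note is that $j$ commutes with $\mu$, so $T_\mu$ and $\overline{\,\cdot\,}$ commute on the nose. The compatibilities of $\nu$ with the associator, unitors, $i$ and $\phi$ I would verify on elementary tensors $\overline{\xi}\otimes\overline{\eta}$ with $\xi,\eta$ bounded vectors, using $Z^{\mp}(\overline{\xi},-\tilde I)=\overline{Z^{\pm}(\xi,\tilde I)}\circ i$ and density of such tensors in the Connes fusion.

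\textbf{Step 2: the braiding condition \eqref{eq: CompatibilityBraidingInvolution}.} I would use that complex conjugation reverses orientation: $\tilde I\mapsto-\tilde I$ sends $\widetilde{S^1_-}$ to $\widetilde{S^1_+}$ and turns the path $\varrho$ into its reverse. Applying $\nu$ to $\overline{\B_{H,K}}$ should therefore give a path continuation that runs backwards, which identifies with $\B^{-1}$ in the reversed order. Concretely, I would evaluate both sides on vectors of the form $\overline{L(\xi,\tilde I)\eta}$ and use Remark \ref{rk: FormulasLB} to trade $L$ for $\B$. The twist $\Gamma_\varphi$ appearing in the crossed braiding has to match $T_{g^{-1}}T_g\cong T_e$, and this is the source of the bottom row of the diagram.

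\textbf{Step 3: the ribbon condition \eqref{eq: RibbonCondition}.} This is the step I expect to be the main obstacle. One has to show that $\overline{(e^{-2\pi iL_0})^*}$ on $\overline{H^\varphi}$ equals the action of $e^{-2\pi iL_0}$ on $\overline{H^\varphi}$ with its conjugate twisted action. My plan is to first show that the $\widetilde{\Mob}$-action on $\overline{H^\varphi}$ is the conjugate of the action on $H^\varphi$ composed with the reflection $z\mapsto\bar z$. The Bisognano--Wichmann theorem gives that $\mathrm J$ implements this reflection, which is an orientation-reversing automorphism of $\widetilde{\Mob}$ sending $\widetilde{\exp}(tL_0)$ to $\widetilde{\exp}(-tL_0)$ up to the appropriate conjugation of $i$. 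Since $e^{-2\pi iL_0}$ is unitary, its adjoint is $e^{2\pi iL_0}$, and conjugating by the antiunitary flips $i\mapsto -i$. The two sign changes cancel to give $e^{-2\pi iL_0}$, which is the required identity.

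The delicate part of Step 3 is that the $\widetilde{\Mob}$-action on a twisted representation is constructed indirectly, by lifting the diffeomorphism action along covering intervals. I would therefore prove that the reflection compatibility holds locally, for elements supported near an interval, using uniqueness of the covariant lift, and then propagate it along paths.
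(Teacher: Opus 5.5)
The paper does not prove this statement itself. It only recalls the data $(\overline{\,\cdot\,},\nu,i,\phi)$ together with the trivial lift $\chi_\mu=\id$ and cites \cite[Thm.~A.15]{GcrossedbraidedRep}, so there is no internal argument to compare yours with. As a reconstruction, your plan targets the right axioms and isolates the right non-formal inputs.

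For \eqref{eq: CompatibilityBraidingInvolution}, the precise tool is that $\nu$ intertwines a path continuation $\alpha^\bullet$ with the conjugate $\overline{(-\alpha)^\bullet}$ of the reflected path. This is \cite[Lem.~A.10]{GcrossedbraidedRep}, which the paper uses in the proof of Lemma~\ref{lemm: compatibilityLRnu}. Since $-\varrho$ is $\varrho$ run backwards, this is the orientation reversal you invoke. Lemma~\ref{lemm: compatibilityLRnu} together with Remark~\ref{rk: FormulasLB} then lets you check the diagram on a dense set of vectors, as you propose.

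For \eqref{eq: RibbonCondition}, note that $T_\varphi$, $n$ and $\chi$ are identities on underlying maps and that $\theta_{T_\varphi(Y)}=T_\varphi(\theta_Y)$. So the condition reduces to $\theta_{\overline H}=\overline{\theta_H^*}$ on $\overline{H}$. Your key lemma $U^{\overline H}(\tilde g)\overline{\xi}=\overline{U^{H}(\sigma(\tilde g))\xi}$, with $\sigma$ the lift to $\widetilde{\Mob}$ of conjugation by $r(z)=\bar z$ (so $\sigma(\widetilde{R}_t)=\widetilde{R}_{-t}$), gives $L_0^{\overline H}=\overline{L_0^H}$, which is positive as it must be. That yields the identity.

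The one place where your stated justification does not suffice is the proof of this key lemma. Bisognano--Wichmann \cite{bgl93} gives $\mathrm{J}U(g)\mathrm{J}=U(rgr)$ only for $g\in\Mob$. Suppose you argue locally, comparing $\overline{\pi^H}_{\tilde I}(U(\gamma))$ with $\overline{\pi^H_{-\tilde I}(U(r\gamma r))}$ for diffeomorphisms $\gamma$ localized in $I$; this is how the $\widetilde{\Mob}$-action on twisted representations is built. Then you need $\mathrm{J}U(\gamma)\mathrm{J}=U(r\gamma r)$ up to a phase for such $\gamma$. That does not follow from Bisognano--Wichmann alone. It follows from the uniqueness of diffeomorphism symmetry (Carpi--Weiner), applied to $\gamma\mapsto\mathrm{J}U(r\gamma r)\mathrm{J}$.

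You can avoid diffeomorphisms, and the propagation along paths, altogether:
\begin{itemize}
\item Set $U_2(\tilde g):=\overline{U^H(\sigma(\tilde g))}$. This is a strongly continuous unitary representation of $\widetilde{\Mob}$ on $\overline{H}$ with values in the von Neumann algebra $\mathcal{N}$ generated by $\overline{\pi^H}$.
\item $U_2$ is covariant for $\overline{\pi^H}$. This uses only the M\"obius identity, $\pi_0(j(y)^*)=\mathrm{J}\pi_0(y)\mathrm{J}$, and $\sigma(\tilde g)(-\tilde I)=-\tilde g\tilde I$.
\item The canonical action $U_1$ on $\overline{H}$ also takes values in $\mathcal{N}$, since it is built from the operators $\overline{\pi^H}_{\tilde I}(U(\gamma))$.
\item Hence $\tilde g\mapsto U_1(\tilde g)^*U_2(\tilde g)$ is a homomorphism into the unitary group of $\mathcal{N}\cap\mathcal{N}'$. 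It is trivial because $\widetilde{\Mob}$ is perfect, so $U_1=U_2$.
\end{itemize}
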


If $G$ is a discrete group acting on $\A$ by a group homomorphism $\Phi: G\to \Aut(\A)$, we obtain that $\Rep^G(\A)$ is canonically a $G$-crossed balanced involutive $\mathrm{W}^*$-tensor category.

\begin{corollary}
    Let $\A$ be a conformal net being acted on by a discrete group $G$. Then, the $G$-crossed balanced $\mathrm{W}^*$-tensor category $\Rep^{G}(\A)$ is naturally a $G$-crossed balanced involutive $\mathrm{W}^*$-tensor category.  
\end{corollary}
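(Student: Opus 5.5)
This corollary follows by pulling back the involutive $\Aut(\A)$-crossed balanced structure of \cite[Thm. A.15]{GcrossedbraidedRep} along $\Phi: G\to \Aut(\A)$. The plan is to check that each piece of data restricts to the full subcategory $\Rep^G(\A)=\bigoplus_{g\in G}\Rep^{g}(\A)\subset \Rep^{\Aut(\A)}(\A)$. Here we write $\Rep^g(\A)$ for $\Rep^{\Phi(g)}(\A)$, and we allow $\Phi$ to be non-injective. In that case we define $\Rep^G(\A)$ as the $G$-graded category whose $g$-component is $\Rep^{\Phi(g)}(\A)$.

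First, the underlying $G$-crossed balanced $\mathrm{W}^*$-tensor structure is exactly the pullback described before Theorem \ref{Thm: BigCorollary}. The tensor product, associators and unitors are those of $\Rep^{\Aut(\A)}(\A)$ restricted to the relevant components. We set $T_g:=T_{\Phi(g)}$, with the same monoidal data $s$ and $n_{g,h}$ (identities), and $\B$ and $\theta$ are restricted in the same way. Since every axiom is an equation of morphisms among objects in the image, all the axioms hold automatically.

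Second, the involutive data also restrict. The functor $\overline{\,\cdot\,}$ sends $\Rep^{\Phi(g)}(\A)$ to $\Rep^{\Phi(g)^{-1}}(\A)=\Rep^{\Phi(g^{-1})}(\A)$, so it preserves $\Rep^G(\A)$ and maps $\mathcal{C}_g$ to $\mathcal{C}_{g^{-1}}$, as the definition requires. The unit $H_0$ lies in the $e$-component. Hence the structure maps $r=i$, $\varphi=\phi$ and $\nu$ are defined on $\Rep^G(\A)$, and their coherence conditions are inherited. For the lift of the action we take $\chi_g:=\chi_{\Phi(g)}$, which is the identity since $T_\mu(\overline{\,\cdot\,})=\overline{T_\mu(-)}$ on the nose.

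Finally, the compatibility diagrams \eqref{eq: CompatibilityBraidingInvolution}, \eqref{eq: CompatibilityNChi} and the ribbon condition \eqref{eq: RibbonCondition} for $g\in G$ and $X\in\Rep^g(\A)$ are literally the corresponding diagrams for $\Phi(g)\in\Aut(\A)$. These hold by \cite[Thm. A.15]{GcrossedbraidedRep}.

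The only point needing care is the bookkeeping that the $G$-grading and the $\Aut(\A)$-grading agree via $\Phi$, so that $g^{-1}$ corresponds to $\Phi(g)^{-1}$ and each of $T_{g^{-1}}T_g$, $n_{g^{-1},g}$ and $T_e$ matches its $\Aut(\A)$ counterpart. Once that is checked there is no real obstacle; the statement is formal.
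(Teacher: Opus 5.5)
Your proposal is correct and is the paper's argument: the corollary is obtained by pulling back the involutive $\Aut(\A)$-crossed balanced structure of \cite[Thm. A.15]{GcrossedbraidedRep} along $\Phi\colon G\to\Aut(\A)$, with $\chi$ trivial and all axioms inherited componentwise. Your explicit handling of a non-injective $\Phi$, taking the $g$-component to be $\Rep^{\Phi(g)}(\A)$ instead of a subcategory, is extra care that the paper leaves implicit.
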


 We are now ready to show that the equivalence of balanced $\mathrm{W}^*$-tensor categories $\Rep(\A^G)\cong (\Rep^G(\A))^G$ in Theorem \ref{Thm: EquivariantizationCompatibleBalance}, for $\A$ being acted on faithfully by a finite group $G$, can be upgraded to an equivalence of balanced involutive $\mathrm{W}^*$-tensor categories. We will need the following two lemmas.

 \begin{lemma}\label{lemm: compatibilityLRnu}
    Fix automorphisms $\varphi, \mu\in \Aut(\A)$ and an interval $\tilde{I}\in \Jcal_\R$. Then, for any twisted representations $H^\varphi\in\Rep^\varphi(\A)$ and $K^\mu\in\Rep^\mu(\A)$, and any $\xi\in H^\varphi_-(\tilde{I})$, it holds that
    \[
    \nu_{H, K}\circ L(\overline{\xi}, -\tilde{I}) = \overline{R(\xi, \tilde{I})}
    \]
    when acting on $\overline{K^\mu}$. Moreover, if $\tilde{J}\in\Jcal_\R$ is an interval such that $\tilde{I}\subset\tilde{J}^{c+}$ and $\eta\in K^\mu_+(\tilde{J})$, it holds that
    \[
    \nu_{H, K}\circ L(\overline{\xi}, -\tilde{I})\overline{\eta} = \overline{L(\eta, \tilde{J})\xi}.
    \]
 \end{lemma}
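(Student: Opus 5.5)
The plan is to reduce both identities to the defining formula for $\nu^{\tilde{I}}_{H,K}$ and to move between intervals using path continuations, which commute with complex conjugation. For the first identity, fix $\overline{\eta}\in\overline{K^\mu}$. By definition, $L(\overline{\xi},-\tilde{I})\overline{\eta}=\alpha_{-\tilde{I}}^\bullet\big(\overline{\xi}\otimes\overline{\eta}\big)$. Here $\overline{\xi}\otimes\overline{\eta}$ lies in $\overline{H^\varphi}_+(-\tilde{I})\boxtimes\overline{K^\mu}$, which makes sense because $H^\varphi_-(\tilde{I})\cong\overline{H^\varphi}_+(-\tilde{I})$. Similarly, $R(\xi,\tilde{I})\eta=(\varrho*\alpha_{\tilde{I}})^\bullet(\eta\otimes\xi)$ in $K^\mu\boxtimes H^\varphi_-(+)$. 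The element $\overline{\xi}\otimes\overline{\eta}$ is sent by $\nu^{-\tilde{I}}_{H,K}$ to $\overline{\eta\otimes\xi}\in\overline{K^\mu\boxtimes H^\varphi_-(\tilde{I})}$. So the first identity reduces to the naturality statement
\[
\nu_{H,K}\circ\alpha_{-\tilde{I}}^\bullet=\overline{(\varrho*\alpha_{\tilde{I}})^\bullet}\circ\nu^{-\tilde{I}}_{H,K},
\]
that is, the maps $\nu^{\tilde{I}'}$ for varying intervals $\tilde{I}'$ intertwine path continuations. Under this correspondence a path $\alpha$ in the $\overline{H}_+$-slot corresponds to the reflected path $-\alpha$ in the $H_-$-slot.

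To prove this compatibility, I would first treat small paths. In that case $\alpha^\bullet$ is just the canonical identification induced by inclusions $H_+(\tilde{I}_1)\subset H_+(\tilde{I})$, and $\nu^{\tilde{I}}$ visibly commutes with these inclusions because it is given by $\overline{\xi}\otimes\overline{\eta}\mapsto\overline{\eta\otimes\xi}$ on the algebraic tensor products. General paths then follow by concatenation. It remains to check that $-\alpha_{-\tilde{I}}$, which runs from $\tilde{I}$ to $+\nicefrac14$, is homotopic to $\varrho*\alpha_{\tilde{I}}$, which runs from $\tilde{I}$ to $-\nicefrac14$ and then to $+\nicefrac14$. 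Both are paths in $\R$ with the same endpoints, and $\R$ is simply connected, so homotopic paths give the same continuation. The reflection $-\widetilde{S^1_-}=\widetilde{S^1_+}$ is what identifies the target $\overline{K\boxtimes H_-(-\widetilde{S^1_-})}$ with $\overline{K\boxtimes H}$ in the definition of $\nu_{H,K}$.

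For the second identity, I would apply the first identity to $\overline{\eta}$: it gives $\nu_{H,K}L(\overline{\xi},-\tilde{I})\overline{\eta}=\overline{R(\xi,\tilde{I})\eta}$. Since $\tilde{I}\subset\tilde{J}^{c+}$, the locality and unitality axioms of the Connes $G$-crossed categorical extension (Theorem \ref{Thm: ConnesCategoricalExtension}) give
\[
R(\xi,\tilde{I})\eta=R(\xi,\tilde{I})L(\eta,\tilde{J})\Omega=L(\eta,\tilde{J})R(\xi,\tilde{I})\Omega=L(\eta,\tilde{J})\xi .
\]
Here one should check that the identifications $K\boxtimes H_0\cong K$ and $H_0\boxtimes H\cong H$ are used consistently. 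Combining the two displays gives the second identity.

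The main obstacle is the bookkeeping of the conjugation-reflection. One has to verify carefully that $\nu^{\tilde{I}'}$ intertwines the canonical inclusion identifications; this rests on the relation $Z^{\mp}(\overline{\xi},-\tilde{I})=\overline{Z^{\pm}(\xi,\tilde{I})}\circ i$ and \cite[Prop. A.9]{GcrossedbraidedRep}. One must also match the orientations so that the reflected path is homotopic to $\varrho*\alpha_{\tilde{I}}$ and not to a path that winds differently, since the twists would otherwise introduce factors of $\varphi$.
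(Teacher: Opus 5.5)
Your proposal is correct and follows the paper's proof. For the first identity, both arguments use the trivial triangle $\nu^{-\tilde{I}}_{H,K}\circ Z^+(\overline{\xi},-\tilde{I})=\overline{Z^-(\xi,\tilde{I})}$, the compatibility of $\nu$ with path continuations under the reflection $\alpha\mapsto-\alpha$ (the paper cites \cite[Lem. A.10]{GcrossedbraidedRep} where you sketch it via small paths), and the homotopy $-\alpha_{-\tilde{I}}\simeq\varrho*\alpha_{\tilde{I}}$ in $\R$; the second identity is the same locality-plus-unitality computation $R(\xi,\tilde{I})L(\eta,\tilde{J})\Omega=L(\eta,\tilde{J})R(\xi,\tilde{I})\Omega$.
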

 \begin{proof}
     We prove the first statement. Let $\alpha_{-\tilde{I}}$ be a path in $\R$ from $-\tilde{I}$ to $\widetilde{S^1_-}$. We have to show the commutativity of the outer diagram in 
\[\begin{tikzcd}
	{\overline{K^\mu}} && {\overline{H^\varphi}_{+}(-\tilde{I})\boxtimes \overline{K^\mu}} && {\overline{H^\varphi}_{+}(-)\boxtimes \overline{K^\mu}} \\
	&& {\overline{K^\mu\boxtimes H^\varphi_-(\tilde{I})}} && {\overline{K^\mu\boxtimes H^\varphi_-(+)}}.
	\arrow["{Z^+(\overline{\xi}, -\tilde{I})}", from=1-1, to=1-3]
	\arrow["{\overline{Z^-(\xi, \tilde{I})}}"', from=1-1, to=2-3]
	\arrow["{\alpha_{-\tilde{I}}^\bullet}", from=1-3, to=1-5]
	\arrow["{\nu^{-\tilde{I}}_{H, K}}", from=1-3, to=2-3]
	\arrow["{\nu_{H, K}^{\widetilde{S^1_-}}}", from=1-5, to=2-5]
	\arrow["{\overline{(-\alpha_{-\tilde{I}})^\bullet}}", from=2-3, to=2-5]
\end{tikzcd}\]
The right square commutes by \cite[Lem. A.10]{GcrossedbraidedRep}, and the left triangle commutes trivially. For the second statement, let $\tilde{J}\in\Jcal_\R$ be an interval such that $\tilde{I}\subset\tilde{J}^{c+}$, and let $\eta\in K^\mu_+(\tilde{J})$. Then, 
\[
\nu_{H, K}\circ L(\overline{\xi}, -\tilde{I}) \overline{\eta}= \overline{R(\xi, \tilde{I})\eta} = \overline{R(\xi, \tilde{I})L(\eta, \tilde{J})\Omega} = \overline{L(\eta, \tilde{J})R(\xi, \tilde{I})\Omega} = \overline{L(\eta, \tilde{J})\xi}.
\]
 \end{proof}

 \begin{lemma}\label{lemm: compatibilityLJ}
     Given intervals $\tilde{I}, \tilde{J}\in \Jcal_\R$ and any $x\in\A(I)$ and $y\in \A(J)$, it holds that
\[
\mathrm{J}\mu L^G(\pi_0(x)\Omega, \tilde{I})\pi_0(y)\Omega = \mu L^G(\pi_0(j(x)^*)\Omega, -\tilde{I})\pi_0(j(y)^*)\Omega.
\]
\begin{proof}
We compute
\begin{align*}
    \mathrm{J}\mu L^G(\pi_0(x)\Omega, \tilde{I})\pi_0(y)\Omega & = \mathrm{J}\pi_0(x)\pi_0(y)\Omega\\ & = \mathrm{J}\pi_0(x)\mathrm{J}\mathrm{J}\pi_0(y)\mathrm{J}\Omega\\ & = \pi_0(j(x)^*)\pi_0(j(y)^*)\Omega\\ & = \mu L^G(\pi_0(j(x)^*)\Omega, -\tilde{I})\pi_0(j(y)^*)\Omega,
\end{align*}
as needed.
\end{proof}
 \end{lemma}

We write $\upsilon$ for the anti-tensor structure of the involution on $\Rep(\A^G)$, so that we can reserve $\nu$ for the anti-tensor structure of the involution on $\Rep^G(\A)$ and $\nu^G$ for the anti-tensor structure on the equivariantization $(\Rep^G(\A))^G$. We can now prove the main result of the Appendix.

\begin{theorem}\label{thm: Fixed-PointsEquivalenceInvolutive}
     Let $\A$ be a conformal net being acted on faithfully by a finite group $G$. Then, the functors
\[\begin{tikzcd}
	{\Rep(\A^G)} && {(\Rep^G(\A))^G}
	\arrow["{\mathfrak{M}^G\circ\mathfrak{D}}", shift left=2, from=1-1, to=1-3]
	\arrow["{\mathfrak{R}}", shift left=2, from=1-3, to=1-1]
\end{tikzcd}\]
provide an equivalence of balanced involutive $\mathrm{W}^*$-tensor categories.
\end{theorem}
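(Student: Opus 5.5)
Since Theorem \ref{Thm: EquivariantizationCompatibleBalance} already gives an equivalence of balanced $\mathrm{W}^*$-tensor categories with quasi-inverse $\mathfrak{R}$, and Lemma \ref{lemm: EquivariantizationIsInvolutive} makes $(\Rep^G(\A))^G$ a balanced involutive $\mathrm{W}^*$-tensor category, it suffices to upgrade one of the two functors to an involutive $\mathrm{W}^*$-tensor functor. The quasi-inverse of an involutive equivalence is then automatically involutive, by transporting $\chi$ along the unit and counit. I will upgrade $\mathfrak{R}$, because it is the simplest to write down. For $X=((H,\pi^H),u)$, we have $\overline{X}=(\overline{H},\overline{u})$ with $\overline{u}_g=\overline{u_g}\circ\chi_g^{-1}$. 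The lift $\chi_g$ is the identity on underlying Hilbert spaces, so the $G$-fixed vectors of $\overline{H}$ are exactly the conjugates of $G$-fixed vectors of $H$. This gives a canonical unitary
\[
\chi^{\mathfrak{R}}_X:\overline{\mathfrak{R}(X)}=\overline{H^G}\xrightarrow{\ \overline{\xi}\mapsto\overline{\xi}\ }(\overline{H})^G=\mathfrak{R}(\overline{X}).
\]
It is natural in $X$ because both functors act on morphisms by restriction and conjugation.

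The first check is that $\chi^{\mathfrak{R}}_X$ intertwines the $\A^G$-actions. On $\overline{\mathfrak{R}(X)}$ the action of $x\in\A^G(I)$ is given by Equation \eqref{eq: defInvolutionRepGA} for the net $\A^G$, with the modular conjugation $\mathrm{J}^G$ of $(\A^G(S^1_+),\Omega)$ on $H_0^G$. On $\mathfrak{R}(\overline{X})$ the same formula is used for $\A$, with $\mathrm{J}$. The needed input is that $\mathrm{J}^G=\mathrm{J}|_{H_0^G}$, and hence $j^G=j|_{\A^G}$. I would derive this from the Bisognano–Wichmann theorem: $\mathrm{J}$ is implemented geometrically by the reflection, which commutes with the $G$-action because $j$ commutes with automorphisms and $V_g\Omega=\Omega$. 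Therefore $\mathrm{J}$ preserves $H_0^G$, and its restriction satisfies the defining property of $\mathrm{J}^G$. The lifts $\pm\tilde{I}$ and the twists play no role on fixed points, because $\mathfrak{R}$ lands in untwisted representations. Compatibility of $\chi^{\mathfrak{R}}$ with the units is similar: $i^G(\xi)=\overline{\mathrm{J}^G\xi}$ agrees with the restriction of $i$. Compatibility with $\varphi$, that is $\overline{\overline{\xi}}$, is immediate.

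The main obstacle is compatibility with the anti-tensor structures. We must show that, under the tensor structure of $\mathfrak{R}$ (the one induced by inverting $\mathfrak{M}^G\circ\mathfrak{D}$), the map $\nu^G_{X,Y}=\nu_{X,Y}$ restricted to fixed points corresponds to $\upsilon_{\mathfrak{R}X,\mathfrak{R}Y}$. To keep the tensor structure explicit, I would check this on objects in the essential image of $\mathfrak{M}^G\circ\mathfrak{D}$, where $\mathfrak{R}$ is given by $\Phi_\rho$ from Theorem \ref{thm: RepFixedPointsIsEquivTwistedReps}. Equivalently, I would verify that $\mathfrak{M}^G\circ\mathfrak{D}$ intertwines $\upsilon$ and $\nu$ with the structure map $\overline{\Gamma\boxtimes^G H}\cong\Gamma\boxtimes^G\overline{H}$ induced by $\overline{L^G(\xi,\tilde{I})\eta}\mapsto L^G(\overline{\xi},-\tilde{I})\overline{\eta}$ together with $\mathrm{J}$ on $\Gamma$. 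The computation should go through total sets of vectors of the form $L(\xi,\tilde{I})\eta$, using the following identities: Lemma \ref{lemm: compatibilityLRnu} converts $\nu$ applied to $L$-vectors into conjugates of $L$-vectors (both in $\Rep^G(\A)$ and, with trivial group, in $\Rep(\A^G)$); Lemma \ref{lemm: compatibilityLJ} handles the multiplication $\mu$ of $\Gamma$ under $\mathrm{J}$; and the relations \eqref{eq: RelationsPhiLR} of the tensor equivalence $\Phi$ from Proposition \ref{prop: MisEquivalenceOFG-X-BraidedCats} transport $L^\Gamma$ to $L^\boxtimes$. Density of fusion products then upgrades the equalities on these total sets to equalities of unitaries.

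Finally, I need compatibility with braidings and balances. Since braid and balance are preserved by Theorem \ref{Thm: EquivariantizationCompatibleBalance}, and the involutive conditions \eqref{eq: CompatibilityBraidingInvolution} and \eqref{eq: RibbonConditionUncrossed} are properties rather than extra data, these hold automatically once the functor is an involutive tensor equivalence.
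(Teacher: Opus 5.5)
Your proposal is correct, and its decisive step is exactly the paper's proof. The paper equips $\mathfrak{M}^G\circ\mathfrak{D}$ directly with the map $\chi_H\colon\overline{L^G(\pi_0(x)\Omega,\tilde{I})\xi}\mapsto L^G(\pi_0(j(x)^*)\Omega,-\tilde{I})\overline{\xi}$ that you arrive at; this map is indeed the transport of your identification $\overline{H^G}=(\overline{H})^G$ along the unit and counit. It then checks the tensorator condition on $L$-vectors using Lemmas \ref{lemm: compatibilityLRnu} and \ref{lemm: compatibilityLJ}, the relations \eqref{eq: RelationsPhiLR}, and one ingredient you did not list: that $\mu_{\Gamma\boxtimes^G\overline{H},\Gamma\boxtimes^G\overline{K}}$ coequalizes the left and right actions of $\Gamma$.

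Putting the structure on $\mathfrak{R}$ first only makes the routine checks (naturality, intertwining of the $\A^G$-actions, units) immediate. The price is needing $\mathrm{J}^G=\mathrm{J}|_{H_0^G}$ explicitly, which the paper also uses implicitly (e.g.\ via $\mathrm{J}V_g=V_g\mathrm{J}$, and via $i\colon\Gamma\to\overline{\Gamma}$ in the factorization of $\chi_H$).
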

\begin{proof}
    In Theorem \ref{Thm: EquivariantizationCompatibleBalance}, we have proved that $\mathfrak{M}^G\circ\mathfrak{D}$ and $\mathfrak{R}$ provide an equivalence of balanced $\mathrm{W}^*$-tensor categories. We will upgrade the composition $\mathfrak{M}^G\circ \mathfrak{D}$ to an involutive $\mathrm{W}^*$-tensor functor. Let us denote $\mathfrak{X}:=\mathfrak{M}^G\circ \mathfrak{D}$.
    
    Let $H = (H, \pi^H)\in \Rep(\A^G)$. Then, $\overline{\mathfrak{X}(H)}$ has underlying Hilbert space $\overline{\Gamma\boxtimes^G H}$ with twisted $\A$-action $\overline{\pi^{\mathfrak{X}(H)}}$ given~by
    \[
    \overline{\pi^{\mathfrak{X}(H)}}_{\tilde{I}}(x)(\overline{L^G(\pi_0(y)\Omega, \tilde{J})\xi}) = \overline{(\mu\boxtimes^G\id)\circ L^G(\pi_0(j(x)^*)\Omega, -\tilde{I})L^G(\pi_0(y)\Omega, \tilde{J})\xi}
    \]
    for all $\tilde{I}, \tilde{J}\in\Jcal_\R$, $x\in \A_\R(\tilde{I})$, $y\in\A(J)$ and $\xi\in H$. The $G$-equivariance data is given, for all $g\in G$ and $\xi\in \Gamma\boxtimes^G H$, by
    \[
    T_g(\overline{\Gamma\boxtimes^G H}) = \overline{T_g(\Gamma\boxtimes^G H)}\xrightarrow{\overline{V_g\boxtimes^G\id}} \overline{\Gamma\boxtimes^G H}.
    \]
    On the other hand $\mathfrak{X}(\overline{H})$ has underlying Hilbert space $\Gamma\boxtimes^G\overline{H}$ with twisted $\A$-action given~by
    \[
    \pi^{\mathfrak{X}(\overline{H})}_{\tilde{I}}(x)(L^G(\pi_0(y)\Omega, \tilde{J})\overline{\xi}) = (\mu\boxtimes^G\id)\circ L^G(\pi_0(x)\Omega,\tilde{I}) L^G(\pi_0(y)\Omega, \tilde{J})\overline{\xi},
    \]
    for all $\tilde{I}, \tilde{J}\in\Jcal_\R$, $x\in \A_\R(\tilde{I})$, $y\in\A(J)$ and $\xi\in H$. The $G$-equivariance data is given by $T_g(\Gamma\boxtimes^G\overline{H})\xrightarrow{V_g\boxtimes^G\id}\Gamma\boxtimes^G\overline{H}$ for all $g\in G$. We define the unitary isomorphism of Hilbert~spaces
    \[
    \begin{array}{cccc}
        \chi_{H}^{\phantom{.}}: & \overline{\Gamma\boxtimes^G H}   &\to & \Gamma\boxtimes^G\overline{H} \\
    &  \overline{L^G(\pi_0(x)\Omega, \tilde{I})\xi}   &\mapsto  & L^G(\pi_0(j(x)^*)\Omega,-\tilde{I})\overline{\xi} 
    \end{array}
    \]
    for any $\tilde{I}\in\Jcal_\R$, and where $x\in\A(I)$ and $\xi\in H$. The unitary is well-defined and independent of the choice of $\tilde{I}$ as it equals the composition
    \[
    \overline{\Gamma\boxtimes^G H}\xrightarrow{\overline{\beta_{\Gamma, H}}}\overline{H\boxtimes^G\Gamma}\xrightarrow{\upsilon_{\Gamma, H}^{-1}}\overline{\Gamma}\boxtimes^G\overline{H}\xrightarrow{i^{-1}\boxtimes^G\id}\Gamma\boxtimes^G\overline{H}.
    \]
     We will show that $\chi_H^{\phantom{.}}$ is a morphism in $(\Rep^G(\A))^G$. Let $\tilde{I}, \tilde{J}\in\Jcal_\R$ be intervals such that $-\tilde{I}\subset\tilde{J}^{c-}$ and such that there exists $\tilde{O}\in\Jcal_\R$ with $-\tilde{I}\cup\tilde{J}\subset\tilde{O}$. Then, for every $x\in\A_\R(\tilde{I})$, $y\in\A_\R(\tilde{J})$ and $\xi\in H$, it holds that
    \begin{align*}
        \chi_{H}^{\phantom{.}}\circ \overline{\pi^{\mathfrak{X}(H)}}_{\tilde{I}}(x)(\overline{L^G(\pi_0(y)\Omega, \tilde{J})\xi}) &  =\chi_{H}^{\phantom{.}}(\overline{(\mu\boxtimes^G\id)\circ L^G(\pi_0(j(x)^*)\Omega, -\tilde{I
    })L^G(\pi_0(y)\Omega, \tilde{J})\xi})\\ & = \chi_{H}^{\phantom{.}}(\overline{L^G(\mu L^G(\pi_0(j(x)^*)\Omega, -\tilde{I})\pi_0(y)\Omega, \tilde{O})\xi})\\ & = L^G(\mathrm{J}\mu L^G(\pi_0(j(x)^*)\Omega, -\tilde{I})\pi_0(y)\Omega, -\tilde{O})\overline{\xi}
    \\& = L^G(\mu L^G(\pi_0(x)\Omega, \tilde{I})\pi_0(j(y)^*)\Omega, -\tilde{O})\overline{\xi},\\ &= (\mu\boxtimes^G\id)\circ L^G(\pi_0(x)\Omega, \tilde{I})\circ L^G(\pi_0(j(y)^*)\Omega, -\tilde{J})\overline{\xi})\\& = \pi_{\tilde{I}}^{\mathfrak{X}(\overline{H})}(x)\circ \chi_{H}^{\phantom{.}}(\overline{L^G(\pi_0(y)\Omega, \tilde{J})\xi})
    \end{align*}
    using Lemma \ref{lemm: compatibilityLJ}. Hence, $\chi_H^{\phantom{.}}$ is a morphism of twisted $\A$-representations. Now, given $g\in G$,
    \begin{align*}
        \chi_H\circ (\overline{V_g\boxtimes^G\id})\overline{L^G(\pi_0(y)\Omega, \tilde{J})\xi} & = \chi_H^{\phantom{.}}(\overline{L^G(V_g\pi_0(y)\Omega, \tilde{J})}\xi)\\ & = L^G(\mathrm{J}\, V_g\,\pi_0(y)\Omega, -\tilde{J})\overline{\xi}\\ & =L^G(V_g\,\mathrm{J}\,\pi_0(y)\Omega, -\tilde{J})\overline{\xi}
       \\& 
        = (V_g\boxtimes^G \id)\circ \chi_{H}^{\phantom{.}}(\overline{L^G(\pi_0(y)\Omega, \tilde{J})\xi}),
    \end{align*}
    which shows that $\chi_{H}^{\phantom{.}}$ is compatible with the $G$-equivariance data. It is clear that $\chi_H^{\phantom{.}}$ is natural in $H$. Hence, it remains to show that $\chi$ upgrades $\mathfrak{X} = \mathfrak{M}^G\circ \mathfrak{D}$ to an involutive $\mathrm{W}^*$-tensor functor. We only show compatibility with the tensorator, the rest of the conditions, which can be found below \cite[Def. A.1]{GcrossedbraidedRep} follow analogously. We have to show the commutativity of the following diagram for all $H,~ K\in ~\Rep(\A^G),$
\[\begin{tikzcd}
	{\overline{\Gamma\boxtimes^G H}\boxtimes \overline{\Gamma\boxtimes^G K}} && {(\Gamma\boxtimes^G\overline{H})\boxtimes(\Gamma\boxtimes^G\overline{K})} && {(\Gamma\boxtimes^G\overline{H})\boxtimes_\Gamma(\Gamma\boxtimes^G\overline{K})} \\
	{\overline{(\Gamma\boxtimes^G K)\boxtimes (\Gamma\boxtimes^G H)}} &&&& {\Gamma\boxtimes^G\overline{H}\boxtimes^G\Gamma\boxtimes^G\overline{K}} \\
	{\overline{(\Gamma\boxtimes^G K)\boxtimes_\Gamma (\Gamma\boxtimes^G H)}} &&&& {\Gamma\boxtimes^G\overline{H}\boxtimes^G H_0^G\boxtimes^G\overline{K}} \\
	{\overline{\Gamma\boxtimes^G K\boxtimes^G \Gamma\boxtimes^G H}} &&&& {\Gamma\boxtimes^G\overline{H}\boxtimes^G\overline{K}} \\
	{\overline{\Gamma\boxtimes^G K\boxtimes^G H_0^G\boxtimes^G H}} && {\overline{\Gamma\boxtimes^G K\boxtimes^G H}} && {\Gamma\boxtimes^G\overline{K\boxtimes^G H}},
	\arrow["{\chi_H^{\phantom{.}}\boxtimes\chi_{K}^{\phantom{.}}}", from=1-1, to=1-3]
	\arrow["{\nu_{\Gamma\boxtimes^GH, \Gamma\boxtimes^G K}}"', from=1-1, to=2-1]
	\arrow["{\Phi_{\Gamma\boxtimes^G \overline{H}, \Gamma\boxtimes^G\overline{K}}}"', from=1-5, to=1-3]
	\arrow["{\mu_{\Gamma\boxtimes^G\overline{H}, \Gamma\boxtimes^G\overline{K}}}"', from=2-5, to=1-5]
	\arrow["{\overline{\Phi_{\Gamma\boxtimes^GK, \Gamma\boxtimes^G H}}}", from=3-1, to=2-1]
	\arrow["{\id\boxtimes^G\iota\boxtimes^G\id}"', from=3-5, to=2-5]
	\arrow["{\overline{\mu_{\Gamma\boxtimes^GK, \Gamma\boxtimes^GH}}}", from=4-1, to=3-1]
	\arrow["\cong"', from=4-5, to=3-5]
	\arrow["{\id\boxtimes^G\upsilon_{H, K}}", from=4-5, to=5-5]
	\arrow["{\overline{\id\boxtimes^G\iota\boxtimes^G\id}}", from=5-1, to=4-1]
	\arrow["\cong", from=5-3, to=5-1]
	\arrow["{\chi^{\phantom{.}}_{K\boxtimes^G H}}"', from=5-3, to=5-5]
\end{tikzcd}\]
where we have already substituted the tensorator for $\mathfrak{M}^G\circ\mathfrak{D}$, following Proposition \ref{prop: MEquivalenceOfGCats} and Theorem \ref{thm: RepFixedPointsIsEquivTwistedReps}. Let $\tilde{I}, \tilde{J}\in\Jcal_\R$ be intervals with $\tilde{I}\subset\tilde{J}^{c+}$, $x\in \A(J)$, $\xi\in H(I)$ and $\eta\in K(J)$. Then, $\overline{L^G(\pi_0(x)\Omega, \tilde{J})L^G(\eta, \tilde{J})\xi}\in \overline{\Gamma\boxtimes^GK\boxtimes^G H}$ is a vector of the representation at the bottom centre of the diagram. Let us compute the action of the left leg of the diagram on this vector. We have,
\[
\begin{alignedat}{3}
\overline{L^G(\pi_0(x)\Omega, \tilde{J})L^G(\eta, \tilde{J})\xi}
&\xmapsto{\makebox[8em][c]{}}
&\qquad&
\overline{L^G(L^G(\pi_0(x)\Omega, \tilde{J})\eta, \tilde{J})L^G(\Omega, \tilde{I})\xi}\\
&\xmapsto{\makebox[8em][c]{$\overline{\mu_{\Gamma\boxtimes^GK,\,\Gamma\boxtimes^GH}}$}}
&&
\overline{L^\Gamma(L^G(\pi_0(x)\Omega, \tilde{J})\eta, \tilde{J})L^G(\Omega, \tilde{I})\xi}\\
&\xmapsto{\makebox[8em][c]{$\overline{\Phi_{\Gamma\boxtimes^GK,\,\Gamma\boxtimes^GH}}$}}
&&
\overline{L(L^G(\pi_0(x)\Omega, \tilde{J})\eta, \tilde{J})L^G(\Omega, \tilde{I})\xi}\\
&\xmapsto{\makebox[8em][c]{$\nu_{\Gamma\boxtimes^GH,\,\Gamma\boxtimes^GK}^{-1}$}}
&&
L(\overline{L^G(\Omega, \tilde{I})\xi}, -\tilde{I})
\overline{L^G(\pi_0(x)\Omega, \tilde{J})\eta}\\
&\xmapsto{\makebox[8em][c]{$\chi_H^{\phantom{.}}\boxtimes\chi_K^{\phantom{.}}$}}
&&
L(L^G(\Omega,-\tilde{I})\overline{\xi}, -\tilde{I})
L^G(\pi_0(j(x)^*)\Omega, -\tilde{J})
\overline{\eta},
\end{alignedat}
\]
where we use Lemma \ref{lemm: compatibilityLRnu}. Similarly, the image of $\overline{L^G(\pi_0(x)\Omega, \tilde{J})L^G(\eta, \tilde{J})\xi}$ under the right leg of the diagram is
\[
\begin{alignedat}{3}\hspace{-.3cm}
\overline{L^G(\pi_0(x)\Omega, \tilde{J})L^G(\eta, \tilde{J})\xi}
&\xmapsto{\makebox[8em][c]{$\chi^{\phantom{.}}_{H\boxtimes^G K}$}}
&\qquad&
L^G(\pi_0(j(x)^*)\Omega, -\tilde{J})\overline{L^G(\eta, \tilde{J})\xi}\\
&\xmapsto{\makebox[8em][c]{$\mathrm{id}\boxtimes^G\upsilon_{H, K}^{-1}$}}
&&
L^G(\pi_0(j(x)^*)\Omega, \tilde{J})L^G(\overline{\xi}, -\tilde{I})\overline{\eta}\\
&\xmapsto{\makebox[8em][c]{}}
&&
L^G(\pi_0(j(x)^*)\Omega, -\tilde{J})L^G(\overline{\xi}, -\tilde{I})L^G(\Omega,-\tilde{I})\overline{\eta}\\
&\xmapsto{\makebox[8em][c]{$\mu_{\Gamma\boxtimes^G\overline{H},\,\Gamma\boxtimes^G\overline{K}}$}}
&&
\mu_{\Gamma\boxtimes^G\overline{H},\,\Gamma\boxtimes^G\overline{K}}
\Bigl(
L^G(\pi_0(j(x)^*)\Omega, -\tilde{J})
L^G(\overline{\xi}, -\tilde{I})
L^G(\Omega,-\tilde{I})
\overline{\eta}
\Bigr)
\\
&=&\qquad&
\mu_{\Gamma\boxtimes^G\overline{H},\,\Gamma\boxtimes^G\overline{K}}
\Bigl(
L^G(\Omega, -\tilde{I})
L^G(\overline{\xi}, -\tilde{I})
L^G(\pi_0(j(x)^*)\Omega, -\tilde{J})
\overline{\eta}
\Bigr)\\
&=&&
L^\Gamma(L^G(\Omega, -\tilde{I})\overline{\xi}, -\tilde{I})
L^G(\pi_0(j(x)^*)\Omega, -\tilde{J})
\overline{\eta}\\
&\xmapsto{\makebox[8em][c]{$\Phi_{\Gamma\boxtimes^G\overline{H},\,\Gamma\boxtimes^G\overline{K}}$}}
&&
L(L^G(\Omega, -\tilde{I})\overline{\xi}, -\tilde{I})
L^G(\pi_0(j(x)^*)\Omega, -\tilde{J})
\overline{\eta},
\end{alignedat}
\]
where we use that $\mu_{\Gamma\boxtimes^G\overline{H}, \Gamma\boxtimes^G\overline{K}}$ equalizes the actions of $\Gamma$ on the left and on the right. Indeed, $L^G(\Omega, -\tilde{I})
L^G(\overline{\xi}, -\tilde{I})
L^G(\pi_0(j(x)^*)\Omega, -\tilde{J})
\overline{\eta}$ and $L^G(\pi_0(j(x)^*)\Omega, -\tilde{J})
L^G(\overline{\xi}, -\tilde{I})
L^G(\Omega,-\tilde{I})
\overline{\eta}$ are the images of $L^G(\Omega, -\tilde{I})L^G(\overline{\xi}, -\tilde{I})L^G(\pi_0(j(x)^*), -\tilde{J})L^G(\Omega, \tilde{J})\overline{\eta}\in \Gamma\boxtimes^G\overline{H}\boxtimes^G\Gamma\boxtimes^G\Gamma\boxtimes^G\overline{K}$ under $\id_{\Gamma\boxtimes^G\overline{H}}\boxtimes^G\mu\boxtimes^G\id_{\overline{K}}$ and $(\mu\boxtimes^G\id_{\overline{H}
\boxtimes^G\Gamma\boxtimes^G\overline{K}})\circ (\beta^{-1}_{\Gamma, \Gamma\boxtimes^G\overline{H}}\boxtimes^G\id_{\Gamma\boxtimes^G\overline{K}})$ respectively.
\end{proof}

\bibliographystyle{halpha-abbrv}
\bibliography{FixedPoints}
\end{document}